\def\width{15.4cm}
\def\wid1{15.7cm}
\numberwithin{equation}{section}
\numberwithin{table}{section}
\newcolumntype{I}{!{\vrule width 1.2pt}}
\newlength\savedwidth
\newcommand\whline{\noalign{\global\savedwidth\arrayrulewidth
                            \global\arrayrulewidth 1.2pt}%
                   \hline
                   \noalign{\global\arrayrulewidth\savedwidth}}
\newtheorem{thm}{Theorem}[section]
\newtheorem{lem}[thm]{Lemma}
\newtheorem{cor}[thm]{Corollary}
\newtheorem{prop}[thm]{Proposition}
\newtheorem{conj}[thm]{Conjecture}
\newtheorem{hyp}[thm]{Hypothesis}
\newtheorem{defi}[thm]{Definition}
\theoremstyle{definition}
\newtheorem{remark}[thm]{Remark}
\newtheorem{exam}{Example}[section]
\newcommand{\ls}[2]{{\vphantom{#2}}^{#1}{#2}} 	
\newcommand{\lsa}[2]{{\vphantom{#2}}^{#1\!}{#2}}
\newcommand{\ncase}{\medskip\noindent}
\newcommand{\di}{\,| \,}
\newcommand{\divi}{\; | \;}
\newcommand{\dmod}{\mod \;}
\newcommand{\coleq}{\! :=}
\renewcommand{\ker}{\operatorname{ker}}
\newcommand{\Aut}{\operatorname{Aut}}
\newcommand{\ra}{\rightarrow}
\newcommand{\mZ}{\mathbb{Z}}
\newcommand{\mF}{\mathbb{F}}
\newcommand{\mC}{\mathbb{C}}
\newcommand{\mN}{\mathbb{N}}
\newcommand{\Hom}{\operatorname{Hom}}
\newcommand{\Stab}{\operatorname{Stab}}
\renewcommand{\rm}{\mathrm}
\newcommand{\mbf}{\mathbf}
\newcommand{\Cal}{\mathcal}
\newcommand{\End}{\operatorname{End}}
\newcommand{\hra}{\hookrightarrow}
\newcommand{\lda}{\lambda}
\newcommand{\fr}{\mathfrak}
\newcommand{\lra}{\longrightarrow}
\newcommand{\ol}{\overline}
\newcommand{\Irr}{\operatorname{Irr}}
\renewcommand{\a}{\alpha}
\renewcommand{\b}{\beta}
\newcommand{\e}{\epsilon}
\newcommand{\g}{\gamma}
\renewcommand{\d}{\delta}
\renewcommand{\th}{\theta}
\newcommand{\om}{\omega}
\newcommand{\Om}{\Omega}
\newcommand{\Ind}{\operatorname{Ind}}
\newcommand{\Res}{\operatorname{Res}}
\newcommand{\PSL}{\operatorname{PSL}}
\newcommand{\PSU}{\operatorname{PSU}}
\renewcommand{\fr}{\mathfrak}
\newcommand{\Def}{\operatorname{Def}}
\newcommand{\Inf}{\operatorname{Inf}}
\newcommand{\Bl}{\operatorname{Bl}}
\newcommand{\El}{\operatorname{El}}
\newcommand{\Proj}{\operatorname{Proj}}
\newcommand{\D}{\Delta}
\renewcommand{\O}{\mathcal O}
\newcommand{\lan}{\langle}
\newcommand{\ran}{\rangle}
\newcommand{\SU}{\operatorname{SU}}
\newcommand{\GU}{\operatorname{GU}}
\newcommand{\reg}{\mathrm{reg}}
\newcommand{\Char}{\operatorname{char}}
\newcommand{\capm}{\operatorname{cap}}
\newcommand{\SL}{\operatorname{SL}}
\newcommand{\PSp}{\operatorname{PSp}}
\newcommand{\Llra}{\Longleftrightarrow}
\newcommand{\hght}{\operatorname{ht}}
\title{The McKay conjecture and Brauer's induction theorem}
\author{Anton Evseev\thanks{The author is supported by the EPSRC Postdoctoral Fellowship EP/G050244/1.}  \\
\\
School of Mathematics \\
Queen Mary, University of London \\
Mile End Road, London E1 4NS, U.K. \\ 
\texttt{A.Evseev@qmul.ac.uk}
}
\date{} 
\begin{document}
\maketitle
\begin{abstract} Let $G$ be an arbitrary finite group. The McKay conjecture asserts that $G$ and the normaliser $N_G (P)$ of a Sylow $p$-subgroup $P$ in $G$ have the same number of characters of degree not divisible by $p$ (that is, of $p'$-degree). We propose a new refinement of the McKay conjecture, which suggests that one may choose a correspondence between the characters of $p'$-degree of $G$ and $N_G (P)$ to be compatible with induction and restriction in a certain sense. This refinement implies, in particular, a conjecture of Isaacs and Navarro.
We also state a corresponding refinement of the Brou{\'e} abelian defect group conjecture. We verify the proposed conjectures in several special cases.
\end{abstract}

\section{Introduction}\label{intro}

\subsection{Refinements of the McKay conjecture}\label{refinements}

Let $G$ be a finite group and $p$ a prime. As usual, $\Irr(G)$ will denote the set of complex irreducible characters of $G$. We write $\Irr_{p'}(G)$ for the set of characters $\chi\in \Irr(G)$ such that $\chi(1)$ is not divisible by $p$.

The McKay conjecture is one of the most intriguing open problems in representation theory of finite groups. The most straightforward version of the conjecture asserts that if $P$ is a Sylow $p$-subgroup of $G$ then $|\Irr_{p'}(G)|=|\Irr_{p'}(N_G (P))|$. The conjecture was proposed by McKay~\cite{McKay1971},~\cite{McKay1972} in a special case and was investigated more generally by Isaacs~\cite{Isaacs1973}.
Alperin~\cite{Alperin1976} stated the conjecture in the form given above and, in fact, proposed a stronger version in terms of blocks 
(cf.\ Section~\ref{blarb} below).
The conjecture has been proved in many cases, in particular, for $p$-solvable groups (for the prime $p$), symmetric and alternating groups, sporadic simple groups and many classes of finite groups of Lie type. In fact, Isaacs, Malle and Navarro~\cite{IsaacsMalleNavarro2007} have reduced the conjecture to verifying a (fairly complicated) set of conditions for each finite simple group, and there has been considerable progress towards proving that those conditions are satisfied via the classification of finite simple groups 
(see e.g.~\cite{Malle2008}, \cite{Spaeth2009}, \cite{Spaeth2010}, \cite{BrunatHimstedt2010} and references therein). 

However, there seems to be no known general approach that might lead to a proof of 
the conjecture for all finite groups and all primes simultaneously.
It appears that, in general, no bijection between $\Irr_{p'} (G)$ and $\Irr_{p'} (N_G (P))$ can be described as ``canonical'' (in an informal sense). However, there is a number of natural decompositions of $\Irr_{p'}(G)$ into a union of disjoint subsets $X_1,\ldots, X_n$ such that there is a corresponding decomposition $\Irr_{p'} (N_G (P))=\sqcup_{i=1}^n Y_i$. One can then ask whether $|X_i|=|Y_i|$ for each $i$.

Several such refinements of the McKay conjecture have been proposed. The most relevant of these to the present paper is probably the Isaacs--Navarro conjecture~\cite{IsaacsNavarro2002}, which may be stated as follows. Fix a prime $p$ and, for an integer $l$, write
$$
M_l (G) = |\{\chi \in \Irr(G) \mid \chi(1) \equiv \pm l \bmod p \}|.
$$

\begin{conj}[\cite{IsaacsNavarro2002}, Conjecture A]\label{conjIN}
If $P$ is a Sylow $p$-subgroup of $G$ then $M_l (G)=M_l (N_G(P))$ for every integer $l$ coprime to $p$.
\end{conj}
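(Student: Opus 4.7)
The plan is to derive Conjecture~\ref{conjIN} from a stronger refinement of the McKay conjecture: the existence of a bijection $\pi\colon \Irr_{p'}(G) \to \Irr_{p'}(N_G(P))$ and signs $\varepsilon(\chi) \in \{\pm 1\}$ such that, for every $\chi\in\Irr_{p'}(G)$, the virtual character
\[
\chi - \varepsilon(\chi)\, \Ind_{N_G(P)}^G \pi(\chi)
\]
lies in the $\mZ$-linear span of characters of the form $\Ind_H^G \psi$ with $H < G$, $\psi \in \Irr(H)$, and $p \mid [G:H]$. Granted such a refinement, evaluating at the identity and reducing modulo $p$ kills all the induced ``error'' contributions, while Sylow's theorem gives $[G:N_G(P)] \equiv 1 \pmod p$. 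Hence $\chi(1) \equiv \varepsilon(\chi)\,\pi(\chi)(1) \pmod p$ for every $\chi$, and the equality $M_l(G) = M_l(N_G(P))$ follows immediately for every $l$ coprime to $p$, since the sign $\varepsilon(\chi)$ merely swaps $l$ with $-l$ inside the set $\{\pm l\}$.

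To construct the bijection, I would invoke Brauer's induction theorem restricted to $p$-elementary subgroups $E = Q \times C$, where $Q$ is a $p$-subgroup and $C$ is cyclic of order prime to $p$. The key structural observation is that when $Q$ is a \emph{full} Sylow $p$-subgroup of $G$, one has $E \leq N_G(Q)$ automatically, since $C$ centralises $Q$. Any Brauer expansion of $\chi$ may therefore be split according to the $p$-part of the source subgroup: terms coming from $p$-elementary subgroups whose $p$-part is a proper $p$-subgroup of $P$ automatically contribute characters of the form $\Ind_H^G \psi$ with $p \mid [G:H]$, while the remaining terms, being induced from subgroups of $N_G(P)$, can be consolidated by transitivity of induction into a single $\Ind_{N_G(P)}^G \widetilde{\chi}$ for a virtual character $\widetilde{\chi}$ of $N_G(P)$. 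Tracking these decompositions across all $\chi \in \Irr_{p'}(G)$ would, in principle, define a correspondence into the Grothendieck ring of $N_G(P)$.

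The main obstacle is promoting the virtual character $\widetilde{\chi}$ to a genuine irreducible $\varepsilon(\chi)\pi(\chi) \in \pm\Irr_{p'}(N_G(P))$ and showing that the resulting assignment $\chi \mapsto \pi(\chi)$ is a bijection, not just an equality in the Grothendieck ring modulo induced ``garbage''. Brauer's theorem offers no canonical expansion, so there is no a~priori mechanism to enforce this rigidity on $\widetilde{\chi}$, and it is at this point that the genuine content of the McKay conjecture intervenes. I would therefore expect the argument to have to go either through a detailed study of the $p$-local geometry of~$G$, in the spirit of the Brou\'e abelian defect conjecture mentioned in the abstract, or through the Isaacs--Malle--Navarro reduction to finite simple groups followed by a case-by-case verification of the proposed refinement.
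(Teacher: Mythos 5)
The statement is an open conjecture of Isaacs and Navarro; the paper does not prove it, and neither do you. What the paper does is propose a stronger refinement (Conjecture~\ref{conjB}, specifically property (WIRC-Syl)) and then observe, in Proposition~\ref{wircsylIN}, that this refinement implies Conjecture~\ref{conjIN}. Your first paragraph carries out exactly the same reduction, phrased in the dual (induction) form that the paper calls (WIRC*): you posit a signed bijection $\pi$ with $\chi-\varepsilon(\chi)\,\Ind_{N_G(P)}^{G}\pi(\chi)$ lying in a ``garbage'' submodule whose elements all have $p$-divisible degree, evaluate at $1$, and use $[G:N_G(P)]\equiv 1\pmod p$ to get $\chi(1)\equiv\varepsilon(\chi)\pi(\chi)(1)\pmod p$. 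That argument is correct. The paper's version uses restriction instead, asking $F(\chi)\equiv\Res^G_{N_G(P)}\chi\pmod{\Cal C^p(N_G(P))+\Cal I(N_G(P),P,\Cal S)}$, which yields $F(\chi)(1)\equiv\chi(1)\pmod p$ directly without invoking the Sylow index congruence; it is the cleaner of the two but logically equivalent in view of Theorem~\ref{corrthm}. Your final paragraphs correctly identify the remaining obstacle --- Brauer's theorem gives a virtual character of $N_G(P)$ in the right coset, but no mechanism forces it to be $\pm$ an irreducible of $p'$-degree, nor the assignment to be a bijection. That rigidity is the actual content of (WIRC-Syl), and the paper proves it only in special cases (cyclic defect, $p$-nilpotent groups with abelian Sylow, split groups of Lie type in defining characteristic, small groups by computation); there is no general proof for your attempt to have matched.

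One substantive difference: your proposed garbage ideal, the $\mZ$-span of all $\Ind_H^G\psi$ with $p\mid[G:H]$, is logically incomparable with the paper's $\Cal C^p(G)+\Cal I(G,P,\Cal S)$. The piece $\Cal I(G,P,\Cal S)$ (with $\Cal S=\Cal S(G,P,N_G(P))$ the set of subgroups of $P$ contained in $P\cap\lsa{t}{P}$ for some $t\notin N_G(P)$) is a proper submodule of yours, since a generic proper subgroup of $P$ need not lie in $\Cal S$; conversely $\Cal C^p(G)$, the span of irreducibles of $p$-divisible degree, is generally not contained in your ideal. The paper's choice is carefully tailored: by Theorem~\ref{corrthm}, $\Ind^G_H$ and $\Res^G_H$ induce mutually inverse isomorphisms between $\Cal C(G)/\Cal I(G,P,\Cal S)$ and $\Cal C(N_G(P))/\Cal I(N_G(P),P,\Cal S)$, so the coset of $\Res^G_H\chi$ is a well-defined invariant living in a group that is transported faithfully between $G$ and $N_G(P)$ --- precisely the ``consolidation'' your second paragraph wants but cannot achieve, because with the coarser ideal you propose there is no analogue of Theorem~\ref{corrthm} to eliminate the ambiguity in the Brauer expansion. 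Either ideal suffices for the degree congruence of Conjecture~\ref{conjIN}, but the paper's is what makes the proposed refinement a structured conjecture rather than a formal restatement.
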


Clearly, Conjecture~\ref{conjIN} is equivalent to the McKay conjecture when $p=2$ or $3$.
In this paper we propose a stronger refinement, which, if true, would give more information than the McKay conjecture for every prime. Most of the paper is devoted to proving that refinement in certain special cases.

 We denote by $\Cal C (G)$ the abelian group $\mZ\Irr(G)$ of \emph{virtual characters} of $G$. If $H$ is a subgroup of $G$, we have the induction and restriction maps 
$\Ind_H^G\colon \Cal C(H) \ra \Cal C(G)$ and $\Res^G_H\colon \Cal C(G) \ra \Cal C(H)$.

\begin{defi}\label{defX} Let $P$ be a $p$-subgroup of $G$ and $H$ be a subgroup of $G$. We define the~\emph{intersection set} 
$\Cal S(G,P,H)$ by 
$$
\Cal S (G,P,H) = \{ Q\le P \mid  Q \subseteq \lsa{t}{P} \text{ for some } t\notin H \}.
$$
\end{defi}

We will only consider cases where $H$ contains the normaliser $N_G (P)$. Note that then $\Cal S(G,P,H)$ is one of the sets appearing in the definition of the Green correspondence (cf.\ e.g.~\cite{CRI}, Definition 20.4).

\begin{defi}\label{defI} Let $P$ be a $p$-subgroup of $G$.
Suppose $\Cal S$ is a family of subgroups of $P$ such that $\Cal S$ is downward closed (i.e.\ closed under taking subgroups). 
 Then $\Cal I(G,P,\Cal S)$ is defined to be the subgroup of $\Cal C (G)$
spanned by the class functions of the form $\Ind_{L}^G \phi$ where $\phi\in \Cal C(L)$ and $L$ is a subgroup of $G$ such that
\begin{enumerate}[(i)]
\item $L\cap P$ is a Sylow $p$-subgroup of $L$; and 
\item $L\cap P\in \Cal S$.
\end{enumerate} 
\end{defi}

Our starting point is the following observation.

\begin{thm}\label{corrthm}
Let $P$ be a Sylow $p$-subgroup of $G$, and let $H\le G$ contain $N_G (P)$. Set $\Cal S = \Cal S(G,P,H)$. Then
\begin{enumerate}[(i)]
 \item $\Ind_H^G (\Cal I(H,P,\Cal S))\subseteq \Cal I(G,P,\Cal S)$;
\item $\Res_H^G (\Cal I(G,P,\Cal S)) \subseteq \Cal I(H,P,\Cal S)$;
\item the maps $\Res_H^G$ and $\Ind_H^G$ yield mutually inverse isomorphisms between the abelian groups $\Cal C(G)/\Cal I(G,P,\Cal S)$ and 
$\Cal C(H)/\Cal I(H,P,\Cal S)$.
\end{enumerate}
\end{thm}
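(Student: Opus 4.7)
The plan is to establish (i) by the transitivity of induction, (ii) via Mackey's double coset formula, and (iii) by combining these with the projection (Frobenius) formula. I expect the main obstacle to lie in the last step of (iii), namely the identity $\Ind_H^G 1_H - 1_G \in \Cal I(G, P, \Cal S)$.

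For (i), if $L \le H$ satisfies $L \cap P$ Sylow in $L$ and $L \cap P \in \Cal S$, then transitivity gives $\Ind_H^G \Ind_L^H \phi = \Ind_L^G \phi$; since the defining conditions depend only on $L$, this is already a generator of $\Cal I(G, P, \Cal S)$.

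For (ii), let $\chi = \Ind_L^G \phi$ be a generator of $\Cal I(G,P,\Cal S)$. Mackey's formula reads
\[
\Res_H^G \chi = \sum_{x \in H\backslash G/L} \Ind_{H \cap {}^xL}^H \Res_{H \cap {}^xL}^{{}^xL}({}^x\phi),
\]
and I show each summand lies in $\Cal I(H,P,\Cal S)$. Setting $M = H \cap {}^xL$ and using that $P$ is a Sylow $p$-subgroup of $H$ (since $P \le N_G(P) \le H$), any Sylow $p$-subgroup $Q$ of $M$ is $H$-conjugate to some $Q' \le P$; automatically $Q' = M' \cap P$ is then Sylow in the conjugate $M'$. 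To check $Q' \in \Cal S$, I use that ${}^x(L\cap P)$ is Sylow in ${}^xL$, so $Q$ is conjugate in ${}^xL$ into ${}^x(L\cap P) \le {}^{x t_0}P$, where $t_0 \notin H$ witnesses $L\cap P \in \Cal S$; tracing the two conjugations produces $t \notin H$ with $Q' \le {}^tP$, so the summand lies in $\Cal I(H,P,\Cal S)$.

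With (i) and (ii), the maps $\Res_H^G$ and $\Ind_H^G$ descend to homomorphisms of quotients. For $\Res\circ\Ind = \id$ on $\Cal C(H)/\Cal I(H,P,\Cal S)$, apply Mackey to $\Res_H^G \Ind_H^G \phi$: the trivial double coset contributes $\phi$, while each $x \notin H$ contributes $\Ind_{H \cap {}^xH}^H \Res({}^x\phi) \in \Cal I(H,P,\Cal S)$ by the same argument (for $L = H$ the hypothesis $L\cap P \in \Cal S$ is replaced by $x \notin H$, which directly furnishes $t \notin H$ with $Q' \le {}^tP$). For the reverse composition, the projection formula yields $\Ind_H^G \Res_H^G \chi = \chi \cdot \Ind_H^G 1_H$, so $\Ind\Res\chi - \chi = \chi \cdot (\Ind_H^G 1_H - 1_G)$; a further application of projection shows that $\Cal I(G,P,\Cal S)$ is an ideal in $\Cal C(G)$ under pointwise product, so the problem reduces to the claim $\Ind_H^G 1_H - 1_G \in \Cal I(G,P,\Cal S)$. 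This is the crux. A promising route is to analyze the $P$-action on $G/H$: since $N_G(P) \le H$, the coset $H$ is the unique $P$-fixed point, and every non-trivial $P$-orbit has stabilizer $P \cap {}^xH \in \Cal S$ for some $x \notin H$; lifting this $P$-level identity back to $G$ seems to require a genuinely new ingredient, plausibly Brauer's induction theorem (as hinted by the paper's title).
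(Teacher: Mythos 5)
Parts (i), (ii) and the identity $\Res_H^G \Ind_H^G \equiv \id$ modulo $\Cal I(H,P,\Cal S)$ follow the same route as the paper (transitivity of induction plus the Mackey formula), with one small imprecision in (ii): the element you get by "tracing the two conjugations", namely $t=hy^{-1}xt_0$, need \emph{not} lie outside $H$. What saves the argument is that $Q'$ is also contained in $\lsa{hy^{-1}x}{P}$ (because $L\cap P\le P$), and since $t_0\notin H$ at least one of $hy^{-1}x$ and $hy^{-1}xt_0$ lies outside $H$; this dichotomy, which the paper makes explicitly, should be spelled out. The genuine gap, however, is the step you yourself flag as the crux: you never prove that $\Ind_H^G 1_H-1_G\in\Cal I(G,P,\Cal S)$ (equivalently, that the map induced by $\Ind_H^G$ on the quotients is surjective). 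Without it, (iii) is not established: $\Res_H^G\Ind_H^G=\id$ on the quotient only shows that the map induced by $\Ind_H^G$ is injective, and your reduction via the projection formula (which is fine, since $\Cal I(G,P,\Cal S)$ is indeed an ideal of $\Cal C(G)$) merely restates the surjectivity problem; the proposal stops exactly where the real content of the theorem lies.

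Your guess about the missing ingredient is correct, and it is precisely how the paper closes the argument. By Brauer's induction theorem (Theorem~\ref{BrInd}), every element of $\Cal C(G)$ -- in particular $1_G$ -- is a $\mZ$-linear combination of characters $\Ind_E^G\th_E$ with $E\in\El(G)$. Since $E$ is elementary, $E=E_p\times E_{p'}$, so after replacing $E$ by a $G$-conjugate we may assume $E_p\subseteq P$, and then $E\cap P=E_p$ is a Sylow $p$-subgroup of $E$. If $E\subseteq H$, then $\Ind_E^G\th_E=\Ind_H^G\Ind_E^H\th_E\in\Ind_H^G(\Cal C(H))$. Otherwise choose $g\in E\setminus H$; as $g$ normalises the direct factor $E_p$, we get $E_p\subseteq P\cap\lsa{g}{P}$ with $g\notin H$, so $E\cap P\in\Cal S$ and $\Ind_E^G\th_E\in\Cal I(G,P,\Cal S)$ by Definition~\ref{defI}. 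Hence $\Cal C(G)=\Ind_H^G(\Cal C(H))+\Cal I(G,P,\Cal S)$, i.e.\ the induced map on quotients is surjective, and together with $\Res_H^G\Ind_H^G=\id$ this gives the mutually inverse isomorphisms. (If you want your exact reduction target: writing $1_G\equiv\Ind_H^G\psi$ modulo $\Cal I(G,P,\Cal S)$ and applying $\Res_H^G$ gives $\psi\equiv 1_H$ modulo $\Cal I(H,P,\Cal S)$, and then part (i) yields $1_G\equiv\Ind_H^G 1_H$ modulo $\Cal I(G,P,\Cal S)$.) This is exactly the paper's proof in the Sylow case, so once you carry out the Brauer-induction step your argument coincides with it.
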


In Section~\ref{gensetup} we state and prove a more general version of this result (Theorem~\ref{corrthmgen}).
Brauer's classical induction theorem is a key ingredient of the proof. 

Let $H=N_G (P)$ where $P$ is a Sylow $p$-subgroup of $G$. The McKay conjecture predicts the existence of a one-to-one correspondence between $\Irr_{p'} (G)$ and $\Irr_{p'} (H)$. It is natural to ask whether this correspondence can be chosen in such a way that it ``agrees'' with the isomorphism  of Theorem~\ref{corrthm} between 
$\Cal C(G)/\Cal I(G,P,\Cal S)$ and $\Cal C(H)/\Cal I(H,P,\Cal S)$.

If $S$ is a set of virtual characters, write $\pm S=\{\chi, -\chi \mid \chi\in S \}$. If $S_1$ and $S_2$ are two sets of virtual characters, 
we say that a bijection $F\colon\pm S_1 \ra \pm S_2$ is \emph{signed} if $F(-\chi)= -F(\chi)$ for all $\chi\in \pm S_1$. Whenever $P$ is a Sylow $p$-subgroup of $G$ and $H$ is a subgroup of $G$ containing $N_G (P)$, we will consider whether the pair $(G,H)$ satisfies the following property:

\bigskip
\noindent
\begin{tabular}{lp{\width}}
\!\!\!\!(IRC-Syl)\!\!\! &
\emph{There is a signed bijection $F\colon \pm\Irr_{p'}(G) \ra \pm\Irr_{p'}(H)$ such that
$$
F(\chi) \equiv \Res_H^G \chi \dmod \Cal I(H,P,\Cal S(G,P,H)) \qquad\qquad
$$
for all $\chi\in \pm \Irr_{p'}(G)$.
} \\
\end{tabular}
\medskip

If this holds, we will say that $G$ satisfies (IRC-Syl) with respect to $H$ (and the prime $p$).
In the present paper we largely restrict ourselves to the case $H=N_G (P)$.  However, the author is not aware of counterexamples to the assertions of conjectures stated below if $H$ is instead taken to be an arbitrary subgroup of $G$ containing $N_G (P)$ (note that the set $\Cal S(G,P,H)$ may be smaller than $\Cal S(G,P,N_G (P))$ if $H\supsetneq N_G (P)$). 

\begin{conj}\label{conjA} Let $G$ be a finite group with an abelian Sylow $p$-subgroup $P$.  Then $G$ satisfies property \emph{(IRC-Syl)} with respect to $N_G (P)$.  
\end{conj}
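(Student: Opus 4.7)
My strategy is to define the desired bijection $F$ canonically through Theorem~\ref{corrthm}: for each $\chi\in \pm\Irr_{p'}(G)$ I would set $F(\chi)$ to be the unique element $\pm\psi$ of $\pm\Irr_{p'}(H)$ such that $\Res^G_H \chi \equiv \pm\psi \pmod{\Cal I(H,P,\Cal S)}$. For this prescription to define a signed bijection one needs three things: (a) $\pm\Irr_{p'}(H)$ maps injectively into $\Cal C(H)/\Cal I(H,P,\Cal S)$; (b) the analogous statement for $G$; (c) for every $\chi\in\Irr_{p'}(G)$, the class of $\Res^G_H\chi$ modulo $\Cal I(H,P,\Cal S)$ meets $\pm\Irr_{p'}(H)$. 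Combining (a)--(c) with Theorem~\ref{corrthm}(iii), which says that $\Res^G_H$ on the quotients is an isomorphism, the McKay equality $|\Irr_{p'}(G)|=|\Irr_{p'}(H)|$ (known in great generality for abelian Sylow subgroups) upgrades injectivity of $F$ to bijectivity.

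For step (a), the assumption that $P$ is abelian is crucial: since $P\trianglelefteq H=N_G(P)$ and $H/P$ has order coprime to $p$, Clifford theory over $P$ parametrises $\Irr_{p'}(H)$ explicitly in terms of $H$-orbits on $\Irr(P)$ together with characters of stabilisers, and the generators of $\Cal I(H,P,\Cal S)$ --- induced from subgroups $L\le H$ with $L\cap P\in \Cal S$ --- admit a compatible description in the same language. A direct bookkeeping argument in this parametrisation should yield the required linear independence modulo $\Cal I(H,P,\Cal S)$. Steps (b) and (c) are much harder, and the natural way to attack them is block by block. By Brauer's first main theorem, blocks $B$ of $G$ with defect group $P$ are in bijection with blocks $b$ of $H$ with the same defect group, and for each such pair Brou\'e's abelian defect group conjecture predicts a derived equivalence $D^b(B)\simeq D^b(b)$ that induces a perfect isometry $\mZ\Irr(B)\to \mZ\Irr(b)$. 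Since perfect isometries send irreducibles to signed irreducibles, combining these with (a) would provide both the linear independence in (b) and the existence in (c).

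The main obstacle is evident: Brou\'e's conjecture is itself open in general, and even granting it, one must additionally ensure the perfect isometry is compatible with restriction modulo $\Cal I(H,P,\Cal S)$. The natural refinement to invoke is a splendid equivalence in the sense of Rickard, or a source-algebra equivalence in the sense of Puig and Linckelmann, which is designed to respect Brauer restriction to local subgroups and so ought to lift the canonical isomorphism of Theorem~\ref{corrthm} --- but converting this local compatibility into an identification of classes modulo $\Cal I(H,P,\Cal S)$ requires genuine work. In the cases where Brou\'e's conjecture is unconditional (cyclic defect groups through Brauer tree combinatorics, $p$-solvable groups through the Glauberman--Isaacs correspondence, and several abelian-defect families for classical groups), I would instead verify \emph{(IRC-Syl)} by tracking the known McKay bijection through $\Res^G_H$ directly, which is presumably the route taken for the special cases announced in the abstract.
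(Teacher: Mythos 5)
This statement is Conjecture~\ref{conjA} of the paper: it is an open conjecture, not a theorem, so there is no proof in the paper to compare your argument against. What the paper does provide is (i) a reduction: an $\Cal S$-tempered splendid tilting complex with pivot $\fr{Gr}(G,b)$ as in Conjecture~\ref{rBr} yields, via Lemmas~\ref{mu1} and~\ref{mu2} and Proposition~\ref{BrA}, a signed bijection $F=R_{\bar\mu}|_{\pm\Irr_0(G,b)}$ satisfying the congruence of (IRC-Syl); and (ii) verifications of that refined Brou\'e-type statement for cyclic defect groups (Theorem~\ref{cyclicthm}) and for blocks of $p$-nilpotent groups $L\rtimes P$ with $P$ abelian (Theorem~\ref{pnilpspl}), together with the Lie-type results and the computations of Section~\ref{calc}. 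Your second and third paragraphs — obtain the bijection from a Brou\'e/Rickard equivalence refined so as to be compatible with the Green correspondence, and check the known cases directly — are in the same spirit as the paper's Section~\ref{splendid}, and your acknowledgement that this rests on open conjectures and on a nontrivial compatibility statement is accurate.

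However, your step (a) is concretely false, and with it the proposed ``canonical'' definition of $F$ collapses. Every element of $\Cal I(H,P,\Cal S(G,P,H))$ has degree divisible by $p$, but this does not come close to separating the $p'$-degree irreducibles of $H$: the quotient $\Cal C(H)/\Cal I(H,P,\Cal S)$ (which for $P$ Sylow equals the group $\Cal Q_1$ tabulated in Section~\ref{calc}) typically has very small rank — for instance $\Cal Q_1\simeq\mZ$ for $G=S_5$, $p=2$, where $H=N_G(P)$ is dihedral of order $8$ and $|\Irr_{2'}(H)|=4$ — so the elements of $\Irr_{p'}(H)$ are never linearly independent modulo $\Cal I(H,P,\Cal S)$ in such cases, and distinct elements of $\pm\Irr_{p'}(H)$ can represent the same class. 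Hence there is no ``unique $\pm\psi$'' congruent to $\Res^G_H\chi$, and (IRC-Syl) cannot be reformulated as a uniqueness statement; it only asserts the existence of \emph{some} signed bijection compatible with restriction, and producing one is exactly the hard content. Relatedly, you cannot invoke ``the McKay equality, known in great generality for abelian Sylow subgroups'' as an ingredient: at the level of generality of the conjecture it is not available, and the property under discussion is meant to refine it, not presuppose it. The paper's intended mechanism is different: the bijection comes from the isometry $R_{\bar\mu}$ attached to a splendid tilting complex, and it is the $\D\Cal S$-temperedness with pivot $\fr{Gr}(G,b)$ that converts the local compatibility of a splendid equivalence into the congruence modulo $\Cal I(H,P,\Cal S)$ — precisely the step you flag as requiring genuine work, and which is carried out in the paper only in the special cases above.
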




As we will see in Sections~\ref{TI} and~\ref{calc}, property (IRC-Syl) sometimes fails for the normaliser $H$ of a non-abelian Sylow subgroup $P$. 
When seeking a refinement that might be true in all cases, it seems appropriate to replace $\Cal I(H,P,\Cal S)$ with a larger subgroup of $\Cal C(H)$ in the statement of (IRC-Syl). Let
$$
\Irr^p (G) = \{ \chi\in \Irr(G) \mid \chi(1) \equiv 0 \bmod p \},
$$
and write $\Cal C^p (G) = \mZ \Irr^p (G)$. We will consider three more properties that may or may not hold for a pair $(G,H)$ where, as before,
 $P$ is a Sylow $p$-subgroup of $G$ and $N_G (P)\le H \le G$:

\bigskip
\noindent
\begin{tabular}{lp{\width}}
\text{\!\!\!(pRes-Syl)} & 
\centering{
$
\Res^G_H (\Cal C^p (G)) \subseteq \Cal C^p (H) + \Cal I(H,P,\Cal S(G,P,H)). \qquad\qquad
$}
\end{tabular}

\bigskip
\noindent
\begin{tabular}{lp{\width}}
\text{\!\!\!(pInd-Syl)} & 
\centering{
$
\Ind^G_H (\Cal C^p (H)) \subseteq \Cal C^p (G) + \Cal I(G,P,\Cal S(G,P,H)). \qquad\qquad
$}
\end{tabular}


\bigskip
\noindent
\begin{tabular}{lp{\width}}
\!\!\!(WIRC-Syl)\!\!\!
&
\emph{There is a signed bijection $F\colon \pm\Irr_{p'}(G) \ra \pm\Irr_{p'}(H)$ such that
$$
F(\chi) \equiv \Res_H^G \chi \dmod  \Cal C^p (H) + \Cal I(H,P,\Cal S(G,P,H)) \qquad\qquad
$$
for all $\chi\in \pm \Irr_{p'}(G)$.
} \\
\end{tabular}

\medskip
The acronyms IRC and WIRC stand for \emph{(weak) induction-restriction correspondence}.

Note that if (pRes-Syl) and (pInd-Syl) hold then we have the following analogue of Theorem~\ref{corrthm}: the maps $\Ind^G_H$ and $\Res^G_H$ yield mutually inverse abelian group isomorphisms
$$
\frac{\Cal C(G)}{\Cal C^p (G) + \Cal I(G,P,\Cal S)} \longleftrightarrow \frac{\Cal C(H)}{\Cal C^p (H) + \Cal I(H,P,\Cal S)}
$$
where $\Cal S=\Cal S(G,P,H)$.
In this case (WIRC-Syl) asserts that there exists a signed bijection between $\pm \Irr_{p'}(G)$ and $\pm \Irr_{p'}(H)$ which is compatible with these isomorphisms. 

\begin{conj}\label{conjB} Let $P$ be a Sylow $p$-subgroup of an arbitrary finite group $G$. Then properties \emph{(pRes-Syl)}, \emph{(pInd-Syl)} and \emph{(WIRC-Syl)} hold for $G$ with respect to the normaliser $N_G (P)$.
\end{conj}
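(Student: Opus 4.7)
The plan is to build on Theorem~\ref{corrthm} in three stages, establishing (pRes-Syl), (pInd-Syl), and (WIRC-Syl) in that order. Together, the first two properties would refine the isomorphism of Theorem~\ref{corrthm}(iii) to mutually inverse isomorphisms
$$
\Cal C(G)/(\Cal C^p(G) + \Cal I(G,P,\Cal S)) \longleftrightarrow \Cal C(H)/(\Cal C^p(H) + \Cal I(H,P,\Cal S))
$$
(with $\Cal S=\Cal S(G,P,H)$), and (WIRC-Syl) would then assert that the images of the $p'$-degree irreducibles match up to sign under this isomorphism.

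To prove (pRes-Syl), I would mirror the use of Brauer's induction theorem in Theorem~\ref{corrthm}: write an arbitrary $\chi\in\Irr^p(G)$ as a $\mZ$-linear combination of characters $\Ind_E^G \psi$, where $E$ ranges over elementary subgroups of $G$ and $\psi$ is a linear character of $E$, and expand via Mackey's formula:
$$
\Res^G_H \Ind^G_E \psi = \sum_{t \in H \backslash G / E} \Ind^H_{L_t} \Res^{\lsa{t}{E}}_{L_t} \lsa{t}{\psi}, \qquad L_t := H \cap \lsa{t}{E}.
$$
Terms whose $L_t$ has a Sylow $p$-subgroup in $\Cal S(G,P,H)$ contribute to $\Cal I(H,P,\Cal S)$ by definition. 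In the remaining terms the Sylow $p$-subgroup of $L_t$ essentially coincides with $P$ after conjugation, and one must argue, using $p \mid \chi(1)$ together with the fact that $[G:H] \equiv 1 \bmod p$ (Sylow's theorem), that their cumulative contribution lies in $\Cal C^p(H)$. Property (pInd-Syl) can be approached symmetrically, or deduced via Frobenius reciprocity together with the adjunction already secured by Theorem~\ref{corrthm}.

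Property (WIRC-Syl) is the hardest part. An abstract isomorphism of quotient groups does not automatically yield a signed bijection of the distinguished subsets $\pm\Irr_{p'}(G)$ and $\pm\Irr_{p'}(H)$: one has to check that these sets descend injectively to the quotients and that their images coincide after identification. I would first verify (WIRC-Syl) in tractable special cases where McKay bijections are known and explicit: $p$-solvable groups (via the Glauberman--Isaacs correspondence), groups with TI Sylow $p$-subgroups, symmetric and alternating groups (via combinatorial bijections on hook partitions), and sporadic groups by direct computation. A general proof would presumably require a reduction to quasisimple groups, in the spirit of the Isaacs--Malle--Navarro reduction of the unrefined McKay conjecture, followed by case-by-case verification for each family of finite simple groups via the classification.

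The principal obstacle is the rigidity demanded by (WIRC-Syl), which asks not merely for a bijection but for one realized explicitly by restriction modulo $\Cal C^p(H) + \Cal I$. This is strictly stronger than the McKay conjecture itself, and it is not clear a priori that standard McKay bijections enjoy this extra compatibility, even in the $p$-solvable case. Pinning down this compatibility, and making the reduction to simple groups precise for this strengthened statement, is where the main difficulty lies.
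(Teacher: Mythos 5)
The statement you are trying to prove is Conjecture~\ref{conjB}: in the paper it is an open conjecture, not a theorem, and the paper itself offers no proof — only supporting evidence (Theorem~\ref{corrthm}/\ref{corrthmgen}, the block-level machinery of Section~\ref{gensetup}, the TI case via Eaton's property (P+) in Section~\ref{TI}, the defining-characteristic Lie type results of Section~\ref{lietype}, which rely on Gelfand--Graev characters, and the computations of Section~\ref{calc}). So no proposal along these lines can be judged ``correct''; what matters is whether your sketch closes the genuine gaps, and it does not.

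The concrete gap is in your argument for (pRes-Syl). After Brauer induction and Mackey decomposition, the terms $\Ind^H_{L_t}\Res^{\lsa{t}E}_{L_t}\lsa{t}\psi$ for which a Sylow $p$-subgroup of $L_t$ is $G$-conjugate to $P$ do indeed fall outside $\Cal I(H,P,\Cal S)$, but your claim that their ``cumulative contribution lies in $\Cal C^p(H)$'' because $p\mid\chi(1)$ and $|G:H|\equiv 1\bmod p$ is unsupported, and it is essentially the entire content of (pRes-Syl). A degree congruence only constrains $\chi(1)$ modulo $p$ (at best modulo a power of $p$, as in the proof of Proposition~\ref{degprop}); it does not force the relevant virtual character to lie in the span $\mZ\Irr^p(H)$ modulo $\Cal I(H,P,\Cal S)$, since $\Cal C^p(H)+\Cal I(H,P,\Cal S)$ is a proper subgroup of the set of virtual characters of degree divisible by $p$. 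That this cannot be settled by such soft counting is visible in the paper's own partial results: Theorem~\ref{lienonsplit} establishes (pRes-Syl) for groups of Lie type in defining characteristic only through detailed information about semisimple characters, regular characters of $\mbf U^F$ and Alvis--Curtis duality, with genuine exclusions for small $q$; and the failures of the stronger property (IRC-Syl) for twisted groups and for a solvable example in Section~\ref{exampleQ8} show how delicate the dividing line is. The same issue affects your symmetric treatment of (pInd-Syl), and your plan for (WIRC-Syl) (special cases plus an Isaacs--Malle--Navarro-style reduction) is a reasonable research programme but is exactly the open problem, not a proof: you correctly identify that known McKay bijections are not known to be compatible with restriction modulo $\Cal C^p(H)+\Cal I(H,P,\Cal S)$, and no reduction theorem for this refined statement exists in the paper or elsewhere.
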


Let $P$ be a Sylow $p$-subgroup of $G$ and $H=N_G (P)$. Then $P$ is not an element of the set $\Cal S=\Cal S(G,P,H)$, whence each element $\th$ of 
$\Cal C^p (H) + \Cal I(H,P,\Cal S)$ has a degree divisible by $p$. Suppose (WIRC-Syl) holds for $(G,N_G(P))$ and the map $F\colon\pm \Irr_{p'}(G) \ra \pm \Irr_{p'}(H)$ is a witness to that. Then we have 
$
F(\chi)(1) \equiv \chi(1) \pmod p
$
for each $\chi\in\Irr_{p'}(G)$. The following proposition is now clear.

\begin{prop}\label{wircsylIN}
Let $P$ be a Sylow $p$-subgroup of a finite group $G$. If \emph{(WIRC-Syl)} holds for the pair $(G, N_G (P))$ then Conjecture~\ref{conjIN} holds for the group $G$ and the prime $p$. 
\end{prop}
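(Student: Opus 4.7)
The strategy is to extract from the signed bijection $F$ supplied by (WIRC-Syl) an ordinary bijection $\Irr_{p'}(G)\to\Irr_{p'}(N_G(P))$ that matches character degrees modulo $p$ up to sign. The critical input, already announced in the text preceding the proposition, is that every element of $\Cal C^p(H)+\Cal I(H,P,\Cal S)$ has degree divisible by $p$. Indeed, generators of $\Cal C^p(H)$ do so by definition, while for any generator $\Ind_L^H\phi$ of $\Cal I(H,P,\Cal S)$ the conditions $L\cap P\in\Cal S$ and $P\notin\Cal S$ force $L\cap P\subsetneq P$, so $|H:L|$ is divisible by $p$ (since $P$ is Sylow in $H$) and hence so is $(\Ind_L^H\phi)(1)=|H:L|\phi(1)$. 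Evaluating the defining congruence of (WIRC-Syl) at the identity therefore yields $F(\chi)(1)\equiv\chi(1)\pmod p$ for each $\chi\in\pm\Irr_{p'}(G)$.

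Next I would turn the signed bijection into an ordinary one: for each $\chi\in\Irr_{p'}(G)$, write $F(\chi)=\epsilon(\chi)F_0(\chi)$ with $\epsilon(\chi)\in\{\pm1\}$ and $F_0(\chi)\in\Irr_{p'}(H)$. The signedness relation $F(-\chi)=-F(\chi)$ together with bijectivity of $F$ makes $F_0\colon\Irr_{p'}(G)\to\Irr_{p'}(H)$ a well-defined bijection, and the congruence above rewrites as $F_0(\chi)(1)\equiv\pm\chi(1)\pmod p$ for every $\chi\in\Irr_{p'}(G)$.

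Finally, for $l$ coprime to $p$, the condition $\chi(1)\equiv\pm l\pmod p$ implies $p\nmid\chi(1)$, so $M_l(G)$ counts only characters in $\Irr_{p'}(G)$, and similarly for $H$. The bijection $F_0$ therefore restricts to a bijection between $\{\chi\in\Irr_{p'}(G):\chi(1)\equiv\pm l\pmod p\}$ and $\{\psi\in\Irr_{p'}(H):\psi(1)\equiv\pm l\pmod p\}$, giving $M_l(G)=M_l(N_G(P))$. There is no substantial obstacle here: once the degree-divisibility observation is in hand, the remainder is elementary bookkeeping, which is why the author declares the proposition to be ``clear''.
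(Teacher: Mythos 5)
Your proof is correct and follows exactly the route the paper intends: the observation that $P\notin\Cal S(G,P,N_G(P))$ forces every element of $\Cal C^p(H)+\Cal I(H,P,\Cal S)$ to have degree divisible by $p$, evaluation of the (WIRC-Syl) congruence at the identity to get $F(\chi)(1)\equiv\chi(1)\pmod p$, and then the elementary de-signing and counting argument — this is precisely the content the author summarises before declaring the proposition clear. No gaps; the remaining bookkeeping you supply (well-definedness and bijectivity of $F_0$, and that characters counted by $M_l$ lie in $\Irr_{p'}$ when $\gcd(l,p)=1$) is exactly what is needed.
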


In Section~\ref{Broue} we state what appears to be a natural refinement of the Brou{\'e} abelian defect group conjecture and will show that Conjecture~\ref{conjA} follows from that refinement. Even if we do not assume $P$ to be abelian, property (IRC-Syl) holds in a number of cases, in particular, for split groups of Lie type if $p$ is the defining characteristic with certain exceptions (see Theorem~\ref{liesplit}) and for the groups $S_n$ and $A_n$ whenever $n\le 11$ (see Section~\ref{calc}). In fact, the only examples that we have been able to find so far
 of simple groups $G$ for which (IRC-Syl) fails  are those where $G$ contains a twisted group of Lie type defined over a field of characteristic $p$. Nevertheless, property (IRC-Syl) fails for some solvable groups, at least for $p=2$ (see Section~\ref{exampleQ8}). To find a statement plausibly approaching a necessary and sufficient condition for (IRC-Syl) seems to be an interesting problem.

Conjecture~\ref{conjB}, particularly property (WIRC-Syl), appears to stand on somewhat less firm ground than Conjecture~\ref{conjA}. As there are comparatively few cases where (IRC-Syl) fails, it is not easy to judge what the ``right'' way to weaken (IRC-Syl) is. 
However, at this time the ``right'' property seems more likely to be stronger rather than weaker than (WIRC-Syl). 
Indeed, Eaton~\cite{Eaton2008}, motivated partially by a result of Navarro~\cite{Navarro2003}, has proposed a somewhat stronger property in the special case $\Cal S(G,P,H)=\{ \mbf 1 \}$ (see Section~\ref{Eaton}). Proposition~\ref{wircsylIN}, the results of~\cite{Eaton2008}  and the results of Sections~\ref{lietype} and~\ref{calc} all seem to indicate that (WIRC-Syl) is a reasonable starting point, at the very least. 


The paper is organised as follows. In Section~\ref{gensetup} we generalise Theorem~\ref{corrthm}, the four properties stated above and Proposition~\ref{wircsylIN} to the case where $P$ is an arbitrary $p$-subgroup of $G$. In Section~\ref{splendid} we state the refinement of the Brou{\'e} abelian defect group conjecture mentioned above and verify it in two special cases. In Section~\ref{TI} we consider the case where
$\Cal S=\Cal S(G,P,N_G (P))=\{ \mbf 1 \}$ and observe that property (WIRC-Syl) follows from a property stated by Eaton~\cite{Eaton2008} (called (P+)) in this case. We formulate a strengthening of (WIRC-Syl) based on (P+). Also, we consider an example of a solvable group for which (IRC-Syl) fails for the prime $2$. In Section~\ref{lietype} we prove results concerning the four properties above in the case where $G$ is a finite group of Lie type defined over a field of characteristic $p$. Finally, in Section~\ref{calc} we give results of computer calculations testing our properties for ``small'' groups. In particular, the tables of Section~\ref{calc} give the structure of the abelian groups 
$\Cal C(N_G(P))/\Cal I(N_G(P),P,\Cal S)$ and $\Cal C(N_G(P))/(\Cal C^p(G)+\Cal I(N_G (P),P,\Cal S))$ in a number of cases (where $\Cal S=\Cal S(G,P,H)$), allowing one to get a 
feeling as to how much information the properties (IRC-Syl) and (WIRC-Syl) encode when they hold. For the most part, Sections~\ref{lietype} and~\ref{calc} may be read independently of each other and of Sections~\ref{splendid} and~\ref{TI}.

\subsection{Some notation and conventions}

Most of our notation is standard. If $k<l$ are integers, we sometimes denote by $[k,l]$ the set $\{k, k+1,\ldots,l\}$.

\ncase
\textbf{Groups.} 
We denote by $Z(G)$ the centre of a group $G$. If $S$ is a $G$-set then $S^G$ is the set of points in $S$ fixed by all elements of $G$. 

Now suppose $G$ is a finite group. The \emph{$p$-part} $g_p$ and the \emph{$p'$-part} $g_{p'}$ of an element $g\in G$ are defined uniquely by the conditions that $g=g_p g_{p'}=g_{p'} g_p$, that $g_p$ is a $p$-element and that $g_{p'}$ is a $p'$-element. If $g,h\in G$, we write $[g,h]=g^{-1}h^{-1}gh$. If $L$ is a subgroup of $G$, we write $L_p$ and $L_{p'}$ for the sets $p$-elements and $p'$-elements of $L$ respectively. We denote by $\mbf 1$ the trivial subgroup of $G$. If $X$ and $Y$ are subsets of $G$, we write $X\subseteq_G Y$ if $\lsa{g}{X}=Y$ for some $g\in G$, and we write $X=_G Y$ if $\lsa{g}{X}=Y$ for some $g\in G$. 

\ncase
\textbf{Characters.}
Suppose $\chi\in \Cal C(G)$. We say that $\th\in \Irr(G)$ is an \emph{irreducible constituent} of $\chi$ if the scalar product $\lan \chi,\th \ran$ is non-zero, and we say that this scalar product is the \emph{multiplicity} of $\phi$ in $\chi$. We say that $\chi$ is \emph{multiplicity-free} if all its irreducible constituents occur with multiplicity $1$ or $-1$. We denote by $1_G$ the trivial character of $G$. If $\th\in \Cal C(H)$ for some finite group $H$, we write $\chi\times \th\in \Cal C(G\times H)$ for the ``outer'' product of $\chi$ and $\th$, defined by $(\chi\times \th) (g,h)=\chi(g)\th(h)$.

Let $L$ is a normal subgroup of $G$ and $\phi\in\Irr(L)$. We write $\Irr(G \di \phi)$ for the set of characters $\chi\in \Irr(G)$ such that $\phi$ is a constituent of $\Res^G_L \chi$. We define $\Irr_{p'} (G\di \phi)=\Irr_{p'}(G) \cap \Irr(G\di \phi)$.
If $\chi\in\Cal C(G)$, we define
$$
\pi_{\phi} \chi = \sum_{\xi\in \Irr(G | \phi)} \lan \chi,\xi \ran \xi.
$$
This is the projection of $\chi$ onto the $\mZ$-span of $\Irr(G\di \phi)$.

\ncase
\textbf{Rings.}
Throughout we will denote by $\Cal O$ a complete discrete valuation ring of characteristic $0$ with a maximal ideal $\fr p$ such that the field $k=\Cal O/\fr p$ has characteristic $p$ and is algebraically closed. We will denote by $K$ be the field of fractions of $\Cal O$. We assume that $\Cal O$ is ``large enough'', i.e.\ contains all $|G|$-th roots of unity for all groups $G$ in question. 
Thus we may and do consider elements of $\Cal C (G)$ as class functions with values in $K$ (via an identification of a splitting field for $G$ in $\mC$ with one in $K$.) The dual $\bar\chi$ of a virtual character $\chi\in \Cal C(G)$ is defined by $\bar\chi(g)=g^{-1}$, $g\in G$. If 
$e=\sum_{g\in G} a_g g$ is an idempotent of $Z(KG)$, we write $\bar e =\sum_{g\in G} a_g g^{-1}$. The centre of an algebra $A$ is denoted by $Z(A)$.

\ncase
\textbf{Modules.}
 All modules will be assumed to be finitely generated. Modules are assumed to be left ones unless we specify otherwise. Let $R$ be a commutative ring. An \emph{$RG$-lattice} is an $RG$-module 
which is free as an $R$-module. If $M$ and $N$ are $R$-modules, we 
write $M\otimes N=M\otimes_R N$. We will sometimes write $M^{\oplus n}$ to denote the direct sum of $n$ copies of an $RG$-module $M$. 
If $L$ is a normal subgroup of $G$ and $M$ is 
an $R(G/L)$-module then $\Inf_{G/L}^G M$ is the $RG$-module obtained from $M$ by inflation.
If $M$ is an $\Cal O G$-lattice then $K\otimes_{\Cal O} M$ is a $KG$-module; we say that $M$ \emph{affords} 
the character afforded by $K\otimes_{\Cal O} M$. For finite groups $G$ and $H$, 
we will identify $R(G\times H)$-modules with $RG$-$RH$-bimodules: if $M$ is an $R(G\times H)$-module then its bimodule structure is given by
$gmh=(g,h^{-1})m$, $g\in G$, $h\in H$, $m\in M$.

\bigskip
\noindent
\textbf{Acknowlegdements.} The author is grateful to Robert Boltje for pointing out an error in a previous version of the paper and to C{\'e}dric Bonnaf{\'e} for providing a correct proof of Proposition 14.32 in~\cite{DigneMichelBook} (see Remark~\ref{Bonrem} below).

\section{The general setup}\label{gensetup}

\subsection{Brauer's induction theorem and relatively projective modules}\label{willems}

As indicated above, Theorem~\ref{corrthm} ultimately relies on Brauer's induction theorem. We state the latter result, following~\cite{IsaacsBook}, Chapter 8.

\begin{defi} If $l$ is a prime, a group $E$ is said to be \emph{$l$-elementary} if it is a direct product of an $l$-group and a cyclic $l'$-group. A group is \emph{elementary} if it is $l$-elementary for some prime $l$. If $G$ is a finite group, we denote by $\El(G)$ the set of all elementary subgroups of $G$.
\end{defi}

Note that if $E$ is an elementary group and $p$ is any prime then $E$ is a direct product of a $p$-subgroup and a $p'$-subgroup.

\begin{thm}[Brauer]\label{BrInd} Let $G$ be a finite group, and suppose $\chi\in \Cal C(G)$. Then we can choose a virtual character $\th_E\in \Cal C(E)$ for each 
$E\in \El(G)$ in such a way that 
$$
\chi = \sum_{E\in \El(G)} \Ind_E^G \th_E.
$$
\end{thm}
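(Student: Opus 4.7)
My plan follows the classical strategy (see for instance~\cite{IsaacsBook}, Chapter~8). The first step is to observe that
$$I(G)\coleq\sum_{E\in\El(G)}\Ind_E^G\Cal C(E)\subseteq\Cal C(G)$$
is an ideal of the commutative ring $\Cal C(G)$, a fact that follows immediately from the projection formula $\chi\cdot\Ind_E^G\psi=\Ind_E^G(\Res^G_E\chi\cdot\psi)$. Consequently the whole theorem reduces to the single assertion $1_G\in I(G)$: once $1_G=\sum_E\Ind_E^G\phi_E$ is known, for an arbitrary $\chi\in\Cal C(G)$ the projection formula yields $\chi=\chi\cdot 1_G=\sum_E\Ind_E^G(\Res^G_E\chi\cdot\phi_E)$, which is the required expansion.

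Next, for each prime $p\mid|G|$, I write $|G|=p^{a_p}m_p$ with $\gcd(m_p,p)=1$ and let $I_p(G)\subseteq I(G)$ denote the $\mZ$-submodule spanned by inductions $\Ind_E^G\psi$ with $E$ a $p$-elementary subgroup of $G$. The core of the proof is the local lemma
$$m_p\cdot 1_G\in I_p(G).$$
To establish it, I would proceed element by element. Given $g\in G$ with $p$-part $u=g_p$ and $p'$-part $s=g_{p'}$, the relevant $p$-elementary subgroups have the shape $\langle s\rangle\times P$, where $P$ is a Sylow $p$-subgroup of $C_G(s)$ containing $u$. By inducing up carefully chosen virtual characters of such subgroups (typically products of a linear character of $\langle s\rangle$ with a character of $P$), using orthogonality of linear characters of cyclic $p'$-groups, and invoking Frobenius-type integrality of character sums, one arranges the contributions from different conjugacy classes to interfere constructively, producing the constant class function $m_p$ on $G$.

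The final stage is a Bezout combination. For distinct primes $p_1,p_2\mid|G|$ no prime divides both $m_{p_1}$ and $m_{p_2}$ (such a prime would have to be different from each $p_i$, which is impossible), whence $\gcd\{m_p:p\mid|G|\}=1$. Choosing integers $c_p$ with $\sum_p c_p m_p=1$ gives
$$1_G=\sum_{p\mid|G|}c_p\bigl(m_p\cdot 1_G\bigr)\in I(G),$$
which completes the argument.

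The main obstacle is the local lemma of the second stage: stages one and three are formal bookkeeping once it is in hand, but constructing an explicit element of $I_p(G)$ equal to the constant $m_p$ requires genuine combinatorial work. One has to orchestrate the contributions from many $p$-elementary subgroups so that their induced characters simultaneously vanish on all ``unwanted'' parts of $G$ and sum to $m_p$ on every conjugacy class, and it is here that the number-theoretic heart of Brauer's theorem resides.
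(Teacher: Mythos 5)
The paper does not actually prove this statement: Theorem~\ref{BrInd} is quoted as a classical result, with a reference to~\cite{IsaacsBook}, Chapter~8, so there is no internal proof to compare against; your proposal has to stand on its own. As it stands it is an outline of the classical strategy rather than a proof. Stages one and three are fine: the projection formula does make $I(G)=\sum_{E\in\El(G)}\Ind_E^G\Cal C(E)$ an ideal of $\Cal C(G)$, and a Bezout combination does reduce everything to $1_G\in I(G)$. But the entire mathematical content of Brauer's theorem is concentrated in your ``local lemma'' $m_p\cdot 1_G\in I_p(G)$, and you do not prove it --- you only describe the kind of argument one ``would'' run (inducing characters from subgroups $\langle s\rangle\times P$, ``orthogonality of linear characters'', ``Frobenius-type integrality'', contributions that ``interfere constructively''). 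Nothing in the proposal actually exhibits integer combinations of induced characters from $p$-elementary subgroups whose sum is the constant class function $m_p$, nor verifies the congruences/integrality statements that make such a construction work; as you yourself concede, this is where ``the number-theoretic heart of Brauer's theorem resides.'' Deferring exactly that step means the theorem has not been proved.

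A secondary, easily repaired error: your justification of the coprimality in stage three is false as stated. For distinct primes $p_1,p_2\mid |G|$ it is not true that no prime divides both $m_{p_1}$ and $m_{p_2}$; any third prime divisor of $|G|$ divides both (for $|G|=30$ one has $m_2=15$ and $m_3=10$, with common factor $5$). What you need, and what is true, is only that $\gcd\{m_p : p\mid |G|\}=1$: a common prime divisor $q$ of all the $m_p$ would in particular divide $m_q$, which is impossible because $m_q$ is prime to $q$. This is a one-line fix, unlike the missing local lemma, which is the genuine gap.
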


Let $Q$ be a $p$-subgroup of $G$.  
Recall that, if $R$ is a commutative ring, an $RG$-module is said to be (relatively) \emph{$Q$-projective} if it is a direct summand of $\Ind_{Q}^G M$ for some $RQ$-module $M$.
We will use certain standard facts related to the theory of vertices and sources, which may be found in~\cite{CRI}, Sections 19 and 20, for example.
Note that every irreducible character of $G$ is afforded by some $\Cal OG$-lattice (see, for example,~\cite{ThevenazBook}, Proposition 42.6). We denote by $\Cal A(Q)$ the set of all subgroups of $Q$.

\begin{thm}[Willems~\cite{Willems1979}]\label{relpr} Let $Q$ be a $p$-subgroup of a finite group $G$. Suppose $M$ is a 
 $Q$-projective $\Cal O G$-lattice. 
If $\chi$ is the character afforded by $M$ then $\chi\in \Cal I(G,Q,\Cal A (Q))$.
\end{thm}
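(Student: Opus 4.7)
The plan is to combine Brauer's induction theorem (Theorem~\ref{BrInd}), the theory of vertices and sources, and the Green correspondence, and then to proceed by induction on the order of the vertex of $M$.

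First, I reduce to the case where $M$ is indecomposable, which is harmless because $\Cal I(G,Q,\Cal A(Q))$ is closed under sums and each indecomposable summand of a $Q$-projective lattice is again $Q$-projective. Let $V$ be a vertex of $M$. Since $M$ is $Q$-projective, $V$ is $G$-conjugate to a subgroup of $Q$, and after replacing $V$ by this conjugate I may assume $V\le Q$. Then $M$ is a direct summand of $\Ind_V^G S$ for some indecomposable $\Cal OV$-lattice $S$ (a source of $M$). Letting $\sigma\in \Cal C(V)$ be the character of $S$, I apply Brauer's theorem inside the $p$-group $V$: every subgroup of $V$ is itself a $p$-group and hence $p$-elementary, so Brauer's theorem yields
$$
\sigma \,=\, \sum_{F\le V}\Ind_F^V \tau_F \qquad\text{for some } \tau_F\in \Cal C(F).
$$
Transitivity of induction gives $\Ind_V^G \sigma = \sum_{F\le V}\Ind_F^G \tau_F$, and each summand on the right lies in $\Cal I(G,Q,\Cal A(Q))$: taking $L=F\le V\le Q$, the intersection $L\cap Q = F$ is both a Sylow $p$-subgroup of $L$ and a member of $\Cal A(Q)$. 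Hence $\Ind_V^G \sigma\in \Cal I(G,Q,\Cal A(Q))$.

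Writing $\Ind_V^G S = M \oplus M'$ at the module level and passing to characters yields $\chi = \Ind_V^G \sigma - \chi'$, where $\chi'$ is the character of $M'$, so the problem reduces to showing $\chi'\in \Cal I(G,Q,\Cal A(Q))$. This is where induction on $|V|$ is used, and it is the main obstacle. By the Green correspondence (combined with a Mackey decomposition of $\Ind_V^{N_G(V)} S$), $M$ appears exactly once in the indecomposable decomposition of $\Ind_V^G S$, and every other indecomposable summand has vertex strictly contained in $V$ up to $G$-conjugation. Each indecomposable summand of $M'$ is therefore a $Q$-projective $\Cal OG$-lattice of smaller vertex, and the inductive hypothesis puts its character in $\Cal I(G,Q,\Cal A(Q))$. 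The base case $V=\mbf 1$ corresponds to $M$ being projective; here one appeals to the classical consequence of Brauer's induction theorem that the character of any projective $\Cal OG$-lattice is a $\mathbb Z$-linear combination of characters induced from $p'$-subgroups of $G$, and for any such $p'$-subgroup $L$ the intersection $L\cap Q$ equals $\mbf 1$, which is a Sylow $p$-subgroup of $L$ and belongs to $\Cal A(Q)$. This completes the induction and establishes $\chi\in \Cal I(G,Q,\Cal A(Q))$.
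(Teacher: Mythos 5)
There is a genuine gap in the inductive step. The claim that ``$M$ appears exactly once in the indecomposable decomposition of $\Ind_V^G S$, and every other indecomposable summand has vertex strictly contained in $V$ up to $G$-conjugation'' is not what the Green correspondence gives, and it is false in general. The Green correspondence controls $\Ind_{N_G(V)}^G f(M)$, where $f(M)$ is the Green correspondent of $M$ over $N_G(V)$: there the complement is projective relative to the subgroups $V\cap \ls{g}{V}$, $g\notin N_G(V)$, which are indeed proper in $V$. But you are inducing the \emph{source} from $V$, and the intermediate step $\Ind_V^{N_G(V)} S$ typically splits into several indecomposables with vertex exactly $V$ (and $f(M)$ may even occur with multiplicity greater than one when $N_G(V)/V$ is a nonabelian $p'$-group). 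Concretely, take $G=S_3$, $p=3$, $V=Q=A_3$, $S=\Cal O$ the trivial lattice and $M=\Cal O$ the trivial $\Cal O G$-lattice: then $\Ind_V^G S\simeq \Cal O\oplus \mathrm{sgn}$, and the second summand also has vertex $V$, not a proper subgroup. Consequently the summands of your $M'$ need not have smaller vertex, the induction on $|V|$ does not advance, and the argument becomes circular (to handle $\chi$ you must handle $\chi'$, whose summands are lattices of exactly the same kind). The base case $V=\mbf 1$, via the classical fact that characters of projective lattices are integral combinations of characters induced from $p'$-subgroups, is fine, and your first reduction is harmless (in fact $\Ind_V^G\sigma\in\Cal I(G,Q,\Cal A(Q))$ already by Definition~\ref{defI}, so Brauer's theorem inside $V$ is not needed there); the problem is solely the missing mechanism for passing from ``direct summand of an induced module'' to membership in $\Cal I(G,Q,\Cal A(Q))$.

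For comparison, the paper's proof supplies exactly that mechanism: it applies Brauer's induction theorem to $1_G$ to write $\chi=\sum \Ind_E^G((\Res^G_E\chi)\phi)$ over elementary subgroups $E$, uses the Mackey formula to see that each indecomposable summand of $\Res^G_E M$ (tensored with a lattice affording $\phi$) is $(\ls{g}{Q}\cap E)$-projective, and then, for $E=E_p\times E_{p'}$, invokes Green's indecomposability theorem to show that an indecomposable relatively $R$-projective lattice ($R\le E_p$) is not merely a summand of, but \emph{equal to}, a module induced from $RE_{p'}$. It is this indecomposability argument on elementary groups that converts summands into genuinely induced characters; your proposal has no substitute for it, so as written the proof does not go through.
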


In order to highlight the use of Theorem~\ref{BrInd}, we give a more direct and slightly shorter proof than the one in~\cite{Willems1979}.

\begin{proof}  
First we prove the result in the case when $G$ is a direct product of a $p$-group and a $p'$-group, so $G= G_p \times G_{p'}$ and $Q\le G_p$. 
We may assume that $M$ is indecomposable. 
By the hypothesis, $M$ is a direct 
summand of $\Ind_Q^G S$ for some indecomposable $\Cal O Q$-lattice $S$ (which is a source of $M$). It follows from the Krull--Schmidt theorem that $M$ is a summand of $\Ind_{QG_{p'}}^G N$ 
for an indecomposable $\Cal O QG_{p'}$-lattice $N$. However, by Green's indecomposability theorem (see~\cite{CRI}, Corollary 19.23), $\Ind_{QG_{p'}}^G N$ is indecomposable, whence $M=\Ind_{QG_{p'}}^G N$. It follows that $\chi\in \Cal I(G,Q,\Cal A (Q))$ in this case.

Now let $G$ be arbitrary. By Theorem~\ref{BrInd}, we have
$$
1_G = \sum_{E\in \El(G)} \sum_{\phi\in \Irr(E)} n_{\phi} \Ind_{E}^G \phi
$$
for some integers $n_{\phi}$.
Hence
$$
\chi = \sum_{E\in \El(G)} \sum_{\phi\in\Irr(E)} n_{\phi} \Ind_E^G ((\Res^G_E \chi) \phi),
$$
so it suffices to show that $\Ind_E^G ((\Res^G_E \chi) \phi)\in \Cal I(G,Q,\Cal A(Q))$ for all $E$ and $\phi$. 
Let $U$ be an $\Cal O E$-lattice affording $\phi$, and let $V$ be an indecomposable summand of $\Res^G_E M$. 
As $M$ is a summand of $\Ind^G_Q S$ for an $\Cal O Q$-lattice $S$, 
it follows from the Mackey formula that $V$ is a summand of $\Ind_{\ls{g}{Q} \cap E}^E W$ for some $g\in G$ and
some $\Cal O(\ls{g}{Q}\cap E)$-lattice $W$. Thus $V$ is $\ls{g}{Q}\cap E$-projective, and hence so is $V\otimes_{\Cal O} U$. Therefore, 
by the first part of the proof, 
$(\Res^G_E \chi) \phi\in \Cal I(E, \ls{g}{Q}\cap E, \Cal A(\ls{g}{Q}\cap E))$, and it follows that 
$$
\Ind_E^G (\Res_E^G(\chi) \phi) \in \Cal I(G,\ls{g}{Q}, \Cal A(\ls{g}{Q})) =  \Cal I(G,Q,\Cal A(Q)),
$$
as required.
\end{proof}

\subsection{Blocks of arbitrary defect}\label{blarb}

We denote by $\Bl(G)$ the set of $p$-blocks of a finite group $G$, that is, of primitive idempotents of $Z(\Cal OG)$. 
We will repeatedly use standard facts of block theory, which may be found, for example, in~\cite{CRII}, Chapter 7.
In particular, we have a decomposition of $\Cal OG$ as a direct sum of algebras:
$$
\Cal OG\simeq\bigoplus_{b\in \Bl (G)} \Cal OGb.
$$
On tensoring with $K$, this gives rise to
$$
KG \simeq \bigoplus_{b\in \Bl (G)} KGb.
$$
Each class function on $G$ with values in $K$ can be extended to a map $KG \ra K$ by linearity, and we have
$$
\Irr(G) = \bigsqcup_{b\in \Bl (G)} \Irr (G,b),
$$
where $\Irr(G,b)=\{\chi\in\Irr(G) \mid \chi(b) \ne 0 \}$.
We write $\Cal C(G,b)=\mZ \Irr(G,b)$. The projection map $\Proj_b\colon \Cal C(G) \ra \Cal C(G,b)$ is defined in the obvious way: if $\chi=\sum_{b\in \Bl(G)} \chi_b$
where $\chi_b\in \Cal C(G,b)$ for each $b$, then $\Proj_b (\chi)=\chi_b$.

\begin{prop}\label{splitblocks}
Let $G$ be a finite group and $p$ a prime. Suppose $P$ is a $p$-subgroup of $G$ and $\Cal S$ is a downward closed set of subgroups of $P$. Then
$$
\Cal I(G,P,\Cal S) = \bigoplus_{b\in \Bl(G)} (\Cal I(G,P,\Cal S) \cap \Cal C(G,b)).
$$
\end{prop}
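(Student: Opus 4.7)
The plan is to show that each block projection $\Proj_b$, $b\in\Bl(G)$, maps $\Cal I(G,P,\Cal S)$ into itself. Since the decomposition $\Cal C(G)=\bigoplus_{b\in\Bl(G)}\Cal C(G,b)$ already holds at the ambient level, this immediately yields the asserted direct sum decomposition of $\Cal I(G,P,\Cal S)$.

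By linearity it suffices to verify the claim on a generator $\chi = \Ind_L^G\phi$, where $Q := L\cap P$ is a Sylow $p$-subgroup of $L$ lying in $\Cal S$. Write $\phi = \phi_1 - \phi_2$ with each $\phi_i$ an ordinary character afforded by an $\Cal OL$-lattice $M_i$. Since $[L:Q]$ is coprime to $p$ and hence invertible in $\Cal O$, every $\Cal OL$-lattice is $Q$-projective; so $M_i$, and therefore $\Ind_L^G M_i$, is $Q$-projective. The block idempotent $e_b \in Z(\Cal OG)$ cuts off a direct summand $e_b\,\Ind_L^G M_i$, which inherits $Q$-projectivity. By Theorem~\ref{relpr} the character of this summand lies in $\Cal I(G,Q,\Cal A(Q))$, and taking the difference gives $\Proj_b \chi \in \Cal I(G,Q,\Cal A(Q))$.

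It remains to check the containment $\Cal I(G,Q,\Cal A(Q))\subseteq \Cal I(G,P,\Cal S)$. Any generator of the former has the shape $\Ind_{L'}^G\psi$ with $L'\cap Q$ a Sylow $p$-subgroup of $L'$; the $p$-subgroups $L'\cap Q\subseteq L'\cap P$ of $L'$ must then coincide by maximality, so $L'\cap P = L'\cap Q$ is Sylow in $L'$ and lies in $\Cal S$ by downward closure (since $L'\cap P \subseteq Q \in \Cal S$). This completes the argument.

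The main substantive ingredient is Willems' theorem (which itself packages Brauer induction); the rest of the proof hinges on the elementary but crucial observation that a central idempotent of $\Cal OG$ preserves relative projectivity of lattices, together with the standard fact that $\Cal OL$-lattices are automatically projective relative to any Sylow $p$-subgroup of $L$. There is no real obstacle beyond making sure Willems' conclusion, which is phrased in terms of $\Cal A(Q)$ for $Q = L\cap P$, can be matched up with the downward closed family $\Cal S$ appearing in the statement.
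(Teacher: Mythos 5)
Your proof is correct and follows essentially the same route as the paper: realise a generator $\Ind_L^G\phi$ by an induced $\Cal O G$-lattice, note it is $Q$-projective, cut with the block idempotent (a direct summand is still $Q$-projective), and apply Willems' theorem to place each block component in $\Cal I(G,Q,\Cal A(Q))\subseteq\Cal I(G,P,\Cal S)$. Your only additions are making explicit the reduction of a virtual character to a difference of lattice-afforded characters and the verification of the final containment via downward closure, both of which the paper leaves implicit.
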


\begin{proof}
Let $\xi\in \Cal I(G,P,\Cal S)$. We are to show that $\Proj_b \xi\in \Cal I(G,P,\Cal S)$ for each $b$. Without loss of generality, $\xi=\Ind_L^G \th$ where $L$ is a subgroup of $G$ such that there is a Sylow $p$-subgroup $Q$ of $L$ satisfying 
$Q\le P$ and $Q\in \Cal S$.

Let $W$ be an $\Cal O L$-lattice affording $\th$. Then $V=\Ind_L^G W$ is a $Q$-projective $\Cal O G$-lattice which affords $\xi$. We have
$$
V=\bigoplus_{b\in \Bl(G)} bV,
$$
so each $bV$ is $Q$-projective. Since $bV$ affords $\Proj_b \xi$ for each $b$, we have
$\Proj_b \xi\in \Cal I(G,Q,\Cal A(Q))\subseteq \Cal I(G,P,\Cal S)$ by Theorem~\ref{relpr}.  
\end{proof}

\begin{defi} Suppose $P$ is a $p$-subgroup of a finite group $G$. We denote by $\Irr(G,P)$ the union of the sets $\Irr(G,b)$ taken over the blocks $b$ of $G$ such that some defect group of $b$ is contained in $P$. We also set $\Cal C(G,P) = \mZ \Irr(G,P)$. The projection homomorphism
 $\Proj_P\colon \Cal C(G) \ra \Cal C(G,P)$ is defined by setting
$$
 \Proj_P (\chi)=
\begin{cases}
\chi & \text{if } \chi \in \Irr(G,P), \\
0 & \text{otherwise}
\end{cases}
$$
for $\chi\in \Irr(G)$ and extending this map by $\mZ$-linearity.
\end{defi}


If $M$ and $N$ are $\Cal OG$-lattices, we write $M \di N$ if $M$ is a direct summand of $N$. Recall that if $\Cal S$ is a set of $p$-subgroups of $G$ then an $\Cal OG$-lattice $M$ is said to be \emph{$\Cal S$-projective} if for every indecomposable summand $N$ of $M$ there exists $S\in \Cal S$ such that $N$ is $S$-projective. 

We can now state and prove a generalisation of Theorem~\ref{corrthm}.

\begin{thm}\label{corrthmgen}
 Let $P$ be a $p$-subgroup of a finite group $G$. Suppose $H$ is a subgroup of $G$ containing $N_G(P)$ and set $\Cal S=\Cal S(G,P,H)$. Then 
\begin{enumerate}[(i)]
 \item\label{i1} $\Ind_H^G (\Cal C(H,P)) \subseteq \Cal C(G,P) + \Cal I(G,P,\Cal S)$;
 \item\label{i2} $\Ind_H^G (\Cal I(H,P,\Cal S)) \subseteq \Cal I(G,P,\Cal S)$;
 \item\label{i3} $\Proj_P \Res_H^G (\Cal I(G,P,\Cal S)) \subseteq \Cal I(H,P,\Cal S)$;
\item\label{i4} the maps $\Proj_P \Res^G_H$ and $\Ind^G_H$ yield mutually inverse isomorphisms between the abelian groups
$$
\frac{\Cal C(G,P)+\Cal I(G,P,\Cal S)}{\Cal I(G,P,\Cal S)} \quad \text{and} \quad \frac{\Cal C(H,P)}{\Cal I(H,P,\Cal S)\cap \Cal C(H,P)}.
$$
\end{enumerate}
\end{thm}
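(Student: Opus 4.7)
The plan is to establish the four parts in turn, using transitivity of induction, the Mackey formula, Proposition~\ref{splitblocks}, and the vertex analysis enabled by Theorem~\ref{relpr}. Part (ii) is immediate from transitivity: a generator $\Ind_L^H\phi$ of $\Cal I(H,P,\Cal S)$ is sent by $\Ind_H^G$ to $\Ind_L^G\phi$, which is a generator of $\Cal I(G,P,\Cal S)$, since the defining conditions ($L\cap P$ Sylow in $L$ and $L\cap P\in\Cal S$) are preserved when $L$ is viewed as a subgroup of $G$. For (iii), apply Mackey to $\Res_H^G\Ind_L^G\phi=\sum_g\Ind_{L_g}^H(\Res\lsa{g}{\phi})$ with $L_g=H\cap\lsa{g}{L}$. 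The trivial coset contributes a term directly in $\Cal I(H,P,\Cal S)$, since $(H\cap L)\cap P=L\cap P\in\Cal S$ is Sylow in $H\cap L$. For a non-trivial double coset ($g\notin HL$), the term is afforded by a $Q_g$-projective $\Cal O H$-module where $Q_g$ is a Sylow $p$-subgroup of $L_g$; after replacing $g$ by an $L$-translate, one may arrange $Q_g\le\lsa{g}{P}$. Applying $\Proj_P$ and focusing on an indecomposable summand in an $H$-block $b$ with $D_b\le P$, the vertex $V$ is simultaneously $H$-conjugate to a subgroup of $Q_g$ (hence of $\lsa{g}{P}$) and to a subgroup of $D_b\le P$; a direct bookkeeping then yields a common $H$-conjugate $V_0\le P$ together with $t=h_2h_1^{-1}g\notin H$ satisfying $V_0\le\lsa{t}{P}$, so $V_0\in\Cal S$ and Theorem~\ref{relpr} places the contribution in $\Cal I(H,P,\Cal S)$. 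That $\Proj_P$ preserves $\Cal I(H,P,\Cal S)$ follows from Proposition~\ref{splitblocks}.

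Part (i) is the most delicate. Take $\chi\in\Cal C(H,b)$ with $D_b\le P$ and let $M$ be a $D_b$-projective $\Cal O H$-lattice affording $\chi$; then $\Ind_H^G M$ is $D_b$-projective. Decompose $\Ind_H^G\chi=\sum_c\Proj_c\Ind_H^G\chi$ by $G$-blocks: contributions from blocks $c$ with $D_c\le_G P$ lie in $\Cal C(G,P)$, so it remains (Case B) to show $\Proj_c\Ind_H^G\chi\in\Cal I(G,P,\Cal S)$ when $D_c\not\le_G P$. Any indecomposable summand of $c\cdot\Ind_H^G M$ has vertex $V$ satisfying $V\le_G D_b\le P$ and $V\le_G D_c$; choosing $V\le D_b\le P$, one finds a defect group $T$ of $c$ with $V\le T$ and $T\not\le_G P$, which forces $V\lneq T$. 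I consider the ascending normalizer chain $V=V_0\lneq V_1\lneq\cdots\lneq V_r=T$ inside the $p$-group $T$ and argue that it cannot lie entirely in $H$: otherwise $T\le H$, which combined with $N_G(P)\le H$ would contradict $T\not\le_G P$ (immediate when $P$ is Sylow in $G$, and requiring a refined argument in general). Taking the smallest index $i$ with $V_i\not\le H$ and an element $x\in V_i\setminus H$ normalizing $V_{i-1}\supseteq V$ yields, after tracking the action on $V$ and its ambient $P$-conjugates, an element $t\notin H$ with $V\le\lsa{t}{P}$, so $V\in\Cal S$ and Theorem~\ref{relpr} closes the case.

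For part (iv), parts (i)--(iii) make the maps $\Ind_H^G$ and $\Proj_P\Res_H^G$ descend to well-defined homomorphisms between the stated quotient groups. The Mackey decomposition of $\Res_H^G\Ind_H^G\chi$ (for $\chi\in\Cal C(H,P)$), combined with the vertex analysis of (iii) applied to each $g\notin H$ cross-term (now with $L=H$, so that $Y=yg\notin H$ replaces the role of $g$), gives $\Proj_P\Res_H^G\Ind_H^G\chi\equiv\chi\pmod{\Cal I(H,P,\Cal S)\cap\Cal C(H,P)}$, establishing one composition as the identity. For the reverse composition, take $\mu\in\Cal C(G,P)$, set $\mu'=\mu-\Ind_H^G\Proj_P\Res_H^G\mu$, and use the identity just proved (applied to $\chi=\Proj_P\Res_H^G\mu$) to obtain $\Proj_P\Res_H^G\mu'\in\Cal I(H,P,\Cal S)$; a further vertex-and-block analysis parallel to that of part (i) then forces $\mu'\in\Cal I(G,P,\Cal S)$, completing the proof. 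The main obstacle throughout is the normalizer-chain argument at the heart of part (i), especially in the non-Sylow case where $T\le H$ does not immediately yield a contradiction.
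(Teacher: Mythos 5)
Parts (ii), (iii) and the ``$\Proj_P\Res^G_H\Ind^G_H=\id$'' half of (iv) in your proposal are sound and close in spirit to the paper (your double-coset bookkeeping in (iii), with the trivial coset handled separately and $t=h_2h_1^{-1}g\notin H$ produced from the cross terms, is a legitimate variant of the paper's uniform computation). The genuine gap is in Case B of your part (i), and it propagates to the surjectivity half of (iv). In Case B your only data are: an indecomposable summand $W$ of $c\,\Ind_H^G M$ has a vertex $V$ with $V\le D_b\le P$ and $V\le T$ for some defect group $T$ of $c$, where no $G$-conjugate of $T$ lies in $P$. This data alone does not force $V\in\Cal S$, and your normalizer-chain argument inside $T$ breaks at both branches. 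If the chain stays in $H$ you obtain only $T\le H$, which is perfectly consistent with $T$ not being $G$-conjugate into $P$ when $P$ is not a Sylow subgroup; you acknowledge this subcase but give no argument, and none is available from these hypotheses. In the other branch, an element $x\in V_i\setminus H$ normalizing $V_{i-1}\supseteq V$ only yields $\lsa{x^{-1}}{V}\le V_{i-1}\le T$; nothing places $V_{i-1}$ (a subgroup of the defect group of the \emph{wrong} block) inside $P$ or an $H$-translate of $P$, so you cannot conclude $V\le\lsa{x}{P}$, i.e.\ $V\in\Cal S$. What is missing is precisely the block-theoretic input the paper uses: when no $G$-conjugate of the vertex $Q$ of $M$ lies in $\Cal S$, one gets $C_G(Q)\le H$ and $N_G(D)\le H$, so the Green correspondence for $(G,H,P)$ gives $\Ind_H^G M\simeq N\oplus V$ with $V$ relatively $\Cal S$-projective, Nagao's theorem (Theorem~\ref{Alperin}) identifies the block of the Green correspondent $N$ as the Brauer correspondent $e^G$, and Brauer's first main theorem shows $e^G$ has defect group $D\le P$; only then does everything outside $\Cal C(G,P)$ land in $\Cal I(G,P,\Cal S)$ via Theorem~\ref{relpr} and Proposition~\ref{splitblocks}. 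A purely group-theoretic argument about $V$, $T$, $P$, $H$ cannot substitute for this identification of the $G$-block of $N$.

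The same omission reappears at the end of your (iv): the step ``a further vertex-and-block analysis parallel to that of part (i) then forces $\mu'\in\Cal I(G,P,\Cal S)$'' is exactly the injectivity of $\Proj_P\Res^G_H$ (equivalently the surjectivity of $\Ind_H^G$) on the relevant quotients, which is the remaining substantive claim of the theorem; it is not a formal consequence of the composition identity you proved. The paper establishes it by a second Green-correspondence argument: starting from an $\Cal OG$-lattice affording $\chi\in\Irr(G,P)$ whose vertex has no $G$-conjugate in $\Cal S$, one passes to its Green correspondent $M$ over $H$, uses Theorem~\ref{Alperin} to see $b=e^G$, and uses the behaviour of defect groups under the Brauer correspondence to conclude that the $H$-block of $M$ has a defect group inside $P$, whence $\chi=\Ind_H^G\phi-\th$ with $\phi\in\Cal C(H,P)$ and $\th\in\Cal I(G,P,\Cal S)$. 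As written, your proposal asserts rather than proves both this step and Case B of (i), so the proof is incomplete at its two central points.
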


If $H$ is a subgroup of $G$ and $e\in \Bl(H)$, we write $e^G$ for the Brauer correspondent of $e$ in $G$ whenever it is defined (see~\cite{CRII}, Definition 58.8). We will rely on the following standard result.

\begin{thm}[Nagao; see \cite{CRII}, Theorem 58.22]\label{Alperin}
Let $P$ be a $p$-subgroup of a finite group $G$. Let $H$ be a subgroup of $G$ containing $PC_G(P)$. Suppose $N$ is an indecomposable $\Cal OG$-module and $M$ is an indecomposable $\Cal OH$-module such that $M \di \Res^G_H N$ and $P$ is a vertex of $M$. Let $e$ be the $p$-block of $H$ such that $eM=M$. Then $e^G$ is defined and $(e^G)N=N$.
\end{thm}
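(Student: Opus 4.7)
The plan is to use the Brauer homomorphism at $P$ to match block idempotents and to identify $e^G$ directly as the unique block $b$ of $G$ whose idempotent fixes $N$. The hypothesis $PC_G(P)\le H$ is crucial in two ways: it gives $C_H(P)=C_G(P)$, so the Brauer maps $\mathrm{Br}_P^G\colon Z(\mathcal O G)^P \to Z(kC_G(P))$ and $\mathrm{Br}_P^H\colon Z(\mathcal O H)^P \to Z(kC_G(P))$ land in the same commutative algebra; and it is the standard hypothesis under which the Brauer correspondent $e^G$ of a block $e$ of $H$ is defined in \cite{CRII}, Definition~58.8.

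First I would prove $\mathrm{Br}_P(e)\ne 0$, which is the necessary non-degeneracy condition for $e^G$ to be defined. Since $M$ lies in the block $e$ and has vertex $P$, a defect group $D$ of $e$ must contain an $H$-conjugate of $P$; replacing $M$ by a suitable $H$-conjugate summand of $\mathrm{Res}^G_H N$ (which is still an indecomposable summand of vertex $P$), I may assume $P\le D$, and then the standard property of the Brauer homomorphism on block idempotents yields $\mathrm{Br}_P(e)\ne 0$.

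Next, let $b$ be the unique block of $G$ with $bN=N$; the goal is to show $b=e^G$. Since $b$ is central in $\mathcal O G$, it acts as the identity on $N$, so $eN = ebN \supseteq M \ne 0$, giving $eb\ne 0$ in $\mathcal O G$. Conversely, for any other block $b'\ne b$ of $G$ one has $b'N=0$, so $eb'$ acts as zero on $N$. To upgrade these module-level observations to the idempotent identity $\mathrm{Br}_P(e)\,\mathrm{Br}_P(b)=\mathrm{Br}_P(e)$ in $Z(kC_G(P))$, I would pass to the Brauer quotient $N(P)=N^P/\sum_{Q<P}\mathrm{Tr}^P_Q(N^Q)$, which is a $kC_G(P)$-module on which $\mathrm{Br}_P(b)$ acts as the identity and $\mathrm{Br}_P(e)$ acts through its image in the block decomposition of $kC_G(P)$. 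Because $M$ has vertex $P$, Higman's criterion forces $M(P)\ne 0$; the identification $C_H(P)=C_G(P)$ lets one view $M(P)$ as a nonzero $kC_G(P)$-direct summand of $N(P)$. Consequently every primitive idempotent of $kC_G(P)$ that supports $\mathrm{Br}_P(e)$ must also support $\mathrm{Br}_P(b)$, yielding $\mathrm{Br}_P(b)\,\mathrm{Br}_P(e)=\mathrm{Br}_P(e)\ne 0$. By the definition in \cite{CRII}, Definition~58.8, this says $e^G$ is defined and equals $b$, whence $(e^G)N=N$.

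The main obstacle is the final step: verifying that $M(P)$ really embeds as a $kC_G(P)$-direct summand of $N(P)$ and that the actions of $b$ (through $\mathrm{Br}_P^G$) and of $e$ (through $\mathrm{Br}_P^H$) on $N(P)$ factor through the common algebra $kC_G(P)$ in compatible fashion. This amounts to checking that the functor $(-)(P)$ commutes with the restriction $\mathrm{Res}^G_H$ under the hypothesis $C_H(P)=C_G(P)$, and that the relative trace maps defining $(-)(P)$ for $H$- and $G$-modules agree on the $H$-summand $M$ of $N$. This compatibility is exactly what the hypothesis $PC_G(P)\le H$ delivers, but making the identification precise is the technical heart of the argument.
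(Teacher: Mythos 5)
The step that breaks is your claim that ``because $M$ has vertex $P$, Higman's criterion forces $M(P)\ne 0$''. Higman's criterion gives $\operatorname{id}_M\in\Tr^H_P\bigl(\End_{\mathcal O P}(M)\bigr)$, but this does not imply that the Brauer quotient $M(P)=M^P/\sum_{Q<P}\mathrm{Tr}^P_Q(M^Q)$ (applied to $k\otimes_{\mathcal O}M$) is nonzero: nonvanishing of the Brauer construction at a vertex is a special feature of $p$-permutation (trivial source) modules, and nothing in the hypotheses makes $M$ or $N$ such a module --- in the paper's applications $N$ is an $\mathcal O G$-lattice affording an irreducible character. Concretely, take $P$ elementary abelian of order $p^2$ and $M=\Omega(k)$ the augmentation ideal of $kP$; it is indecomposable with vertex $P$ (it is endotrivial and $P$ is noncyclic), yet $M^P=\operatorname{soc}(kP)=k\cdot z$ with $z=\sum_{g\in P}g$, and for any maximal $Q<P$ the element $z_Q=\sum_{q\in Q}q$ lies in $M^Q$ with $\mathrm{Tr}^P_Q(z_Q)=z$, so $M^P=\sum_{Q<P}\mathrm{Tr}^P_Q(M^Q)$ and $M(P)=0$. (This does not contradict the theorem, but it kills the auxiliary claim.) With $M(P)$, and likewise $N(P)$, possibly zero, the bridge you build from the module-level facts $eM=M$, $bN=N$ to the idempotent identity $\mathrm{Br}_P(b)\,\mathrm{Br}_P(e)=\mathrm{Br}_P(e)$ collapses.

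There are two further soft spots even if $M(P)\ne 0$ were granted. First, a common nonzero $kC_G(P)$-module on which $\mathrm{Br}_P(e)$ and $\mathrm{Br}_P(b)$ both act as the identity only yields $\mathrm{Br}_P(e)\mathrm{Br}_P(b)\ne 0$; it does not show that every block of $kC_G(P)$ supporting $\mathrm{Br}_P(e)$ also supports $\mathrm{Br}_P(b)$, so you would still need a separate argument that at most one block of $G$ can meet $\mathrm{Br}_P(e)$. Second, $e^G$ in \cite{CRII}, Definition~58.8, is defined via central characters and the projection $Z(\mathcal O G)\to\mathcal O H$, not via $\mathrm{Br}_P$, so identifying the two notions is itself a statement requiring proof. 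Note also that the paper offers no proof to compare against --- it quotes \cite{CRII}, Theorem~58.22 --- and the standard argument there runs along different lines: one writes $\operatorname{id}_M=\Tr^H_P(\varphi)$ and computes the action of class sums of $G$ on $M$, using $C_G(P)\subseteq H$ to see that modulo the maximal ideal only the contribution of elements of $C_G(P)$ survives, which identifies the central character $\omega_b$ with the one obtained from $\lambda_e$; this avoids the Brauer construction on modules entirely. Your opening step --- that $\mathrm{Br}_P(e)\ne 0$ because $P$ is contained in a defect group of $e$, with $C_H(P)=C_G(P)$ making the two Brauer maps compatible --- is fine and is indeed where the hypothesis $PC_G(P)\le H$ enters.
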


\begin{proof}[Proof of Theorem~\ref{corrthmgen}]
\eqref{i1} Let $\phi\in \Irr(H,P)$. It is enough to show that $\Ind_H^G \phi \in \Cal C(G,P)+ \Cal I(G,P,\Cal S)$ for all such $\phi$. 
Let $e$ be the $p$-block of $H$ such that $\phi\in \Irr(H,e)$, so that $P$ contains a defect group $D$ of $e$.
Let $M$ be an $\Cal OH$-lattice affording $\phi$ and $Q$ be a vertex of $M$. We may assume that $Q\subseteq D$. 
If $\ls{g}{Q}\in \Cal S$ for some $g\in G$, then $\Ind_H^G M$ is 
an $\Cal S$-projective $\Cal OG$-lattice, whence $\Ind_H^G \phi\in \Cal I(G,P,\Cal S)$ by Theorem~\ref{relpr}. 

So we may assume that no $G$-conjugate of $Q$ belongs to $\Cal S$. Then, by the Green correspondence associated with the triple $(G,H,P)$ (see~\cite{CRI}, Theorem 20.6), we have $\Ind_H^G M \simeq N \oplus V$ where $N$ is an indecomposable $\Cal OG$-lattice with vertex $Q$ and $V$ is relatively $\Cal S$-projective. We claim that $C_G (Q)\subseteq H$ and $N_G (D) \subseteq H$. Indeed, if $g\in C_G (Q)-H$ or if $g\in N_G (D)-H$ then 
$Q\subseteq P\cap \lsa{g}{P}\in \Cal S$ (as $Q\subseteq D$), contradicting our assumption on $Q$. Since $N_G (D) \subseteq H$, the block $e^G$ has $D$ as a defect group by Brauer's first main theorem. Now, since $QC_G(Q)\subseteq H$, we have $(e^G)N=N$ by Theorem~\ref{Alperin}, so the character afforded by $N$ 
belongs to $\Cal C(G,P)$. On the other hand, the character afforded by $V$ is in $\Cal I(G,P,\Cal S)$ by Theorem~\ref{relpr}. 
Hence $\Ind_H^G \phi \in \Cal C(G,P)+ \Cal I(G,P,\Cal S)$.

\eqref{i2} follows immediately from Definition~\ref{defI}. 

\eqref{i3} Let $L$ be a subgroup of $G$ such that $Q \coleq L\cap P \in \Cal S$ and $Q$ is a Sylow $p$-subgroup of $L$. Suppose $\phi\in \Irr(L)$, and let $M$ be an $\Cal OL$-lattice affording $\phi$. Let $N \di \Res^G_H \Ind_L^G M$ and let $\eta\in \Cal C(H)$ be the character afforded by $N$. It suffices to show that $\Proj_P (\eta) \in \Cal I(H,P,\Cal S)$ for all such $\eta$.

Since $Q$ is a Sylow $p$-subgroup of $L$, there exists an $\Cal OQ$-lattice $U$ such that $M \di \Ind_Q^L U$. Then 
\[
 N \di \Res^G_H \Ind^G_L M \di \Res^G_H \Ind_Q^G U,
\]
and it follows from the Mackey formula that $N \di \Ind_{\ls{g}{Q} \cap H}^H V$ for some $g\in G$ and some $\Cal O(\ls{g}{Q}\cap H)$-lattice $V$. 
Thus $N$ is $\ls{g}{Q}$-projective.
Let $e$ be the block of $H$ such that $eN=N$. If no defect group of $e$ is contained in $P$ then $\Proj_P (\eta)=0$. 

So we may assume that a vertex $S$ of $N$ is contained in $P$. Since $N$ is relatively $\ls{g}Q$-projective, 
there is $h\in H$ such that $S\subseteq \ls{hg}{Q}$, whence $N$
 is $(\ls{hg}{Q} \cap P)$-projective. We claim that $\ls{hg}{Q} \cap P \in \Cal S$. Indeed, since $Q\in \Cal S$, we have $Q\subseteq P \cap \lsa{x}{P}$ for some $x\in G-H$, whence
$$
\ls{hg}{Q} \cap P \subseteq P \cap \lsa{hg}{P} \cap \lsa{hgx}P.
$$
Since $x\notin H$, at least one of $hg$ and $hgx$ is not in $H$, and therefore $\ls{g}Q\cap P\in \Cal S$, as claimed. Hence, by Theorem~\ref{relpr}, we have
$\Proj_P (\eta)=\eta\in \Cal I(H,P,\Cal S)$, and part~\eqref{i3} follows.

\eqref{i4} Let $\phi\in \Cal C(H,P)$, and let $T$ be a set of representatives of double $H$-$H$-cosets in $G-H$. 
By the Mackey formula,
\begin{equation}\label{mackey}
\Res^G_H \Ind^G_H \phi  =  \phi + \sum_{t\in T} \Ind_{\lsa{t}{H}\cap H}^H \Res^{\lsa{t}H}_{\lsa{t}H\cap H} \lsa{t}\phi \\
\end{equation}
We claim that $\Proj_P \Ind_{\lsa{t}H\cap H}^H \Res^{\lsa{t}H}_{\lsa{t}{H}\cap H} \lsa{t}\phi \in \Cal I(H,P,\Cal S)$ for every $t\in T$. Let $U$ be an $\Cal OH$-lattice affording $\phi$,  and let $V$ be an indecomposable summand of $\Ind_{\lsa{t}H\cap H}^H \Res_{\lsa{t}H\cap H}^{\lsa{t}H} \ls{t}U$.  Let $e$ be a block of $H$ with a defect group contained in $P$. Then $eV$ is $P$-projective and therefore has a vertex $Q$ contained in $P$. On the other hand,
since $P$ contains a defect group of the block of $H$ containing $\phi$, the module $U$ is $P$-projective, whence $V$ is $\lsa{t}P\cap H$-projective.
Thus, $Q\subseteq \lsa{h}(\lsa{t}P\cap H)$ for some $h\in H$, so $Q\subseteq \lsa{ht}P \cap P \in \Cal S$. Hence, by Theorem~\ref{relpr}, the character afforded by $eV$ lies in $\Cal I(H,P,\Cal S)$, and our claim follows. Therefore, Equation \eqref{mackey} and Proposition~\ref{splitblocks} yield
\[
 \Proj_P \Res^G_H \Ind^G_H \phi \equiv \phi \dmod  \Cal I(H,P,\Cal S) \cap \Cal C(H,P).
\]

It only remains to show that the map between the quotients $\Cal C(H,P)/(\Cal I(H,P,\Cal S)\cap \Cal C(H,P))$ and 
$(\Cal C(G,P)+\Cal I(G,P,\Cal S))/\Cal I(G,P,\Cal S)$ induced by $\Ind_H^G$ is surjective, i.e.\ that 
$$
\Cal C(G,P) \subseteq \Ind_H^G (\Cal C(H,P)) + \Cal I(G,P,\Cal S).
$$
Let $\chi\in \Irr (G,P)$, and let $N$ be an $\Cal OG$-lattice affording $\chi$. Then $N$ has a vertex $Q$ contained in $P$. If $\ls{g}{Q}\in \Cal S$ for some $g\in G$ then $\chi \in \Cal I(G,P,\Cal S)$ by Theorem~\ref{relpr}. So we may assume that no $G$-conjugate of $Q$ belongs to $\Cal S$. 
Therefore, $N$ has a well-defined Green correspondent $M$ with respect to the triple $(G,H,P)$ (so that $M$ is an $\Cal OH$-lattice).
Let $b\in \Bl(G)$ and $e\in \Bl(H)$ be the blocks such that $bN=N$ and $eM=M$.
By the properties of Green correspondence, $Q$ is a vertex of $M$ and $M \di \Res^G_H N$. Also, $C_G (Q) \subseteq H$ (see the proof of~\eqref{i1}).
Using Theorem~\ref{Alperin} we deduce that $b=e^G$.

Since $Q$ is a vertex of an $\Cal OH$-module belonging to $e$, there is a defect group $S$ of $e$ that contains $Q$ (see~\cite{CRII}, Corollary 57.27). Since $b$ has a defect group contained in $P$ and $e^G=b$, by~\cite{CRII}, Corollary 58.18, we have $S\subseteq_G P$. Let $g$ be an element of $G$ such that $\lsa{g}{S}\subseteq P$. Then $\ls{g}{Q}\subseteq P$ and, since $Q\notin \Cal S$, we have $g\in H$. So $\lsa{g}{S}$ is a defect group of $e$ contained in $P$, and we infer that the character $\phi$ afforded by $M$ belongs to $\Cal C(H,P)$.

By a property of Green correspondence, $\Ind_H^G M \simeq N \oplus V$ where $V$ is a relatively $\Cal S$-projective $\Cal OG$-lattice. Denoting by $\th$ the character afforded by $V$, we see that $\th\in \Cal I(G,P,\Cal S)$ (by Theorem~\ref{relpr}), and hence 
\[
\chi=\Ind_H^G \phi - \th \in \Ind_H^G (\Cal C(H,P)) + \Cal I(G,P,\Cal S). \qedhere
\]
\end{proof}

\begin{remark}
If we assume that $P$ is a Sylow $p$-subgroup of $G$ (cf.\ Theorem~\ref{corrthm}), the proof becomes considerably shorter and no longer requires any use of modular representation theory. Indeed, parts~\eqref{i1} and~\eqref{i2} are clear, and~\eqref{i3} follows from the Mackey formula and the fact that $\ls{g}{Q}\cap P \in \Cal S$ whenever $Q\in \Cal S$ and $g\in G$ (see the proof of~\eqref{i3} above). The fact that $\Res^G_H \Ind^G_H$ yields 
the identity map on $\Cal C(H)/\Cal I(G,P, \Cal S)$ also follows from the Mackey formula.
We sketch the proof of the statement that $\Ind^G_H$ induces a surjective map between the quotients in~\eqref{i4}. By Theorem~\ref{BrInd}, it suffices to show that if $E$ is an elementary subgroup of $G$ then either $E\subseteq_G H$ or $E_p$ is $G$-conjugate to a subgroup of $P$ lying in $\Cal S$. Replacing $E$ with a $G$-conjugate, we may assume that $E_p\subseteq P$. Now suppose $E\nsubseteq H$ and consider $g\in E-H$. Then $E\subseteq \lsa{g}{P}\cap P$, whence $E\in \Cal S$. 
\end{remark}

\begin{remark} If $H$ does not contain $N_G (P)$, the statement of Theorem~\ref{corrthmgen} is still true, but is not interesting.
\end{remark}

Now we can state more general versions of the properties of Section~\ref{refinements}. 
As usual,  if $a$ is an integer coprime to $p$, we write $v_p (p^m a)=m$ and $(p^m a)_{p'}=a$.
Let $P$ be an arbitrary $p$-subgroup of $G$. Define
\begin{eqnarray*}
\Irr_{0}(G,P) & = &  \{ \chi\in \Irr(G,P) \mid v_p (\chi(1)) = v_p(|G:P|) \},  \\
\Irr^p (G,P) & = & \Irr(G,P) - \Irr_0 (G,P)
\end{eqnarray*}
and $\Cal C^p (G,P)=\mZ\Irr^p (G,P)$.
Thus, $\Irr_{0}(G,P)$ consists of irreducible characters of height $0$ in blocks with defect group $P$ in $G$. (Recall that the \emph{height} of an irreducible character $\chi$ of $G$ belonging to a block with defect group $P$ is defined as $v_p (\chi(1))-v_p (|G:P|)$.)
If $H$ is a subgroup of $G$ containing $N_G (P)$, we set $\Cal S=\Cal S(G,P,H)$ and consider the following five properties that may or may not hold for the triple $(G,P,H)$:

\bigskip
\noindent
\begin{tabular}{lp{\width}}
\!\!\!(IRC)\!\!\!
&
\emph{\quad There is a signed bijection $F\colon \pm\Irr_{0}(G,P) \ra \pm\Irr_{0}(H,P)$ such that
$$
F(\chi) \equiv \Proj_P \Res_H^G \chi \dmod  \Cal I(H,P,\Cal S) \qquad
$$
\quad for all $\chi\in \Irr_{0}(G,P)$.
} \\
\end{tabular}

\bigskip
\noindent
\begin{tabular}{lp{\width}}
\text{\!\!\!(pRes)} & 
\centering{
$
\Proj_P \Res^G_H (\Cal C^p (G,P)) \subseteq \Cal C^p (H,P) + \Cal I(H,P,\Cal S). \qquad
$}
\end{tabular}

\bigskip
\noindent
\begin{tabular}{lp{\width}}
\text{\!\!\!(pInd)} & 
\centering{
$
\Ind^G_H (\Cal C^p (H,P)) \subseteq \Cal C^p (G,P) + \Cal I(G,P,\Cal S). \qquad
$}
\end{tabular}

\bigskip
\noindent
\begin{tabular}{lp{\width}}
\!\!\!(WIRC)\!\!\!
&
\emph{\;There is a signed bijection $F\colon \pm\Irr_{0}(G,P) \ra \pm\Irr_{0}(H,P)$ such that
$$
F(\chi) \equiv \Proj_P \Res_H^G \chi \dmod  \Cal C^p (H,P) + \Cal I(H,P,\Cal S) \qquad
$$
\;for all $\chi\in \Irr_{0}(G,P)$.
} \\
\end{tabular}

\bigskip
\noindent
\begin{tabular}{lp{\width}}
\!\!\!(WIRC*)\!\!\!
&
\emph{There is a signed bijection $F\colon \pm\Irr_{0}(G,P) \ra \pm\Irr_{0}(H,P)$ such that
$$
\chi \equiv \Ind_H^G F(\chi) \dmod  \Cal C^p (G,P) + \Cal I(G,P,\Cal S) \qquad
$$
for all $\chi\in \Irr_{0}(G,P)$.
} \\
\end{tabular}
\medskip

It follows from Theorem~\ref{corrthmgen} that if (pRes) holds then (WIRC*) implies (WIRC); and if (pInd) holds then (WIRC) implies (WIRC*).
Conjectures~\ref{conjA} and~\ref{conjB} may be generalised as follows.

\begin{conj}\label{conjgen} Let $P$ be a $p$-subgroup of a finite group $G$. Then properties \emph{(pRes)}, \emph{(pInd)} and \emph{(WIRC)} hold for the triple $(G,P,N_G(P))$. Moreover, if $P$ is abelian then \emph{(IRC)} holds for that triple. 
\end{conj}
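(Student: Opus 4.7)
The plan mirrors the standard strategy for the McKay conjecture itself: reduce, as far as possible, to a statement about (quasi)simple groups, and then verify the refined conjectures family by family, using Theorem~\ref{corrthmgen} as the main structural input. The first step would be a reduction theorem in the spirit of Isaacs--Malle--Navarro, but the target here is more delicate than a numerical equality: one needs a signed bijection $F$ that is compatible with induction and restriction modulo $\Cal I(\cdot, P, \Cal S)$, which forces one to track how $F$ behaves under Clifford correspondence, central extensions, and direct products. Proposition~\ref{splitblocks} helps, since it permits a block-by-block formulation; together with Theorem~\ref{corrthmgen} it identifies, inside each block, the natural abelian-group isomorphism into which any such $F$ must fit.

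For the abelian Sylow case I would try to deduce (IRC) from a suitably refined form of Brou\'e's abelian defect group conjecture: a splendid derived equivalence between the principal blocks of $G$ and $N_G(P)$ should induce a perfect isometry whose image modulo relatively projective virtual characters is precisely the signed bijection on $\pm\Irr_{0}(G,P)$ that (IRC) demands. For general $P$ I would verify the remaining assertions on three concrete families: trivial-intersection cases where $\Cal S = \{\mathbf{1}\}$, in which Eaton's property (P+) of~\cite{Eaton2008} should immediately yield (WIRC); finite groups of Lie type over a field of characteristic $p$, where the Borel subgroup realises $N_G(P)$ and the $p'$-degree characters admit an explicit description via Deligne--Lusztig theory together with standard facts about the Steinberg and semisimple characters; and small groups, settled by direct computer computation of the quotients $\Cal C(G)/\Cal I(G,P,\Cal S)$ and $\Cal C(G)/(\Cal C^p(G,P)+\Cal I(G,P,\Cal S))$ on both sides.

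The main obstacle is the same one that keeps the McKay conjecture open: there is no uniform construction of the bijection $F$. Even granting a clean reduction, the verification for a finite simple group $S$ requires both an explicit description of $\Irr_{0}(N_S(P),P)$ and an intrinsic way to compare $\Proj_P \Res^S_{N_S(P)} \chi$ with a chosen character in $\pm\Irr_{0}(N_S(P),P)$ modulo $\Cal I$. Keeping track of signs, and of the particular class-function representative chosen modulo $\Cal I$ at each step of a reduction, complicates every argument that would be a triviality in the pure counting version of McKay; in particular, the natural candidates for $F$ arising from Clifford theory or Deligne--Lusztig induction come with implicit sign ambiguities that must be fixed coherently across blocks. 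A realistic short-term goal is therefore not an unconditional proof of Conjecture~\ref{conjgen}, but the accumulation of evidence across the abelian case (through a refined Brou\'e conjecture), TI cases (through Eaton's property), defining-characteristic Lie type, and small groups computationally---precisely the programme pursued in the rest of the paper.
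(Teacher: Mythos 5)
The labelled statement is a conjecture, not a theorem: the paper offers no proof, and your proposal correctly recognises this, instead outlining essentially the same evidence-gathering programme the paper pursues in Sections~\ref{splendid}--\ref{calc} (the refined Brou\'e conjecture~\ref{rBr} and Proposition~\ref{BrA} for abelian $P$, Eaton's property (P+)/(G) via Proposition~\ref{Eawirc} in the TI case, Gelfand--Graev and semisimple-character techniques for defining-characteristic Lie type in Theorems~\ref{liesplit}--\ref{lienonsplit}, and GAP computations for small groups, all riding on Theorem~\ref{corrthmgen} and the block decomposition of Propositions~\ref{splitblocks} and~\ref{otherblocks}). The one place where your outline goes beyond the paper is the suggestion of an Isaacs--Malle--Navarro-style reduction theorem: the paper does not attempt such a reduction, and the closest it comes is Proposition~\ref{normsub} on passage to quotients by a normal subgroup, so that remains an open (and, as you rightly note, delicate) ingredient.
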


Now we consider the properties stated above at the level of an individual block.

\begin{prop}\label{otherblocks} 
Let $P$ be a $p$-subgroup of a finite group $G$, and let $b$ be a block of $G$ such that some defect group of $b$ is contained in $P$. Suppose $H$ is a subgroup of $G$ containing $N_G (P)$ and set $\Cal S=\Cal S(G,P,H)$. Let $e$ be a block of $H$ such that some defect group of $e$ is contained in $P$. 
\begin{enumerate}[(i)]
\item\label{ob1} If $\chi\in \Irr(G,b)$ and either $e^G$ is not defined or $e^G\ne b$ then $\Proj_e \Res^G_H \chi\in \Cal I(H,P,\Cal S)$.
\item\label{ob2} If $\phi\in\Irr(H,e)$ and $c$ is a $p$-block of $G$ such that either $e^G$ is not defined or $e^G\ne c$ then 
$\Proj_c \Ind_H^G \phi \in \Cal I(G,P,\Cal S)$.
\end{enumerate}
\end{prop}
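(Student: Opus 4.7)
The plan is to adapt the case analysis from the proof of Theorem~\ref{corrthmgen} and track the block decomposition on both sides, with Proposition~\ref{splitblocks} as the bridge: since $\Cal I(G,P,\Cal S)$ and $\Cal I(H,P,\Cal S)$ split along blocks, it suffices to control the block components of $\Res^G_H \chi$ and $\Ind^G_H \phi$. In each part I will take an indecomposable lattice affording the given irreducible character and pick a vertex $Q$ contained in $P$ (possible since some defect group of the ambient block lies in $P$), then split into two cases according to whether $Q$ is $G$-conjugate to a member of $\Cal S$.

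For part~\eqref{ob1}, let $N$ be an indecomposable $\Cal OG$-lattice affording $\chi$, with vertex $Q \le P$. If $Q\subseteq_G \Cal S$, then Theorem~\ref{relpr} places $\chi$ itself in $\Cal I(G,P,\Cal S)$; Theorem~\ref{corrthmgen}\eqref{i3} then gives $\Proj_P \Res_H^G \chi \in \Cal I(H,P,\Cal S)$, and since $e$ has defect group in $P$ we have $\Proj_e = \Proj_e \Proj_P$, so Proposition~\ref{splitblocks} finishes this case. Otherwise, exactly as in the proof of Theorem~\ref{corrthmgen}\eqref{i1}, the assumption that no $G$-conjugate of $Q$ lies in $\Cal S$ forces $C_G(Q) \le H$, and in particular $H \supseteq Q C_G(Q)$. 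Green correspondence for the triple $(G,H,P)$ now yields $\Res^G_H N \simeq M_0 \oplus V$ with $M_0$ indecomposable of vertex $Q$ and $V$ relatively $\Cal S$-projective. Let $e'$ be the block of $H$ with $e' M_0 = M_0$. Applying Theorem~\ref{Alperin} with $Q$ in place of $P$ shows that $(e')^G$ is defined and equals $b$. The hypothesis that $e^G$ is undefined or differs from $b$ therefore forces $e \ne e'$, so $\Proj_e$ kills the character afforded by $M_0$, while $\Proj_e$ applied to the character afforded by $V$ lies in $\Cal I(H,P,\Cal S)$ by Theorem~\ref{relpr} and Proposition~\ref{splitblocks}.

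Part~\eqref{ob2} will proceed by the symmetric argument, replacing $N$ by an indecomposable $\Cal OH$-lattice $M$ affording $\phi$ and working with $\Ind_H^G M$ in place of $\Res^G_H N$. The easy case $Q\subseteq_G \Cal S$ is immediate from Theorem~\ref{relpr} applied to the $\Cal S$-projective lattice $\Ind_H^G M$, followed by Proposition~\ref{splitblocks}. In the other case Green correspondence gives $\Ind^G_H M \simeq N_0 \oplus V$ with $N_0$ indecomposable of vertex $Q$ and $V$ an $\Cal S$-projective $\Cal OG$-lattice, and Theorem~\ref{Alperin} identifies the block of $N_0$ as the Brauer correspondent $e^G$; the assumption $e^G \ne c$ then makes $\Proj_c$ annihilate the contribution of $N_0$, while the contribution of $V$ is already in $\Cal I(G,P,\Cal S)$. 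The one technical point worth highlighting is the use of Theorem~\ref{Alperin} with the vertex $Q$ rather than with $P$: this requires $H \supseteq Q C_G(Q)$, which is not obvious from $H \supseteq N_G(P)$ alone but follows from the standing case assumption that no $G$-conjugate of $Q$ lies in $\Cal S$. This is really the only nontrivial input beyond what is already packaged in Theorem~\ref{corrthmgen}, so I do not anticipate a serious obstacle in the execution.
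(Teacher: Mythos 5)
Your overall strategy and case structure coincide with the paper's, and part~(ii) is essentially right (the residual piece $V$ in $\Ind_H^G M \simeq N_0 \oplus V$ really is $\Cal S$-projective, since Green correspondence for induction gives relative projectivity with respect to the subgroups $\lsa{g}{P}\cap P$, $g\notin H$, and $\Cal S$ is exactly their downward closure). But there is a genuine gap in part~(i): you assert that $\Res^G_H N \simeq M_0\oplus V$ with $V$ relatively $\Cal S$-projective, which is not what Green correspondence gives. For restriction, $V$ is relatively projective with respect to the family $\mathfrak n = \{ \lsa{g}{P}\cap H \mid g\in G\setminus H\}$, whose members are subgroups of $H$ that need not lie in $P$ at all (and indeed, if $P$ is not a Sylow $p$-subgroup of $H$, indecomposable summands of $V$ can have vertices not conjugate into $P$). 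So $V$ need not be $\Cal S$-projective, and Theorem~\ref{relpr} applied naively to $V$ does not land you in $\Cal I(H,P,\Cal S)$.

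The repair, which the paper carries out explicitly, is to apply $\Proj_e$ first and only then invoke $\mathfrak n$-projectivity. An indecomposable summand $W$ of $eV$ has vertex $S$ contained in a defect group of $e$, hence (after conjugating in $H$) $S\le P$; on the other hand, $\mathfrak n$-projectivity forces $\lsa{h}{S}\le \lsa{g}{P}\cap H$ for some $h\in H$, $g\notin H$, whence $S \le P\cap \lsa{h^{-1}g}{P}$ with $h^{-1}g\notin H$, i.e.\ $S\in\Cal S$. Only then does Theorem~\ref{relpr} place the character of $eV$ into $\Cal I(H,P,\Cal S)$. In short: the $\Cal S$-projectivity is a property of $eV$, not of $V$, and obtaining it requires combining the block hypothesis on $e$ with the Green correspondence projectivity. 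Everything else in your outline is sound, including the correct observation that $C_G(Q)\le H$ is what licenses the application of Theorem~\ref{Alperin} with the vertex $Q$ in place of $P$.
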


\begin{proof} \eqref{ob1}
 Let $N$ be an $\Cal OG$-lattice affording $\chi$. If $N$ is $\Cal S$-projective then $\Proj_P \Res^G_H \chi \in \Cal I(H,P,\Cal S)$ by Theorem~\ref{corrthmgen}\eqref{i3} and the result follows from Proposition~\ref{splitblocks}. Thus we may assume that no vertex of $N$ 
is an element of $\Cal S$, so $N$ has a 
Green correspondent, $M$ say, with respect to the triple $(G,H,P)$. Let
$$
\Cal Y = \{ S \le P \mid S\le \lsa{g}{P} \cap H \text{ for some } g\in G-H \}.
$$
 Then 
$\Res^G_H N \simeq M\oplus U$ where $U$ is $\Cal Y$-projective. Let $V$ be an indecomposable summand of $eU$. Since $P$ contains a defect group of $e$, the lattice $V$ has a vertex $S$ contained in $P$. On the other hand, some $H$-conjugate of $S$ lies in $\Cal Y$ (by the properties of Green correspondence), so $\lsa{h}{S}\le \lsa{g}{P}$ for some $h\in H$ and $g\in G-H$. Thus, $S\subseteq P\cap \lsa{h^{-1}g}P$, whence $S\in \Cal S$. Using Theorem~\ref{relpr} we deduce that the character afforded by $eU$ lies in $\Cal I(H,P,\Cal S)$.

Since $\Proj_e \Res^G_H \chi$ is afforded by the lattice $eM\oplus eU$, it remains only to show that the character $\th$ afforded by $eM$ belongs to $\Cal I(H,P,\Cal S)$. Let $Q$ be a vertex of $M$ contained in $P$. Note that $Q\notin \Cal S$ by the properties of the Green correspondence.
If $C_G(Q) \subseteq H$ then, since $e^G\ne b$, Theorem~\ref{Alperin} shows that $eM=0$. On the other hand, if $g\in C_G (Q)-H$, then $Q\subseteq P\cap \ls{g}{P}$, whence $Q\in \Cal S$, which is a contradiction.


\eqref{ob2} Let $M$ be an $\Cal OH$-lattice affording $\chi$. If $M$ is $\Cal S$-projective then the result follows from Theorem~\ref{relpr} and Theorem~\ref{corrthmgen}\eqref{i2}. So we may assume that no vertex of $M$ lies in $\Cal S$. By the Green correspondence we have $\Ind_H^G M \simeq N\oplus V$ where $bN=N$ and $V$ is $\Cal S$-projective.
Let $Q$ be a vertex of $M$ contained in $P$. Then 
$C_G (Q) \subseteq H$ (as in the proof of~\eqref{ob1}), and by Theorem~\ref{Alperin} the Brauer correspondent $e^G$ is defined and satisfies 
$(e^G)N=N$, whence $cN=0$. On the other hand, the character afforded by $cV$ lies in $\Cal I(G,P,\Cal S)$ by Theorem~\ref{relpr}. 
The result follows.
\end{proof}

\begin{remark}
 Let the notation be as in Proposition~\ref{otherblocks}.
 If $P$ is abelian and we assume Brauer's height-zero conjecture to be true (see e.g.~\cite{NavarroBook}, Chapter 9, p.\ 212) then all elements of $\Irr^p (G,P)$ and $\Irr^p (H,P)$ belong to blocks with defect groups strictly contained in $P$. In this case properties (pRes) and (pInd) both hold by Proposition~\ref{otherblocks}. 
\end{remark}

If $b$ is a $p$-block of $G$, we denote by $\Irr_0 (G,b)$ the set of characters of height $0$ in $\Irr(G,b)$. We write 
$\Irr^p (G,b)=\Irr(G,b)-\Irr_0 (G,b)$ and $\Cal C^p(G,b)=\mZ \Irr^p (G,b)$. We will often use the following notation.

\begin{hyp}\label{hypbl} Let $G$ be a finite $p$-group. Let $P$ be a $p$-subgroup of $G$ and suppose $H$ is a subgroup of $G$ containing $N_G (P)$.
 Suppose that $b$ is a $p$-block of $G$ such that $P$ is a defect group of $b$, and let $e\in \Bl(H)$ be the Brauer correspondent of $b$. 
\end{hyp}

With these assumptions, we define properties (IRC-Bl), (pRes-Bl), (pInd-Bl), (WIRC-Bl) and (WIRC*-Bl) for the quadruple $(G,b,P,H)$ as follows. For each of the five properties, we consider the statement above of the corresponding property of the triple $(G,P,H)$ and replace $\Irr_0 (G,P)$ with $\Irr_0 (G,b)$, $\Irr_0 (H,P)$ with $\Irr_0 (H,e)$, $\Cal C^p (G,P)$ with $\Cal C^p (G,b)$, and $\Cal C^p(H,P)$ with $\Cal C^p(H,e)$. For instance,
(IRC-Bl) may be stated as follows.

\bigskip
\noindent
\begin{tabular}{lp{\width}}
\!\!\!(IRC-Bl)\!\!\!
&
\emph{\quad There is a signed bijection $F\colon \pm\Irr_{0}(G,b) \ra \pm\Irr_{0}(H,e)$ such that
$$
F(\chi) \equiv \Proj_P \Res_H^G \chi \dmod  \Cal I(H,P,\Cal S(G,P,H)) \qquad 
$$
\quad for all $\chi\in \Irr_{0}(G,b)$.
} \\
\end{tabular}
\medskip

Let $P$ be a $p$-subgroup of $G$ and suppose $H\le G$ contains $N_G (P)$.
It follows from Proposition~\ref{otherblocks} 
that if (IRC-Bl) holds for all blocks $b\in \Bl(G)$ for which $P$ is a defect group then (IRC) holds for the triple $(G,P,H)$. Analogous statements hold for the other four properties.

Also, it is easy to see, using Proposition~\ref{otherblocks}, that if (pRes) holds for the triple $(G,P,H)$ then (pRes-Bl) is true, with respect to $P$ and $H$, for each block $b$ of $G$ with defect group $P$; and the analogous statement is true for property (pInd). The analogues for properties 
(IRC), (WIRC) and (WIRC*) follow from Proposition~\ref{degprop} below and Proposition~\ref{otherblocks}.

Isaacs and Navarro~\cite{IsaacsNavarro2002} proposed a generalisation of their Conjecture~\ref{conjIN} to the case of blocks of arbitrary defect, which is a refinement of Alperin's strengthening~\cite{Alperin1976} of the McKay conjecture. 
If $b$ is a block of $G$, let $M_l (b)$ be the number of irreducible characters  $\chi\in \Irr_0 (G,b)$
such that $\chi(1)_{p'} \equiv \pm l \pmod p$.

\begin{conj}[\cite{IsaacsNavarro2002}, Conjecture B]\label{conjINgen}
Let $b$ be a $p$-block of a finite group $G$. Suppose that $P$ is a defect group of $b$ and $H=N_G (P)$. Let $e\in \Bl(H)$ be the Brauer correspondent of $b$.
Then for each integer $l$ not divisible by $p$ we have $M_{ml} (G,b)=M_l (H,e)$ where $m=|G:H|_{p'}$.
\end{conj}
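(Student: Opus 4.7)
The plan is to derive Conjecture~\ref{conjINgen} as a consequence of property (WIRC*-Bl) applied to the quadruple $(G,b,P,H)$ with $H=N_G(P)$, in direct analogy with the argument preceding Proposition~\ref{wircsylIN}. Since (WIRC-Bl) together with (pInd-Bl) implies (WIRC*-Bl) by the blockwise analogue of the implications following Theorem~\ref{corrthmgen}, this will in particular show that Conjecture~\ref{conjB}, applied blockwise, implies Conjecture~\ref{conjINgen}. So I would start from a signed bijection $F\colon \pm\Irr_0(G,b)\ra \pm\Irr_0(H,e)$ satisfying
$$
\chi \equiv \Ind_H^G F(\chi) \dmod \Cal C^p(G,b) + \Cal I(G,P,\Cal S),
$$
where $\Cal S = \Cal S(G,P,H)$, for every $\chi\in \pm\Irr_0(G,b)$, and then compare degrees.

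The key technical step is the verification that every $\xi \in \Cal C^p(G,b) + \Cal I(G,P,\Cal S)$ satisfies $v_p(\xi(1)) \ge v_p(|G:P|)+1$. For $\xi \in \Cal C^p(G,b)$ this is immediate, since its constituents have positive height in a block with defect group $P$. For a generator $\Ind_L^G \psi$ of $\Cal I(G,P,\Cal S)$ with $L\cap P \in \Cal S$ a Sylow $p$-subgroup of $L$, the subgroup $L\cap P$ is necessarily proper in $P$: indeed, if $L\cap P = P$, then the defining containment $P \subseteq \lsa{t}{P}$ for some $t \notin H$ forces $P = \lsa{t}{P}$ by comparing orders, whence $t \in N_G(P) \subseteq H$, a contradiction. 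Thus $|L|_p = |L\cap P| \le |P|/p$, so $|G:L|_p \ge p \cdot |G:P|_p$, and therefore $v_p((\Ind_L^G \psi)(1)) \ge v_p(|G:P|)+1$, as required.

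Evaluating the displayed congruence at the identity element then yields
$$
\chi(1) \equiv |G:H|\cdot F(\chi)(1) \pmod{p^{v_p(|G:P|)+1}}.
$$
Since $\chi$ and $F(\chi)$ have height zero in blocks with defect group $P$, we have $\chi(1)_p = |G:P|_p$ and $F(\chi)(1)_p = |H:P|_p$, and $|G:H|_p \cdot |H:P|_p = |G:P|_p$. Dividing the congruence by $|G:P|_p = p^{v_p(|G:P|)}$ therefore reduces it to
$$
\chi(1)_{p'} \equiv m\cdot F(\chi)(1)_{p'} \pmod{p},
$$
with $m = |G:H|_{p'}$. As $m$ is invertible modulo $p$, the condition $\chi(1)_{p'} \equiv \pm ml \pmod{p}$ is equivalent to $F(\chi)(1)_{p'} \equiv \pm l \pmod{p}$; being $\pm$-equivariant, $F$ then restricts to a bijection between the corresponding $\pm$-closed subsets of $\pm\Irr_0(G,b)$ and $\pm\Irr_0(H,e)$, and $M_{ml}(G,b) = M_l(H,e)$ follows at once.

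The main obstacle is of course that Conjecture~\ref{conjgen} and the corresponding property (WIRC*-Bl) are themselves open, so this plan establishes Conjecture~\ref{conjINgen} only conditionally on the deeper refinement. An unconditional proof would still require case-by-case verification, for example along the lines of Sections~\ref{lietype} and~\ref{calc}. A subsidiary point worth noting is the blockwise passage from (WIRC-Bl) to (WIRC*-Bl) via (pInd-Bl); this should be obtainable by applying Proposition~\ref{otherblocks} to transfer the group-level implication from Theorem~\ref{corrthmgen} down to individual blocks.
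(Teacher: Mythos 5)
Your conditional derivation is correct and is essentially the paper's own treatment: the statement is an open conjecture, and the paper proves it only conditionally, in Proposition~\ref{degprop}, by exactly the degree argument you give --- every element of $\Cal C^p(G,b)+\Cal I(G,P,\Cal S)$ has degree divisible by $p^{v_p(|G:P|)+1}$ because the members of $\Cal S$ are proper subgroups of $P$, after which one evaluates the (WIRC)/(WIRC*) congruence at $1$ and cancels $|G:P|_p$ to get $\chi(1)_{p'}\equiv |G:H|_{p'}\,(F(\chi)(1))_{p'} \pmod p$. The only minor divergence is that you take the blockwise property (WIRC*-Bl) as hypothesis, whereas the paper starts from a bijection on $\Irr_0(G,P)$ witnessing (WIRC) (calling the (WIRC*) case ``similar'') and must in addition show $F(\chi)\in\pm\Irr_0(H,e)$, which it does via Propositions~\ref{splitblocks} and~\ref{otherblocks} together with the same degree estimate.
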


This conjecture is implied by Conjecture~\ref{conjgen}.

\begin{prop}\label{degprop} Assume Hypothesis~\emph{\ref{hypbl}}. Suppose $F\colon \pm \Irr_{0} (G,P) \ra \pm \Irr_{0} (H,P)$ is a signed bijection witnessing either \emph{(WIRC)} or \emph{(WIRC*)}. If $\chi\in \Irr_0 (G,b)$ then 
$F(\chi)\in \pm\Irr_0 (H,e)$ and 
\begin{equation}\label{degpropeq}
\chi(1)_{p'} \equiv |G:H|_{p'} (F(\chi)(1))_{p'} \pmod p.
\end{equation} 
In particular, if $H=N_G (P)$ then Conjecture~\emph{\ref{conjINgen}} holds for all blocks of $G$ with defect group $P$.
\end{prop}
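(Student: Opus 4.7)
The plan is to establish the two claims — that $F(\chi)\in \pm\Irr_0(H,e)$ and the congruence \eqref{degpropeq} — via one common $p$-adic divisibility estimate. Namely, every element of $\Cal C^p(H,P)+\Cal I(H,P,\Cal S)$ has value at $1$ divisible by $p\cdot |H|_p/|P|$. For $\Cal C^p(H,P)$ this is because characters of positive height in a block of defect $P$, as well as all characters in blocks of defect strictly smaller than $P$, have $p$-part of degree at least $p\cdot |H|_p/|P|$. For $\Cal I(H,P,\Cal S)$, each generator $\Ind_L^H \psi$ satisfies $|H:L|_p=|H|_p/|L\cap P|\ge p\cdot |H|_p/|P|$, using that $L\cap P\in \Cal S$ is a proper subgroup of $P$. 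The analogous estimate holds at the $G$-level with $|H|_p$ replaced by $|G|_p$.

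For the block identification, every element of $\Irr_0(H,P)$ lies in a block of $H$ whose defect group has order $|P|$, hence equals $P$. Write $F(\chi)=\varepsilon\phi$ with $\varepsilon\in\{\pm 1\}$ and $\phi\in \Irr_0(H,e')$ for some $e'\in \Bl(H)$ of defect group $P$, and suppose toward contradiction that $e'\ne e$. Brauer's first main theorem (applied via the chain $N_G(P)\subseteq H\subseteq G$) gives an injection $e''\mapsto (e'')^G$ from blocks of $H$ of defect $P$ into $\Bl(G)$, so $(e')^G$ is defined and differs from $b$. If $F$ witnesses (WIRC), project $F(\chi)-\Proj_P \Res^G_H \chi \in \Cal C^p(H,P)+\Cal I(H,P,\Cal S)$ onto the $e'$-component; Proposition~\ref{splitblocks} splits $\Cal I(H,P,\Cal S)$ by blocks, and Proposition~\ref{otherblocks}\eqref{ob1} puts $\Proj_{e'}\Res^G_H \chi$ into $\Cal I(H,P,\Cal S)\cap \Cal C(H,e')$. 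Therefore $\phi\in \Cal C^p(H,e')+\Cal I(H,P,\Cal S)\cap \Cal C(H,e')$, forcing $\phi(1)$ to be divisible by $p\cdot |H|_p/|P|$ — contradicting $\phi\in \Irr_0(H,e')$. If $F$ witnesses (WIRC*), the symmetric argument — project $\chi-\Ind_H^G F(\chi)$ onto the $b$-component of $\Cal C(G)$ and apply Proposition~\ref{otherblocks}\eqref{ob2} — gives the analogous contradiction with $\chi\in \Irr_0(G,b)$. Hence $e'=e$.

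For the degree congruence, the (WIRC*) case is immediate: the relation $\chi\equiv \Ind_H^G F(\chi)\pmod{\Cal C^p(G,P)+\Cal I(G,P,\Cal S)}$ evaluated at $1$ gives $\chi(1)\equiv |G:H|\,F(\chi)(1)\pmod{p\cdot |G|_p/|P|}$, and dividing by the common $p$-part $|G|_p/|P|$ yields \eqref{degpropeq}. For (WIRC), apply $\Ind_H^G$ to the defining relation: Theorem~\ref{corrthmgen}\eqref{i2} gives $\Ind_H^G \Cal I(H,P,\Cal S)\subseteq \Cal I(G,P,\Cal S)$, and Theorem~\ref{corrthmgen}\eqref{i4} gives $\Ind_H^G \Proj_P \Res^G_H \chi\equiv \chi \pmod{\Cal I(G,P,\Cal S)}$ (since $\chi\in \Cal C(G,P)$), so $\chi\equiv \Ind_H^G F(\chi)$ modulo $\Ind_H^G(\Cal C^p(H,P))+\Cal I(G,P,\Cal S)$. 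Evaluating at $1$ and using that $|G:H|$ times a multiple of $p\cdot |H|_p/|P|$ is a multiple of $p\cdot |G|_p/|P|$ produces the same divisibility, hence \eqref{degpropeq}. The Isaacs--Navarro consequence follows at once: specialising to $H=N_G(P)$ and varying $l$ coprime to $p$, the signed bijection $F$ matches up $\Irr_0(G,b)$-characters of $p'$-part $\equiv \pm ml$ with $\Irr_0(H,e)$-characters of $p'$-part $\equiv \pm l$, giving $M_{ml}(G,b)=M_l(H,e)$. The main point throughout is the careful bookkeeping of $p$-adic valuations; the block identification $e'=e$ hinges on the injectivity of Brauer correspondence for blocks of defect $P$ in subgroups containing $N_G(P)$, and the reduction of (WIRC) to a $G$-level statement relies essentially on Theorem~\ref{corrthmgen}\eqref{i4}.
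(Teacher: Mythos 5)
Your proof is correct and follows essentially the same strategy as the paper: the divisibility estimate "every degree in $\Cal C^p(H,P)+\Cal I(H,P,\Cal S)$ is divisible by $p\cdot |H|_p/|P|$" (the paper writes this as divisibility by $p^{v_p(|G:P|)+1}$ after inducing to $G$), combined with the block decompositions from Propositions~\ref{splitblocks} and~\ref{otherblocks} and the inverse-isomorphism statement of Theorem~\ref{corrthmgen}\eqref{i4}. The only organisational difference is order: the paper first derives the degree congruence \eqref{degpropeq} and then uses the resulting valuation of $F(\chi)(1)$ to rule out $F(\chi)\notin\pm\Irr_0(H,e)$, whereas you establish the block identification first and then the congruence; both orders are valid since $F(\chi)\in\pm\Irr_0(H,P)$ already pins down $v_p(F(\chi)(1))$. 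You also make explicit two things the paper leaves implicit: the symmetric argument for (WIRC*) (the paper just says "similar"), and the observation that the injectivity of $e'\mapsto (e')^G$ on blocks of $H$ of defect $P$ — via Brauer's first main theorem and transitivity of the Brauer correspondence along $PC_G(P)\subseteq N_G(P)\subseteq H\subseteq G$ — is what guarantees $(e')^G\ne b$ and thereby licenses the application of Proposition~\ref{otherblocks}\eqref{ob1}. These additions are welcome clarifications rather than genuine departures.
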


\begin{proof} We will only consider the case when $F$ witnesses property (WIRC) because a proof for  (WIRC*) is similar. Let $\chi\in \Irr_0 (G,b)$. Applying $\Ind_H^G$ to the equation of (WIRC), we obtain
\[
 \Ind_H^G F(\chi) \equiv \Ind_H^G \Proj_P \Res^G_H \chi \dmod \Ind_H^G (\Cal C^p (H,P)) + \Cal I(G,P,\Cal S),
\]
where $\Cal S=\Cal S(G,P,H)$. By Theorem~\ref{corrthmgen}\eqref{i4}, this gives
\[
 \Ind_H^G F(\chi) \equiv \chi \dmod \Ind_H^G (\Cal C^p (H,P))+ \Cal I(G,P,\Cal S).
\]
Now all virtual characters belonging to $\Ind_H^G (\Cal C^p(H,P))$ have degrees divisible by $p^{v_p(|G:P|)+1}$. The same is true for virtual characters lying in $\Cal I(G,P,\Cal S)$ because each element of $\Cal S$ is a proper subgroup of $P$. Therefore, we have
\begin{equation}\label{degpropeq1}
 |G:H| F(\chi)(1) \equiv \chi(1) \dmod p^{v_p(|G:P|)+1},
\end{equation}
and~\eqref{degpropeq} follows. 

Now suppose for contradiction that $F(\chi)\notin \pm\Irr_0 (H,e)$. Let $f$ be the block of $H$ containing $\pm F(\chi)$. Then, by the identity of (WIRC) and Proposition~\ref{splitblocks}, we have $F(\chi)-\Proj_f \Res^G_H \chi \in \Cal I(H,P,\Cal S)$. On the other hand, by Proposition~\ref{otherblocks}, we have $\Proj_f \Res^G_H \chi \in \Cal I(H,P,\Cal S)$, so $F(\chi)\in \Cal I(H,P,\Cal S)$. However, by~\eqref{degpropeq1}, 
$v_p (F(\chi)(1))=v_p (|H:P|)$, whereas we have already observed that $v_p (\xi)>v_p (|H:P|)$ for each $\xi\in \Cal I(H,P,\Cal S)$. This contradiction completes the proof.
\end{proof}

We finish the section with another consequence of Conjecture~\ref{conjgen} (cf.~\cite{IsaacsMalleNavarro2007}, Conjecture C). If $\nu$ is an irreducible character of a normal subgroup of $G$, we write
$\Irr(G,P \di \nu)=\Irr(G,P) \cap \Irr(G \di \nu)$, $\Cal C(G \di \nu)=\mZ \Irr(G\di \nu)$, and so on.

\begin{prop}\label{normsub}
Let $L$ be a normal subgroup of a finite group $G$. Let $\tilde{P}/L$ be a $p$-subgroup of $G/L$ and $P$ be a Sylow $p$-subgroup of $\tilde{P}$. Let $H$ be a subgroup of $G$ containing $N_G (P)$.
\begin{enumerate}[(i)]
 \item\label{ns1} If $\nu\in\Irr(L)$ and $F\colon \pm \Irr_0 (G,P) \ra \pm \Irr_0 (HL,P)$ is a signed bijection witnessing either \emph{(WIRC)} or  \emph{(WIRC*)} then 
for every $\chi\in \pm \Irr_0 (G,P \di \nu)$ there is a $G$-conjugate $\nu'$ of $\nu$ such that $F(\chi)\in \pm \Irr_0 (HL,P \di \nu')$.
 \item\label{ns2} If any one of the properties \emph{(IRC)}, \emph{(pRes)}, \emph{(pInd)}, \emph{(WIRC)} and \emph{(WIRC*)} holds for the triple $(G,P,HL)$ then the same property is true for $(G/L,\tilde{P}/L, HL/L)$. 
\end{enumerate}
\end{prop}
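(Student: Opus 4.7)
The plan is to deduce part~(i) by applying the projection $\pi_\mu$ (in the notation of the paper, for $\mu \in \Irr(L)$) to the witnessing congruence for $F$, and then to deduce part~(ii) from~(i) via inflation from $G/L$.

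For~(i), I would first invoke Clifford's theorem: since $L \trianglelefteq G$ and $\chi$ lies over $\nu$, every irreducible constituent of $\Res^G_L \chi$ is a $G$-conjugate of $\nu$, so every irreducible constituent of $\Res^G_{HL} \chi$ lies over some element of the $G$-orbit $\Gamma$ of $\nu$ in $\Irr(L)$. Hence for any $\mu \in \Irr(L) \setminus \Gamma$ we have $\pi_\mu \Res^G_{HL} \chi = 0$, and therefore $\pi_\mu \Proj_P \Res^G_{HL} \chi = 0$, since $\pi_\mu$ commutes with $\Proj_P$ (both being projections indexed by subsets of $\Irr(HL)$). The key technical claim is that for any $HL$-invariant subset $\Omega \subseteq \Irr(L)$, the projection $\pi_\Omega$ onto the $\mZ$-span of $\Irr(HL \mid \Omega)$ preserves both $\Cal C^p(HL,P)$ (trivially) and $\Cal I(HL,P,\Cal S)$ (nontrivially). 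Granting this and applying $\pi_\mu$ to the witnessing congruence of (WIRC) yields $\pi_\mu F(\chi) \in \Cal C^p(HL,P) + \Cal I(HL,P,\Cal S)$. If $F(\chi) = \pm \psi$ with $\psi$ lying over some $\mu \notin \Gamma$, this gives $\pm \psi \in \Cal C^p(HL,P) + \Cal I(HL,P,\Cal S)$, contradicting $v_p(\psi(1)) = v_p(|HL:P|)$: every element of this lattice has $p$-adic valuation of degree strictly greater than $v_p(|HL:P|)$, since characters in $\Irr^p(HL,P)$ necessarily have height $\ge 1$ (as the defect group is contained in $P$), and an induced character $\Ind^{HL}_M \phi$ with $M \cap P \in \Cal S$ Sylow in $M$ has $v_p([HL:M]) = v_p(|HL|) - v_p(|M \cap P|) > v_p(|HL:P|)$ because $|M \cap P| < |P|$ (recall $P \notin \Cal S(G,P,HL)$ since $N_G(P) \le HL$). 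The (WIRC*) case is handled by the analogous argument, projecting the relation $\chi \equiv \Ind^G_{HL} F(\chi)$ via $\pi_\nu$ on $\Cal C(G)$ and invoking the same claim with $G$ in place of $HL$.

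The stability of $\Cal I(HL,P,\Cal S)$ under $\pi_\mu$ is the main obstacle. I would prove it via the idempotent $e_\Omega = \sum_{\nu' \in \Omega} e_{\nu'} \in Z(KL)$, where $\Omega$ is the $HL$-orbit of $\mu$: since $\Omega$ is $HL$-invariant, $e_\Omega$ is central in $KHL$ and acts as $\pi_\mu$ on characters. For a generator $\Ind^{HL}_M \phi$ of $\Cal I(HL,P,\Cal S)$ with $V$ an $\Cal OM$-lattice affording $\phi$, the $\Cal OHL$-lattice $\Ind^{HL}_M V$ is $(M \cap P)$-projective, so the $e_\Omega$-summand of $K \otimes \Ind^{HL}_M V$ is a $(M \cap P)$-projective $KHL$-module with character $\pi_\mu \Ind^{HL}_M \phi$. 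Decomposing block-by-block via Proposition~\ref{splitblocks} and realizing this $K$-summand as an $\Cal OHL$-lattice summand inside each block component, one obtains an $(M \cap P)$-projective $\Cal OHL$-lattice affording $\pi_\mu \Ind^{HL}_M \phi$; Theorem~\ref{relpr} then places this character in $\Cal I(HL, M \cap P, \Cal A(M \cap P)) \subseteq \Cal I(HL,P,\Cal S)$.

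For~(ii), I would first observe that $\tilde P = PL$ (because $\tilde P/L$ is a $p$-group and $P$ is Sylow in $\tilde P$), and verify the Frattini-type identity $N_{G/L}(\tilde P/L) = N_G(\tilde P) L/L \le HL/L$ (any $g$ normalizing $\tilde P$ permutes the Sylow $p$-subgroups of $\tilde P$ transitively and can be adjusted by an element of $\tilde P$ to normalize $P$, placing it in $N_G(P)L \subseteq HL$); so the triple $(G/L, \tilde P/L, HL/L)$ is a valid input for the properties. Applying part~(i) with $\nu = 1_L$, whose only $G$-conjugate is itself, and combining with the bijectivity of $F$ via a partition-counting argument, I obtain that $F$ restricts to a signed bijection $\pm \Irr_0(G, P \mid 1_L) \to \pm \Irr_0(HL, P \mid 1_L)$. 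Via inflation $\Inf_{G/L}^G$ these sets are identified with $\pm \Irr_0(G/L, \tilde P/L)$ and $\pm \Irr_0(HL/L, \tilde P/L)$ respectively, once one verifies the block/defect/height compatibility under inflation (using in particular $v_p(|HL:P|) = v_p(|HL/L : \tilde P/L|)$, which holds because $[\tilde P:P] = |L|_{p'}$ is coprime to $p$). The induced $\bar F$ is the desired witness, and its witnessing congruence pulls back from that for $F$ via the standard identity $\Res^G_{HL} \Inf_{G/L}^G = \Inf_{HL/L}^{HL} \Res^{G/L}_{HL/L}$ together with the compatibility of $\Inf$ with the $\Cal C^p$ and $\Cal I$ subgroups (the pullback being compatible in both directions). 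The analogous reasoning handles (pRes), (pInd) and (IRC).
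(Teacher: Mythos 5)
The overall architecture of your argument is sound and your identification of the key technical point — that the projection $\pi_\mu$ must preserve $\Cal I(HL,P,\Cal S)$ — is exactly right, as is your use of a valuation argument to extract a contradiction. (Your variant of deriving the contradiction directly at the level of $HL$ by hitting $F(\chi)$ with $\pi_\mu$ for $\mu$ outside the orbit of $\nu$ is a perfectly good alternative to the paper's step of pulling back to $G$ via Theorem~\ref{corrthmgen}.) The gap is in your proof of the technical claim itself.

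Your idempotent/lattice argument does not go through. The idempotent $e_\Omega\in Z(KL)$ need not lie in $\Cal OHL$ (nor in $\Cal OHLb$ for a single block $b$), so the $e_\Omega$-component of an $\Cal OHL$-lattice is a $KHL$-submodule of $K\otimes N$ and not an $\Cal OHL$-lattice direct summand of $N$. Passing to block components does not repair this: a single block of $HL$ can lie over several $HL$-orbits on $\Irr(L)$, so $e_\Omega b$ is still a proper idempotent of $KHLb$ in general. More seriously, if your construction did produce an $(M\cap P)$-projective $\Cal OHL$-lattice affording $\pi_\mu\Ind_M^{HL}\phi$, you would be proving the much stronger statement $\pi_\mu\Ind_M^{HL}\phi\in\Cal I(HL,M\cap P,\Cal A(M\cap P))$, and this is simply \emph{false}. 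For a concrete counterexample, take $L$ a nontrivial normal $p$-subgroup of a $p$-group $HL$, $M=\mbf 1$, $\phi=1$: then $\Ind_M^{HL}\phi$ is the regular character $\rho_{HL}$ and $\Cal I(HL,\mbf 1,\{\mbf 1\})=\Cal P(HL)=\mZ\rho_{HL}$, yet $\pi_\mu\rho_{HL}$ is a proper truncation of $\rho_{HL}$ and thus not in $\mZ\rho_{HL}$.

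The fix is character-theoretic and uses the specific structure of $\Cal S=\Cal S(G,P,HL)$ together with the hypotheses on $L$. The key observation is that $L\cap P$ is a Sylow $p$-subgroup of $L$ (since $L\trianglelefteq\tilde P$ and $P$ is Sylow in $\tilde P$) and $L\cap P$ lies in every maximal element of $\Cal S$. These facts let one check that if $M\cap P$ is Sylow in $M$ and $M\cap P\in\Cal S$, then the same holds for $ML$. One then rewrites
\[
\Ind_M^{HL}\phi \;=\; \sum_i \Ind_{ML}^{HL}\,\pi_{\mu_i}\,\Ind_M^{ML}\phi,
\]
where $\mu_i$ runs over representatives of the $M$-orbits (equivalently $ML$-orbits) on $\Irr(L)$. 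Since $\Ind_{ML}^{HL}$ carries $\mZ\Irr(ML\mid\mu_i)$ into $\mZ\Irr(HL\mid\text{$HL$-orbit of }\mu_i)$, applying $\pi_\mu^{HL}$ just selects the terms with $\mu_i$ in the $HL$-orbit of $\mu$, and each such term lies in $\Cal I(HL,P,\Cal S)$ because $ML$ meets the defining conditions. This places $\pi_\mu\Ind_M^{HL}\phi$ in $\Cal I(HL,P,\Cal S)$ without ever producing a lattice for it. Once the decompositions $\Cal I(G,P,\Cal S)=\bigoplus\big(\Cal I(G,P,\Cal S)\cap\Cal C(G\mid\nu_i)\big)$ and the analogue for $HL$ are in place by this method, the rest of your argument for (i) works, and your deduction of (ii) from the $\nu=1_L$ case plus the counting argument and inflation bookkeeping is essentially the intended route.
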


\begin{proof}
\eqref{ns1} Let $\Cal S=\Cal S(G,P,H)$. Let $\nu_1,\ldots,\nu_l$ be a complete set of representatives of $G$-orbits on $\Irr(L)$ and $\nu'_1,\ldots,\nu'_m$ be a complete set of representatives of the $H$-orbits on $\Irr(L)$. The abelian group 
$\Cal I(G,P,\Cal S)$ is spanned by the virtual characters of the form $\Ind_A^G \phi$ where $A$ is a subgroup of $G$ such that $A\cap P$ is a Sylow $p$-subgroup of $A$ and $A\cap P\in \Cal S$. Using the facts that $L\cap P$ is a Sylow $p$-subgroup of $L$ and that $L\cap P$ is contained in every maximal element of $\Cal S$, it is easy to see that $AL\cap P$ is a Sylow $p$-subgroup of $AL$ and $AL\cap P\in \Cal S$ for each such $A$. Thus we have
\[
 \Ind_A^G \phi = \sum_{i=1}^n \Ind_{AL}^G \pi_{\mu_i} \Ind_A^{AL} \phi 
\]
where $\mu_1,\ldots,\mu_n$ is a set of representatives of the $A$-orbits on $\Irr(L)$.
It follows that
\begin{equation}\label{nseq1}
 \Cal I(G,P,\Cal S) = \bigoplus_{i=1}^l \left( \Cal I(G,P,\Cal S) \cap \Cal C(G \di \nu_i) \right).
\end{equation}
Similarly,
\begin{equation}\label{nseq2}
 \Cal I(HL,P,\Cal S)=\bigoplus_{i=1}^m \left( \Cal I(HL,P,\Cal S) \cap \Cal C(HL \di \nu'_i) \right).
\end{equation}
Now suppose $F\colon \pm\Irr_0 (G,P) \ra \pm \Irr_0 (H,P)$ is a signed bijection satisfying (WIRC). Let $\nu\in \Irr(L)$ and suppose $\chi\in \Irr_0 (G,P \di \nu)$. Let $\nu'\in \Irr(L)$ be a character such that $F(\chi) \in \pm\Irr_0 (HL,P\di \nu')$. Suppose for contradiction that $\nu'$ is not $G$-conjugate to $\nu$. By the identity of (WIRC), we have 
\[
 F(\chi)-\Proj_P \Res^G_H \chi \in \Cal C^p (HL,P)+\Cal I(HL,P,\Cal S).
\]
However, each irreducible constituent of $\Proj_P \Res^G_H \chi$ lies in $\Irr(HL \di \lda)$ for some $\lda\in \Irr(L)$ that is $G$-conjugate to $\nu$,
and by~\eqref{nseq2} we deduce that
\[
 \Proj_P \Res^G_H \chi \in \Cal C^p (HL,P)+\Cal I(HL,P,\Cal S).
\]
Using Theorem~\ref{corrthmgen} we infer that 
\[
 \chi \in \Ind_{HL}^G (\Cal C^p (HL,P))+\Cal I(G,P,\Cal S).
\]
This is a contradiction because $v_p (\chi(1))=v_p (|G:P|)$, whereas every element of the set on the right-hand side has a degree divisible by 
$p^{v_p (|G:P|)+1}$. The proof in the case when $F$ witnesses (WIRC*) is similar (in fact, slightly easier).

\eqref{ns2} The statements for properties (pRes) and (pInd) follow immediately from~\eqref{nseq1} and~\eqref{nseq2}. We obtain the statements for properties (IRC), (WIRC) and (WIRC*) by putting $\nu=1_L$ in~\eqref{ns1} and using the same two identities. 
\end{proof}

\section{Splendid Rickard equivalences}\label{splendid}

\subsection{A refinement of the conjectures of Brou{\'e} and Rickard}\label{Broue}

It is believed that if $P$ is abelian then a one-to-one correspondence between the sets $\Irr_{0} (G,P)$ and $\Irr_{0}(N_G (P), P)$ is just one consequence of a much deeper relationship, namely of derived equivalences between relevant block algebras. The existence of such equivalences was conjectured by Brou{\'e}~\cite{Broue1990}. In this section we formulate a refinement of the Brou{\'e} conjecture which implies Conjecture~\ref{conjgen} when $P$ is abelian. 

First we recall two important definitions due to Rickard (see~\cite{Rickard1996}), using slightly different terminology. Throughout Section~\ref{splendid} we will denote by $R$ a ring which is either $\Cal O$ or $k$. Following~\cite{Rickard1996}, by a \emph{complex} we  understand a cochain complex 
$$
X = \qquad \cdots \ra X^i \ra X^{i+1} \ra \cdots.
$$
Recall that if $X$ is a complex of $R$-modules then the \emph{dual complex} $X^*$ is defined as follows:
 the $n$th term of $X^*$ is $(X^{-n})^* = \Hom_R (X^{-n}, R)$, 
and the differential $(X^*)^n \ra (X^*)^{n+1}$ of $X^*$ is the dual of the differential $X^{-n-1}\ra X^{-n}$ of $X$. 

\begin{defi}[see \cite{Rickard1996}, Section 2] Let $A$ and $B$ be symmetric $R$-algebras. A bounded complex $X$ of finitely generated $A$-$B$-bimodules is said to be a \emph{Rickard tilting complex} if all the terms of $X$ are projective as left and right modules and we have isomorphisms $X\otimes_B X^* \simeq A$ and $X^* \otimes_A X\simeq B$ in the homotopy categories of $A$-$A$-bimodules and $B$-$B$-bimodules respectively.
\end{defi}

If $P$ is embedded as a subgroup into two groups $G$ and $H$, we write 
\[
\D P=\{(x,x) \mid x\in P \} \le G \times H.
\]
If $\Cal S$ is a set of subgroups of $P$, we set $\D \Cal S = \{\D Q \mid Q\in \Cal S\}$. Recall that an $RG$-module $N$ is said to be \emph{$p$-permutation} if each indecomposable summand $U$ of $N$ has trivial source, that is, if each such $U$ is a summand of $\Ind_Q^G R$ for some $p$-subgroup $Q$ of $G$. 

\begin{defi}[see \cite{Rickard1996}, Section 2]\label{splendiddef} Let $G$ and $H$ be finite groups with a common $p$-subgroup $P$. Suppose $b$ and $c$ are central idempotents of $RG$ and $RH$ respectively. If $X$ is a Rickard tilting complex of $RGb$-$RHc$-bimodules and all the terms of $X$, considered as $R(G\times H)$-modules, are relatively $\D P$-projective $p$-permutation modules, then $X$ is said to be a \emph{splendid tilting complex}. 
\end{defi}

Assume Hypothesis~\ref{hypbl} and suppose $H=N_G (P)$. 
Rickard's refinement~\cite{Rickard1996} of the Brou{\'e} abelian defect group conjecture (\cite{Broue1990}, Question 6.2) 
asserts that if $P$ is abelian then there exists a splendid tilting complex of $\Cal OGb$-$\Cal OHe$-bimodules. 
In view of Definition~\ref{defX}, it seems reasonable to impose a further requirement upon Rickard tilting complexes. 

\begin{defi}\label{tempered} Let $P$ be a fixed $p$-subgroup of an arbitrary finite group $G$. 
Let $\Cal S$ be a set of subgroups of $P$. Let $X$ be a  complex of $RG$-modules. 
We say that $X$ is \emph{$\Cal S$-tempered} if at most one term $X^i$ is not $\Cal S$-projective. 
If such a term $X^i$ exists, let $X^i \simeq M\oplus N$ where $N$ is the largest $\Cal S$-projective summand of $X^i$. We say that $M$ is the \emph{pivot} (or \emph{$\Cal S$-pivot}) of $X$. 
If all terms of $X$ are $\Cal S$-projective, the pivot of $X$ is defined to be $0$.
\end{defi}

Assume Hypothesis~\ref{hypbl} (so, in particular, $\Cal S=\Cal S(G,P,H)$). 
It is well known that 
the $\Cal O(G\times H)$-module $b\Cal OGe$ has a unique indecomposable summand $U$ with vertex $\D P$: 
this is the Green correspondent of the $\Cal O(H\times H)$-module $\Cal OHe$ and of the $\Cal O(G\times G)$-module $\Cal OGb$ 
with respect to $\D P$.
 We will denote this module $U$ by $\fr{Gr} (G,b,H)$ (or simply by $\fr{Gr}(G,b)$ if $H=N_G (P)$). In particular, by the properties of the Green correspondence, 
we have isomorphisms of $\Cal O(G\times H)$-modules
\begin{enumerate}[(i)]
\item $b\Cal OG \simeq \fr{Gr}(G,b,H) \oplus V$ where $V$ is $\D \Cal S$-projective; and 
\item $\Cal OGe \simeq \fr{Gr}(G,b,H) \oplus W$ where $W$ is $\D \Cal S$-projective. 
\end{enumerate}
We will use these observations repeatedly. 

The following seems to be an appropriate refinement of the Brou{\'e} conjecture when one considers property (IRC).

\begin{conj}\label{rBr}
Assume Hypothesis~\emph{\ref{hypbl}}. Suppose $P$ is abelian and $H=N_G(P)$. Then there exists a 
splendid tilting complex of $\Cal OGb$-$\Cal OHe$-bimodules that is $\D\Cal S$-tempered with pivot $\fr{Gr} (G,b)$. 
\end{conj}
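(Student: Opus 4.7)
Since Conjecture~\ref{rBr} refines the Brou\'e/Rickard conjecture, any proof plan must be conditional: one assumes that a splendid tilting complex exists and then tries to arrange the additional $\D\Cal S$-tempering. The starting observation I would build on is that, by the Green correspondence, $b\Cal OG \simeq \fr{Gr}(G,b) \oplus V$ and $\Cal OGe \simeq \fr{Gr}(G,b) \oplus W$ with $V$ and $W$ relatively $\D\Cal S$-projective. Hence $\fr{Gr}(G,b)$ is the unique indecomposable $p$-permutation $\Cal O(G\times H)$-bimodule (on which $b$ and $e$ act as the identity) whose vertex is not $\D\Cal S$-conjugate into $\D\Cal S$. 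Any $\D\Cal S$-tempered pivot must therefore be a sum of copies of $\fr{Gr}(G,b)$, and the conjecture is asking that this sum consist of a single copy. This rigidity is the reason the refinement is plausible.

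The first concrete step is to verify two accessible classes. Case (a): nilpotent blocks. Puig's theorem says that $\fr{Gr}(G,b)$ itself induces a Morita equivalence between $\Cal OGb$ and $\Cal OHe$, so one takes $X$ concentrated in degree $0$ equal to $\fr{Gr}(G,b)$, and the tempering condition is vacuous. Case (b): blocks with defect group of order $p$, where Rickard and Rouquier constructed an explicit two-term splendid tilting complex from the Brauer tree. Here one computes the vertices of the summands of each term of the complex directly and, after stripping all $\D\Cal S$-projective summands, confirms that exactly one copy of $\fr{Gr}(G,b)$ remains in one term. A third accessible family consists of blocks admitting a splendid Morita equivalence with their Brauer correspondent (as happens for certain principal blocks with controlled fusion), which one treats exactly as the nilpotent case.

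For the general abelian defect case the plan is to apply the Brauer construction $(-)(\D P)$ termwise to a hypothetical splendid tilting complex $Y$. Because every $\D\Cal S$-projective $p$-permutation module has zero Brauer quotient at $\D P$, the complex $Y(\D P)$ records precisely the non-$\D\Cal S$-projective information of $Y$; equivalently, $Y$ is $\D\Cal S$-tempered with pivot $\fr{Gr}(G,b)$ if and only if $Y(\D P)$ is quasi-isomorphic to a single bimodule, namely the Brauer quotient of $\fr{Gr}(G,b)$. Now $Y(\D P)$ is itself a splendid Rickard equivalence between local blocks which, because $P$ is abelian and $H = N_G(P)$, are essentially the same block of $kC_G(P)$. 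Theorem~\ref{corrthmgen} pins down which bimodule the pivot has to be: the isomorphism of virtual character groups modulo $\Cal I$ that $Y$ necessarily induces agrees with the one induced by ordinary induction and restriction, forcing the surviving indecomposable summand to be $\fr{Gr}(G,b)$ once it is unique. The main obstacle is therefore the local-to-Morita reduction, i.e.\ showing that one can represent any splendid self-Rickard equivalence of the local block by a bimodule rather than a genuine complex. This is a genuine strengthening of Brou\'e's conjecture, and outside the cases above it seems tractable only when an explicit splendid equivalence is in hand.
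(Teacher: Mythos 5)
The statement you were asked to address is a conjecture, and the paper does not prove it in general — it only establishes Proposition~\ref{BrA} (the character-theoretic consequence) and then verifies the conjecture in two special cases: cyclic defect groups (Theorem~\ref{cyclicthm}) and blocks of $p$-nilpotent groups with abelian defect group (Section~\ref{pnilpab}). You correctly recognise that any ``proof'' must be conditional, and your opening observation about the uniqueness of the non-$\D\Cal S$-projective summand of $b\Cal OGe$ matches the paper's framing. But the two concrete cases you sketch contain a substantive error and an unnecessary restriction.

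Your case (a) is the serious gap. You claim that for a nilpotent block one may take $X = \fr{Gr}(G,b)$ concentrated in degree~$0$ and that ``the tempering condition is vacuous.'' This is false in general. The Green correspondent $\fr{Gr}(G,b)$ always induces a \emph{stable} equivalence between $\Cal OGb$ and $\Cal OHe$, and for a nilpotent block this does lift to a Morita equivalence — but the bimodule implementing the Morita equivalence is built from the (endo-permutation) source of the unique simple $\Cal OLb$-module, which in general does not have trivial source, hence is not a $p$-permutation bimodule. Consequently $\fr{Gr}(G,b)$ placed in degree~$0$ is not a Rickard tilting complex. This is precisely the obstruction that forces the paper to develop the machinery of Section~\ref{pnilpab}: Rickard's endo-split $p$-permutation resolutions, Dade's classification, Lemmas~\ref{endolem1}--\ref{endolem2} about $D_k(P,\Cal S)$ being generated by relative syzygies $[\Om_{P/Q}]$ with $Q\in\Cal S$, and then Theorem~\ref{endothm}/Corollary~\ref{endocor} to produce an $\Cal S$-tempered resolution of the source. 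The resulting complex $T$ in Theorem~\ref{pnilpspl} is a genuine multi-term complex whose pivot is $\widetilde{\fr{Gr}}(G,b)$; it collapses to a single bimodule only when the source is trivial. Your case (b) restricts to defect groups of order $p$, whereas the paper's Theorem~\ref{cyclicthm} handles arbitrary cyclic defect groups, by induction on $|P|$ using Rouquier's two-term complex together with Chuang's composition lemma (Lemma~\ref{comp}) to control the vertices that appear after the $\otimes_{\Cal OL}$ composition.

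Your proposed general reduction via the Brauer construction $(-)(\D P)$ is a reasonable heuristic (a $\D\Cal S$-projective $p$-permutation module has vanishing Brauer quotient at $\D P$, so the tempering condition can be read off the Brauer quotient complex), and you correctly acknowledge that you are claiming something strictly stronger than Broué's conjecture. But it does not supply an argument for either of the accessible families and is not pursued in the paper. To repair your case (a) you would need to run the actual $p$-nilpotent argument — in particular to know that the relevant source lies in $D_k^{\Om|\Cal S}(P)$ — and this is exactly where Remark~\ref{pnilprem} warns that the argument fails for non-abelian $P$ (and indeed the paper exhibits a quaternion counterexample in Section~\ref{exampleQ8}).
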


It may be of interest to consider a similar statement when $H$ is an arbitrary subgroup of $G$ containing $N_G (P)$. However, this more general situation is beyond the scope of the present paper.

\begin{remark} In the case when $\Cal S=\{ \mbf 1 \}$, Conjecture~\ref{rBr} corresponds precisely to the well-established approach of constructing a derived equivalence from the stable equivalence given by the Green correspondence (see e.g.\ \cite{Linckelmann1998}).
\end{remark}

It seems plausible that in many of the cases for which the Brou{\'e} conjecture and Rickard's refinement have been proved so far (a list is available e.g.\ in~\cite{ChuangRickard2007}), the double complexes constructed in the course of the proofs satisfy the more precise condition of Conjecture~\ref{rBr}. In Sections~\ref{cyclic} and~\ref{pnilpab} we verify this in two special cases.  

Now we establish character-theoretic consequences of Conjecture~\ref{rBr}. 
Suppose $G$ and $H$ are finite groups, and let $b\in \Bl(G)$ and $e\in \Bl(H)$. 
Let $\mu\in \Cal C(G \times H, b\otimes \bar e)$. 
Then $\mu$ gives rise to abelian group homomorphisms $I_{\mu}\colon\Cal C(H,e) \ra \Cal C(G,b)$ and $R_{\mu}\colon\Cal C(G,\bar b) \ra \Cal C(H,\bar e)$, defined as follows:
\begin{eqnarray*}
 I_{\mu} (\phi) (g) & = & \frac{1}{|H|} \sum_{h\in H} \mu(g,h) \phi(h) \quad \text{and} \\
R_{\mu} (\chi) (h) & = & \frac{1}{|G|} \sum_{g\in G} \mu(g,h) \chi(g)
\end{eqnarray*}
(see~\cite{Broue1990}, Section 1).

Now assume Hypothesis~\ref{hypbl}. Define 
\begin{equation}\label{omega}
\om(G,b,H)=\sum_{\chi\in \Irr(G,b)} \sum_{\phi\in\Irr(H,e)} \lan \Res^G_H \chi, \phi \ran (\chi \times \bar{\phi}),
\end{equation}
so that $I_{\om(G,b,H)} (\phi)=\Proj_b \Ind_H^G \phi$ and $R_{\ol{\om(G,b,H)}} (\chi)=\Proj_e \Res_H^G \chi$ for all $\phi\in\Irr(H,e)$ and $\chi\in\Irr(G,b)$. Moreover, $\om(G,b,H)$ is the character afforded by the $\Cal O(G\times H)$-lattice $b\Cal OGe$.

If $X$ is a chain complex of $\Cal OG$-lattices and $\chi_i$ is the character afforded by $X^i$, the \emph{virtual character afforded by $X$} is defined as 
$\sum_{i\in \mZ} (-1)^i \chi_i$. The following lemma follows immediately from Definition~\ref{tempered}, Theorem~\ref{relpr} and the properties of $\fr{Gr}(G,b)$ stated above.

\begin{lem}\label{mu1}
Assume Hypothesis~\emph{\ref{hypbl}}. Suppose $X$ is an $\Cal S$-tempered splendid 
tilting complex of $\O G b$-$\O He$-bimodules with pivot $\fr{Gr}(G,b)$, and let $X^i$ be the term of $X$ which $\fr{Gr}(G,b,H)$ is a summand of. 
Let $\mu$ be the character afforded by $X$. Then
$$
\mu \equiv (-1)^i \om(G,b,H) \dmod \Cal I(G\times H,\D P, \D \Cal S). 
$$
\end{lem}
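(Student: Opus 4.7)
The plan is essentially bookkeeping: unpack the definition of the character afforded by $X$, discard the $\D\Cal S$-projective terms using Theorem~\ref{relpr}, and identify what remains with $\om(G,b,H)$ via the Green-correspondence description of $b\Cal OGe$ recalled immediately before the lemma.

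Concretely, write $\chi_j$ for the character afforded by $X^j$, so that $\mu=\sum_j (-1)^j\chi_j$ by definition. Since $X$ is $\D\Cal S$-tempered with pivot $\fr{Gr}(G,b,H)$, every term $X^j$ with $j\ne i$ is $\D\Cal S$-projective, while $X^i\simeq\fr{Gr}(G,b,H)\oplus N$ for some $\D\Cal S$-projective $\Cal O(G\times H)$-lattice $N$. Because $\D\Cal S$ is downward closed (as $\Cal S$ is), Theorem~\ref{relpr} applied inside $G\times H$ with $p$-subgroup $\D P$ shows that every $\D\Cal S$-projective lattice affords a character in $\Cal I(G\times H,\D P,\D\Cal S)$; consequently $\mu\equiv (-1)^i\chi_{\fr{Gr}(G,b,H)}\pmod{\Cal I(G\times H,\D P,\D\Cal S)}$. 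To identify $\chi_{\fr{Gr}(G,b,H)}$ with $\om(G,b,H)$ modulo the same subgroup, I would multiply the decomposition $b\Cal OG\simeq\fr{Gr}(G,b,H)\oplus V$ (recalled before Conjecture~\ref{rBr}) on the right by $e$, obtaining $b\Cal OGe\simeq\fr{Gr}(G,b,H)\oplus Ve$ with $Ve$ still $\D\Cal S$-projective. Since $b\Cal OGe$ affords $\om(G,b,H)$ by the paragraph following~\eqref{omega} and $Ve$ affords a character in $\Cal I(G\times H,\D P,\D\Cal S)$ by Theorem~\ref{relpr}, combining with the previous step finishes the argument.

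I do not anticipate any substantive obstacle: the proof is essentially tautological given Theorem~\ref{relpr} and the recalled decomposition of $b\Cal OG$. The only points needing quick verification are that $\D\Cal S$ is downward closed and that summands of $\D\Cal S$-projective modules remain $\D\Cal S$-projective, both immediate from the definitions given just before Theorem~\ref{corrthmgen}.
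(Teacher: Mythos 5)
Your proof is correct and follows exactly the route the paper has in mind; the paper itself gives no written argument, asserting only that the lemma ``follows immediately from Definition~\ref{tempered}, Theorem~\ref{relpr} and the properties of $\fr{Gr}(G,b)$ stated above,'' and your write-up is precisely the unpacking of those three ingredients. (A minor remark: rather than hitting $b\Cal OG\simeq\fr{Gr}(G,b,H)\oplus V$ with $e$ on the right, you could equally hit $\Cal OGe\simeq\fr{Gr}(G,b,H)\oplus W$ with $b$ on the left; both give $b\Cal OGe\simeq\fr{Gr}(G,b,H)\oplus(\D\Cal S\text{-projective})$, with the indecomposable pivot absorbing the idempotent because it is already a summand of $b\Cal OGe$.)
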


\begin{lem}\label{mu2}
 Let $G$ and $H$ be finite groups with a common $p$-subgroup $P$ and suppose $\Cal S$ is a downward closed set of subgroups of $P$. 
If $\mu\in \Cal I(G\times H,\D P,\D\Cal S)$ then $I_{\mu} (\phi) \in \Cal I(G,P,\Cal S)$ for all $\phi\in \Cal C (H)$ and $R_{\mu}(\chi)\in \Cal I(H,P,\Cal S)$ for all $\chi\in \Cal C(G)$.
\end{lem}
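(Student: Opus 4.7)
The plan is to prove the assertion for $I_\mu$; the one for $R_\mu$ follows by the symmetric argument with the roles of $G$ and $H$ interchanged. By $\mZ$-linearity I reduce to $\mu = \Ind_L^{G \times H}\psi$, where $L \le G \times H$ satisfies $L \cap \D P = \D Q$ for some $Q \in \Cal S$ and $\D Q$ is a Sylow $p$-subgroup of $L$. Writing $\psi$ and $\phi$ as differences of genuine characters, I further assume that $\psi$ is afforded by an $\Cal OL$-lattice $W$ and $\phi$ by an $\Cal OH$-lattice $N$. Adopting the bimodule convention $gmh = (g,h^{-1})m$, the virtual character $I_\mu(\phi)$ is afforded by the $\Cal OG$-lattice $M \otimes_{\Cal OH} N$, where $M = \Ind_L^{G\times H} W$; this rests only on the standard identity $M \otimes_{\Cal OH} N \simeq (M \otimes_{\Cal O} N)^H$ in characteristic zero.

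Since $\D Q$ is a Sylow $p$-subgroup of $L$, the $\Cal OL$-lattice $W$ is relatively $\D Q$-projective, and hence $M$ is $\D Q$-projective as an $\Cal O(G\times H)$-lattice; in particular, $M$ is a direct summand of $\Ind_{\D Q}^{G \times H}(\tilde U)$ for some $\Cal O\D Q$-lattice $\tilde U$. The key step is the natural $\Cal OG$-module isomorphism
\[
\Ind_{\D Q}^{G \times H}(\tilde U) \otimes_{\Cal OH} N \;\cong\; \Ind_Q^G\!\bigl(\tilde U \otimes_{\Cal O} \Res^H_Q N\bigr),
\]
in which $Q$ acts diagonally on the right via $\D Q \cong Q$ and via $Q \le H$. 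To verify this one first observes that the bimodule convention gives an isomorphism $\Cal O(G \times H) \otimes_{\Cal OH} N \cong \Cal OG \otimes_{\Cal O} N$ of left $\Cal OG$-modules, under which the residual right action of $\D Q$ becomes $(g_0 \otimes n)(q,q) = g_0 q \otimes q^{-1} n$; tensoring over $\Cal O\D Q$ with $\tilde U$ then yields $\Cal OG \otimes_{\Cal OQ}(\tilde U \otimes_{\Cal O} N)$ with diagonal $Q$-action, that is, $\Ind_Q^G(\tilde U \otimes \Res^H_Q N)$.

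Taking the summand corresponding to $M$, the module $M \otimes_{\Cal OH} N$ is a $Q$-projective $\Cal OG$-lattice, so Theorem~\ref{relpr} places $I_\mu(\phi)$ inside $\Cal I(G, Q, \Cal A(Q))$. To conclude I observe that $\Cal I(G, Q, \Cal A(Q)) \subseteq \Cal I(G, P, \Cal S)$: for a generator $\Ind_{L'}^G \xi$ the subgroup $L' \cap Q$ is a Sylow $p$-subgroup of $L'$, so the inclusion $L' \cap Q \le L' \cap P$ of $p$-subgroups of $L'$ forces $L' \cap P = L' \cap Q \le Q$, which belongs to $\Cal S$ by downward closure. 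The main obstacle is the bimodule identity displayed above; although the verification is mechanical, the two commuting right actions and the sign built into the convention $gmh=(g,h^{-1})m$ demand careful bookkeeping.
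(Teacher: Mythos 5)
Your argument is correct and matches the paper's proof: the same reduction to a single induced generator $\Ind_L^{G\times H}\psi$ with $\D Q=L\cap\D P$ a Sylow $p$-subgroup of $L$, the same bimodule identity $\Ind_{\D Q}^{G\times H}(\tilde U)\otimes_{\O H} N\cong\Ind_Q^G(\tilde U\otimes\Res^H_Q N)$, and the same appeal to Theorem~\ref{relpr}. You merely spell out two steps the paper leaves implicit (a sketch of the bimodule isomorphism and the inclusion $\Cal I(G,Q,\Cal A(Q))\subseteq\Cal I(G,P,\Cal S)$), but the route is identical.
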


\begin{proof} It suffices to prove the assertion for $I_{\mu}$ as the second statement follows after we swap $G$ and $H$. 
 Without loss of generality, $\mu= \Ind_{L}^{G\times H} \nu$ for some $\nu\in\Irr(L)$ where $L$ is a subgroup of $G\times H$ such that 
$\D Q \coleq L\cap \D P \in \Cal S$ and $\D Q$ 
is a Sylow $p$-subgroup of $L$. Let $\chi\in\Irr(G)$, and let $U$ be an $\O L$-lattice affording $\nu$. Then $V=\Ind_{L}^{G\times H} U$ affords $\mu$ and is relatively 
$\D Q$-projective, so $V \di \Ind_{\D Q}^{G\times H} W$ for some $\O Q$-lattice $W$. We may assume that $\phi\in\Irr(H)$. Let $N$ be an $\O H$-lattice affording $\phi$. By Theorem~\ref{relpr}, it suffices to show that 
$V \otimes_{\O H} N$ is $Q$-projective. In fact, it is enough to prove that 
$(\Ind_{\D Q}^{G\times H} W) \otimes_{\O H} N$ is $Q$-projective. However, it is not difficult to see that
$$
(\Ind_{\D Q}^{G\times H} W) \otimes_{\O H} N \simeq \Ind_{Q}^G (W\otimes_{\O} \Res^H_Q N),  \qquad
$$
where an isomorphism (of $\Cal OG$-modules) is given by
\[
 \qquad\qquad\qquad ((g,1) \otimes w) \otimes n \longmapsfrom g \otimes (w\otimes n), \quad g\in G,\, w\in W,\, n\in N,
\]
and the result follows.
\end{proof}

\begin{prop}\label{BrA} 
Assume Hypothesis~\emph{\ref{hypbl}}. Suppose $X$ is a splendid 
tilting complex of $\O G b$-$\O He$-bimodules that is $\Cal S$-tempered with pivot $\fr{Gr}(G,b)$, and let $X^i$ be the term of $X$ which $\fr{Gr}(G,b,H)$ is a summand of. 
Let $\mu$ be the character afforded by $X$. 
Then
\begin{equation}\label{BrA1}
R_{\bar{\mu}}(\chi) \equiv (-1)^i \Proj_e \Res^G_H \chi \dmod  \Cal I(H,P,\Cal S) \qquad \text{for all } \chi\in \Cal C(G,b).
\end{equation}
Moreover, the map $F=R_{\bar\mu}|_{\pm \Irr_0 (G,b)}$ is a bijection between $\pm \Irr_0 (G,b)$ and $\pm \Irr_0 (H,e)$.
In particular, Conjecture~\emph{\ref{rBr}} implies the last statement of Conjecture~\emph{\ref{conjgen}}.
\end{prop}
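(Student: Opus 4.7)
The plan is to establish~\eqref{BrA1} by combining Lemmas~\ref{mu1} and~\ref{mu2}, and then to obtain the bijection $F$ by invoking Brou{\'e}'s perfect isometry theorem for Rickard tilting complexes together with a degree calculation in the spirit of Proposition~\ref{degprop}. What makes everything fit together is that the same map $R_{\bar\mu}$ simultaneously realises the perfect isometry and the congruence~\eqref{BrA1}.

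For~\eqref{BrA1}, I would first observe that the subgroup $\Cal I(G\times H, \D P, \D\Cal S)$ of $\Cal C(G\times H)$ is stable under dualisation $\nu\mapsto\bar\nu$, since $\ol{\Ind_L^{G\times H}\phi}=\Ind_L^{G\times H}\bar\phi$ for every subgroup $L\le G\times H$ and every $\phi\in\Cal C(L)$. Combined with Lemma~\ref{mu1} this gives
$$
\bar\mu \equiv (-1)^i \ol{\om(G,b,H)} \dmod \Cal I(G\times H, \D P, \D\Cal S).
$$
Applying the $\mZ$-linear map $\nu\mapsto R_\nu(\chi)$ and using Lemma~\ref{mu2} yields
$$
R_{\bar\mu}(\chi) \equiv (-1)^i R_{\ol{\om(G,b,H)}}(\chi) = (-1)^i \Proj_e \Res^G_H \chi \dmod \Cal I(H,P,\Cal S)
$$
for every $\chi\in \Cal C(G,b)$, which is exactly~\eqref{BrA1}.

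Since $X$ is a Rickard tilting complex of $\O Gb$-$\O He$-bimodules, the virtual character $\mu$ defines a perfect isometry between the blocks $b$ and $e$ by a theorem of Brou{\'e}~\cite{Broue1990}; in particular $R_{\bar\mu}$ sends $\pm\Irr(G,b)$ bijectively onto $\pm\Irr(H,e)$. It remains only to verify that this bijection restricts to $\pm\Irr_0(G,b)\to \pm\Irr_0(H,e)$. For $\chi\in\Irr_0(G,b)$ I would apply $\Ind^G_H$ to~\eqref{BrA1}; using Proposition~\ref{otherblocks}\eqref{ob1} and Theorem~\ref{corrthmgen}\eqref{i2}, the virtual character $\Proj_e\Res^G_H\chi$ may be replaced by $\Proj_P\Res^G_H\chi$ modulo $\Cal I(G,P,\Cal S)$, and Theorem~\ref{corrthmgen}\eqref{i4} reduces the latter to $\chi$, giving
$$
\Ind^G_H F(\chi) \equiv (-1)^i \chi \dmod \Cal I(G,P,\Cal S).
$$
Since each element of $\Cal S$ is a proper subgroup of $P$, every virtual character in $\Cal I(G,P,\Cal S)$ has degree divisible by $p^{v_p(|G:P|)+1}$; evaluating the displayed congruence at $1$ therefore forces $v_p(F(\chi)(1))=v_p(|H:P|)$, so $F(\chi)\in \pm\Irr_0(H,e)$. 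Applying the same reasoning to the dual splendid tilting complex $X^*$ (whose pivot is the Green correspondent of $e\O H$ with respect to $\D P$) supplies the reverse inclusion, hence the bijectivity of $F$.

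The final sentence of the proposition is then immediate: for each block $b$ of $G$ with defect group the abelian $P$, Conjecture~\ref{rBr} furnishes a complex $X$ satisfying the hypotheses of the proposition, producing (IRC-Bl) for $b$ with respect to $N_G(P)$; combining these over all such $b$ yields (IRC) for the triple $(G,P,N_G(P))$ via Proposition~\ref{otherblocks}. The crucial non-routine step in the argument is the appeal to Brou{\'e}'s perfect isometry theorem, which is what upgrades the character-theoretic congruence~\eqref{BrA1} from a relation modulo $\Cal I(H,P,\Cal S)$ to an actual height-preserving bijection of irreducible characters.
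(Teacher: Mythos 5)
Your overall strategy coincides with the paper's: you obtain~\eqref{BrA1} from Lemmas~\ref{mu1} and~\ref{mu2} (your observation that $\Cal I(G\times H,\D P,\D\Cal S)$ is closed under duality is exactly what makes this step work), and you get the signed bijection between $\pm\Irr(G,b)$ and $\pm\Irr(H,e)$ from Brou\'e's theorem that $R_{\bar\mu}$ and $I_{\mu}$ are mutually inverse isometries. The one problematic step is your height-zero argument. To replace $\Proj_e\Res^G_H\chi$ by $\Proj_P\Res^G_H\chi$ modulo $\Cal I(H,P,\Cal S)$ you appeal to Proposition~\ref{otherblocks}\eqref{ob1}, but that result only covers blocks $f$ of $H$ with $f^G$ undefined or $f^G\ne b$. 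Under Hypothesis~\ref{hypbl} the subgroup $H$ may properly contain $N_G(P)$, and then a block $f\ne e$ of $H$ with a defect group \emph{strictly} contained in $P$ and $f^G=b$ is not ruled out (Brauer's first main theorem is only a bijection on blocks with defect group $P$); for such $f$ nothing you cite places $\Proj_f\Res^G_H\chi$ in $\Cal I(H,P,\Cal S)$, so the congruence $\Ind^G_H F(\chi)\equiv\pm\chi$ modulo $\Cal I(G,P,\Cal S)$ is not established in the stated generality. (When $H=N_G(P)$ your step is fine: $P\le O_p(H)$ forces every block of $H$ to have a defect group containing $P$, so all blocks with defect group contained in $P$ have defect group exactly $P$ and $e$ is the unique one inducing $b$; in particular the application to Conjecture~\ref{rBr} is unaffected.)

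The paper sidesteps the $H$-side block decomposition entirely by running the valuation argument in the other direction: for $\phi\in\pm\Irr(H,e)$ one has $I_{\mu}(\phi)\equiv\pm\Ind_H^G\phi$ modulo $\Cal I(G,P,\Cal S)$, because Lemmas~\ref{mu1} and~\ref{mu2} give $I_{\mu}(\phi)\equiv\pm\Proj_b\Ind_H^G\phi$, and Proposition~\ref{otherblocks}\eqref{ob2} applies to \emph{every} block $c\ne b=e^G$ of $G$, with no defect-group caveat. Since every element of $\Cal I(G,P,\Cal S)$ has degree divisible by $p^{v_p(|G:P|)+1}$, this yields $v_p(I_{\mu}(\phi)(1))=v_p(|G:P|)$ if and only if $v_p(\phi(1))=v_p(|H:P|)$, and the mutual inverseness of $I_{\mu}$ and $R_{\bar\mu}$ then gives the bijection between $\pm\Irr_0(G,b)$ and $\pm\Irr_0(H,e)$ in both directions at once. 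Your closing remark about the dual complex is essentially this clean direction (the restriction map attached to $X^*$ is $I_{\mu}$), so your argument is repaired by running only that direction and exploiting the resulting equivalence of valuations, rather than the $\Proj_e\leftrightarrow\Proj_P$ swap on the $H$-side.
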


\begin{proof}
 The first statement is immediate from Lemmas~\ref{mu1} and~\ref{mu2}. It is well-known (see~\cite{Broue1990}, Th{\'e}or{\`e}me 3.1) that $R_{\bar\mu}$ and $I_{\mu}$ are mutually 
inverse isometries between $\Cal C(G,b)$ and $\Cal C(H,e)$, and hence restrict to signed bijections between $\pm\Irr (G,b)$ and $\pm\Irr (H,e)$. 
Let $\phi\in \pm \Irr(H,e)$. 
By Theorem~\ref{corrthmgen},
$$
I_{\mu} (\phi) \equiv \pm \Ind_H^G \phi \dmod \Cal I(G,P,\Cal S).
$$
Therefore, $v_p (I_{\mu}(\phi)(1))=v_p(|G:P|)$ if and only if $v_p(\phi(1))=v_p(|H:P|)$. The result follows.
\end{proof}

\subsection{Blocks with cyclic defect groups}\label{cyclic}

In this section we show that the complex constructed by Rouquier~\cite{Rouquier1998} in the case of a cyclic defect group $P$ satisfies the requirement
of Conjecture~\ref{rBr}.

First we need a result on composition of tempered complexes of $p$-permutation modules.
Recall that if $G$ and $H$ are groups with a common $p$-subgroup $S$, we say that \emph{$G$ controls $H$-fusion of subgroups of $S$} if, whenever $Q\le S$ and $h\in G_2$ satisfy
$\ls{h}{Q}\subseteq S$, there exists $g\in H$ such that $\ls{g}{x}=\ls{h}{x}$ for all $x\in Q$. The following lemma is a combination of the statement and the proof of~\cite{Chuang1999}, Lemma 8.2.
 
\begin{lem}[Chuang]\label{comp} 
Let $G$, $H$ and $L$ be finite groups with a common $p$-subgroup $P$. Let $M$ be a $p$-permutation 
$\Cal O(G\times H)$-module with vertex $\D S$ and let $N$ be a $p$-permutation $\Cal O(H\times L)$-module with vertex $\D Q$, where $S$ and $Q$ are subgroups of $P$. Then $U=M\otimes_{\Cal OH} N$ is a $p$-permutation module which is relatively projective with respect to the set
$$
\{ \ls{(1,h)} \D (S\cap \ls{h^{-1}} Q) \mid h\in H \}
$$
of subgroups of $P\times P$. 
Moreover, if $G$ controls the $H$-fusion of subgroups of $P$ then $U$ is $\D Q$-projective; and if $L$ controls the $H$-fusion of subgroups of $P$ then $U$ is $\D S$-projective.
\end{lem}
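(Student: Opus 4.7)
The plan is to prove this via a standard Mackey-type decomposition for tensor products of permutation bimodules, followed by a conjugation argument using control of fusion.

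First, I would reduce to permutation modules. Since $M$ is a $p$-permutation $\Cal O(G\times H)$-module with vertex $\D S$, it is a direct summand of $\Ind_{\D S}^{G\times H} \Cal O \simeq \Cal O[(G\times H)/\D S]$; similarly, $N$ is a summand of $\Cal O[(H\times L)/\D Q]$. Hence $U=M\otimes_{\Cal OH} N$ is a direct summand of $\Cal O[(G\times H)/\D S]\otimes_{\Cal OH} \Cal O[(H\times L)/\D Q]$, and in particular $U$ is a $p$-permutation $\Cal O(G\times L)$-module.

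Next, I would compute this tensor product explicitly. Identifying the $(G\times H)$-set $(G\times H)/\D S$ with the $(G,H)$-biset $G\times^S H$ (where $S$ acts on $G$ by right multiplication and on $H$ by left multiplication), and identifying $(H\times L)/\D Q$ with $H\times^Q L$ in the analogous way, the composition of bisets $G\times^S H \otimes_H H\times^Q L$ admits a Mackey-style decomposition indexed by double cosets $h\in S\backslash H/Q$. Computing the stabilizer of the distinguished element in each orbit, tracking the bimodule convention $g\cdot m\cdot h=(g,h^{-1})m$ carefully, identifies the $h$-orbit with $(G\times L)/\ls{(1,h)}\D(S\cap \ls{h^{-1}}Q)$. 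Summing over all double cosets and using that $U$ is a direct summand gives the first assertion of the lemma.

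For the ``moreover'' part, I would use the fusion hypothesis to collapse each vertex candidate into a conjugate of $\D Q$ or $\D S$. Fix $h\in H$ and set $T=S\cap \ls{h^{-1}}Q$, so that $T\le P$ and $hTh^{-1}\le Q\le P$. If $G$ controls the $H$-fusion of subgroups of $P$, then there exists $g\in G$ with $gxg^{-1}=hxh^{-1}$ for all $x\in T$, whence
$$
(g,1)\,\ls{(1,h)}\D T\,(g,1)^{-1} = \{(gxg^{-1},hxh^{-1})\mid x\in T\} = \D(hTh^{-1}) \le \D Q,
$$
so $U$ is $\D Q$-projective. If instead $L$ controls the $H$-fusion, applying the hypothesis to $h^{-1}\in H$ and $hTh^{-1}\le P$ yields $l\in L$ with $lyl^{-1}=h^{-1}yh$ for all $y\in hTh^{-1}$, and conjugation by $(1,l)$ sends $\ls{(1,h)}\D T$ to $\D T\le \D S$, giving $\D S$-projectivity.

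I expect the main obstacle to be the Mackey-style decomposition itself: one must track the left/right biset conventions through a tensor product of two transitive permutation bimodules over $\Cal OH$ and verify that the stabilizer of the distinguished element in the $h$-indexed orbit has precisely the form $\ls{(1,h)}\D(S\cap \ls{h^{-1}}Q)$.
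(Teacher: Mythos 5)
Your proof is correct, and it is essentially the standard argument: the paper itself gives no proof but refers to Chuang's Lemma 8.2, whose proof proceeds exactly as you do — write the trivial-source bimodules as summands of (sums of) $\Ind_{\D S}^{G\times H}\Cal O$ and $\Ind_{\D Q}^{H\times L}\Cal O$, decompose the tensor product over $\Cal OH$ into permutation modules on twisted diagonals indexed by $S$-$Q$-double cosets, and then collapse the twisted diagonals into $\D Q$ (resp.\ $\D S$) using control of fusion. The only cosmetic point is that "vertex $\D S$" should be read so that every indecomposable summand of $M$ has vertex contained in $\D S$ (replacing $S$ by a subgroup $S''\le S$ only shrinks the intersections $S''\cap\ls{h^{-1}}Q$, so your argument goes through unchanged).
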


\begin{thm}\label{cyclicthm}
Conjecture~\ref{rBr} holds when $P$ is cyclic. 
\end{thm}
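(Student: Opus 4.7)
My plan is to verify that Rouquier's splendid Rickard tilting complex~\cite{Rouquier1998} $X$ between $\Cal OGb$ and $\Cal OHe$ satisfies the additional $\D\Cal S$-tempered condition with pivot $\fr{Gr}(G,b)$. Recall that $X$ is a bounded complex of $p$-permutation $\Cal O(G\times H)$-modules, relatively $\D P$-projective, inducing a derived equivalence between $\Cal OGb$ and $\Cal OHe$.

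I would proceed by induction on $|P|$. For the base case $|P|=p$, Rouquier's construction lifts the stable equivalence of Morita type given by $\fr{Gr}(G,b)$ to a derived equivalence by a two-term complex $X^0 \to X^1$, in which $X^0 = \fr{Gr}(G,b) \oplus Y^0$ and $X^1 = Y^1$ with $Y^0, Y^1$ projective $\Cal O(G\times H)$-bimodules. Projective bimodules have vertex $\D\mathbf{1}$, and since $\mathbf{1}\in \Cal S$ whenever $H\subsetneq G$, the summands $Y^0$ and $Y^1$ are $\D\Cal S$-projective, giving the required $\D\Cal S$-tempered structure with pivot $\fr{Gr}(G,b)$. (If $H=G$ the assertion is trivial, taking $X=\Cal OGb$ concentrated in degree $0$.)

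For the inductive step, let $P_1$ be the unique minimal nontrivial subgroup of $P$, set $L=N_G(P_1)$ and $L_H=L\cap H$, and let $b_1\in\Bl(L)$ and $e_1\in\Bl(L_H)$ be the Brauer correspondents of $b$ and $e$ respectively (both still have defect group $P$). Rouquier's first reduction yields a splendid Morita equivalence $b\sim b_1$ realised by a bimodule $U$ with vertex $\D P$; applying the inductive hypothesis to $b_1$ (viewed as a block of a group in which $P_1$ is normal, so that the problem reduces to one with a smaller cyclic defect group after passing through the quotient) gives a $\D\Cal S$-tempered splendid complex $Y$ between $b_1$ and $e_1$ with the required pivot. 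The plan is to combine $U$, $Y$, and the corresponding splendid Morita equivalence between $e_1$ and $e$ via tensor products, using Lemma~\ref{comp} to control the vertices of the resulting complex, and to check that the composition yields $\fr{Gr}(G,b)$ as the pivot.

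The main obstacle is to ensure that every indecomposable non-pivot summand appearing along the way has a vertex $\D Q$ with $Q\in\Cal S(G,P,H)$. A priori, not every proper subgroup of $P$ belongs to $\Cal S$, so this matching is non-trivial; in the cyclic case, however, it is governed by the Brauer-pair structure of $(G,b)$, which is completely encoded in the Brauer tree. The key point is that for each proper $Q<P$ occurring as a vertex of some non-pivot summand there is a Brauer pair $(Q,b_Q)$ with $N_G(Q,b_Q)\not\subseteq H$; choosing $t\in N_G(Q,b_Q)\setminus H$ then gives $\ls{t}{Q}=Q\subseteq \ls{t}{P}$, witnessing $Q\in\Cal S$. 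Carrying out this verification using the explicit vertex/edge description of Rouquier's complex is the technical crux.
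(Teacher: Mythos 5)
Your overall shape (induction, Rouquier's complex, Chuang's composition lemma, pivot identified via Green correspondence) is the right one, but the inductive step as you describe it has two genuine problems. First, Rouquier's reduction from $G$ to $L=N_G(P_1)$ is \emph{not} a splendid Morita equivalence realised by a single bimodule $U$ with vertex $\D P$: the blocks $b$ and $b_1$ generally have different Brauer trees (the local one is a star), so no such Morita bimodule exists. What Rouquier provides (his Theorem 10.3) is a two-term splendid Rickard complex $0\to N\to \fr{Gr}(G,b,L)\to 0$ with $N$ a summand of $b\O G\otimes_{\O S}\O Lf$, $S=O_p(G)$; any argument must compose complexes, not bimodules. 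Second, the proposed recursion "pass to $L/P_1$ to get a smaller cyclic defect group" is unjustified: $P_1$ is normal but not central in $L$, the block $b_1$ of $L$ still has defect group $P$, and there is no canonical deflation compatible with $\fr{Gr}$ and with $\Cal S$. The paper instead keeps the defect group $P$ throughout and effectively inducts on $|P:O_p(G)|$ (base case: $P$ normal, where there is nothing to prove), taking $Q$ with $|Q:O_p(G)|=p$, $L=N_G(Q)$, and composing Rouquier's two-term complex for $(G,b,L)$ with the inductively obtained $\Cal S$-tempered complex for $(L,f,H)$, noting $\Cal S(L,P,H)\subseteq\Cal S(G,P,H)$.

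The issue you yourself flag as the "technical crux" --- that every vertex of a non-pivot summand lies in $\Cal S$ --- is exactly the part you have not proved, and your Brauer-pair criterion ($N_G(Q,b_Q)\not\subseteq H$, hence $Q\in\Cal S$) is only a sufficient condition whose verification from the Brauer tree is not supplied. The paper's arrangement makes this verification unnecessary: in the composite $C\otimes_{\O L}X$ the only ``new'' vertices that can appear are, by Chuang's lemma, subgroups of the form $S\cap\ls{g^{-1}}{Q'}$ (up to conjugacy), i.e.\ either vertices $Q'\in\Cal S$ already controlled by the inductive hypothesis, or subgroups of $S=O_p(G)$, and $S\in\Cal S$ for the trivial reason that $S$ is normal, so $S=\lsa{t}{S}\subseteq\lsa{t}{P}$ for any $t\notin H$ (and $\Cal S$ is downward closed). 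A similar use of normality of $S$, together with the decompositions $b\O Ge\simeq\fr{Gr}(G,b)\oplus(\D\Cal S\text{-projective})$, pins down the pivot as $\fr{Gr}(G,b)$. So to repair your argument you would need to replace the Morita claim by the two-term complex, drop the quotient reduction in favour of the $O_p$-increasing induction, and then the Brauer-tree analysis you postponed simply disappears.
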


\begin{proof} We use the notation given by Hypothesis~\ref{hypbl}, with $H=N_G (P)$. 
 We argue by induction on $|P|$, observing that there is nothing to prove if $P$ is normal in $G$. Let $S=O_p (G)$, and let $Q$ be the subgroup of $P$ such that 
$|Q:S|=p$. Let $L=N_G (Q)$ and let $f\in \Bl(L)$ be the Brauer correspondent of $b$ and $e$. By~\cite{Rouquier1998}, Theorem 10.3, there is a splendid tilting complex $C$ of $\Cal OGb$-$\Cal OLf$-bimodules of the form
$$
0 \lra N \lra \fr{Gr}(G,b,L) \lra 0,
$$
where $N$ is a direct summand of $b\Cal OG \otimes_{\Cal OS} \Cal OLf$ (with, say, $C^0=\fr{Gr}(G,b,L)$).
By the inductive hypothesis, there is a splendid tilting complex $X$ of $\Cal OLf$-$\Cal OHe$-bimodules which is $\Cal S$-tempered with pivot 
$\fr{Gr}(L,f)$. It is clear that $C\otimes_{\O L} X$ is a Rickard tilting complex of $\Cal OGb$-$\Cal OHe$-bimodules.
Moreover, if $U$ is an indecomposable summand of a term of $X$ which is not isomorphic to $\fr{Gr}(L,f)$ then, by Lemma~\ref{comp}, 
$C^j \otimes_{\O L} U$ is a $\D \Cal S$-projective $p$-permutation module for each $j$ 
because $G$ certainly controls $L$-fusion of subgroups of $P$. Also,
$C^{-1} \mid \Ind_{\D S}^{G\times L} \O$ and, since $S$ is normal in $G$, it follows from Lemma~\ref{comp} that each tensor product 
$C^{-1}\otimes_{\Cal O L} X^j$ is a $p$-permutation module that is relatively projective with respect to the set of subgroups of the form
$\ls{(1,g)}{\D S} = \ls{(g^{-1},1)} \D S =_{G\times H} \D S$, $g\in L$. Since $H\ne G$, we have $S\in \Cal S$.

Finally, we claim that $\fr{Gr} (G,b)$ is a summand of $\fr{Gr}(G,b,L) \otimes_{\O L} \fr{Gr}(L,f)$ and that all other indecomposable summands of the latter 
$\O Gb$-$\O He$-bimodule are $\D \Cal S$-projective. Certainly $\fr{Gr}(G,b)$ is a summand of $b\O Ge \simeq b\O G \otimes_{\O L} \O Le$. As $\fr{Gr}(L,f)$ is the only summand of $\O Le$ that is not $\D\Cal S$-projective, it follows from Lemma~\ref{comp} that $\fr{Gr}(G,b)$ must be a summand of 
\[
b\O G \otimes_{\O L} \fr{Gr}(L,f)=b\O G \otimes_{\O L} f \fr{Gr}(L,f)\simeq b\O Gf \otimes_{\O L} \fr{Gr}(L,f).
\]
Now $b\Cal OGf\simeq \fr{Gr}(G,f,L)\oplus U$ for some $\D S$-projective bimodule $U$. By Lemma~\ref{comp}, $U\otimes_{\Cal OL} \fr{Gr}(L,f)$ is relatively projective with respect to the set of subgroups of the form $\ls{(1,g)}{\D S}$, $g\in L$ (as $S$ is normal in $G$). Since $\ls{(1,g)}{\D S}=_{G\times H} \D S$ for all $g\in L$, we see that $\fr{Gr} (G,b)$ cannot be a summand of $U\otimes_{\Cal OL} \fr{Gr}(L,f)$ and therefore must be a summand of 
$\fr{Gr}(G,f,L)\otimes_{\Cal OL}\fr{Gr}(L,f)$. 
  
 Since all indecomposable summands of $b\O Ge$ other than $\fr{Gr}(G,b)$ are $\D \Cal S$-projective, the same must be true for all indecomposable summands of $\fr{Gr}(G,f,L) \otimes_{\O L} \fr{Gr}(L,f)$ other than $\fr{Gr}(G,b)$. 
Therefore, $C\otimes_{\O L} X$ is $\D\Cal S$-tempered with pivot $\fr{Gr}(G,b)$.
\end{proof}

\subsection{Blocks of $p$-nilpotent groups}\label{pnilpab}

Rickard constructed a splendid tilting complex for blocks with defect group $P$
in the case when $G=L \rtimes P$ where $L$ is a $p'$-group and $P$ is abelian (see \cite{Rickard1996}, Section 7). In this section we show that essentially the same complex satisfies the requirement of Conjecture~\ref{rBr}. Rickard's construction relies on Dade's classification of endo-permutation modules for abelian $p$-groups (see~\cite{Dade1978a},~\cite{Dade1978b}), and we will need to make certain results concerning those modules more precise. An excellent survey of the theory of endo-permutation modules is given in~\cite{Thevenaz2007}.

Recall that $R$ is either $\Cal O$ or $k$. Let $P$ be a finite $p$-group. 
An $RP$-lattice $M$ is said to be an \emph{endo-permutation module} if the algebra $\End_{R} (M)$ is a permutation $RP$-module, that is, has a $P$-invariant 
$R$-basis. An endo-permutation $RP$-module $M$ is said to be \emph{capped} if it has an indecomposable summand with vertex $P$. Such a summand is then necessarily unique up to isomorphism (\cite{Dade1978a}, Theorem 3.8) and is denoted by $\capm(M)$. Two endo-permutation $RP$-modules are said to be equivalent if their caps are isomorphic. We shall write $[M]$ for the equivalence class of $M$. The ``extended'' \emph{Dade group} $D(P)=D_R (P)$ is the abelian group that consists of the equivalence classes of endo-permutation $RP$-modules with the operation $[M]+[N] = [M\otimes_R N]$. (Note that another definition is often used, leading to a slightly different notion of ``Dade group''; see~\cite{ThevenazBook}, Section 29.)

Let $\Cal S$ be a downward closed set of subgroups of $P$. Let $M$ be an endo-permutation $RP$-module. 
We will call $M$ an \emph{$\Cal S$-endo-permutation module} if each indecomposable summand of the $RP$-module $\End_R (M)$ is isomorphic either to $R$ or to a permutation module of the form $\Ind_Q^P R=R(P/Q)$ with $Q\in \Cal S$. Then, by~\cite{Dade1978a}, Lemma 6.4, capped indecomposable 
$\Cal S$-endo-permutation modules are precisely the indecomposable modules $M$ such that 
\begin{equation}\label{endo1}
\End_R (M) \simeq R \oplus \bigoplus_{i} \Ind_{Q_i}^P R \quad \text{as } RP\text{-modules}
\end{equation}
with $Q_i\in \Cal S$ for each $i$. 
If $Q$ and $S$ are subgroups of $P$ then, by the Mackey theorem, we have
$$
\Ind_Q^P R \otimes \Ind_S^P R \simeq \bigoplus_{x\in [Q\backslash P/S]} \Ind_{Q\cap\lsa{x}{S}}^P R,
$$
where $[Q\backslash P/S]$ denotes a set of representatives of $Q$-$S$-double cosets in $P$.
If $M$ and $N$ are $\Cal S$-endo-permutation modules then $\End_R (M\otimes N)\simeq \End_R (M) \otimes \End_R (N)$, and it follows that $M\otimes N$ is an $\Cal S$-endo-permutation module. 
Hence the classes $[M]$ such that $\capm(M)$ is an $\Cal S$-endo-permutation module form a subgroup of $D_R (P)$, which will be denoted by $D_{R} (P, \Cal S)$. 
We remark that $D_R (P,\{ \mbf 1 \})$ is precisely the group of endo-trivial $RG$-modules, up to equivalence (see~\cite{Dade1978a}, Section 7).


If $X$ is a non-empty finite (left) $P$-set and $RX$ denotes the corresponding permutation module then the \emph{relative syzygy} $\Om_X$ is defined as the kernel of the augmentation map $RX \ra R$, which is given by $x\mapsto 1$ for $x\in X$ (see~\cite{Alperin2001}). By~\cite{Bouc2000}, Lemma 2.3.3, $\End_R (\Om_X)$ is isomorphic, as an $RP$-lattice, to 
a direct summand of $R\oplus (RX \otimes_R RX)$. It follows that, if $X=P/Q$ with $Q\in \Cal S$, we have $[\Om_X] \in D_R (P, \Cal S)$. We denote by 
$D_R^{\Om | \Cal S} (P)$ the subgroup of $D_R (P, \Cal S)$ generated by the classes $[\Om_{P/Q}]$ with $Q\in\Cal S$. 
 
Let $Q$ be a normal subgroup of $P$. 
In this case, $\Om_{P/Q}=\Inf_{P/Q}^P \Om^1_{P/Q} (R)$ and, more generally, $n[\Om_{P/Q}]=[\Inf_{P/Q} \Om^n_{P/Q} (R)]$ for all $n\in \mZ$, where $\Om^n_{P/Q} (R)$ is the $n$th Heller translate of the trivial $R(P/Q)$-module (see~\cite{Thevenaz2007}, Section 4). We will write
$\Om_{P/Q}^{-1} = \Inf_{P/Q}^P \Om_{P/Q}^{-1}(R)$.

Denote by $\Def^P_{P/Q}$ the deflation (or ``slash") operation defined in~\cite{Dade1978a}, Section 4, which sends an endo-permutation $kP$-module to an endo-permutation $k(P/Q)$-module. It induces a group homomorphism $\Def^P_{P/Q}\colon D_k (P) \ra D_k (P/Q)$.
In particular, suppose $M$ is a capped indecomposable endo-permutation $kP$-module. Then $N=\Def^P_{P/Q} M$ is also indecomposable (see~\cite{Dade1978a}, Statement (5.3)) and, moreover, if $\End_k (M)\simeq kX$ as $kP$-modules then $\End_k (N) \simeq k(X^Q)$ as $k(P/Q)$-modules. It follows that $\Def^P_{P/Q} [M]=0$ if and only if $|X^Q|=1$.  

\begin{lem}\label{endolem1} If $P$ is abelian and $\Cal S$ is a downward closed set of subgroups of $P$ then 
$$
D_k (P,\Cal S) = \bigcap_{\substack{Q\le P \\ Q\notin \Cal S}} \ker \Def^P_{P/Q}.
$$
\end{lem}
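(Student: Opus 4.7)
\emph{Plan.} The strategy is to unpack both sides of the equality in terms of the $P$-set underlying $\End_k(M)$, where $M$ is any capped indecomposable representative of a given class in $D_k(P)$. Write $\End_k(M)\simeq kX$ for a finite $P$-set $X$ and decompose $X$ into $P$-orbits as $X=\{\ast\}\sqcup\bigsqcup_{i} P/Q_i$. By the characterisation recorded just after~\eqref{endo1}, the class $[M]$ belongs to $D_k(P,\Cal S)$ exactly when every stabiliser $Q_i$ lies in $\Cal S$.

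On the other hand, by the discussion preceding the lemma, $\Def^P_{P/Q}[M]=0$ is equivalent to $|X^Q|=1$. The trivial orbit $\{\ast\}$ contributes one fixed point, so this condition is the same as $(P/Q_i)^Q=\emptyset$ for every $i$. Here I would use that $P$ is abelian: the translation action of $Q$ on $P/Q_i$ has a fixed point if and only if $Q\subseteq\bigcap_{x\in P}\lsa{x}{Q_i}=Q_i$. Thus $\Def^P_{P/Q}[M]=0$ if and only if $Q\nleq Q_i$ for every $i$.

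Combining these two characterisations, $[M]$ lies in the intersection on the right-hand side precisely when, for every $Q\notin\Cal S$ and every $i$, one has $Q\nleq Q_i$; by contraposition, this means that every subgroup contained in some $Q_i$ must already belong to $\Cal S$. Since $\Cal S$ is downward closed, this is equivalent to the condition that each $Q_i$ itself lies in $\Cal S$, which by the opening paragraph is exactly $[M]\in D_k(P,\Cal S)$; the two inclusions follow simultaneously.

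The proof is a translation exercise once the dictionary between endo-permutation classes and $P$-sets is in place, so there is no genuine obstacle. The hypothesis that $P$ is abelian enters in exactly one place, namely to reduce the $Q$-fixed-point condition on $P/Q_i$ to the clean inclusion $Q\le Q_i$; this is what allows the downward-closure of $\Cal S$ to match the orbit-type data of $X$, and it is the step one would have to rework (or restrict) if $P$ were not abelian.
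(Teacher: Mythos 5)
Your proof is correct and follows essentially the same route as the paper: decompose the $P$-set $X$ with $kX\simeq\End_k(M)$ into orbits, characterise $[M]\in D_k(P,\Cal S)$ by the orbit stabilisers lying in $\Cal S$, translate $\Def^P_{P/Q}[M]=0$ into the fixed-point condition $|X^Q|=1$ (which, by abelianness, becomes $Q\nleq Q_i$ for all $i$), and conclude using downward closure. The only difference is that you spell out the fixed-point computation that the paper leaves implicit.
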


\begin{proof} Suppose $M$ is an indecomposable $\Cal S$-endo-permutation $kP$-module, and let $X$ be a $P$-set such that $\End_k (M)$ is isomorphic to the permutation module $kX$. Then $[M]\in D_k (P,\Cal S)$ if and only if $kX$ has no indecomposable summand of the form $k(P/Q)$ with $Q<P$ and $Q\notin \Cal S$. On the other hand, for $S<P$, $\Def^P_{P/S} [M]=0$ if and only if $kX$ has no indecomposable summand of the form $k(P/Q)$ with $S\le Q<P$. The result follows immediately.
\end{proof}

\begin{lem}\label{endolem2} If $P$ is abelian and $\Cal S$ is a downward closed set of subgroups of $P$ then $D_k^{\Om|\Cal S} (P) = D_k (P,\Cal S)$.
\end{lem}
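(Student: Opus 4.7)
My approach is to use Lemma~\ref{endolem1} to replace the assertion with the equivalent statement
\[
D_k^{\Om|\Cal S}(P) = \bigcap_{R \le P,\, R \notin \Cal S} \ker \Def^P_{P/R},
\]
and attack this form.

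I would first verify the easy forward inclusion. For $Q \in \Cal S$, the Mackey formula $k(P/Q)\otimes_k k(P/Q) \simeq |P/Q|\cdot k(P/Q)$ (valid because $P$ is abelian) combined with the defining sequence $0 \to \Om_{P/Q} \to k(P/Q) \to k \to 0$ yields the decomposition
\[
\End_k(\Om_{P/Q}) \simeq k \oplus (|P/Q|-2)\cdot k(P/Q)
\]
as a $p$-permutation $kP$-module. Every indecomposable summand is a permutation module $k(P/T)$ with $T \le Q$; since $\Cal S$ is downward closed, each such $T$ lies in $\Cal S$, so $[\Om_{P/Q}] \in D_k(P,\Cal S)$.

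For the reverse inclusion, I would rely on Dade's classification of endo-permutation modules over abelian $p$-groups (\cite{Dade1978a},~\cite{Dade1978b}), which implies that $D_k(P)$ is generated by the classes $[\Om_{P/Q}]$ over all subgroups $Q \le P$. Given $x \in \bigcap_{R\notin\Cal S}\ker\Def^P_{P/R}$, I would write $x = \sum_{Q\le P} n_Q [\Om_{P/Q}]$ and successively eliminate the summands with $Q\notin\Cal S$. The mechanism is a pair of explicit formulas, both valid because $P$ is abelian:
\[
\Def^P_{P/R}[\Om_{P/Q}] =
\begin{cases} [\Om_{(P/R)/(Q/R)}] & \text{if } R \le Q, \\ 0 & \text{if } R \not\le Q, \end{cases}
\qquad
\Inf^P_{P/R}[\Om_{(P/R)/(Q/R)}] = [\Om_{P/Q}] \text{ when } R \le Q.
\]
The deflation formula is obtained from the definition of Dade's slash operation by observing that $(P/Q)^R$ equals $P/Q$ if $R \le Q$ and is empty otherwise, so the $R$-fixed part of the above decomposition of $\End_k(\Om_{P/Q})$ either matches $\End_k(\Om_{(P/R)/(Q/R)})$ or reduces to $k$ (i.e.\ to the trivial class in $D_k(P/R)$). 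The inflation identity is immediate from exactness of inflation applied to the defining sequence.

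Combining the two formulas, for each $R \notin \Cal S$ the hypothesis $\Def^P_{P/R} x = 0$ yields, after applying $\Inf^P_{P/R}$, the relation $\sum_{Q \ge R} n_Q [\Om_{P/Q}] = 0$ in $D_k(P)$, which allows the substitution $n_R [\Om_{P/R}] = -\sum_{Q > R} n_Q [\Om_{P/Q}]$ in the expansion of $x$, thereby erasing every summand with $Q \ge R$. Iterating this over all $R \notin \Cal S$, and using that $\Cal S$ is downward closed (so that ``$Q \not\ge R$ for every $R \notin \Cal S$'' is equivalent to $Q \in \Cal S$), the final expression for $x$ is supported on $\Cal S$, giving $x \in D_k^{\Om|\Cal S}(P)$. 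The main technical point will be establishing the deflation formula for relative syzygies; Dade's generating theorem for $D_k(P)$ is invoked as a black box.
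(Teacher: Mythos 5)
Your proposal is correct in substance but follows a genuinely different route from the paper. The paper argues by induction on $|\Cal S|$: it picks a maximal $S\in\Cal S$, uses Dade's direct-product decomposition of the intersection of deflation kernels (Equation (5.29) of~\cite{Dade1978a}) to write $D_k(P,\Cal S)=\bigl(\Inf_{P/S}^P D_k(P/S,\{\mbf 1\})\bigr)\times D_k(P,\Cal S-\{S\})$, and then only needs that the endo-trivial group $D_k(P/S,\{\mbf 1\})$ is cyclic with generator the syzygy of the trivial module (\cite{Dade1978b}, Theorem 10.1). You instead take as a black box the generation of $D_k(P)$ by all relative syzygies (Dade's classification for abelian $p$-groups, which is essentially what the paper's induction re-proves in the case where $\Cal S$ consists of all subgroups) and run an elimination: for each $R\notin\Cal S$ you apply $\Def^P_{P/R}$ followed by $\Inf_{P/R}^P$ to a syzygy expansion of $x$ and delete all terms $[\Om_{P/Q}]$ with $Q\supseteq R$, downward-closedness of $\Cal S$ then ensuring the surviving terms are indexed by $\Cal S$. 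The elimination scheme itself is fine (subtracting the relation $\sum_{Q\supseteq R}n_Q[\Om_{P/Q}]=0$ removes all those terms at once, and terms removed earlier never reappear). Your approach is more explicit but carries a heavier citation; also note that the containment $D_k^{\Om|\Cal S}(P)\subseteq D_k(P,\Cal S)$ is already built into the paper's definition of $D_k^{\Om|\Cal S}(P)$ (via Bouc's lemma), so your forward computation, though correct, is not needed.

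The one step you must tighten is the deflation formula in the case $R\le Q$. Your justification --- that the $R$-fixed part of $\End_k(\Om_{P/Q})\simeq k\oplus(|P:Q|-2)\,k(P/Q)$ ``matches'' $\End_k(\Om_{(P/R)/(Q/R)})$ --- does not by itself identify the class: isomorphic endomorphism permutation modules do not determine an element of the Dade group (for instance $[\Om_{P/\mbf 1}]$ and $-[\Om_{P/\mbf 1}]$ have isomorphic endomorphism modules). The fixed-point count does settle the case $R\not\le Q$, since then $|X^R|=1$ and the criterion quoted in the paper gives deflation $0$; but for $R\le Q$ you should argue via $\Om_{P/Q}=\Inf_{P/Q}^P\Om^1_{P/Q}(k)$ and the compatibility $\Def^P_{P/R}\circ\Inf_{P/Q}^P=\Inf_{P/Q}^{P/R}$ for $R\le Q$ (a standard property of Dade's slash construction), which yields both the formula and the identity $\Inf_{P/R}^P\Def^P_{P/R}[\Om_{P/Q}]=[\Om_{P/Q}]$ --- the only thing your elimination actually uses. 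With that repair the argument goes through.
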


\begin{proof}
 We use induction of $|\Cal S|$. If $\Cal S=\varnothing$, there is nothing to prove.
Let $S$ be a maximal element of $\Cal S$, and let $\Cal Y=\Cal S - \{ S\}$. By Equation (5.29) of~\cite{Dade1978a},
$$
\bigcap_{Q\notin \Cal S} \ker \Def^{P}_{P/Q} = \Inf_{P/S}^P \left(\bigcap_{S<Q\le P} \ker \Def^{P/S}_{P/Q} \right) \times \bigcap_{Q\notin \Cal Y} \ker\Def^P_{P/Q}.
$$
Due to Lemma~\ref{endolem1}, we may rewrite this as
\begin{equation}\label{endo2}
D_k (P,\Cal S) = (\Inf_{P/S}^P D_k (P/S, \{\mbf 1\})) \times D_k (P, \Cal Y).
\end{equation}
By the inductive hypothesis, $D_k (P, \Cal Y)$ is contained in $D^{\Om|\Cal S}_k (P)$. By~\cite{Dade1978b}, Theorem 10.1, $D_k (Q,\{\mbf 1\})$ is cyclic and is generated by $\Om_{Q/\mbf 1}$ for any finite abelian $p$-group $Q$. Thus $\Inf_{P/S}^P D_k (P/S, \{ \mbf 1 \})$ is generated by $\Om_{P/S}$ and hence is contained in
$D_k^{\Om | \Cal S} (P)$. The result now follows from~\eqref{endo2}.
\end{proof}

Let $A$ be a ring with an identity element. 
Recall that a complex $X$ of $A$-modules is said to be \emph{split} if $X$ is a direct sum of a contractible complex and the complex with the $i$th term equal to $H_i (X)$ having zero differentials. (A complex is said to be \emph{contractible} if it is equivalent to the zero complex in the homotopy category.)

\begin{defi}[Rickard~\cite{Rickard1996}, Section 7.1]
 Let $G$ be a finite group and $M$ be an $RG$-module. An endo-split ($p$-permutation) resolution of $M$ is a bounded complex $X$ of
 ($p$-permutation) $RG$-modules having homology concentrated in degree zero together with an isomorphism between $H_0 (X)$ and $M$ such that $X^*\otimes_R X$ is split as a complex of $RG$-modules (with $G$ acting diagonally).
\end{defi}

The following result is a slight generalisation of~\cite{HarrisLinckelmann2000}, Lemma 1.5(i), and is proved in exactly the same way (cf.\ also~\cite{Rickard1996}, Lemma 7.5).

\begin{lem}\label{dirsummandcomp} Let $X$ be an endo-split resolution of an $\Cal OG$-module $M$. If $N$ is a direct summand of $M$ then $X$ has a direct summand 
$Y,$ unique up to homotopy equivalence, such that $Y$ is an endo-split resolution of $N$.
\end{lem}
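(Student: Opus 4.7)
The plan is to reduce the lemma to lifting an idempotent from $\End_{\Cal OG}(M)$ to the homotopy-category endomorphism ring of $X$, and then splitting that idempotent by Krull--Schmidt. Write $M=N\oplus N'$ and let $e\in \End_{\Cal OG}(M)$ be the idempotent with image $N$. Passing to zeroth homology gives a ring homomorphism
\[
\Phi\colon \End_{K^b(\Cal OG)}(X) \lra \End_{\Cal OG}(H_0(X)) = \End_{\Cal OG}(M),
\]
and the task is to lift $e$ through $\Phi$ to an idempotent $\tilde e$, then split $\tilde e$ to obtain $Y$.

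First I would prove that $\Phi$ is surjective with kernel contained in the Jacobson radical. This is where the endo-split hypothesis is used: write $X^* \otimes_R X \simeq C \oplus Z$ as complexes of $\Cal OG$-modules (diagonal action), with $C$ contractible and $Z$ having zero differentials. Taking $G$-fixed points preserves both properties, so
\[
\End_{K^b(\Cal OG)}(X) = H^0(\Hom^\bullet_{\Cal OG}(X,X)) = H^0((X^* \otimes_R X)^G) = (Z^0)^G.
\]
On the other hand, the quasi-isomorphism $X\simeq M$ gives $\End_{D(\Cal OG)}(X)=\End_{\Cal OG}(M)$, and since $Z$ has zero differentials, the forgetful map from the homotopy-category endomorphism ring to the derived-category endomorphism ring is visibly surjective (it is the projection onto the degree-zero component of $Z^G$ that corresponds to genuine endomorphisms of $M$ modulo higher-degree contributions, all of which are nilpotent because $X$ is bounded).

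Once $\Phi$ is surjective with kernel in the Jacobson radical, completeness of $\Cal O$ (hence $\fr p$-adic completeness of the endomorphism ring, which is a finitely generated $\Cal O$-module) allows us to lift $e$ to an idempotent $\tilde e\in \End_{K^b(\Cal OG)}(X)$, uniquely up to conjugation. Since bounded complexes of finitely generated $\Cal OG$-lattices form a Krull--Schmidt category in $K^b(\Cal OG)$, the idempotent $\tilde e$ splits and produces a decomposition $X\simeq Y\oplus Y'$ with $Y$ unique up to homotopy equivalence. Then $H_0(Y)=eM=N$, the other homology groups vanish, and the terms of $Y$, being summands of terms of $X$, are $p$-permutation. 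For endo-splitness of $Y$, the idempotent $\tilde e\otimes \tilde e^*$ decomposes $X^*\otimes_R X$ and exhibits $Y^*\otimes_R Y$ as a direct summand of the split complex $X^*\otimes_R X$, hence split.

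The main obstacle is the first step: controlling $\ker\Phi$. One must verify carefully that the splitting of $X^*\otimes_R X$ as a complex of $\Cal OG$-modules (not merely of $\Cal O$-modules) transfers, after taking $G$-fixed points, into enough information to identify $\End_{K^b(\Cal OG)}(X)$ with a ring having $\End_{\Cal OG}(M)$ as a quotient by a nilpotent ideal. This is the substantive content of the endo-split hypothesis, and it is precisely where Lemma~1.5(i) of~\cite{HarrisLinckelmann2000} and Lemma~7.5 of~\cite{Rickard1996} do their work; the remainder of the argument is formal idempotent lifting plus Krull--Schmidt.
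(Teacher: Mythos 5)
Your skeleton is the same as that of the argument the paper points to (Harris--Linckelmann, Lemma~1.5(i), cf.\ Rickard, Lemma~7.5): use the endo-split hypothesis to control $\End_{K^b(\Cal OG)}(X)$, transfer the idempotent $e$ of $\End_{\Cal OG}(M)$, split it by Krull--Schmidt, and check that the resulting summand is again an endo-split resolution. Your computation $\End_{K^b(\Cal OG)}(X)=H^0\bigl((X^*\otimes_R X)^G\bigr)=(Z^0)^G$ is correct and is indeed where the hypothesis enters. The genuine gap is the step you yourself call the main obstacle: you assert that $\Phi\colon\End_{K^b(\Cal OG)}(X)\to\End_{\Cal OG}(M)$ is ``visibly surjective'' with kernel in the radical because ``higher-degree contributions are nilpotent, $X$ being bounded''. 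Neither claim is established by what you wrote. Elements of $\End_{K^b(\Cal OG)}(X)$ are already of degree zero, so there are no higher-degree contributions; the kernel of $\End_{K}(X)\to\End_{D}(X)$ for a bounded complex consists of homotopy classes inducing zero on homology, and these need not be nilpotent; and, most importantly, knowing that $Z$ has zero differentials tells you only that $\End_{K^b(\Cal OG)}(X)=(Z^0)^G$ --- it does not identify $Z^0$, and surjectivity of $\Phi$ is exactly the question of whether every endomorphism of $M$ lifts to a chain endomorphism of $X$, which fails for general bounded complexes with homology concentrated in degree $0$.

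The missing ingredient is the non-equivariant half of the argument: the terms of $X$ are $\Cal O$-projective (they are $\Cal O$-lattices in the relevant situations), so $X$ is a bounded complex of projective $\Cal O$-modules and hence K-projective over $\Cal O$; therefore every $\Cal O$-linear endomorphism of $M$ lifts to an $\Cal O$-linear chain endomorphism of $X$, uniquely up to $\Cal O$-homotopy, i.e.\ $Z^0=H^0\bigl(\Hom^\bullet_{\Cal O}(X,X)\bigr)\cong\End_{\Cal O}(M)$ as $\Cal OG$-modules, the isomorphism $[f]\mapsto H_0(f)$ being equivariant for the conjugation actions. Combined with your fixed-point computation (legitimate precisely because the splitting of $X^*\otimes_R X$ makes $G$-fixed points commute with homology), this gives $\End_{K^b(\Cal OG)}(X)\cong\bigl(\End_{\Cal O}(M)\bigr)^G=\End_{\Cal OG}(M)$, so $\Phi$ is in fact an isomorphism; the idempotent $e$ then corresponds directly to an idempotent of $\End_{K^b(\Cal OG)}(X)$ and your completeness/idempotent-lifting step is not needed. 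Without this identification, the surjectivity on which the existence of $Y$ rests is unproved, so as written the proof is incomplete. (The rest of your argument --- Krull--Schmidt splitting in $K^b$, $H_0(Y)\cong N$, and endo-splitness of $Y$ via $\tilde e\otimes\tilde e^*$, noting that a summand of a split complex is split and that passing between homotopy-category summands and genuine summands costs only contractible complexes --- is fine.)
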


The following two results refine an important theorem of Rickard (\cite{Rickard1996}, Theorem 7.2). The proofs below essentially mimic the one in~\cite{Rickard1996}. 

\begin{thm}\label{endothm} Let $P$ be a finite abelian $p$-group. Let $\Cal S$ be a downward closed set of subgroups of $P$. 
If $M$ is an indecomposable capped $\Cal S$-endo-permutation $kP$-module then there is an endo-split $p$-permutation resolution of $M$ that is $\Cal S$-tempered with pivot $k$. 
\end{thm}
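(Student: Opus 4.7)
The plan is to assemble an endo-split $p$-permutation resolution of $M$ from elementary pieces corresponding to relative syzygies, then extract the desired direct summand. For any subgroup $Q \in \Cal S$, consider the two-term complex
\[
E_Q\colon \quad 0 \lra k(P/Q) \lra k \lra 0
\]
concentrated in degrees $0$ and $1$, with differential given by the augmentation map. Its homology is concentrated in degree $0$ and is isomorphic to $\Om_{P/Q}$. A direct check (relying on the fact that $\End_k(\Om_{P/Q})$ is a direct summand of $k \oplus (k(P/Q) \otimes_k k(P/Q))$, cf.\ \cite{Bouc2000}, Lemma 2.3.3) shows that $E_Q^* \otimes_k E_Q$ splits as the direct sum of a contractible complex and the stalk complex $\End_k(\Om_{P/Q})$ in degree $0$, so $E_Q$ is endo-split. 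Clearly $E_Q$ is $\Cal S$-tempered with pivot $k$, and its dual $E_Q^*$ is an endo-split $p$-permutation resolution of $\Om_{P/Q}^{-1}$ with the same property.

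By Lemma \ref{endolem2}, $[M]$ lies in $D_k^{\Om|\Cal S}(P)$, so I may write $[M] = \sum_{i=1}^r n_i [\Om_{P/Q_i}]$ with $Q_i \in \Cal S$ and $n_i \in \mZ$. Set
\[
X \coleq F_{Q_1} \otimes_k \cdots \otimes_k F_{Q_r},
\]
where $F_{Q_i} = E_{Q_i}^{\otimes n_i}$ if $n_i \ge 0$ and $F_{Q_i} = (E_{Q_i}^*)^{\otimes |n_i|}$ otherwise. Then $X$ is an endo-split $p$-permutation resolution of $N \coleq H_0(X)$, and $[N] = [M]$ in $D_k(P)$, so $\capm(N) \cong M$. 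A term-by-term inspection shows that each term of $X$ is a tensor product of terms of the $F_{Q_i}$; each such tensor factor is either the pivot $k$ of the corresponding elementary complex or else is $\Cal S$-projective. Since a tensor product containing an $\Cal S$-projective factor is itself $\Cal S$-projective, $X$ is $\Cal S$-tempered with pivot $k$, concentrated in a single degree.

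Writing $N \cong M \oplus N'$, Lemma \ref{dirsummandcomp} yields an endo-split $p$-permutation resolution $Y$ of $M$ which is a direct summand of $X$. Every term of $Y$ is a direct summand of the corresponding term of $X$, so $Y$ is automatically $\Cal S$-tempered with pivot either $0$ or $k$. It remains to rule out the former possibility (assuming, as I may, that $P \notin \Cal S$; otherwise the conclusion is trivial). Suppose, for contradiction, that every term of $Y$ is $\Cal S$-projective. Then every term of $Y^* \otimes_k Y$ is $\Cal S$-projective. But since $Y$ is endo-split, $Y^* \otimes_k Y$ decomposes as a direct sum of a contractible complex and the stalk complex $\End_k(M)$ in degree $0$, and hence $\End_k(M)$ is a direct summand of the degree-$0$ term of $Y^* \otimes_k Y$ and is therefore $\Cal S$-projective. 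On the other hand, since $M$ has vertex $P$, the Brauer quotient $\End_k(M)(P) \cong \End_k(M(P))$ is nonzero, forcing $\End_k(M)$ to contain the trivial module $k$ as a direct summand. Since $k$ has vertex $P$ and $P \notin \Cal S$, this is the desired contradiction, and $Y$ has pivot $k$.

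The main obstacle is the final step: showing that the pivot cannot collapse when one passes from $X$ to the direct summand $Y$. The argument above leverages both the endo-split condition on $Y$ and the standard fact that $\End_k(M)$ contains the trivial $kP$-module as a direct summand, detected via the Brauer quotient at $P$.
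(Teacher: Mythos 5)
Your construction is essentially the paper's: the same two-term complexes resolving $\Om_{P/Q}$ and $\Om_{P/Q}^{-1}$ for $Q\in\Cal S$, Lemma~\ref{endolem2} to write $[M]$ in terms of these classes, the fact that tensor products of endo-split $p$-permutation resolutions are again such resolutions, and Lemma~\ref{dirsummandcomp} to extract a resolution of $M$ (the paper extracts the cap after each pairwise tensor product rather than once at the end, but this is immaterial). The only real divergence is the final step, where you rule out the possibility that the extracted summand $Y$ has pivot $0$, and here your justification contains a false claim. The isomorphism $\End_k(M)(P)\cong\End_k(M(P))$ does not hold for arbitrary $kP$-modules (the Brauer construction commutes with such operations only for $p$-permutation modules, which an endo-permutation module generally is not), and for non-$p$-permutation modules having vertex $P$ does not force a nonzero Brauer quotient at $P$: for $P$ noncyclic, $M=\Om_{P/\mbf 1}$ has vertex $P$ but $M(P)=0$, while $\End_k(M)(P)\cong k\ne 0$, so both the isomorphism and the inference ``vertex $P$ $\Rightarrow$ $M(P)\ne 0$'' fail. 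What you actually need is only that the trivial module $k$ is a direct summand of $\End_k(M)$, with vertex $P\notin\Cal S$, contradicting $\Cal S$-projectivity; and this is immediate because $M$ is \emph{capped}: by~\eqref{endo1} (Dade's characterisation), $\End_k(M)\simeq k\oplus\bigoplus_i\Ind_{Q_i}^P k$ with $Q_i\in\Cal S$. With that substitution your argument via the splitting of $Y^*\otimes_k Y$ is correct. Note that the paper's own way of excluding pivot $0$ is even shorter and avoids $Y^*\otimes Y$ altogether: if all terms of $Y$ were $\Cal S$-projective, then (indecomposable relatively projective modules over a $p$-group being induced from proper subgroups) each term would have dimension divisible by $p$, whence $\dim M=\sum_i(-1)^i\dim Y^i\equiv 0\pmod p$, contradicting $\dim\End_k(M)\equiv 1\pmod p$. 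Finally, your remark that the case $P\in\Cal S$ is ``trivial'' is not quite right under Definition~\ref{tempered} (the pivot would then be $0$, not $k$); this degenerate case is implicitly excluded, as in the paper.
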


\begin{proof} It is shown in the proof of~\cite{Rickard1996}, Theorem 7.2, that 
$$
\cdots \lra 0 \lra kP \stackrel{g \mapsto 1}{\lra} k \lra 0 \lra \cdots, 
$$
where $kP$ is the $0$-term, is an endo-split $p$-permutation resolution of $\Om_{P/\mbf 1}$. This complex is clearly $\{ \mbf 1 \}$-tempered with pivot $k$. Similarly, the complex
$$
\xymatrix@1{
\cdots \ar[r] &  0 \ar[r] & k \ar[rr]^{\!\!\!\!1 \mapsto \sum_{g\in P} g} & & kP \ar[r] & 0 \ar[r] & \cdots, 
}
$$ 
where $kP$ is the $0$-term, is an endo-split $p$-permutation resolution of the indecomposable module $\Om^{-1}_{P/\mbf 1} $ and is also $\{ \mbf 1 \}$-tempered with pivot $k$. Therefore, for every $Q\in \Cal S$, the modules $\Om_{P/Q}$ and $\Om^{-1}_{P/Q}$ have resolutions of the required form (these can be obtained by inflating resolutions as above from $P/Q$ to $P$).

By Lemma~\ref{endolem2}, each element of $D_k (P,\Cal S)$ can be represented as a sum of elements of the form $[\Om_{P/Q}]$ and 
$[\Om_{P/Q}^{-1}]$ with $Q\in \Cal S$. So the result will follow once we show that, if $M$ and $N$ are indecomposable endo-permutation modules that have resolutions of the required form then so does $\capm(M\otimes_k N)$. To see this, let $X$ and $Y$ be resolutions of $M$ and $N$ respectively that satisfy the conditions of the theorem. By~\cite{Rickard1996}, Lemma 7.4, $X\otimes_k Y$ is an endo-split $p$-permutation resolution of $M\otimes N$. It is easy to see that $X\otimes Y$ is $\Cal S$-tempered with pivot $k$. Finally, by Lemma~\ref{dirsummandcomp}, the module $V=\capm(M\otimes N)$ has an endo-split $p$-permutation resolution $T$ that is a direct summand of the complex $X\otimes Y$. Then $T$ is certainly $\Cal S$-tempered, with pivot either $k$ or $0$. However, if $T$ has pivot $0$ then $\dim V = \sum_{i\in \mZ} (-1)^i \dim T^i$ is divisible by $p$, which is impossible because $\dim \End_k (V)\equiv 1 \pmod p$. So $T$ has pivot $k$, and the result follows. 
\end{proof}


\begin{cor}\label{endocor} 
Let $P$ be a finite abelian $p$-group and $\Cal S$ be a downward closed set of subgroups of $P$.
 Let $M$ be an $\Cal S$-endo-permutation $RP$-module. Then there is a $1$-dimensional $RP$-module $J$ such that $J\otimes M$ has an
endo-split $p$-permutation resolution that is $\Cal S$-tempered with pivot $R^{\oplus n}$, where $n$ is the multiplicity of $\capm (M)$ as a summand of $M$ if $M$ is capped and $n=0$ if $M$ is uncapped.
\end{cor}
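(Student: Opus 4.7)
The plan is to establish the result first over $R = k$ and then lift to $R = \Cal O$, with the $1$-dimensional twist $J$ accounting for the non-uniqueness of lifts of endo-permutation modules. Over $k$, since $P$ is a $p$-group, every $1$-dimensional $kP$-module is trivial, so $J = k$ suffices and no twist is needed.

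First, over $k$: decompose $M = \capm(M)^{\oplus n} \oplus M'$ where $M'$ has no indecomposable summand with vertex $P$ (with $n = 0$ and $M = M'$ in the uncapped case). Each indecomposable summand $N$ of $M'$ is an uncapped indecomposable $\Cal S$-endo-permutation module, because $\End_k(N)$ is a direct summand of $\End_k(M)$ and thus has the appropriate permutation-module structure but no trivial summand. A standard result from the theory of endo-permutation modules (see~\cite{Thevenaz2007}) linking the vertex of $N$ to the $P$-orbits appearing in $\End_k(N)$ shows that the vertex of $N$ lies in $\Cal S$ and that $N$ has trivial source; consequently $M'$ is an $\Cal S$-projective $p$-permutation $kP$-module. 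Now apply Theorem~\ref{endothm} to obtain an endo-split $p$-permutation resolution $Y$ of $\capm(M)$ that is $\Cal S$-tempered with pivot $k$, and set $X = Y^{\oplus n} \oplus M'[0]$, where $M'[0]$ is $M'$ placed in degree zero. Then $X$ is a bounded complex of $p$-permutation modules with $H_0(X) \simeq M$ and no higher homology, and it is $\Cal S$-tempered with pivot $k^{\oplus n}$, because the only non-$\Cal S$-projective contribution in any single degree comes from the $n$ copies of the pivot of $Y$.

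The main technical point is to verify that $X$ is endo-split. Expanding $X^* \otimes X$ yields four summands: $(Y^*)^{\oplus n} \otimes Y^{\oplus n}$, which is $n^2$ copies of the split complex $Y^* \otimes Y$; the piece $(M'^*[0]) \otimes (M'[0])$, which is concentrated in degree zero and is therefore split; and the two cross terms $(Y^*)^{\oplus n} \otimes M'[0]$ and $(M'^*[0]) \otimes Y^{\oplus n}$. Splitness of the cross terms would follow from the $\Cal S$-projectivity of $M'$: writing $M'$ as a direct summand of a sum of modules $\Ind_Q^P U$ with $Q \in \Cal S$, the projection formula reduces the question to the splitness of complexes of the form $\Ind_Q^P (\Res^P_Q Y^* \otimes_k U)$, which can be handled using that the Brauer construction of $Y^*$ at any subgroup outside $\Cal S$ vanishes. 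I expect this verification of the endo-split property for the cross terms to be the principal obstacle of the argument; an alternative would be to apply Lemma~\ref{dirsummandcomp} after exhibiting $M$ as a direct summand of a module with a known endo-split $p$-permutation resolution of the required shape.

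Finally, to pass to $R = \Cal O$: given an $\Cal S$-endo-permutation $\Cal O P$-lattice $M$, reduce modulo the maximal ideal to obtain the $\Cal S$-endo-permutation $kP$-module $\bar M = k \otimes_{\Cal O} M$, construct the complex $\bar X$ over $kP$ as above, and lift $\bar X$ term by term to a complex $X$ of $\Cal O P$-lattices (each $p$-permutation summand lifts uniquely, and the differentials lift because the relevant $\Hom$-spaces reduce surjectively modulo $\fr p$). The resulting complex $X$ is then endo-split and $\Cal S$-tempered with pivot $\Cal O^{\oplus n}$, and $H_0(X)$ is an $\Cal O P$-lattice reducing to $\bar M$. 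Since any two endo-permutation $\Cal O P$-lifts of $\bar M$ differ by tensor product with a $1$-dimensional $\Cal O P$-character, one has $H_0(X) \simeq J \otimes M$ for some $1$-dimensional $J$, so $X$ is the required resolution of $J \otimes M$.
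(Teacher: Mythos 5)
There is a genuine gap at the first step of your construction over $k$. You claim that every indecomposable summand of the non-capped part $M'$ of an $\Cal S$-endo-permutation module has trivial source, so that $M'$ is an $\Cal S$-projective $p$-permutation module that can simply be placed in degree zero. This is false in general. By Dade's structure theorem (\cite{Dade1978a}, Theorem 6.10, which is exactly what the paper's proof uses), $M'\simeq \bigoplus_i \Ind_{Q_i}^P \Res^P_{Q_i} \capm(M)$ with $Q_i<P$; by Green's indecomposability theorem the indecomposable summands here are induced from indecomposable summands of $\Res^P_{Q_i}\capm(M)$, whose sources are in general non-trivial endo-permutation $kQ_i$-modules. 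Concretely, for $p$ odd, $P=C_p\times C_p$, $Q\le P$ of order $p$ with $Q\in \Cal S$, and $V=\Om_{P/\mbf 1}$, the module $M=V\oplus \Ind_Q^P\Res^P_Q V$ is an $\Cal S$-endo-permutation module whose non-capped part has the indecomposable summand $\Ind_Q^P\,\Om_{Q/\mbf 1}$ with source $\Om_{Q/\mbf 1}$ of dimension $p-1>1$. Hence your complex $Y^{\oplus n}\oplus M'[0]$ is not a complex of $p$-permutation modules, so it is not an endo-split $p$-permutation resolution in the required sense, and the term-by-term lifting to $\Cal O$ (which relies on lifting $p$-permutation modules) breaks down as well. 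On top of this, the splitness of the cross terms $(Y^*)^{\oplus n}\otimes M'[0]$ and $(M'[0])^*\otimes Y^{\oplus n}$, which you yourself single out as the principal obstacle, is left unresolved; vanishing of Brauer quotients does not by itself yield splitness of $\Res^P_Q Y^*\otimes U$.

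The missing idea is to resolve the non-capped part not by a degree-zero summand but by induced--restricted copies of the resolution of the cap. The paper applies Theorem~\ref{endothm} to $\capm(k\otimes M)$, lifts the resulting resolution to $\Cal O$ via \cite{Rickard1996}, Proposition 7.1, and identifies its homology as $J\otimes \capm(M)$ using Dade's surjection $D_{\Cal O}(P)\thra D_k(P)$ with kernel the one-dimensional modules (\cite{Dade1978b}, Proposition 12.1) --- this is where the twist $J$ really comes from. It then writes $J\otimes M\simeq N^{\oplus n}\oplus\bigoplus_i \Ind_{Q_i}^P\Res^P_{Q_i}N$ by \cite{Dade1978a}, Theorem 6.10, checks $Q_i\in\Cal S$ by the same kind of $\End$-argument you sketch for the vertex, and takes the total complex $X^{\oplus n}\oplus\bigoplus_i \Ind_{Q_i}^P\Res^P_{Q_i}X$; the proof of \cite{Rickard1996}, Lemma 7.6, shows this is endo-split, and $Q_i\in\Cal S$ gives $\Cal S$-temperedness with pivot $R^{\oplus n}$. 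This replacement simultaneously restores the $p$-permutation property and disposes of the cross-term splitness question that your proposal leaves open.
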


\begin{proof}
 By Theorem~\ref{endothm}, $k\otimes_R \capm(M)\simeq \capm(k\otimes M)$ has an 
$\Cal S$-tempered endo-split $p$-permutation resolution $X$ with pivot $R$. 
By~\cite{Rickard1996}, Proposition 7.1, $X$ can be lifted to an endo-split $p$-permutation resolution $\tilde{X}$ of an $RP$-module $N$ such that 
$k\otimes N\simeq k\otimes M$. By~\cite{Dade1978b}, Proposition 12.1, there is a well-defined homomorphism from $D_R (P)$ onto $D_k (P)$ given by $[U] \mapsto [k\otimes_R U]$. Moreover, the kernel of this homomorphism consists of the classes of $1$-dimensional $RP$-modules, and it follows that $N=J\otimes \capm(M)$ for some $1$-dimensional $RP$-module $J$. By~\cite{Dade1978a}, Theorem 6.10,
$$
J \otimes M \simeq N^{\oplus n} \oplus \bigoplus_{i\in I} \Ind_{Q_i}^G \Res^G_{Q_i} N
$$
where $I$ is some indexing set and $Q_i<P$ for each $i$. For each $i$ we have $R\mid \End_R (\Res^P_{Q_i} N)$, and hence
$$
\Ind_{Q_i}^P R \divi \Ind_{Q_i}^P \End_R (\Res^P_{Q_i} N) \divi \End_R (\Ind_{Q_i}^P \Res^P_{Q_i} N) \divi \End_R (J\otimes M).
$$
Since $J\otimes M$ is an $\Cal S$-endo-permutation module, we deduce that $Q_i\in \Cal S$ for all $i\in I$. 
By the proof of~\cite{Rickard1996}, Lemma 7.6, the complex
$$
Y= X^{\oplus n} \oplus \bigoplus_{i\in I} \Ind_{Q_i}^G \Res^G_{Q_i} X
$$
is an endo-split $p$-permutation resolution of $J\otimes N$. Since $Q_i \in \Cal S$ for each $i$, the complex $Y$ is $\Cal S$-tempered with pivot $R^{\oplus n}$.
\end{proof}

We will use the following known result (cf.\ \cite{Rickard1996}, proof of Theorem 7.2). The proof is left as an exercise.

\begin{lem}\label{splitcrit}
 Let $X$ be a bounded complex of $A$-modules such that $X^i$ is projective for each $i\ne 0$ and the homology of $X$ is concentrated 
in degree $0$ (i.e.\ $H_i (X)=0$ for all $i\ne 0$). Then $X$ is split. 
\end{lem}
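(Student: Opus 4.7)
The plan is to argue by induction on the width $w(X) := \max\{i : X^i \ne 0\} - \min\{i : X^i \ne 0\}$, peeling off at each step a two-term contractible direct summand of the form $(0 \to P \xrightarrow{\mathrm{id}} P \to 0)$ concentrated in two adjacent degrees. The base case $w(X) = 0$ is immediate: the homology hypothesis forces the unique nonzero term to sit in degree $0$, so $X$ already coincides with the complex having $H_i(X)$ in degree $i$ and zero differentials.

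For the inductive step I first trim from the top. Let $n = \max\{i : X^i \ne 0\}$; if $n > 0$, the vanishing $H_n(X) = 0$ forces $d^{n-1}: X^{n-1} \to X^n$ to be surjective, and since $X^n$ is projective this surjection splits through some section $s$. This yields a decomposition $X^{n-1} = \ker(d^{n-1}) \oplus s(X^n)$, from which one reads off a decomposition of complexes $X = C \oplus X'$, where $C$ is the contractible subcomplex $(0 \to X^n \xrightarrow{\mathrm{id}} X^n \to 0)$ concentrated in degrees $n-1$ and $n$. The complementary summand $X'$ has width $w(X) - 1$, the same homology as $X$, and retains projectivity in all nonzero degrees: its new top term $\ker(d^{n-1})$ is a summand of the projective $X^{n-1}$ when $n - 1 > 0$, and no projectivity at degree $0$ is required by the hypotheses. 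Iterating brings us to the case $\max\{i : X^i \ne 0\} = 0$.

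I would then run the dual procedure at the bottom: setting $m = \min\{i : X^i \ne 0\}$, if $m < 0$ then $H_m(X) = 0$ forces $d^m : X^m \to X^{m+1}$ to be injective, and I would like to split off the contractible subcomplex $(0 \to X^m \xrightarrow{\mathrm{id}} X^m \to 0)$ in degrees $m, m+1$. This is the main obstacle: splitting an injection requires a form of relative injectivity of $X^m$, not just projectivity, and is genuinely false for an arbitrary ring $A$ (for instance, $0 \to \mathbb Z \xrightarrow{2} \mathbb Z \to 0$ is not split as a complex of $\mathbb Z$-modules even though the lefthand term is projective). In the situations in which the lemma is applied, however, $A$ is a symmetric algebra over the base ring so that projective modules in the ambient category are also injective, which makes the required splitting automatic. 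Once this is granted, alternating the top and bottom trimming operations reduces the width to zero in finitely many steps, and the base case closes the induction.
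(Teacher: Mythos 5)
Your trimming-from-the-top step is correct: surjectivity of $d^{n-1}$ onto the projective top term yields a section, and the complex sheds a contractible two-term summand. Your diagnosis of the bottom trimming is also correct, and the counterexample is genuine: the complex $0 \to \mathbb{Z} \xrightarrow{\times 2} \mathbb{Z} \to 0$, placed in degrees $-1$ and $0$, satisfies every hypothesis of the lemma as printed (bounded, the degree-$(-1)$ term is projective, and $H_{-1}=0$), yet it is not split, since $\mathbb{Z}/2\mathbb{Z}$ is not a direct summand of $\mathbb{Z}$. So the lemma is false as stated; the paper offers no proof (``left as an exercise''), and there is none to be found without a further hypothesis.

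The repair you propose, however, does not work in the paper's setting. Even when $A$ is a symmetric $\mathcal O$-algebra, projective $A$-modules are not injective when $\mathcal O$ is a complete discrete valuation ring rather than a field: $\mathcal O$ itself is a symmetric $\mathcal O$-algebra, yet $\mathcal O$ is not an injective $\mathcal O$-module, and the same two-term complex $0 \to \mathcal O \xrightarrow{\pi} \mathcal O \to 0$ (with $\pi$ a uniformiser) still refutes the lemma over $A = \mathcal O$. ``Projective equals injective'' is true for $kG$ with $k$ a field, but the applications here are over $\mathcal O G$ and $\mathcal O P$.

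What actually rescues the lemma in the place it is used (the proof of Theorem~\ref{pnilpspl}) is an additional fact not recorded in its statement: there \emph{all} terms of the complex, including $X^0$, are projective, and so is $H_0(X)$. With these extra hypotheses the bottom trimming is unnecessary. After your top-trimming has reduced to $X^i=0$ for $i>0$ with every remaining $X^i$ projective, the exact sequence $0 \to \im(d^{-1}) \to X^0 \to H_0(X) \to 0$ splits because $H_0(X)$ is projective; hence $\im(d^{-1})$ is a direct summand of the projective $X^0$, so is itself projective; then $0 \to \ker(d^{-1}) \to X^{-1} \to \im(d^{-1}) \to 0$ splits, $\ker(d^{-1})=\im(d^{-2})$ is a projective direct summand of $X^{-1}$, and one descends inductively, obtaining the required decomposition. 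Your symmetric-algebra idea can be salvaged only in the relative form — projective modules over a symmetric $\mathcal O$-algebra are relatively $\mathcal O$-injective, and the differentials at the bottom are $\mathcal O$-split provided the terms are $\mathcal O$-lattices and $H_0(X)$ is $\mathcal O$-free — but those are again hypotheses missing from the lemma, which just happen to hold in the applications.
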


Let $G=LP$ be a semidirect product of a normal $p'$-subgroup $L$ and a $p$-group $P$. Let $C=C_L (P)$, so that $H\coleq N_G(P)=C\times P$ (as $[N_L(P),P]$ must be contained in both $L$ and $P$). Let $b$ be a block of $G$ with defect group $P$, and let $e\in \Bl(H)$ be its Brauer correspondent. Then $b\in \Cal OL$ and $e\in \Cal OC$ (see e.g.~\cite{CRI}, Proposition 56.37). Since $|C|$ is coprime to $p$, there is a unique indecomposable $\Cal OCe$-lattice, which we will denote by $Z$. 
Then $\Cal OCe\simeq Z\otimes_\Cal O Z^*$, whence
$\Cal OHe\simeq (Z\otimes \Cal OP) \otimes Z^*$. We consider the $\Cal OHe$-$\Cal OP$-lattice $Z\otimes \Cal OP$ appearing in this tensor product. Its bimodule structure is given by
\begin{equation}\label{ZOP}
(cy_1)(z\otimes x)y_2 = (cz) \otimes (y_1 x y_2), \qquad c\in C, \; z\in Z, \; x, y_1, y_2 \in P. 
\end{equation}
Let $\widetilde{\fr{Gr}} (G,b)$ be the $\Cal OG$-$\Cal OP$-bimodule that is the Green correspondent of $Z\otimes \Cal OP$ (with respect to the triple $(G\times P, H\times P, P\times P)$). This notation is justified by the following observation.

\begin{lem}\label{Grprod}
 We have $\fr{Gr}(G,b) \simeq \widetilde{\fr{Gr}}(G,b) \otimes_{\Cal OP} (\Cal OP \otimes Z^*)$. 
\end{lem}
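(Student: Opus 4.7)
The plan is to compute $b\Cal OGe$ directly as an $\Cal O(G\times H)$-module and identify its unique indecomposable summand with vertex $\D P$. Starting from $\Cal OGe \simeq \Cal OG \otimes_{\Cal OH} \Cal OHe$ and the factorisation $\Cal OHe \simeq (Z \otimes \Cal OP) \otimes_{\Cal O} Z^*$, I would first rewrite
$$
\Cal OGe \simeq \bigl(\Cal OG \otimes_{\Cal OH} (Z\otimes\Cal OP)\bigr) \otimes_{\Cal O} Z^*,
$$
with the right $H=C\times P$-action split as the right $C$-action on $Z^*$ and the right $P$-action on $\Cal OP$. The first factor is then recognised as $\Ind_{H\times P}^{G\times P}(Z\otimes\Cal OP)$ viewed as an $\Cal O(G\times P)$-module.

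Next, I would verify that $Z\otimes\Cal OP$ is indecomposable as $\Cal O(H\times P)$-module with vertex $\D P$: the bimodule $\Cal OP \simeq \Ind_{\D P}^{P\times P}\Cal O$ is indecomposable by Green's indecomposability theorem, and $Z$ is indecomposable over $\Cal OC$ with $\End_{\Cal OC}(Z) \simeq \Cal O$, since $\Cal OCe$ is a matrix algebra over $\Cal O$. Green correspondence then yields
$$
\Ind_{H\times P}^{G\times P}(Z\otimes\Cal OP) \simeq \widetilde{\fr{Gr}}(G,b) \oplus V
$$
with $V$ relatively $\Cal S(G\times P,\D P,H\times P)$-projective. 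Applying Nagao's theorem (Theorem~\ref{Alperin}) to $\widetilde{\fr{Gr}}(G,b)$ with its Green correspondent lying in the block $e$ of $\Cal O(H\times P)$, whose Brauer correspondent in $G\times P$ is $b$, shows that $b$ acts as the identity on $\widetilde{\fr{Gr}}(G,b)$. Consequently $\widetilde{\fr{Gr}}(G,b)\otimes_{\Cal O}Z^*$ is a direct summand of $b\Cal OGe$.

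To conclude, I need to check that $\widetilde{\fr{Gr}}(G,b)\otimes_{\Cal O}Z^*$ is indecomposable as $\Cal O(G\times H)$-module with vertex $\D P$, so that by uniqueness it must coincide with $\fr{Gr}(G,b)$; the required identification then follows via the canonical isomorphism $\widetilde{\fr{Gr}}(G,b)\otimes_{\Cal OP}(\Cal OP\otimes_{\Cal O} Z^*)\simeq \widetilde{\fr{Gr}}(G,b)\otimes_{\Cal O}Z^*$. Indecomposability reduces to the computation
$$
\End_{\Cal O(G\times H)}\bigl(\widetilde{\fr{Gr}}(G,b)\otimes_{\Cal O}Z^*\bigr) \simeq \End_{\Cal O(G\times P)}(\widetilde{\fr{Gr}}(G,b))\otimes_{\Cal O}\End_{\Cal OC}(Z^*) \simeq \End_{\Cal O(G\times P)}(\widetilde{\fr{Gr}}(G,b)),
$$
which is local (this uses $\End_{\Cal OC}(Z^*) \simeq \Cal O$ together with a centraliser argument). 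For the vertex, writing $\widetilde{\fr{Gr}}(G,b)$ as a summand of some $\Ind_{\D P}^{G\times P}U$ and $Z^*$ as a summand of $\Ind_{\mbf{1}}^{C}\Cal O^{\oplus n}$ and using the standard isomorphism $(\Ind_{\D P}^{G\times P}U)\otimes_{\Cal O}\Ind_{\mbf{1}}^{C}\Cal O \simeq \Ind_{\D P}^{G\times H}U$ gives $\D P$-projectivity, and restriction to $G\times P$ precludes a smaller vertex. The main obstacle will lie in the careful tracking of the various bimodule structures and in ensuring that the block-theoretic input (Nagao's theorem and the matching of $e$ with $b$ across $\Cal O(H\times P)$, $\Cal O(G\times P)$ and $\Cal OG$) applies cleanly.
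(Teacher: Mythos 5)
Your argument is correct, and it starts from the same structural observation as the paper — namely that $H=C\times P$ lets one split off the $p'$-factor $Z^*$ and view $\Cal OHe\simeq (Z\otimes\Cal OP)\otimes Z^*$ (hence $\Cal OGe\simeq \Ind_{H\times P}^{G\times P}(Z\otimes\Cal OP)\otimes Z^*$) as an outer tensor product over $(G\times P)\times C$. Where you diverge is in how you get from there to the conclusion. The paper simply quotes K{\"u}lshammer (Proposition 1.2) and Harris (Proposition 3.4), which say that vertices, sources and Green correspondents are compatible with outer tensor products, and reads off $\fr{Gr}(G,b)\simeq\widetilde{\fr{Gr}}(G,b)\otimes Z^*$ in one line. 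You instead re-prove the relevant instance of that compatibility by hand: indecomposability of $\widetilde{\fr{Gr}}(G,b)\otimes Z^*$ via $\End_{\Cal O(G\times H)}(\widetilde{\fr{Gr}}(G,b)\otimes Z^*)\simeq \End_{\Cal O(G\times P)}(\widetilde{\fr{Gr}}(G,b))\otimes\End_{\Cal OC}(Z^*)$ being local, the vertex $\D P$ via the induction/restriction argument, membership in the block $b$ via Nagao's theorem, and then Krull--Schmidt uniqueness of the indecomposable summand of $b\Cal OGe$ with vertex $\D P$. This buys self-containedness (no appeal to the two cited propositions) at the cost of length, and all the individual steps check out; the only points you should make explicit when writing it up are the compatibility of the Brauer correspondence with direct products, i.e.\ $(e\otimes 1_{\Cal OP})^{G\times P}=b\otimes 1_{\Cal OP}$, which your Nagao step needs, and the $\End$-of-outer-tensor-product identity over $\Cal O$ (true for lattices, and essentially the content of the K{\"u}lshammer reference). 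Note also that the Nagao step is avoidable: the paper has already recorded that $\Cal OGe\simeq\fr{Gr}(G,b,H)\oplus W$ with $W$ relatively $\D\Cal S$-projective, so it suffices to exhibit your module as an indecomposable summand of $\Cal OGe$ with vertex $\D P$, without tracking the block $b$ at all.
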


\begin{proof}
 We have isomorphisms of $\Cal OH$-$\Cal OH$-bimodules
$$
\Cal OHe \simeq \Cal OCe \otimes \Cal OP \simeq (Z \otimes Z^*) \otimes \Cal OP \simeq (Z\otimes \Cal OP) \otimes Z^*.
$$
Here $(Z\otimes \Cal OP)\otimes Z^*$ may be viewed as an $\Cal O(H\times H)=\Cal O((H\times P)\times C)$-module given as the outer tensor product of the $\Cal O(H\times P)$-module
$Z\otimes \Cal OP$ and the $\Cal OC$-module $Z^*$. By \cite{Kuelshammer1992}, Proposition 1.2, and~\cite{Harris2005}, Proposition 3.4, which essentially state that vertices, sources and Green correspondents behave well with respect to outer tensor products, we have
$$
\fr{Gr}(G,b) \simeq \widetilde{\fr{Gr}}(G,b) \otimes Z^*.
$$
Finally, the isomorphism $\widetilde{\fr{Gr}}(G,b)\otimes Z^*\simeq \widetilde{\fr{Gr}}(G,b) \otimes_{\Cal OP} (\Cal OP \otimes Z^*)$ is clear. 
\end{proof}

If $\Cal S$ is a set of subgroups of $G$, we will say that two $RG$-modules $M$ and $N$ are \emph{$\Cal S$-equivalent} if there exist $\Cal S$-projective $RG$-modules $M_0$ and $N_0$ such that $M\oplus M_0 \simeq N \oplus N_0$. The next result is a refinement of~\cite{Rickard1996}, Theorem 7.8, and most steps of the proof below are the same as in \emph{loc.\ cit.}

\begin{thm}\label{pnilpspl} With the notation as above, assume that $P$ is abelian. Let $\Cal S = \Cal S(G,P,H)$. 
Then there exists a splendid tilting complex of $\Cal OGb$-$\Cal OP$-bimodules that is $\D \Cal S$-tempered with pivot $\widetilde{\fr{Gr}} (G,b)$. 
\end{thm}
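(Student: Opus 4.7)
The plan is to adapt Rickard's construction in Theorem 7.8 of \cite{Rickard1996}, which produces a splendid tilting complex between $\Cal OGb$ and $\Cal OHe$, by targeting $\Cal OP$ rather than $\Cal OHe$ and using the refined Corollary \ref{endocor} in place of his version of the endo-split resolution theorem. The refinement is exactly what will buy both the $\D \Cal S$-tempered property and the identification of the pivot with $\widetilde{\fr{Gr}}(G,b)$.

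First I would bring in the source-algebra structure. Since $L$ is a $p'$-group and $P$ is abelian, Puig's theory combined with Dade's classification of endo-permutation modules (as used by Rickard) yields a source idempotent $i\in(\Cal OGb)^P$ whose source algebra $i\Cal OGi$ is isomorphic to $\End_{\Cal O}(V)\otimes_{\Cal O}\Cal OP$ for an indecomposable endo-permutation $\Cal OP$-module $V$. The source module $i\Cal OG$ then realises a Morita equivalence between $\Cal OGb$ and $\End_{\Cal O}(V)\otimes\Cal OP$; under this equivalence, the bimodule $\widetilde{\fr{Gr}}(G,b)$ corresponds, up to $\D\Cal S$-projective summands, to a bimodule built from $V$, with compatibility provided by Lemma \ref{Grprod} and the behaviour of Green correspondents under outer tensor products (as in the proof of that lemma).

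Next I would apply Corollary \ref{endocor} to $V$ to produce a $1$-dimensional $\Cal OP$-module $J$ and an endo-split $p$-permutation resolution $X$ of $J\otimes V$ that is $\Cal S$-tempered with pivot $\Cal O$. Tensoring $X$ with the source module, viewed as an $\Cal OGi$-$\Cal OP$-bimodule complex, produces a bounded complex $Y$ of $p$-permutation $\Cal O(G\times P)$-bimodules, all terms of which are $\D P$-projective, so $Y$ is splendid. Each non-pivot term of $X$ is an $\Cal S$-projective $\Cal OP$-module; by a Mackey-style argument analogous to Lemma \ref{comp}, the corresponding term of $Y$ is then $\D\Cal S$-projective, while the pivot term of $Y$ matches $\widetilde{\fr{Gr}}(G,b)$ up to a $\D\Cal S$-projective summand. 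To verify that $Y$ is a Rickard tilting complex, I would use the endo-split property of $X$: the complex $X^*\otimes_{\Cal O}X$ has homology concentrated in degree $0$ and, by Lemma \ref{splitcrit}, is split there; transporting the splitting through the source-algebra Morita equivalence gives $Y\otimes_{\Cal OP}Y^*\simeq\Cal OGb$ and $Y^*\otimes_{\Cal OGb}Y\simeq\Cal OP$ in the respective homotopy categories.

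\textbf{The main obstacle} I foresee is controlling the $1$-dimensional twist $J$ produced by Corollary \ref{endocor}: without care the pivot of $Y$ would come out as a $J$-twist of $\widetilde{\fr{Gr}}(G,b)$ rather than $\widetilde{\fr{Gr}}(G,b)$ itself. I would handle this by adjusting the choice of source idempotent $i$ (equivalently, the normalisation given by the generator $Z$ of the unique indecomposable $\Cal OCe$-lattice) to absorb $J$, using the uniqueness statement of Lemma \ref{dirsummandcomp} to track the isomorphism class of the pivot throughout the construction. A minor additional bookkeeping issue is that the Morita equivalence sends left $\Cal OP$-modules to left $\Cal OGb$-modules, so the complex $Y$ must be realised with its correct right $\Cal OP$-structure inherited from the $\Cal OP$-factor of the source algebra $\End_{\Cal O}(V)\otimes\Cal OP$; this is essentially the same manoeuvre Rickard uses in \cite{Rickard1996} and should go through verbatim once the pivot has been identified.
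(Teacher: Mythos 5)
Your overall strategy (refine Rickard's Theorem 7.8 using Corollary~\ref{endocor}, and absorb the one-dimensional twist $J$ by renormalising) is the same as the paper's, and the twist issue you flag is indeed handled there, simply by replacing the chosen extension $M$ of the unique $\Cal OLb$-lattice with $J\otimes M$. However, as written your construction has a genuine gap at its core: the complex $Y$ is not actually defined. The resolution $X$ produced by Corollary~\ref{endocor} is a complex of one-sided $\Cal OP$-modules; the algebra $\End_{\Cal O}(V)$ does not act on its terms, so "tensoring $X$ with the source module" over the source algebra $i\Cal OGbi\simeq \End_{\Cal O}(V)\otimes\Cal OP$ makes no sense, and tensoring over $\Cal OP$ with a one-sided complex destroys the right $P$-action, so no $\Cal OGb$-$\Cal OP$-bimodule complex results. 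The paper gets around this by first turning the resolution into a complex of $\Cal OH$-modules ($Z\otimes X$ with $H=C\times P$), inducing to $G$, cutting out (via Lemma~\ref{dirsummandcomp}) the summand $Y'$ resolving $M$, and only then passing to bimodules by inducing along the diagonal embedding $\D_P G\le G\times P$; your sketch contains no substitute for this diagonal induction step. Relatedly, Lemma~\ref{comp} cannot do the work you ask of it: the Morita bimodule between $\Cal OGb$ and $\Cal OP$ is \emph{not} a $p$-permutation bimodule (its $\D P$-source is the endo-permutation module $V$), so $p$-permutation-ness of the terms and $\D\Cal S$-projectivity of the non-pivot terms must come from the construction itself, and they require the concrete structure the paper uses at the outset, namely $\Cal S=\{Q\le P\mid C_L(Q)>C\}$ and the fact that elements of $P\cap\lsa{g}P$ commute with $g$; nothing in your proposal plays this role.

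The second gap is the tilting verification. Morita equivalence controls one-sided module categories, so "transporting the splitting through the source-algebra Morita equivalence" does not yield that $T\otimes_{\Cal OP}T^*$ and $T^*\otimes_{\Cal OG}T$ are split as complexes of $\Cal O(G\times G)$- and $\Cal O(P\times P)$-modules: their terms are projective only on each side, so Lemma~\ref{splitcrit} does not apply directly, and the endo-split property of $X$ over $P$ alone is not enough. This is exactly where the paper works hardest, proving first that $\End_{\Cal O}(Y)$ is split for the diagonal $G$-action via a Mackey decomposition (again using $[P\cap\lsa{g}P,g]=1$), and then deducing splitness of the bimodule tensor products through the subgroups $\D_P G$, $(\D_P G)^*$ and $E$. (A genuine source-algebra route is conceivable, e.g.\ using relative separability of $i\Cal OGbi$ over $\Cal OP$, but you would have to supply that argument.) Finally, when you cut a summand out of an endo-split resolution to land on the correct module, you must rule out the pivot disappearing into the complementary summand; the paper does this with a dimension count mod $p$ (using $p\nmid\dim M$), a step missing from your outline.
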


\begin{proof}
Observe that $\Cal S= \{ Q\le P \mid C_L (Q) > C \}$. Indeed, since $H=CP$, if 
$Q=P\cap \lsa{g}{P}$ with $g\in L-C$, then $g$ centralises $Q$ because $[Q,g]$ must be contained both in $P$ and in $L$ (as $L$ is normal).

 The algebra $\Cal OLb$ has a unique indecomposable lattice, which can be extended to an $\Cal OGb$-lattice $M$, say. (This extendibility property is well known and follows from extendibility of the corresponding character of $L$; see~\cite{IsaacsBook}, Corollary 6.28.) Since the $\Cal OP$-module $\End_{\Cal O} (\Res^G_P M) \simeq \Cal OLb$ is a direct summand of $\Cal OL$ (with $P$ acting by conjugation), 
we see that $\Res^G_P M$ is an $\Cal S$-endo-permutation module.
 Let $U=\capm (\Res^G_P M)$ (so that $U$ is the source of $M$). By Corollary~\ref{endocor}, for some $1$-dimensional $\Cal OP$-module $J$, 
there is an endo-split $p$-permutation resolution $X$ of $J\otimes U$ that is $\Cal S$-tempered with pivot $\Cal O$. Replacing $M$ with $J\otimes M$, we may assume that $X$ is, in fact, a resolution of $U$. 

Hence $Z\otimes X$ is an endo-split resolution of the $\Cal OHe$-module $Z\otimes U$ (where both tensor products are outer ones).
Moreover, $Z\otimes X$ is $\Cal S$-tempered with pivot $\tilde{Z}=\Inf_{C}^H Z$. Let $Y=\Ind_H^G (Z\otimes X)$. Then $Y$ is a complex of $p$-permutation $\Cal OG$-modules that is $\Cal S$-tempered with pivot $V$, where $V$ is the Green correspondent of $\tilde{Z}$. The homology of $Y$ is concentrated in degree $0$, and $H_0 (Y) \simeq \Ind_H^G (Z\otimes U)$. 

We claim that $\End_{\Cal O} (Y)$ is split as a complex of $\Cal OG$-modules with the diagonal action of $G$ (cf.~\cite{Rickard1996}, proof of Lemma 7.7). We have 
\[
 \End_{\Cal O} (Y) \simeq \Res^{G\times G}_{\D G} \Ind_{H\times H}^{G\times G} ((Z\otimes X)^* \otimes (Z\otimes X)).
\]
By the Mackey formula, the right-hand side is a direct sum of complexes of the form
\[
 \Ind_{\D (H\cap \lsa{g}{H})}^{\D G} \Res^{H\times \lsa{g}{H}}_{\D(H\cap \lsa{g}{H})} ((Z^* \otimes X^*) \otimes \lsa{g}{(Z\otimes X)})
\]
for certain $g\in G$. Since $|(H\cap \lsa{g} H): (P\cap\lsa{g}{P})|$ is coprime to $p$, a complex of $\Cal O(H\cap\lsa{g} H)$-modules splits if and only if its restriction to $\Cal O(P\cap\lsa{g}P)$ splits. Hence, to prove the claim it suffices to show that 
\begin{equation}\label{pnseq}
\Res^{H\times \lsa{g}{H}}_{\D (P\cap \ls{g}{P})} ((Z^* \otimes X^*) \otimes (\lsa{g}Z \otimes \lsa{g}X))
\end{equation}
splits for each $g\in G$. However, as above, all elements of $P\cap \lsa{g}P$ commute with $g$. Thus the module~\eqref{pnseq} is isomorphic to $\Res^P_{P\cap\lsa{g}{P}} ((Z^* \otimes Z) \otimes (X^* \otimes X))$. Since $P$ acts trivially on $Z$ and $X^*\otimes X$ is split as a complex of $\Cal OP$-modules, the claim follows. To summarise, $Y$ is an endo-split $p$-permutation resolution of $\Ind_H^G (Z\otimes U)$ that is $\Cal S$-tempered with pivot $V$.

Since $M$ is the Green correspondent of $Z\otimes U$, there is a direct summand $Y'$ of $Y$ that is an endo-split $p$-permutation resolution of $M$ (by Lemma~\ref{dirsummandcomp}). 
The $\Cal S$-pivot of $Y'$ must be either $0$ or $V$. However, if $Y'$ has pivot $0$ then $\sum_{i} (-1)^i\dim (Y')^i$ is divisible by $p$, which is impossible because the homology of $Y'$ is concentrated in degree zero and $H_0 (Y')\simeq M$, whereas $p$ does not divide $\dim M$. So the $\Cal S$-pivot of $Y'$ is $V$ and, in particular, is $\Cal S$-equivalent to $\Ind_H^G (\tilde{Z})$.

We identify $G$ with 
\[
\D_P G = \{ (lx, x)\in G\times P \mid l\in L, x\in P \} 
\]
 and $H$ with $\D_P H=\{ (cx,x)\in H\times P \mid c\in C, x\in P\}$. 
Consider the complex $T=\Ind_{\D_P G}^{G\times P} (Y')$ of $\Cal OG$-$\Cal OP$-bimodules. 
Certainly, $T$ is $\D \Cal S$-tempered with pivot $\D \Cal S$-equivalent to $\Ind_{\D_P H}^{G\times P} (\tilde{Z})$. However, 
$$\Ind_{\D_P H}^{H\times P} (\tilde Z) \simeq Z\otimes \Cal OP,$$ 
where the module structure on the right-hand side is as described by~\eqref{ZOP}; 
an isomorphism is given by
$$
(x,1) \otimes z \mapsfrom x\otimes z, \quad x\in P,\; z\in Z.
$$
So the $\Cal S$-pivot of $T$ is isomorphic to the Green correspondent of $Z\otimes \Cal OP$, i.e.\ to 
$\widetilde{\fr{Gr}}(G,b)$. Also, all terms of $T$ are $p$-permutation bimodules because all terms of $Y'$ are $p$-permutation $\Cal OG$-modules. 

We will now verify that $T$ is a Rickard tilting complex of $\Cal OGb$-$\Cal OP$-bimodules, thus completing the proof. The homology of $T$ is concentrated in degree $0$, and $H_0 (T) \simeq \Ind_{\D_P G}^{G\times P} (M) = N$, say. As is observed in~\cite{Rickard1996}, Section 7.4, the bimodule $N$ induces a Morita equivalence between the algebras $\Cal OGb$ and $\Cal OP$, that is, there are isomorphisms
\[
 N\otimes_{\Cal OP} N^* \simeq \Cal OGb \quad \text{ and } \quad N^* \otimes_{\Cal OG} N \simeq \Cal OP
\]
of $\Cal OGb$-$\Cal OGb$- and $\Cal OP$-$\Cal OP$-bimodules respectively. By the Mackey theorem, 
\[
\Res^{G\times P}_{G\times \mbf 1} T \simeq \Ind_{L\times \mbf 1}^{G\times P} \Res^{\D_P G}_{L\times \mbf 1} (Y'),
\]
 so the terms of 
$T$ are projective as left 
$\Cal OG$-modules. Since the homology of $T$ is concentrated in degree $0$, by Lemma~\ref{splitcrit}, $T$ splits as a complex of left $\Cal OG$-modules, whence $T^*\otimes_{\Cal OG} T$ has homology concentrated in degree $0$ with 
$H_0 (T^*\otimes_{\Cal OG} T) \simeq N^*\otimes_{\Cal OG} N \simeq \Cal OP$ as $\Cal OP$-$\Cal OP$-bimodules. Similarly, all the terms of $T$ are projective as right $\Cal OP$-modules and $T\otimes_{\Cal OP} T^*$ has homology concentrated in degree $0$ and isomorphic to 
$N\otimes_{\Cal OP} N^* \simeq \Cal OGb$ as an $\Cal OG$-$\Cal OG$-bimodule. 

It remains to show that $T\otimes_{\Cal OP} T^*$ and $T^*\otimes_{\Cal OG} T$ are split. Let $(\D_P G)^* =\{(x,lx) \mid x\in P, l\in L\}\subseteq P\times L$. We have
$$
T \otimes_{\Cal O} T^* \simeq \Ind_{\D_P G\times (\D_P G)^*}^{G\times P\times P \times G} (Y'\otimes_{\Cal O} (Y')^* ),
$$
and, by the Mackey theorem, the restriction of this to $G\times \D P \times G$ is isomorphic to
\[
 \Ind_{E}^{G\times \D P \times G} \Res^{G\times G}_E (Y' \otimes_{\Cal O} (Y')^* )
\]
where $E=\{ (lx,xl') \mid l,l'\in L, x\in P \} \subseteq G\times G$ and $E$ embeds into $G\times \D P \times G$ via
\[
 (lx,xl')\mapsto (lx,x,x,xl'), \quad l,l'\in L, \, x\in P.
\]
Since $Y'\otimes_{\Cal O} (Y')^*$ is split as a complex of $\Cal OG$-$\Cal OG$-bimodules, it follows that $T\otimes_{\Cal O} T^*$ is split when viewed as a complex of $\Cal O(G\times \D P\times G)$-modules. Therefore, $T\otimes_{\Cal OP} T^*$ is split as a complex of $\Cal OG$-$\Cal OG$-bimodules. The proof that $T^* \otimes_{\Cal OG}  T$ is split as a complex of $\Cal OP$-$\Cal OP$-bimodules is given in~\cite{Rickard1996}, proof of Theorem 7.8, and is similar.
\end{proof}

\begin{cor} Let $G=L\rtimes P$ where $L$ is a finite $p'$-group and $P$ is a finite abelian $p$-group. Then Conjecture~\emph{\ref{rBr}} holds for all $p$-blocks of $G$ with defect group $P$. 
\end{cor}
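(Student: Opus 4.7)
The plan is to transport the splendid tilting complex produced by Theorem~\ref{pnilpspl} along the Morita equivalence between $\Cal OP$ and $\Cal OHe$. Writing $H = N_G(P) = C \times P$ as in Section~\ref{pnilpab}, the $\Cal OHe$-module $Z \otimes \Cal OP$ of~\eqref{ZOP} is a progenerator whose endomorphism algebra is $\Cal OP$, and the corresponding $\Cal OP$-$\Cal OHe$-bimodule $M_0 \coleq \Cal OP \otimes_{\Cal O} Z^*$ realises a Morita equivalence between $\Cal OP$ and $\Cal OHe$. Lemma~\ref{Grprod} says exactly that this equivalence sends $\widetilde{\fr{Gr}}(G,b)$ to $\fr{Gr}(G,b)$.

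I would first apply Theorem~\ref{pnilpspl} to produce a splendid tilting complex $T$ of $\Cal OGb$-$\Cal OP$-bimodules that is $\D\Cal S$-tempered with pivot $\widetilde{\fr{Gr}}(G,b)$, and then set $T' = T \otimes_{\Cal OP} M_0$. The Rickard tilting property of $T'$ is inherited from that of $T$ via the Morita equivalence, since $M_0 \otimes_{\Cal OHe} M_0^* \simeq \Cal OP$ and $M_0^* \otimes_{\Cal OP} M_0 \simeq \Cal OHe$ as bimodules.

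For the splendid condition and the tempered structure, the key point is that $M_0 = \Cal OP \otimes_{\Cal O} Z^*$ is an outer tensor product of the $\Cal O(P\times P)$-module $\Cal OP$ (vertex $\D P$, trivial source) with the projective $\Cal OC$-module $Z^*$, and is therefore a $p$-permutation $\Cal O(P \times H)$-module with vertex $\D P$. If $N$ is an indecomposable summand of some term of $T$ with vertex $\D Q$, $Q \le P$, applying Chuang's Lemma~\ref{comp} with middle group $P$ shows that $N \otimes_{\Cal OP} M_0$ is a $p$-permutation $\Cal O(G\times H)$-module relatively projective with respect to $\{\ls{(1,p)} \D Q \mid p\in P\}$. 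Since $P$ is abelian and $Q \le P$, this set collapses to $\{\D Q\}$, so the tensor product is $\D Q$-projective; in particular, it is $\D P$-projective, and it is $\D\Cal S$-projective whenever $Q \in \Cal S$. Consequently $T'$ is splendid, each $\D\Cal S$-projective summand of a term of $T$ yields a $\D\Cal S$-projective summand of the corresponding term of $T'$, and by Lemma~\ref{Grprod} the pivot $\widetilde{\fr{Gr}}(G,b)$ of $T$ gives rise to $\fr{Gr}(G,b)$ in $T'$. Because $\fr{Gr}(G,b)$ has vertex $\D P \notin \D\Cal S$, it cannot be absorbed into any $\D\Cal S$-projective part and genuinely remains the pivot.

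The main obstacle I foresee is bookkeeping: identifying vertex subgroups through the outer tensor product with $Z^*$ and through the identification $H = C \times P$. Abelianness of $P$ is essential in making the set $\{\ls{(1,p)} \D Q \mid p \in P\}$ collapse to a single subgroup, so that the tempered structure of $T$ is transported verbatim by $(-)\otimes_{\Cal OP} M_0$; without this, one would have to enlarge $\Cal S$ to be closed under the $P$-conjugation appearing in Chuang's lemma.
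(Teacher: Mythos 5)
Your proposal is correct and takes essentially the same route as the paper: both tensor the complex $T$ from Theorem~\ref{pnilpspl} over $\Cal OP$ with the Morita bimodule relating $\Cal OP$ and $\Cal OHe$, invoke Chuang's Lemma~\ref{comp} together with the triviality of $P$-fusion in the abelian group $P$ to preserve the $p$-permutation and $\D\Cal S$-tempered structure, and then identify the pivot via Lemma~\ref{Grprod}. (You write the Morita bimodule as $\Cal OP\otimes Z^*$ while the paper writes $\Cal OP\otimes Z\cong T\otimes_{\Cal O}\tilde Z$ — this is only a difference in which side the dual is taken, consistent with Lemma~\ref{Grprod}'s statement.)
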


\begin{proof}
 We use the notation of the discussion preceding Theorem~\ref{pnilpspl} and of the proof above. Let $T$ be the complex of $\Cal OGb$-$\Cal OP$-bimodules given by Theorem~\ref{pnilpspl}. Clearly, $T'=T \otimes_{\Cal OP} (\Cal OP \otimes Z) \cong T\otimes_{\Cal O} \tilde{Z}$ 
is a Rickard tilting complex of $\Cal OGb$-$\Cal OHe$-bimodules, 
and all its terms are $p$-permutation bimodules. Since $H=CP$ controls the (non-existent) $G$-fusion of subgroups of $P$, it follows from Lemma~\ref{comp} that $T'$ is $\D \Cal S$-tempered with pivot $\widetilde{\fr{Gr}}(G,b) \otimes_{\Cal OP} (\Cal OP \otimes Z) \simeq \fr{Gr}(G,b)$, where the isomorphism is due to Lemma~\ref{Grprod}.
\end{proof}

\begin{remark}
Let $G=L\rtimes P$ be a $p$-nilpotent group.
 A character-theoretic ``shadow'' of the Brou{\'e} conjecture is the existence of a
so-called perfect isometry (see~\cite{Broue1990}) between $\Cal C(G,b)$ and $\Cal C(N_G (P),e)$, where $b$ and $e$ are the blocks in question. 
In the present case, it is not difficult to prove the existence of such an isometry purely by methods of character thory (i.e.\ via the Glauberman correspondence; see~\cite{IsaacsBook}, Chapter 13). By contrast, there appears to be no easy character-theoretic proof of the fact that (IRC-Syl) holds when $P$ is abelian. 
\end{remark}

\begin{remark}\label{pnilprem}
 Consider a semidirect product $G=L\rtimes P$ with $P$ not necessarily abelian (and $L$ a $p'$-group, as before). 
In this situation, the statement of Lemma~\ref{endolem2} is no longer true: for example, if $p>2$ and $P$ is an extraspecial group of order $p^3$ and exponent $p$, 
then $\mathbb Q\otimes_{\mathbb Z} D_k (P, \{ \mbf 1 \})$ is $(p+1)$-dimensional by~\cite{Alperin2001}, Theorem 4, but $D_k^{\Om|\Cal \{ \mbf 1 \} } (P)$ is cyclic, so 
$
 D_k (P, \{ \mbf 1 \}) \nsubseteq D_k^{\Om|\Cal \{ \mbf 1 \} } (P).
$
 By a result of Puig (\cite{Puig2009}, Theorem 7.8), with notation as above, the source of the unique simple $kGb$-module necessarily yields a torsion element of the  Dade group, so the proof of Theorem~\ref{pnilpspl} would still work as long as
\begin{equation}\label{nonab2}
D_k (P,\Cal S) \cap D_{k,t} (P) \subseteq D^{\Om |\Cal S}_k (P),
\end{equation}
where $D_{k,t}(P)$ denotes the torsion subgroup of $D_k (P)$. However, for $p=2$, there are torsion elements of the Dade group which do not even belong to the subgroup $D_k^{\Om} (P)$ spanned by all relative syzygies, and in such a case property (IRC-Syl) may fail, as is shown in Section~\ref{exampleQ8} below.
It is not clear whether~\eqref{nonab2} is true for odd $p$. 
\end{remark}

\section{The trivial intersection case}\label{TI}

\subsection{Properties (P+) and (G)}\label{Eaton}

Let $G$ be a finite group. Let $P$ be a $p$-subgroup of $G$ and $H$ be a subgroup of $G$ containing $N_G (P)$. In this section we mostly concentrate on the case where 
$\Cal S = \Cal S(G,P,H) =\{ \mbf 1 \}$. When this occurs for $H=N_G (P)$, it is said that $P$ is a \emph{trivial intersection} (or \emph{TI}) subgroup of $G$.

Denote by  $\Cal{P}(G)\le \Cal C(G)$ the abelian 
group spanned by the characters of projective indecomposable $\O G$-modules. In other words,
\begin{equation}\label{Pdesc}
\Cal P(G) = \{ \chi\in \Cal C(G) \mid \chi(g)=0 \; \text{for all} \; g\in G \text{ with } g_p\ne 1 \}
\end{equation}
(see e.g.\ \cite{NavarroBook}, Corollary 2.16), and also $\Cal P(G)=\Cal I(G,P,\{ \mbf 1 \}$) by Theorem~\ref{relpr}. (Note that $\Cal I(G,P,\{ \mbf 1\})$ does not actually depend on $P$.) If $b$ is a block of $G$, write $\Cal P(G,b)= \Cal P(G) \cap \Cal C(G,b)$.

Assume Hypothesis~\ref{hypbl}. Recall the virtual character $\om(G,b,H)$ of $G\times H$ defined by~\eqref{omega}. The following property of the quadruple $(G,b,P,H)$ has been defined by Eaton~\cite{Eaton2008} (in the case $H=N_G (P)$) and considered, in particular, in cases where $P$ is TI. (We retain the name of the property from~\cite{Eaton2008}.)

\bigskip
\noindent\begin{tabular}{lp{\width}}
\!\!\!(P+)\!\!\! & 
\emph{
\!\!\!\! Let $e\in \Bl(H)$ be the Brauer correspondent of $b$. There exists $\mu\in \Cal (G\times H, b\otimes \bar e)$ of the form 
$\mu=\om(G,b,H) + \sum_{i} \alpha_i \times \beta_i$, where $\alpha_i\in \Cal P(G,b)$ and $\beta_i \in \Cal P(H,\bar e)$ for each $i$, such that, for every 
$\phi\in\Irr_{0}(H,e)$ and $\chi\in \Irr_{0}(G,b)$, the virtual characters $I_{\mu}(\phi)$ and $R_{\bar \mu}(\chi)$ each have precisely one irreducible constituent of height zero, and this occurs with multiplicity $\pm 1$.}
\end{tabular}
\medskip

Using the following observation, we may replace the condition on $\mu$ in (P+) with the congruence 
$\mu\equiv \om(G,b,H) \,\bmod \Cal P(G\times H, b\otimes \bar e)$.

\begin{lem}\label{simlem}
 Let $G$ and $L$ be finite groups and $\nu\in \Cal C(G\times L)$. Then $\nu \in \Cal P(G\times L)$ if and only if 
$\nu$ can be expressed as $\sum_{i} \alpha_i\times \beta_i$ where $\alpha_i\in \Cal P(G)$ and $\beta_i\in \Cal P(L)$ for each $i$. 
\end{lem}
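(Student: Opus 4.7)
The plan is to use two standard characterisations of $\Cal P(G)$: on the one hand, $\Cal P(G)$ is the $\mZ$-span of the characters of the projective indecomposable $\Cal OG$-lattices (the PIMs), and on the other hand, as recorded in~\eqref{Pdesc}, it equals the set of virtual characters of $G$ that vanish on every element with non-trivial $p$-part.

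For the ``if'' direction I would invoke~\eqref{Pdesc}. The $p$-part of a pair $(g,l)\in G\times L$ is simply $(g_p,l_p)$, so $(g,l)_p\ne 1$ if and only if $g_p\ne 1$ or $l_p\ne 1$. If $\alpha\in \Cal P(G)$ and $\beta\in \Cal P(L)$, then $(\alpha\times\beta)(g,l)=\alpha(g)\beta(l)$ vanishes in either case, so $\alpha\times\beta\in \Cal P(G\times L)$. Linearity finishes the direction.

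For the ``only if'' direction I would argue with PIMs. Since $k$ is algebraically closed it is a splitting field for $kG$, $kL$ and $k(G\times L)$, and the PIMs of $k(G\times L)$ are exactly the outer tensor products $U\otimes_k V$ with $U$ a PIM of $kG$ and $V$ a PIM of $kL$. Lifting idempotents from $k$ to $\Cal O$, the same holds for PIMs of $\Cal O(G\times L)$: they are precisely the lattices $\tilde U \otimes_{\Cal O} \tilde V$, whose characters are the outer products $\chi_{\tilde U}\times \chi_{\tilde V}$ of PIM characters of $G$ and $L$. These products therefore form a $\mZ$-basis of $\Cal P(G\times L)$, so any $\nu\in\Cal P(G\times L)$ can be written as an integral combination $\sum_i \alpha_i\times\beta_i$ with $\alpha_i$ and $\beta_i$ PIM characters, a fortiori lying in $\Cal P(G)$ and $\Cal P(L)$ respectively.

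There is no serious obstacle; the only substantive input is the factorisation of PIMs under the outer tensor product, which rests on the splitting-field hypothesis already assumed on $\Cal O$ and $k$ in the conventions of Section~\ref{refinements}.
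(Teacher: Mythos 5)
Your proposal is correct, but it proves the nontrivial direction by a genuinely different route from the paper. The paper works with the identification $\Cal P(G\times L)=\Cal I(G\times L,\mbf 1,\{\mbf 1\})$ coming from Theorem~\ref{relpr}: a virtual character in $\Cal P(G\times L)$ is an integral combination of characters $\Ind_E^{G\times L}\phi$ with $E$ a $p'$-subgroup, and one induces in stages through $E_1\times E_2$ (the product of the projections of $E$), observing that every character of $E_1\times E_2$ is a sum of outer products and that induction from $E_1\times E_2$ turns these into $(\Ind_{E_1}^G\psi_1)\times(\Ind_{E_2}^L\psi_2)$ with $E_1,E_2$ again $p'$-groups; the converse is dismissed as clear. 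You instead argue directly from the definition of $\Cal P$ via projective indecomposable lattices: over the algebraically closed field $k$ the projective indecomposables of $k(G\times L)\simeq kG\otimes_k kL$ are exactly the outer tensor products of those of $kG$ and $kL$, and this lifts to $\Cal O$ by idempotent lifting, so the PIM characters of $G\times L$ are outer products of PIM characters of the factors; your ``if'' direction uses the vanishing criterion~\eqref{Pdesc}, which is also sound. Both arguments are complete; the paper's stays inside ordinary character theory plus the $\Cal I$-description already established (and so fits the Brauer-induction theme of the paper), while yours trades that for standard modular facts (heads of tensor products of projectives over split algebras, lifting from $k$ to $\Cal O$) and has the mild bonus of exhibiting an explicit $\mZ$-basis of $\Cal P(G\times L)$ consisting of outer products, though the basis property is not needed for the statement.
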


\begin{proof}
 Suppose $\nu\in \Cal P(G\times L)=\Cal I(G\times L, \mbf 1, \{ \mbf 1 \})$. Then $\mu$ is a $\mZ$-linear combination of characters of the form 
$\Ind_{E}^{G\times L} \phi$ where $E$ is a $p'$-subgroup and $\phi \in\Irr(E)$. If $E_1$ and $E_2$ are the projections of $E$ onto $G$ and $L$ then 
$\Ind_{E}^{G\times L} \phi=\Ind_{E_1\times E_2}^{G\times L} \Ind_{E}^{E_1\times E_2} \phi$ 
is a sum of characters of the form $(\Ind_{E_1}^G \psi_1)\times (\Ind_{E_2}^L \psi_2)$, so $\nu$ can be expressed as required. 
The converse is clear.
\end{proof}

Following the approach of Sections~\ref{intro} and~\ref{gensetup}, we may generalise (P+) to the case where $P$ is not necessarily TI, and thus consider, assuming Hypothesis~\ref{hypbl}, the following property of the quadruple $(G,b,P,H)$:

\bigskip
\noindent\begin{tabular}{lp{\wid1}}
\!\!\!(G)\!\!\! & 
\emph{ \!\!\!\!
There exists $\mu\in \Cal (G\times H, b\otimes \bar e)$ of the form 
$$
\mu \equiv \om(G,b,H) \dmod \Cal I(G\times H,\D P, \D \Cal S)
$$ 
such that for each 
$\phi\in\Irr_0(H,e)$ and $\chi\in \Irr_0(G,b)$, the virtual characters $I_{\mu}(\phi)$ and $R_{\bar\mu}(\chi)$ each have precisely one irreducible constituent of height zero, and this occurs with multiplicity $\pm 1$.}
\end{tabular}
\medskip

By Lemma~\ref{simlem}, if $\Cal S(G,P,H) =\{ \mbf 1 \}$ then (G) is equivalent to (P+). 



\begin{prop}\label{Eawirc}
Assume Hypothesis~\emph{\ref{hypbl}}. If \emph{(G)} holds for the quadruple $(G,b,P,H)$ then so do \emph{(WIRC-Bl)} and \emph{(WIRC*-Bl)}.
\end{prop}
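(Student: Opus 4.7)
The plan is to build the signed bijection $F$ directly from the virtual character $\mu$ furnished by property (G). For $\chi\in\pm\Irr_0(G,b)$, property (G) says that $R_{\bar\mu}(\chi)$ has a unique height-zero irreducible constituent occurring with multiplicity $\e=\pm 1$; define $F(\chi)\in\pm\Irr_0(H,e)$ to be this constituent multiplied by $\e$, so that the error $\g\coleq R_{\bar\mu}(\chi)-F(\chi)$ lies in $\Cal C^p(H,e)$. Symmetrically, define $F'\colon\pm\Irr_0(H,e)\to\pm\Irr_0(G,b)$ from $I_\mu$. Both maps are clearly signed.

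To show $F'\circ F=\id$ (the other composition being symmetric), I would first verify the Frobenius-type identity
\[
\lan I_\mu(\phi),\chi\ran_G = \lan \phi,R_{\bar\mu}(\chi)\ran_H
\]
by direct unpacking of the defining formulas for $I_\mu$ and $R_{\bar\mu}$, using $\bar\mu(g,h)=\mu(g^{-1},h^{-1})$ and a change of summation variable. Setting $\phi=F(\chi)$ and using orthogonality of $F(\chi)\in\pm\Irr_0(H,e)$ against $\g\in\Cal C^p(H,e)$, the identity yields $\lan I_\mu(F(\chi)),\chi\ran_G=\lan F(\chi),F(\chi)\ran_H=1$. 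Combined with the ``unique height-zero constituent with multiplicity $\pm 1$'' clause of (G), this forces $F'(F(\chi))=\chi$ on the nose (the signs chasing through correctly).

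For the congruences, I would observe that $\Cal I(G\times H,\D P,\D\Cal S)$ is closed under duality (since induction commutes with dualising), whence $\bar\mu\equiv\ol{\om(G,b,H)}\pmod{\Cal I(G\times H,\D P,\D\Cal S)}$. Lemma \ref{mu2}, together with the identities $R_{\ol{\om(G,b,H)}}(\chi)=\Proj_e\Res^G_H\chi$ and $I_{\om(G,b,H)}(\phi)=\Proj_b\Ind^G_H\phi$, then gives
\[
R_{\bar\mu}(\chi) \equiv \Proj_e\Res^G_H\chi \dmod \Cal I(H,P,\Cal S), \qquad I_\mu(F(\chi)) \equiv \Proj_b\Ind^G_H F(\chi) \dmod \Cal I(G,P,\Cal S).
\]
For (WIRC-Bl), I would subtract $\g\in\Cal C^p(H,e)$ from the first congruence and replace $\Proj_e$ with $\Proj_P$ via Proposition \ref{otherblocks}(i) (which says that the projections onto other blocks with defect group in $P$ already lie in $\Cal I(H,P,\Cal S)$). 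For (WIRC*-Bl), I would write $\chi=I_\mu(F(\chi))-\g'$ with $\g'\in\Cal C^p(G,b)$ (from the bijection step) and replace $\Proj_b\Ind^G_H F(\chi)$ by $\Ind^G_H F(\chi)$ modulo $\Cal I(G,P,\Cal S)$, using Proposition \ref{otherblocks}(ii) to kill contributions from other blocks with defect in $P$ and Theorem \ref{corrthmgen}(i) to absorb contributions from blocks whose defect groups are not contained in $P$.

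The main substantive step is the Frobenius-type adjunction combined with the uniqueness hypothesis in (G), which is what actually upgrades a single virtual character into a bijection. Everything else, including both congruences, is then a matter of careful bookkeeping with block projections using Proposition \ref{otherblocks} and Lemma \ref{mu2}.
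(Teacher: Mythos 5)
Your proposal is correct and follows essentially the same route as the paper: the paper likewise defines $F(\chi)$ as the unique height-zero constituent of $R_{\bar\mu}(\chi)$ taken with its multiplicity, deduces that $F$ is a signed bijection from the adjointness of $I_\mu$ and $R_{\bar\mu}$, and obtains both congruences from Lemma~\ref{mu2}. Your explicit verification of the adjunction, the duality-closure of $\Cal I(G\times H,\D P,\D\Cal S)$, and the block-projection bookkeeping via Proposition~\ref{otherblocks} and Theorem~\ref{corrthmgen} simply spells out the details the paper's very short proof leaves implicit.
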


\begin{proof}
Suppose $\mu$ witnesses (G) and define $F\colon \pm\Irr_0 (G,b) \ra \pm\Irr_0 (H,e)$ by setting $F(\chi)=\lan \phi,R_{\bar\mu} (\chi) \ran \phi$ where $\phi$ is the unique irreducible constituent of $R_{\bar \mu} (\chi)$ lying in $\Irr_0 (H,e)$.
Since $I_{\mu}$ and $R_{\bar{\mu}}$ are adjoint, it is easy to see that $F$ is a signed bijection.
It follows from Lemma~\ref{mu2} that this bijection satisfies the requirements of (WIRC-Bl) and (WIRC*-Bl).
\end{proof}

There appears to be no obvious reason why the converse to this observation might be true. 

Property (P+) is proved in~\cite{Eaton2008} for a number of pairs $(G,b)$, with respect to the normaliser of a defect group of $b$, 
including the following cases:
\begin{enumerate}[(i)]
\item $G$ is one of $\SU_3 (q)$, $\GU_3(q)$, $\SU_3(q).2$ and $\GU_3(q).2$, where the extensions are by a field automorphism of order $2$, $p$ is the defining characteristic (so $q$ is a power of $p$) and $b$ is any block of positive defect;
 \item $G=\lsa{2}{B}_2 (2^{2m+1})$, $p=2$ and $b$ is the principal block;
\item $G=\ls{2}{G}_2 (3)$, $p=3$ and $b$ is the principal block;
\item $G=3.McL$ (the perfect triple cover of the sporadic simple group $McL$), $p=5$, and $b$ is any of the three blocks of positive defect.
\end{enumerate}

 We remark that the evidence of Section~\ref{calc} suggests that property (IRC) is false in many, if not all, of the cases (i)--(iv).

\subsection{An example}\label{exampleQ8}

In view of the results of Section~\ref{pnilpab}, it is instructive to consider an irreducible character $\chi$ of a $p$-nilpotent group $G=L\rtimes P$
such that an $\Cal OG$-lattice affording $\chi$ has vertex $P$ and a source $U$ with the property that the corresponding element 
$[k\otimes U]$ of the Dade group $D_k (G)$ does not belong to $D^{\Om|\Cal S}_k (G)$, where $\Cal S=\Cal S(G,P,N_G(P))$. 
Mazza~\cite{Mazza2003} showed that every indecomposable endo-permutation $\Cal OP$-module that gives rise to a torsion element in the 
Dade group occurs as a source of an $\Cal OG$-lattice for some $p$-nilpotent group $G=L\rtimes P$ (where $|L|$ is coprime to $p$). In what follows, we consider an example from~\cite{Mazza2003} where the source $U$ is a so-called ``exotic'' endo-permutation module, so that $[k \otimes U]$ does not belong to the subgroup $D^{\Om}_k (P)$ of $D_k (P)$ generated by all relative syzygies (cf.\ Remark~\ref{pnilprem}). 

Let $p=2$ and $P$ be the quaternion group of order $8$, so that
$$
P= \lan u,v \mid u^4 = 1, v^2 = u^2, \, \ls{u}{v}=v^{-1} \ran.
$$
Let $L$ be the extra-special group of order $125$ with exponent $5$, so that
$$
L= \lan x,y,z \mid x^5=y^5=z^5=1, \; [x,y]=z, [y,z]=[x,z]=1 \ran.
$$
Let $C=Z(L)=\lan z \ran$. 
Let $A$ be the subgroup of $\Aut(L)$ consisting of the maps $\tau$ such that $\tau(z)=z$. Then $A$ can be identified with $\SL_2 (5)$ where elements of $\SL_2 (5)$ are viewed as endomorphisms of the $\mF_5$-vector space $L/C$ with respect to the basis $\{ xC, yC \}$. Moreover, there is an isomorphism between $P$ and the Sylow $2$-subgroup of $A$ given by 
$$
u\mapsto \begin{pmatrix}
a & 0 \\
0 & a^{-1} 
\end{pmatrix}
\quad \text{and} \quad
v\mapsto
\begin{pmatrix}
0 & 1 \\
-1 & 0
\end{pmatrix}
$$
where $a$ is a generator the multiplicative group $\mF_5^{\times}$ (e.g.\ $a=2$). This isomorphism defines an action of $P$ on $L$, and we consider the corresponding semidirect product $G=L \rtimes P$. We have $C_{L}(x)=C$ for all $g\in P-\mbf 1$ (because some power of $g$ maps to the negation of the identity matrix in $\SL_2 (5)$), so $C=C_L (P)$ and $P$ is TI in $G$. As in Section \ref{pnilpab}, we have $N_G (P) =CP$.

Let $\phi$ be a non-trivial linear character of $C$, and let $\chi\in \Irr(L \di \phi)$. One can easily see that $\Ind_C^L \phi=5\chi$ using Clifford theory, and therefore $\chi$ is $G$-invariant. We have $\Res^G_L \chi=5\phi$, whence $\phi$ is the Glauberman correspondent of $\chi$ with respect to $P$ (see~\cite{IsaacsBook}, Chapter 13). Let $\tilde{\chi}$ be a character of $G$ extending $\chi$. Then $\Res^G_{CP} \tilde{\chi}=\phi\times \th$ for some $\th\in \Cal C(P)$ with $\th(1)=5$. Let $\rho=\rho_P$ be the regular character of $P$ and $\b$ be the unique irreducible character of $P$ of degree $2$.  By~\cite{IsaacsBook}, Theorem 13.6, we have $\th(g)=\pm 1$ for each $g\in P-\mbf 1$.
Using this information (or otherwise), one can easily show that $\th = \rho - \a - \b$ for some linear character $\a$ of $P$. Replacing $\tilde{\chi}$ with $\tilde{\chi}\bar{\a}$, we can (and do) ensure that 
$
\th = \rho - 1_P - \b.
$ 
Let $b$ be the block of $G$ containing $\chi$, so that $\Irr(G,b)=\{ \tilde{\chi}\g \mid \g \in \Irr(P) \}$, and the Brauer correspondent $e$ of $b$ is the block of $CP$ whose irreducible characters are the ones of the form $\phi\times \g$, $\g \in \Irr(P)$. First note that property (IRC-Bl) fails for the pair $(G,b)$
with respect to $CP$ because $\th$ cannot be expressed as $\a + t\rho$ with $\a$ a linear character of $P$ and $t\in \mZ$.

Observe that $\om(G,b,CP)=\sum_{\g\in \Irr(P)} (\tilde{\chi}\g) \times \ol{(\phi \times \th\g)}$ and consider
$\mu=\om(G,b,CP)-(\tilde{\chi}\rho)\times (\ol{\phi\times \rho})$. Then  
\[
R_{\bar\mu} (\tilde{\chi}\a)= \phi\times (- \a -\b) \quad \text{and} \quad I_{\mu} (\phi \times \a)=\tilde{\chi}(-\a-\b)
\]
for all linear characters $\a$ of $P$, so $\mu$ is a witness to property (G).
By Proposition~\ref{Eawirc}, property (WIRC-Bl) holds too. Also, $\Res^P_Q (\tilde{\chi}\b) = \phi\times (\b+\rho)$ and 
$\Ind_Q^P (\phi\times \b)=\tilde{\chi}(\b+\rho)$, so properties (pRes-Bl) and (pInd-Bl) are satisfied in this case. 

\begin{remark} Suppose $G=L\rtimes P$ where $L$ is a $p'$-group and $P$ is a TI $p$-subgroup of $G$. 
Suppose $\chi\in\Irr(L)$ is fixed by $P$. Let $M$ be an $\Cal OG$-lattice affording an extension of $\chi$ to $G$, and let $U$ be a source of $M$. Let $\th$ be the character afforded by $U$. Since $U$ is an endo-trivial module, $\th(g)\th(g^{-1})=1$ for each $g\in P-\mbf 1$. Using this, one can show that either $\th(1)=\pm \a + m\rho_P$ for some linear character $\a$ of $P$ and $m\in\mZ$ or $p=2$ and $P$ is dihedral, quaternion or semi-dihedral. In the latter case $\th$ must be one of a certain explicit list of characters. (We omit the details.) It then easily follows that $G$ satisfies (P+) for blocks of maximal defect, as well as properties (pRes-Syl) and (pInd-Syl) (where all three properties are considered with respect to  $N_G (P)$). An alternative way of proving this is to use \cite{Puig2009}, Theorem 7.8,
which implies that $[k\otimes U]$ must be a torsion element of the Dade group (cf.\  Remark~\ref{pnilprem}). It then follows 
that if $\dim U>1$ then $P$ must be cyclic, semi-dihedral or quaternion (see~\cite{Thevenaz2007}, Proposition 6.1).
\end{remark}



\section{Groups of Lie type in the defining characteristic}\label{lietype}

Let $q$ be a power of our prime $p$.
Let $\mbf G$ be a connected reductive algebraic group over the algebraic closure of $\mF_p$. We assume that $\mbf G$ is defined over $\mF_q$. Let $F\colon\mbf G\ra \mbf G$ be the corresponding Frobenius morphism. Note that this assumption excludes the possibility that $\mbf G^F$ is a Suzuki group $\lsa{2}B_2 (2^{2m+1})$ (with $p=2$) or a Ree group $\lsa{2}F_4 (2^{2m+1})$ or $\ls{2}G_2 (3^{2m+1})$ (with $p=2$ or $3$ respectively). For the theory of the groups $\mbf G^F$ and their characters we refer the reader to~\cite{CarterFGLT} or~\cite{DigneMichelBook}.

 Fix an $F$-stable Borel subgroup $\mbf B$ of $\mbf G$ and an $F$-stable maximal torus $\mbf T$ in $\mbf B$, and let $\mbf U$ be the unipotent radical of $\mbf B$. 
It is well known that $\mbf U^F$ is a Sylow $p$-subgroup of $\mbf G^F$ and that $\mbf B^F=N_{\mbf G^F} (\mbf U^F)$. 

Let $\Phi$ be the root system of $\mbf G$ and $\Phi^+$ be the set of positive roots corresponding to $\mbf B$.
The prime $p$ is said to be \emph{good} for $\Phi$ (or for $\mbf G$) if, for each $\a\in \Phi^+$, no coefficient of the linear combination expressing $\a$ in terms of simple roots is divisible by $p$. Specifically, $p$ is good for $\Phi$ if and only if none of the following holds (see e.g.~\cite{CarterFGLT}, Section 1.14):
\begin{itemize}
 \item $\Phi$ has a component of type $B_l$, $C_l$ or $D_l$ and $p=2$;
 \item $\Phi$ has a component of type $G_2$, $F_4$, $E_6$ or $E_7$ and $p\in \{2,3\}$;
\item $\Phi$ has a component of type $E_8$ and $p\in \{ 2,3,5\}$. 
\end{itemize}

Let $\Cal L\colon \mbf T\ra \mbf T$ be the Lang map, which is defined by $\Cal L (t)=t^{-1} F(t)$. Since $Z(\mbf G)$ is abelian, we may identify $H^1 (F, Z(\mbf G))$ with $Z (\mbf G)/\Cal L(Z (\mbf G))$. Note that $H^1 (F,Z(\mbf G))$ is a quotient of $Z(\mbf G)/Z(\mbf G)^{\mathrm o}$, where $Z(\mbf G)^{\mathrm o}$ denotes the connected component of $Z(\mbf G)$. Recall that $\mbf G$ is said to be \emph{split} if each root subgroup of $\mbf G$ with respect to $\mbf T$ is $F$-stable. 

The main aim of this section is to prove the following results.

\begin{thm}\label{liesplit}
 Suppose $\mbf G$ is split. Assume that $p$ is good for $\mbf G$ and $H^1 (F, Z(\mbf G))$ is cyclic. Then \emph{(IRC-Syl)} and \emph{(pInd-Syl)} hold 
for $\mbf G^F$ with respect to the prime $p$ and the normaliser $\mbf B^F$ of $\mbf U^F$. 
\end{thm}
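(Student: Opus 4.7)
The plan is to exploit the standard parametrisation of $p'$-characters in defining characteristic via the dual group, then verify the congruences term-by-term using the Bruhat decomposition.

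First, I would identify the two sides. The characters of $p'$-degree of $\mathbf{B}^F = \mathbf{U}^F \rtimes \mathbf{T}^F$ are precisely the linear characters, so $\mathrm{Irr}_{p'}(\mathbf{B}^F)$ is in natural bijection with $\mathrm{Irr}(\mathbf{T}^F)$ by inflation and hence with $(\mathbf{T}^*)^F$ by duality. On the $\mathbf{G}^F$ side, because $\mathbf{G}$ is split and $p$ is good, the characters of $p'$-degree are the semisimple (equivalently, regular) characters, which may be written as $\chi_s = \pm R_{\mathbf{T}^*}^{\mathbf{G}^*}(\hat s)$ up to sign and can be read off from the Gelfand--Graev decomposition. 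They are parametrised by semisimple classes in $\mathbf{G}^{*F}$, with a further label coming from $H^1(F, Z(\mathbf{G}))$ when the centre is disconnected; this is where the cyclicity assumption intervenes, since it guarantees that each $W$-orbit on $(\mathbf{T}^*)^F$ lifts to a clean family of semisimple characters, giving $|\mathrm{Irr}_{p'}(\mathbf{G}^F)| = |\mathrm{Irr}(\mathbf{T}^F)|$.

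Next, I would define the signed bijection $F$ by sending $\chi_s$ to the linear character $\hat s$ of $\mathbf{B}^F$ corresponding to the appropriate representative $s \in (\mathbf{T}^*)^F$, with the sign being the generic sign $\varepsilon_s$ from Deligne--Lusztig theory. The core task is then the congruence
\[
\mathrm{Res}_{\mathbf{B}^F}^{\mathbf{G}^F} \chi_s \equiv \hat s \pmod{\mathcal{I}(\mathbf{B}^F, \mathbf{U}^F, \mathcal{S})}.
\]
To prove it I would combine the character formula for $R_{\mathbf{T}}^{\mathbf{G}}(\theta_s)$ with the Bruhat decomposition of $\mathbf{G}^F$ into double cosets $\mathbf{B}^F \dot w \mathbf{B}^F$. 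For $w = 1$ the contribution to $\mathrm{Res}_{\mathbf{B}^F}^{\mathbf{G}^F} \chi_s$ is (up to sign) $\hat s$, while the contribution of each non-trivial $w$ is a character induced from a subgroup $L \le \mathbf{B}^F$ with $L \cap \mathbf{U}^F = \mathbf{U}^F \cap {}^w\mathbf{U}^F$, which is a proper subgroup of $\mathbf{U}^F$ and hence lies in $\mathcal{S} = \mathcal{S}(\mathbf{G}^F, \mathbf{U}^F, \mathbf{B}^F)$; such a character lies in $\mathcal{I}(\mathbf{B}^F, \mathbf{U}^F, \mathcal{S})$ by Definition~\ref{defI}. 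This is the computation that leans on Proposition~14.32 of~\cite{DigneMichelBook} mentioned in the acknowledgements.

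For (pInd-Syl), I would use Clifford theory with respect to the normal subgroup $\mathbf{U}^F \vartriangleleft \mathbf{B}^F$. Each $\phi \in \mathrm{Irr}(\mathbf{B}^F)$ of $p$-divisible degree lies over a non-trivial $\psi \in \mathrm{Irr}(\mathbf{U}^F)$; after replacing $\phi$ by its Clifford correspondent in the stabiliser $C_{\mathbf{B}^F}(\psi) = T_\psi \ltimes \mathbf{U}^F$, one shows that $\psi$ may be further induced from some proper ``parabolic'' subgroup of $\mathbf{U}^F$ supported on a root subsystem strictly smaller than $\Phi^+$; here the good-prime hypothesis is what guarantees a clean orbit/stabiliser description for $\mathrm{Irr}(\mathbf{U}^F)$. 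Transitivity of induction then gives $\mathrm{Ind}_{\mathbf{B}^F}^{\mathbf{G}^F}\phi = \mathrm{Ind}_L^{\mathbf{G}^F} \phi'$ for a subgroup $L$ with $L \cap \mathbf{U}^F \in \mathcal{S}$, so the left side lies in $\mathcal{I}(\mathbf{G}^F, \mathbf{U}^F, \mathcal{S})$, and (pInd-Syl) follows (since the $\mathcal{C}^p$ summand is not even required).

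The main obstacle I anticipate is Step~3, the restriction formula: extracting the linear piece $\hat s$ cleanly from $\mathrm{Res}_{\mathbf{B}^F}^{\mathbf{G}^F} \chi_s$ requires both a precise version of the Deligne--Lusztig character formula on the Borel and careful tracking of signs, and the presence of a non-connected centre (handled via the cyclicity of $H^1(F, Z(\mathbf{G}))$) introduces parametrisation subtleties that must be matched against the duality $\mathrm{Irr}(\mathbf{T}^F) \leftrightarrow (\mathbf{T}^*)^F$ to ensure that $F$ is a signed bijection and not merely a signed injection. The exclusion of bad primes is exactly what prevents exceptional cancellations in these character values.
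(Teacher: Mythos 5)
Your starting identification is where the argument breaks: $\Irr_{p'}(\mathbf B^F)$ is \emph{not} the set of linear characters of $\mathbf B^F$. Since $\mathbf T^F$ is a $p'$-group and $\mathbf U^F$ is normal, a character of $\mathbf B^F$ has $p'$-degree exactly when it lies over a \emph{linear} character of $\mathbf U^F$, i.e.\ (when $[\mathbf U^F,\mathbf U^F]=U_2$, which is where the good-prime hypothesis enters) when it belongs to $\Irr(\mathbf B^F\mid 1_{U_2})$; such characters typically have degree $|\mathbf T^F:\Stab_{\mathbf T^F}(\psi)|>1$. Already for $\SL_2(q)$ one has $|\Irr_{p'}(G)|=|\Irr_{p'}(B)|=q+3\neq q-1=|\Irr(\mathbf T^F)|$, so your proposed bijection is between sets of the wrong size. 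Worse, it cannot satisfy the required congruence even where it is defined: every element of $\Cal I(\mathbf B^F,\mathbf U^F,\Cal S)$ has degree divisible by $p$ (all members of $\Cal S$ are proper in $\mathbf U^F$), so (IRC-Syl) forces $F(\chi)(1)\equiv\chi(1)\pmod p$, and a semisimple character of degree $\equiv -1 \pmod p$ cannot be matched with a linear character. Your Bruhat-cell computation of $\Res^{\mathbf G^F}_{\mathbf B^F}\chi_s$ also does not get off the ground for semisimple characters outside the principal series (those built from non-split tori), since they are not induced from $\mathbf B^F$ and no Mackey-type decomposition over $\mathbf B^F\dot w\mathbf B^F$ applies to them. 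The paper takes a different route: it proves that $\Cal I(\mathbf B^F,\mathbf U^F,\Cal S)$ consists exactly of the virtual characters vanishing on elements whose $p$-part is regular unipotent (Proposition~\ref{IforB}), constructs the signed bijection by a counting/inclusion–exclusion argument on multiplicities in the duals $\Xi_z$ of Gelfand–Graev characters and their restrictions $\xi_z$ (using $\Ind_{\mathbf B^F}^{\mathbf G^F}\xi_z=\Xi_z$, Frobenius reciprocity, and the cyclicity of $H^1(F,Z(\mathbf G))$ in Theorem~\ref{liesemisimple}), and then uses the Digne--Michel formulas expressing values on regular unipotent elements through the scalar products $\langle\,\cdot\,,\Xi_z\rangle$ to conclude that $F(\chi)-\Res\chi$ vanishes there.

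For (pInd-Syl) the same structural gap appears. Lying over a \emph{non-trivial} $\psi\in\Irr(\mathbf U^F)$ does not force $p\mid\phi(1)$; the correct criterion is that $\phi$ lies over a character non-trivial on $U_2$, and characters over non-trivial linear $\psi$ have $p'$-degree and are definitely not in $\Cal I(\mathbf B^F,\mathbf U^F,\Cal S)$. The step you dispatch with ``one shows that $\psi$ may be further induced from some proper parabolic subgroup'' is precisely the hard content of the paper's Proposition~\ref{redlin}: one must produce a subgroup $S\le \mathbf B^F$ with $S\cap\mathbf U^F\in\Cal S$ (contained in $\prod_{\alpha\neq\delta}X_\alpha$ for some simple root $\delta$) from which $\phi$ is induced, and the proof goes through the Lie algebra of $\mathbf U^F$, Clifford theory with respect to the layers $U_h/U_{h+1}$, and a case-by-case ``adequacy'' analysis of the bracket map $\fr u^1\times\fr u^h\to\fr u^{h+1}$ (Lemma~\ref{roots11}), which is exactly where the good-prime hypothesis is used. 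Without that result (or a substitute), neither your (pInd-Syl) argument nor any direct verification of the (IRC-Syl) congruence can be completed.
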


\begin{thm}\label{lienonsplit} Assume that $H^1 (F,Z(\mbf G))$ is cyclic. The properties \emph{(pRes-Syl)} and \emph{(WIRC-Syl)} hold for the pair $(\mbf G^F, \mbf B^F)$ (with respect to the defining characteristic $p$)
unless one of the following holds:
\begin{enumerate}[(i)]
 \item $q=2$ and $\Phi$ has an irreducible component of type $B_l$, $C_l$, $F_4$ or $G_2$;
 \item $q=3$ and $\Phi$ has an irreducible component of type $G_2$.
\end{enumerate}
\end{thm}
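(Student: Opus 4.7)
The plan is to construct the required signed bijection by comparing explicit parameterizations of the two character sets and by computing restrictions via Mackey's formula and the Deligne--Lusztig character formula. As a first step I would use Deligne--Lusztig theory---with the cyclicity of $H^1(F, Z(\mbf{G}))$ ensuring that the relevant extensions of cuspidal characters from normalizers of maximal tori are well defined---to identify $\Irr_{p'}(\mbf{G}^F)$ with the set of semisimple characters, indexed by $\mbf{G}^F$-conjugacy classes of pairs $(\mbf{T}', \theta)$, where $\mbf{T}'$ is an $F$-stable maximal torus of $\mbf{G}$ and $\theta \in \Irr(\mbf{T}'^F)$. On the Borel side, Clifford theory applied to $\mbf{B}^F = \mbf{U}^F \rtimes \mbf{T}^F$ parameterizes $\Irr_{p'}(\mbf{B}^F) = \Irr(\mbf{B}^F)$ by $\mbf{T}^F$-orbits on $\Irr(\mbf{U}^F)$ together with characters of the corresponding stabilizers; all such characters have $p'$-degree since $\mbf{T}^F$ is a $p'$-group.

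The central computation is to evaluate $\Res^{\mbf{G}^F}_{\mbf{B}^F} \chi$ modulo $\Cal C^p(\mbf{B}^F) + \Cal I(\mbf{B}^F, \mbf{U}^F, \Cal S)$, where $\Cal S = \Cal S(\mbf{G}^F, \mbf{U}^F, \mbf{B}^F)$, for each $\chi \in \Irr_{p'}(\mbf{G}^F)$. For principal series characters---constituents of $\Ind_{\mbf{B}^F}^{\mbf{G}^F} \theta$ with $\theta \in \Irr(\mbf{T}^F)$---the Mackey formula combined with the Bruhat decomposition yields
\[
\Res^{\mbf{G}^F}_{\mbf{B}^F} \Ind_{\mbf{B}^F}^{\mbf{G}^F}(\theta) \;=\; \theta \;+\; \sum_{w \in W^F \setminus \{1\}} \Ind_{\mbf{T}^F \mbf{U}_w^F}^{\mbf{B}^F}\bigl({}^{w\!} \theta\bigr),
\]
where $\mbf{U}_w = \mbf{U} \cap {}^{\dot{w}}\mbf{U}$. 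For each $w \ne 1$ the subgroup $\mbf{U}_w^F$ lies in $\Cal S$ (as $\mbf{U}_w^F \subseteq {}^{\dot w}\mbf{U}^F \cap \mbf{U}^F$ and $\dot{w} \notin \mbf{B}^F$), and $\mbf{T}^F$ is a $p'$-complement to $\mbf{U}_w^F$ in $\mbf{T}^F \mbf{U}_w^F$, so the $w \ne 1$ summands lie in $\Cal I$. For cuspidal semisimple characters---those attached to non-split tori---an analogous computation based on the Deligne--Lusztig character formula for $R_{\mbf{T}'}^{\mbf{G}}(\theta)$ identifies the restriction, modulo $\Cal I$, with a specific character of $\mbf{B}^F$ lying over a non-trivial $\mbf{T}^F$-orbit on $\Irr(\mbf{U}^F)$.

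The signed bijection $F$ is then defined by sending each semisimple character $\chi_{(\mbf{T}', \theta)}$ to the $\mbf{B}^F$-character that carries the matching labelling data under the Clifford parameterization. Signs are essential for absorbing the discrepancies that arise when several $p'$-characters of $\mbf{G}^F$ restrict to the same character of $\mbf{B}^F$: this is visible already for $\GL_2(q)$, where distinct cuspidal characters indexed by characters of $\mathbb F_{q^2}^{\times}$ with equal restriction to $\mathbb F_q^{\times}$ have identical restrictions to $\mbf{B}^F$, and the required relation $F(\chi) - F(\chi') \in \Cal C^p(\mbf{B}^F) + \Cal I$ must then be verified by explicit inspection of the projective indecomposable characters of $\mbf{B}^F$. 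Property \emph{(pRes-Syl)} is established by the same Mackey-based analysis applied to $p$-divisible characters of $\mbf{G}^F$: the identity $\mathrm{St}_{\mbf{G}} = \sum_J (-1)^{|J|} \Ind_{\mbf{P}_J^F}^{\mbf{G}^F}(1)$ for the Steinberg character, together with its twists by linear characters and tensor products with other Harish-Chandra induced characters, reduces the question to the decomposition of $\Res^{\mbf{G}^F}_{\mbf{B}^F} \Ind_{\mbf{P}_J^F}^{\mbf{G}^F}(1)$, which is again handled by Mackey.

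The main obstacle will be the cuspidal, non-split-torus side of the bijection: computing $\Res^{\mbf{G}^F}_{\mbf{B}^F} R_{\mbf{T}'}^{\mbf{G}}(\theta)$ for non-split $\mbf{T}'$ and identifying its image in the Clifford parameterization of $\Irr(\mbf{B}^F)$ requires a delicate combination of the Deligne--Lusztig formula with an explicit description of the projective characters of $\mbf{B}^F$ and of the signs needed in $F$. The excluded cases ($q = 2$ with $B_l, C_l, F_4, G_2$, and $q = 3$ with $G_2$) are precisely those in which the generic Hecke-algebra or Deligne--Lusztig degree formulas develop small-value singularities---factors such as $q - 1$, $q + 1$ or $q^2 + q + 1$ becoming incompatible with the $p$-adic analysis---so that the approximation modulo $\Cal C^p + \Cal I$ breaks down and the cases must be excluded.
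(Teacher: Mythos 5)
There is a genuine gap, and it sits exactly where you flag ``the main obstacle.'' Your plan never supplies a mechanism for (a) defining the signed bijection or (b) controlling $\Res^{G}_{B}\chi$ modulo $\Cal C^p(B)+\Cal I(B,U,\Cal S)$ for an \emph{individual} $p'$-character $\chi$ (here $G=\mbf G^F$, $B=\mbf B^F$, $U=\mbf U^F$). The Mackey/Bruhat computation you give only controls the full induced characters $\Ind_B^G\theta$, not their constituents, and there is no ``matching of labelling data'': no canonical correspondence between pairs $(\mbf T',\theta)$ and Clifford data for $B$ exists in general. The paper's proof is structured entirely differently: it works with the duals of the Gelfand--Graev characters $\Xi_z$ and their restrictions $\xi_z=\Res^G_B\Xi_z$ (indexed by $z\in H^1(F,Z(\mbf G))$, refined by central characters $\nu$), proves that both $\Xi_{z,\nu}$ and $\xi_{z,\nu}$ are multiplicity-free with matching inner products, and then obtains the bijection \emph{non-constructively} by an inclusion--exclusion counting argument over subsets of $H^1(F,Z(\mbf G))$ -- this is precisely where cyclicity of $H^1(F,Z(\mbf G))$ is used, whereas in your proposal cyclicity plays no substantive role. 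The congruence modulo $\Cal C^p(B)+\Cal I(B,U,\Cal S)$ is then deduced not from Deligne--Lusztig character formulas but from a vanishing criterion: a virtual character of $B$ orthogonal to every $\xi_{z,\nu}$ has its $\pi_{1_{U_2}}$-part vanishing on all elements whose $p$-part is regular (via Digne--Michel's results on character values at regular unipotent classes), and any such class function lies in $\Cal I(B,U,\Cal S)$ by an inductive argument using the Glauberman correspondence (Lemma~\ref{dirprod}). The same criterion, applied to $\Res^G_B\eta$ for $\eta\in\Irr^p(G)$, gives (pRes-Syl); your proposed reduction of (pRes-Syl) to the Steinberg alternating sum and its twists does not account for arbitrary characters of degree divisible by $p$.

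Two further points are incorrect as stated. First, $\Irr_{p'}(B)\neq\Irr(B)$: since $U$ is a normal Sylow $p$-subgroup, $\chi\in\Irr(B)$ has $p'$-degree if and only if it lies over a linear character of $U$, i.e.\ $\Irr_{p'}(B)=\Irr(B\di 1_{[U,U]})$; characters over nonlinear characters of $U$ have degree divisible by $p$. Second, the excluded cases (i)--(ii) have nothing to do with ``singularities in Hecke-algebra or degree formulas'': they are exactly the cases where $[U,U]\neq U_2$ (Howlett), so that $\Irr_{p'}(B)=\Irr(B\di 1_{[U,U]})$ no longer coincides with $\Irr(B\di 1_{U_2})=\Irr_s(B)$ (and correspondingly $\Irr_{p'}(G)\neq\Irr_s(G)$), which is the identification the whole argument rests on (Lemma~\ref{pdeqss}). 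An argument along your lines would not see these exceptions arise, which is a sign that the proposed route does not capture the actual mechanism of the theorem.
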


Note that if $\mbf G$ is simple then $Z(\mbf G)/Z(\mbf G)^{\mathrm o}$ is cyclic except in the case when $\mbf G$ is the simply-connected group of type $D_{2l}$ for some integer $l\ge 2$ (see e.g. \cite{CarterFGLT}, Section 1.19). Thus, by Theorems~\ref{liesplit} and~\ref{lienonsplit}, properties (IRC-Syl), (pInd-Syl) and (pRes-Syl) hold for $\mbf G^F$ with respect to the prime $p$ and the subgroup $\mbf B^F$ whenever $\mbf G$ is simple, split and not of type $D_{2l}$ and $p$ is good for $\mbf G$. Also, by Proposition~\ref{normsub}, the same properties hold for the quotient $\mbf G^F/Z(\mbf G^F)$ under the same assumptions on $\mbf G$.


Theorem~\ref{lienonsplit} is a generalisation of a result due to Brunat (\cite{Brunat2009}, Theorem 1.1). We use many of the same ingredients as the proof in~\cite{Brunat2009}, but avoid explicit computations.


 In Section \ref{lietype1} we describe 
$\Cal I(\mbf B^G, \mbf U^G, \Cal S)$ in the case where $\mbf G$ is split. In Section \ref{lietype2} we use the theory of Gelfand--Graev characters to 
establish a necessary correspondence between certain characters of $\mbf G^F$ and some characters of $\mbf B^F$.

\subsection{Induced characters of the Borel subgroup}\label{lietype1}

Let $\Phi$ be an arbitrary root system. 
Fix a system $\Pi$ of fundamental roots in $\Phi$, and denote by $\Phi^+$ the corresponding set of positive roots.
Let $\mF$ be a field. If $\Phi$ is irreducible, denote by $U=U(\Phi^+,\mF)$ the unipotent subgroup of the Chevalley group of type $\Phi$ defined over the field $\mF$ (see~\cite{CarterSGLT}, Section 4.4). If $\Phi$ is reducible, we define $U(\Phi^+,\mF)$ as the direct product of the groups $U(\Psi^+,\mF)$ for the irreducible components $\Psi$ of $\Phi$ (counted with multiplicities). For each $\a\in \Phi^+$ let $X_{\a}$ be the corresponding root subgroup of $U$, so that there is an isomorphism $x_{\a}\colon \mF\ra X_{\a}$. 

Recall that the \emph{height} of a positive root $\beta=\sum_{\a\in \Pi} n_{\a} \a$ is defined by $\hght(\b)=\sum_{\a \in \Pi} n_{\a}$. We have a decomposition $U=\prod_{\a\in\Phi^+} X_{\a}$, where the product is taken in an order such that roots of smaller height always precede roots of bigger height. (All subsequent products of root subgroups will be assumed to be taken with respect to an ordering of this form.)

For each $h\in \mN$ let $\Phi^+_h$ be the set of (positive) roots of height $h$ in $\Phi$. Define
$$
U_h = \prod_{\hght(\a)\ge h} X_{\a}
$$
and $\bar{U}_h=U_h/U_{h+1}$. We will use the bar notation for the standard homomorphism $U_h \ra \bar U_h$.

Let $\fr g$ be the complex semisimple Lie algebra of type $\Phi$. Let $\fr g_{\mF}$ be the corresponding Lie algebra over $\mF$ (see~\cite{CarterSGLT}, Section 4.4) and denote by $\fr u = \fr u(\Phi^+,\mF)$ the Lie algebra spanned by the positive root vectors of $\fr g_{\mF}$. 
Let $\{e_{\a} \mid \a \in \Phi^+\}$ be a Chevalley basis for $\fr u$. For $h\in \mN$ let 
$
 \fr u^h = \sum_{\hght(\a)=h} \mF e_{\a}.
$

If $\a,\b,\a+\b\in \Phi^+$ and $r,s\in \mF$, we have the commutator relation 
\begin{equation}\label{commrel}
[x_{\a} (r), x_{\b} (s)] \equiv  x_{\a+\b} (-N_{\a,\b} rs) \dmod U_{\hght (\a+\b)+1}
\end{equation} 
where $N_{\a,\b}$ is the element of the image of $\mZ$ in $\mF$ such that
$[e_{\a},e_{\b}] = N_{\a,\b} e_{\a+\b}$ (see~\cite{CarterSGLT}, Theorem 5.2.2). Also, if $\a,\b\in \Phi^+$ and $\a+\b\notin\Phi^+$, we have
$[x_{\a}(r),x_{\b}(s)]=1$ and $[e_{\a},e_{\b}]=0$.

Throughout Section~\ref{lietype} we assume that $q$ is a power of our fixed prime $p$.

\begin{prop}\label{redlin}  Let $\Phi^+$ be a set of positive roots of a root system $\Phi$, and let $\Pi$ be the corresponding set of simple roots.
Let $B=TU$ be a finite group which is a semidirect product of a normal $p$-subgroup $U=U(\Phi^+,\mF_q)$ and a $p'$-subgroup $T$.
Assume that the prime $p$ is good for $\Phi$.
Let 
$$
\Cal S= \left\{ Q \le U \; \Big| \; Q\subseteq \prod_{\a\in\Phi^{+}-\{\d\}} U_{\a} \text{ for some } \d \in \Pi  \right\}.
$$ 
If $\chi\in\Irr(B)$ and $U_2 \not\subseteq \ker\chi$ then $\chi\in \Cal I(B,U,\Cal S)$. 
\end{prop}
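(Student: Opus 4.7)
The plan is to reduce Proposition~\ref{redlin} to a monomiality statement for $\Irr(U)$, and then lift to $B$ via Clifford's and Gallagher's theorems.

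\medskip
\noindent
$(\star)$\enspace If $\lambda\in\Irr(U)$ satisfies $U_{2}\not\subseteq\ker\lambda$ and is $T$-invariant, then there exist $\delta\in\Pi$, a $T$-stable subgroup $A\le V_{\delta}:=\prod_{\alpha\in\Phi^{+}\setminus\{\delta\}}X_{\alpha}$, and a $T$-invariant linear character $\eta\in\Irr(A)$ with $\lambda=\Ind_{A}^{U}\eta$.
\medskip

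Given $(\star)$, the deduction proceeds as follows. Pick an irreducible constituent $\lambda$ of $\Res^{B}_{U}\chi$; since $U_{2}$ is characteristic in $U$, hence $B$-invariant, the hypothesis forces $U_{2}\not\subseteq\ker\lambda$. By Clifford's theorem, $\chi=\Ind_{B_{\lambda}}^{B}\tilde\chi$ with $B_{\lambda}=T_{\lambda}U$, so we may replace $B$ by $B_{\lambda}$ and assume $\lambda$ is $T$-invariant. Apply $(\star)$ to obtain $(A,\eta)$. Gallagher's theorem, applicable since $\gcd(|T|,\eta(1))=1$, supplies $|T|$ extensions $\tilde\eta_{\psi}\in\Irr(TA)$ of $\eta$ parametrised by $\psi\in\Irr(T)$; the Mackey formula (applied to the single double coset $B=TA\cdot U$ coming from $TA\cap U=A$ and $TA\cdot U=B$) yields $\Ind_{TA}^{B}\tilde\eta_{\psi}\,|_{U}=\Ind_{A}^{U}\eta=\lambda$, so $\Ind_{TA}^{B}\tilde\eta_{\psi}$ is irreducible of degree $\lambda(1)$ lying over $\lambda$. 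Double application of Gallagher (to $A\lhd TA$ and to $U\lhd B$) gives
\[
\sum_{\psi}\Ind_{TA}^{B}\tilde\eta_{\psi}=\Ind_{A}^{B}\eta=\Ind_{U}^{B}\lambda=\sum_{\xi}\tilde\lambda\cdot\xi,
\]
and a multiplicity comparison shows that the $|T|$ characters $\Ind_{TA}^{B}\tilde\eta_{\psi}$ are pairwise distinct and coincide with the $|T|$ extensions $\tilde\lambda\cdot\xi$ of $\lambda$. In particular, $\chi=\Ind_{TA}^{B}\tilde\eta_{\psi}$ for some $\psi$; since $TA\cap U=A\in\Cal S$ with $A$ a Sylow $p$-subgroup of $TA$, this places $\chi$ in $\Cal I(B,U,\Cal S)$.

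The heart of the argument is $(\star)$, which I would establish by the Kirillov orbit method for the unipotent Chevalley group $U$, valid in good characteristic. The character $\lambda$ corresponds to a coadjoint orbit of some $\phi\in\fr u^{*}$ whose restriction to $[\fr u,\fr u]$ is non-zero; since $T$ is a $p'$-group acting on the orbit, one may pick a $T$-fixed representative $\phi$ in it. A polarization $\fr a$ of the alternating form $B_{\phi}(x,y):=\phi([x,y])$ is a maximal isotropic Lie subalgebra of $\fr u$, and then $\lambda=\Ind_{\exp(\fr a)}^{U}\eta_{\phi}$ for the linear character $\eta_{\phi}$ attached to $\phi|_{\fr a}$. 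Non-triviality of the orbit forces $\fr a$ to be a proper subalgebra. By $T$-averaging over root-space decomposition, $\fr a$ may further be chosen $T$-stable and of the form $\bigoplus_{\alpha\in J}\mF e_{\alpha}$ for a closed isotropic $T$-stable subset $J\subseteq\Phi^{+}$, and the associated $\eta_{\phi}$ is automatically $T$-invariant.

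The main obstacle, and the essential use of the good-characteristic hypothesis, is to show that $J$ must omit some simple root. The key observation is that the simple root vectors $\{e_{\delta}\}_{\delta\in\Pi}$ generate $\fr u$ as a Lie algebra: every $e_{\alpha}$ can be written as an iterated bracket of the $e_{\delta}$'s, with coefficients built from the Chevalley structure constants $N_{\alpha',\beta'}$, which are non-zero modulo $p$ precisely because $p$ is good for $\Phi$. Any proper Lie subalgebra of $\fr u$ therefore cannot contain every $e_{\delta}$; for a pattern subalgebra this forces $\delta\notin J$ for some $\delta\in\Pi$, whence $A=\prod_{\alpha\in J}X_{\alpha}\le V_{\delta}$, placing $A$ in $\Cal S$ and completing the proof of $(\star)$.
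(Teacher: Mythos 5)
Your reduction of the proposition to the statement $(\star)$, and the Clifford--Gallagher deduction from $(\star)$, is essentially sound (modulo a small slip: for a nonabelian inertia quotient $T_{\lambda}$ there are not ``$|T|$ extensions'' of $\eta$; one should instead note that $\Ind_{T_{\lambda}A}^{T_{\lambda}U}\tilde\eta$ restricts to $\lambda$ on $U$, hence is an extension of $\lambda$, and then every member of $\Irr(T_{\lambda}U\mid\lambda)$ is $\Ind_{T_{\lambda}A}^{T_{\lambda}U}(\tilde\eta\cdot\Res\xi)$ by the projection formula; also the existence of a $T_{\lambda}$-fixed point in the coadjoint orbit needs a Hall-subgroup/Schur--Zassenhaus argument, not the counting you suggest). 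The genuine gap is in the proof of $(\star)$ itself. The Kirillov orbit method for $U(\Phi^{+},\mF_q)$ is \emph{not} ``valid in good characteristic'': it requires $p$ to be large relative to the nilpotency class of $U$ (roughly $p$ at least the height of the highest root, so that $\exp/\log$ or a Lazard correspondence is available). For type $A_l$ every prime is good, yet for $p=2$ and $l$ large the orbit-method dictionary you invoke simply does not exist; these small-prime cases are exactly the ones the proposition must cover. Moreover, even where the orbit method applies, your claim that a polarization of $B_{\phi}$ ``may further be chosen $T$-stable and of the form $\bigoplus_{\alpha\in J}\mF e_{\alpha}$'' by $T$-averaging is unsubstantiated: polarizations are in general not spanned by root vectors, and producing a pattern polarization omitting a simple root is essentially the whole difficulty, not a routine adjustment. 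In effect $(\star)$ asserts a strong monomiality property of $\Irr(U)$ with control on the inducing subgroup, which is far more than is needed and is not available by the quoted tools under the stated hypotheses.

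The paper's proof is both weaker in what it extracts and more robust. It never shows $\lambda$ (or $\chi$) is monomial: it takes the least $h$ with $U_{h+2}\subseteq\ker\chi$, picks a linear constituent $\psi$ of $\Res_{U_{h+1}}\chi$, extends it to a linear character $\phi$ of an explicit subgroup $L$ (namely $U_h$, or $(\prod_{\a\in\Gamma}X_{\a})U_2$ when $h=1$), and then shows that the inertia group $S=\Stab_B(\phi)$ satisfies $S\cap U\subseteq\prod_{\a\ne\d}X_{\a}$ for some simple root $\d$; Clifford theory then puts $\chi$ in $\Cal I(B,U,\Cal S)$. The input where goodness of $p$ enters is the purely linear-algebraic Lemma~\ref{roots11} about the bracket maps $\fr u^{1}\times\fr u^{h}\to\fr u^{h+1}$, proved by a case-by-case analysis with explicit Chevalley structure constants. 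Your observation that the simple root vectors generate $\fr u$ when $p$ is good is correct and is in the same spirit, but by itself it only rules out proper \emph{subalgebras} containing all $e_{\d}$; it does not yield the pattern polarization (nor the inertia-group statement) that your argument needs. As written, the proposal therefore does not prove the proposition.
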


We need a technical lemma. 
\begin{lem}\label{roots11}
Let $\Phi$ be a root system with a fixed set $\Pi$ of simple roots and corresponding set of positive roots $\Phi^+$. Suppose $\mF$ is a field of characteristic that is either $0$ or a good prime for $\Phi$, and let $\fr u=\fr u(\Phi^+, \mF)$.
Let $h>0$ and suppose $V$ is a proper subspace of $\fr u^{h+1}$. Then there exists $\d$ in $\Pi$ such that, for every $a=\sum_{\a\in \Pi} a_{\a} e_{\a}$  ($a_{\a} \in \mF$) with $a_{\d}\ne 0$, there is $b\in u^{h}$ satisfying
$
[a, b] \notin V.
$
\end{lem}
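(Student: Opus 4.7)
The plan is to convert the statement into a question about linear functionals on $\fr u^h$ and then exploit the combinatorics of positive roots. Since the conclusion only gets stronger as $V$ shrinks, I may replace $V$ by a hyperplane $\ker\lambda$ for some nonzero $\lambda \in (\fr u^{h+1})^*$. The good-prime hypothesis now delivers the key identity $[\fr u^1,\fr u^h] = \fr u^{h+1}$: every $\gamma \in \Phi^+_{h+1}$ admits a decomposition $\gamma = \alpha + \beta$ with $\alpha \in \Pi$ and $\beta \in \Phi^+_h$ (a standard down-step property of positive roots), and the Chevalley constant $N_{\alpha,\beta}$ is a bounded nonzero integer that is invertible in $\mF$ because $p$ is good for $\Phi$; in particular $\lambda$ cannot vanish on all of $[\fr u^1,\fr u^h]$.

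Associate to each $\alpha \in \Pi$ the functional $v_\alpha \in (\fr u^h)^*$ given by $v_\alpha(b) = \lambda([e_\alpha,b])$. The lemma reduces to producing $\delta \in \Pi$ and $b \in \fr u^h$ such that $v_\delta(b) \ne 0$ but $v_\alpha(b) = 0$ for every $\alpha \ne \delta$: if such a pair exists, then any $a = \sum_\alpha a_\alpha e_\alpha$ with $a_\delta \ne 0$ satisfies
\[
\lambda([a,b]) \;=\; \sum_\alpha a_\alpha v_\alpha(b) \;=\; a_\delta\, v_\delta(b) \;\ne\; 0,
\]
so $[a,b] \notin \ker\lambda \supseteq V$, as required. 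Note that this is actually stronger than what the lemma demands, since the chosen $b$ will be uniform over the $a$ in question.

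The remaining task, which I view as the combinatorial heart of the lemma, is to construct such a pair $(\delta,b)$. My preferred approach is to fix a total order on $\Pi$ and the induced lex order on the root lattice, choose $\gamma_0$ lex-maximal in $\{\gamma \in \Phi^+_{h+1} : \lambda(e_\gamma) \ne 0\}$ (nonempty since $\lambda \ne 0$), decompose $\gamma_0 = \delta + \beta_0$ via the down-step lemma, and try $b = e_{\beta_0}$. Then $v_\delta(b) = N_{\delta,\beta_0}\lambda(e_{\gamma_0}) \ne 0$, while for every simple $\alpha \ne \delta$ with $\alpha + \beta_0 \in \Phi^+$ the vanishing $v_\alpha(b) = N_{\alpha,\beta_0}\lambda(e_{\alpha+\beta_0}) = 0$ follows as soon as $\alpha + \beta_0 >_{\text{lex}} \gamma_0$, by the maximality of $\gamma_0$. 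The main obstacle is ensuring that the ordering and the choice of $\delta$ can be arranged so this inequality holds for all relevant $\alpha$ uniformly across Dynkin types; when the naive choice fails (because some simple $\alpha$ with $\alpha + \beta_0 \in \Phi^+$ is lex-smaller than $\delta$), the remedy is to replace $b = e_{\beta_0}$ by a suitable $\sum_\beta b_\beta e_\beta$ engineered to annihilate the leftover $v_\alpha(b)$ while preserving $v_\delta(b) \ne 0$, which remains feasible precisely because $[\fr u^1,\fr u^h]=\fr u^{h+1}$ keeps the family $\{v_\alpha\}_{\alpha \in \Pi}$ rich enough inside $(\fr u^h)^*$ that some $v_\delta$ is not in the linear span of the others.
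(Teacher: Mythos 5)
Your two reductions are fine: passing from $V$ to a hyperplane $\ker\lambda$ is harmless, and the linear-algebra step $\lambda([a,b])=\sum_\alpha a_\alpha v_\alpha(b)$ is correct. But note that your "stronger, uniform" target is not actually stronger: for a fixed $\lambda$, the existence of $\delta$ and a single $b$ with $v_\delta(b)\ne 0$ and $v_\alpha(b)=0$ for $\alpha\ne\delta$ is \emph{equivalent} to the statement that $v_\delta\notin\operatorname{span}\{v_\alpha:\alpha\ne\delta\}$, which in turn is equivalent (every linear relation among the $v_\alpha$ has zero $\delta$-coefficient) to the lemma itself for the hyperplane $\ker\lambda$. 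This is exactly why your closing "remedy" is circular: you justify the engineered $b$ by asserting that surjectivity $[\fr u^1,\fr u^h]=\fr u^{h+1}$ keeps the family $\{v_\alpha\}$ "rich enough that some $v_\delta$ is not in the span of the others" --- but surjectivity only says the $v_\alpha$ are not all zero, and the span condition you invoke is precisely the assertion to be proved. (Abstractly, $v_1=v_2\ne 0$ with all other $v_\alpha=0$ satisfies "richness" in your sense and defeats every $\delta$; ruling such configurations out is the whole content of the lemma and requires the root-system combinatorics plus the good-prime hypothesis.)

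The lex-order argument you sketch is sound, but only where it applies: it needs an ordering of $\Pi$ with the property that for the chosen $\gamma_0=\delta+\beta_0$, every other simple $\alpha$ with $\alpha+\beta_0\in\Phi^+$ satisfies $\alpha+\beta_0>_{\mathrm{lex}}\gamma_0$. The paper verifies that such orderings exist exactly for types $A_l$, $B_l$, $C_l$ and $G_2$; for $D_l$ (with $h\ge 2$), $E_6$, $E_7$, $E_8$ and $F_4$ no choice of ordering rescues the single-root-vector $b=e_{\beta_0}$. For instance, in $D_l$ one meets the configuration in which three roots $\beta\in\Phi^+_h$ and three roots $\gamma\in\Phi^+_{h+1}$ are linked so that each $\beta$ extends to exactly two of the $\gamma$'s; if $\lambda$ is nonzero on all three $e_\gamma$'s, then \emph{no} single $e_\beta$ annihilates all but one $v_\alpha$, and whether a combination $b$ exists comes down to a determinant such as $-2a_1a_2a_3$ (and $\pm3$, $\pm5$ in the exceptional types), so the good-prime hypothesis enters in an essential way beyond invertibility of individual structure constants $N_{\alpha,\beta}$. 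This case analysis over $D_l$ and the exceptional types --- which is the bulk of the paper's proof of the lemma --- is exactly what is missing from your argument, and the gap cannot be closed by the appeal to surjectivity.
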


\begin{proof}[Proof of Proposition~\ref{redlin} (assuming Lemma~\ref{roots11})]
 Let $h$ be the smallest integer such that $U_{h+2} \subseteq \ker\chi$, so $h\ge 1$ by the hypothesis. 
Let $\psi$ be an irreducible constituent of $\Res^G_{U_{h+1}}\chi$ (so $\psi$ is linear, as $\bar U_{h+1}$ is abelian). 
Let $\fr u=\fr u (\Phi^+,\mF_q)$ be the Lie algebra corresponding to $U$ with a Chevalley basis $\{ e_{\a} \}_{\a\in \Phi^+}$, as above. Consider the abelian group isomorphisms $f_i\colon U_i/U_{i+1} \ra \fr u^i$ defined by $f_i(\ol{x_{\a}(r)})=-re_{\a}$ for $i>1$ and 
$f_i(\ol{x_{\a}(r)})=re_{\a}$ for $i=1$, where $r\in \mF_q$. 
By~\eqref{commrel}, the map $\bar U_1 \times \bar U_h \ra \bar U_{h+1}$ induced by the commutator translates under these isomorphisms to the Lie bracket map $\fr u^1 \times \fr u^h \ra \fr u^{h+1}$.  
Let 
$$
V = \{ z\in \fr u^{h+1} \mid f_{h+1}^{-1} (\mF_q z) \subseteq \ker \phi \}.
$$ 
Then $V$ is a proper $\mF_q$-subspace of $\fr u^{h+1}$. Let $\d\in \Pi$ be the root given by Lemma~\ref{roots11} for this subspace.

If $h>1$ we set $L=U_{h}$. If $h=1$, let $\Gamma$ be the set of simple roots that are connected to $\d$ in the Dynkin diagram and set
$L=(\prod_{\a\in \Gamma} X_{\a})U_2$. In either case, $L/U_{h+2}$ is abelian (when $h=1$, this follows from the fact that no two roots in $\Gamma$ are connected to each other in the Dynkin diagram). Hence there exists a linear character $\phi\in\Irr(L)$ that extends $\psi$ and is a constituent of $\Res^B_L \chi$. Consider the inertia group $S=\Stab_{B}(\phi)$.  
We will show that $S\cap U\in \Cal S$, and the proposition will follow because $\chi$ is induced from a character of $S$ by Clifford theory. 

We claim that $S\cap U \subseteq \prod_{\a\in\Phi^{+}-\{\d\}} X_{\a}$. Let 
$a\in U - \prod_{\a\in\Phi^{+}-\{\d\}} X_{\a}$. By the conclusion of Lemma~\ref{roots11}, there exists $f_{h}(b)\in \fr u^{h}$ such that 
$
[f_h (b),f_1 (a)] \notin  V.
$
By definition of $V$, we may replace $f_h (b)$ with a scalar multiple $f_h (c)$ in such a way that $[c,a]\notin \ker\phi$. Moreover, if $h=1$, we may assume that $c\in L$ because $[e_{\a},e_{\d}]=0$ for all $\a\in \Pi-\Gamma$.
We have $\phi(c^a)= \phi(c[c,a])=\phi(c) \phi([c,a]) \ne \phi (c)$, so $a\notin S$. This proves the claim and with it the proposition.
\end{proof}

\begin{proof}[Proof of Lemma~\ref{roots11}]
 The proof is by a case-by-case analysis, which is similar to that of the proof of~\cite{Springer1966}, Theorem 2.6, but requires more detail.
(In fact, the lemma can be derived from \emph{loc.cit.} if we assume in addition that $q\ge |\Pi|$.) 
 The reasons for exclusion of bad primes are essentially the same as in~\cite{Springer1966}.  

Note that the lemma can be stated purely in terms of a bilinear map between vector spaces. During the proof we will call such a map $\kappa$ \emph{adequate} if it satisfies the condition of the lemma. More precisely, if $E$, $Y$ and $Z$ are vector spaces and a basis $\Cal B=\{e_1,\ldots,e_l\}$ of $E$ is fixed, we say that a bilinear map $\kappa\colon E\times Y \ra Z$ is adequate with respect to a proper subspace $V$ of $Z$ if there is $i\in [1,l]$ such that, for each 
$a=\sum_{j=1}^l a_j e_j$ with $a_i \ne 0$, there exists $b\in V$ satisfying $\kappa(a,b) \notin V$. We call $\kappa$ adequate if it is adequate with respect to all proper subspaces of $Z$. We will call a basis vector $e_i\in \Cal B$ \emph{irrelevant} with respect to $\kappa$ if $\kappa(\a,Y)=0$; otherwise $e_i$ will be called \emph{relevant}. Let $\Cal B'$ be the set of the relevant basis vectors, and $E'$ be the span of $\Cal B'$. Obviously, $\kappa$ is adequate if and only if its restriction to $E'\times Y$ is. 

It clearly suffices to prove the lemma for irreducible root systems, so we will assume that $\Phi$ is irreducible. 
By~\cite{CarterSGLT}, Theorem 4.2.1, whenever $\a,\b,\a+\b\in \Phi^+$, we have 
\begin{equation}\label{techlem1}
 [e_{\a}, e_{\b}] = \e_{\a,\b} N'_{\a,\b} e_{\a+\b}
\end{equation}
where $N'_{\a,\b}$ is the image in $\mF_q$ of the largest integer $i$ such that $\b-(i-1)\a\in \Phi$ and $\e_{\a,\b} \in \{ \pm 1\}$. Note that $N'_{\a,\b}\ne 0$ because $p$ is a good prime. If $\Phi$ is simply-laced and $h\ge 2$ then, by~\cite{Springer1966}, Lemma 1.9, 
we may (and do) choose the basis $\{ e_{\g} \}_{\g\in \Phi^+}$ in such a way that 
$\e_{\a,\b}=1$ whenever $\a\in \Phi^+_1$, $\b\in\Phi^+_h$ and $\a+\b\in \Phi^+_{h+1}$. For simply-laced root systems we also have $N'_{\a,\b}=1$. 

First assume $\Phi$ is of type $A_l$, $B_l$, $C_l$ or $G_2$ and consider $h>0$. In each of these cases we will choose
a total order $<$ on $\Pi$ and will consider the induced lexicographic order on $\Phi^{+}$. That is, $\sum_{\a \in \Pi} n_{\a} \a<\sum_{\a\in\Pi} m_{\a} \a$ if and only if there is $\a\in \Pi$ such that $n_{\a}<m_{\a}$ and $n_{\a'}=m_{\a'}$ for all $\a'>\a$ in $\Pi$. We will ensure that this order on $\Phi^+$ satisfies the following property: if $\g\in \Phi^+_{h+1}$ and $\d\in \Pi$ is the smallest simple root such that $\g-\d\in \Phi^+$ then, for each $\a\in \Pi-\{\d\}$, either $\g-\d+\a\notin \Phi^+$ or $\g-\d+\a>\g$. (Note that, if $\g\in \Phi^+$ and $\hght(\g)>1$ then there always exists $\a\in\Pi$ such that $\g-\a\in \Phi^+$; this is true for all root systems.)

Suppose such an order $<$ exists. Let $V$ be a proper subspace of $\fr u^{h+1}$.  Let $\g\in \Phi^+_h$ be the maximal element (with respect to $<$) such that $e_{\g} \notin V$. 
Let $\d\in \Pi$ be the minimal element such that $\b=\g-\d \in \Phi^+$. Then $[e_{\d}, e_{\b}]=N_{\d,\b} e_{\g}\notin V$ because 
$N_{\d,\b}$ is non-zero in $\mF_q$ as $p$ is assumed to be good for $\Phi$. On the other hand, for each $\a\in \Pi-\{\d\}$,  we have $[e_{\a}, e_{\b}]\in V$ by the condition imposed on the order $<$. Thus $\d$ clearly satisfies the requirement of the lemma.

\renewcommand{\arraystretch}{1.1}

\begin{figure}[htb]
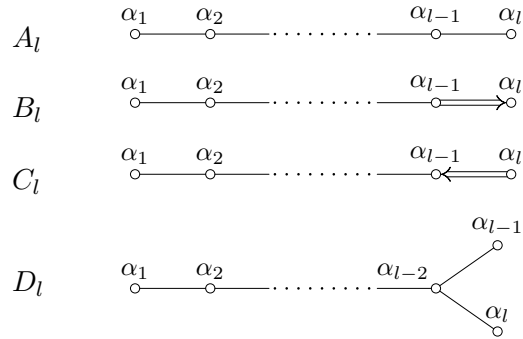

\caption{Labelling of simple roots \qquad\qquad\quad}
\label{labelroots}
\centering
\begin{tabular}{m{1cm}m{7cm}}
$A_l$ & \includegraphics{al.mps} \\[0.3cm]
$B_l$ & \includegraphics{bl.mps} \\[0.3cm]
$C_l$ & \includegraphics{cl.mps} \\[0.3cm]
$D_l$ & \includegraphics{dl.mps} 
\end{tabular}
\end{figure}

If $\Phi$ is of type $A_l$, $B_l$ or $C_l$ we choose the ordering $\a_1>\cdots>\a_l$, where the simple roots are labelled as in Fig.~\ref{labelroots}. 
Using the well-known descriptions of $\Phi^+$ given in Table~\ref{posroots}, 
it is straightforward to check that the condition is then satisfied. 
If $\Phi$ is of type $G_2$, both orderings of the simple roots have the required property.

\begin{table}[htb]
\caption{The positive roots}
\label{posroots}
$$
\begin{array}{|c|l|}
 \hline 
A_l, \; l\ge 1 & \begin{array}{l} \a_i+\a_{i+1}+ \cdots + \a_j, \; 1\le i \le j \le l \\ \end{array} \\
\hline
B_l, \; l\ge 2 & 
\begin{array}{l}
 \a_i + \a_{i+1} + \cdots + \a_j, \; 1\le i \le j\le l \\
\a_i+ \cdots + \a_{j-1} + 2\a_j + 2\a_{j+1} + \cdots + 2\a_l, \; 1\le i<j \le l \\
\end{array} \\
\hline
C_l, \; l\ge 3 &
\begin{array}{l}
 \a_i + \a_{i+1} + \cdots + \a_j, \; 1\le i \le j\le l \\
\a_i+ \cdots + \a_{j-1} + 2\a_j + 2\a_{j+1} + \cdots + 2\a_{l-1}+ \a_l, \; 1\le i\le j < l \\
\end{array} \\
\hline
D_l, \; l\ge 4 & 
\begin{array}{l}
 \lda_{ij}=\a_i + \a_{i+1} + \cdots + \a_j, \; 1\le i \le j\le l-1 \\
 \mu_{i}=\a_i+\a_{i+1} + \cdots + \a_{l-2} + \a_l, \; 1 \le i \le l-1 \\ 
\nu_{ij}=\a_i+ \cdots + \a_{j-1} + 2\a_j + \cdots + 2\a_{l-2}+ \a_{l-1}+\a_l, \; 1\le i< j\le l-1 \\
\end{array} \\
\hline
\end{array}
$$
\end{table}

The rest of the proof will proceed as follows. We suppose for contradiction that $V$ is a proper subspace of $\fr u^{h+1}$ such that $[\cdot, \cdot]$ is not adequate with respect to $V$. The argument proceeds in steps, and at each step we show that $V$ must contain a certain subset of 
the basis of $\fr u^{h+1}$ given by root vectors. 
Suppose we have proved that $V$ contains the subspace $Z_0=\{ e_{\g} \mid \g \in X_0 \}$ of $\fr u^{h+1}$ for some subset $X_0$ of $\Phi^+_{h+1}$ (initially $X_0 =\varnothing$). In this situation the problem is reduced to proving that the map $\fr u^1 \times \fr u^h \ra \fr u^{h+1} /Z_0$ induced by $[\cdot,\cdot]$ is adequate with respect to $V/Z_0$. We use the bar notation to denote the projection $\fr u^{h+1}\ra \fr u^{h+1}/Z_0=\bar{\fr u}^{h+1}$. 

We will then choose a subset $X_1$ of $\Phi^+_{h+1}$, disjoint from $X_0$, and a subset $S$ of $\Phi^+_h$, and will consider the subspaces 
\begin{equation}
\begin{array}{lcl}\label{YZ}
Y & = &\lan e_{\b} \mid \b \in S \ran \;\; \text{of } Y \qquad\qquad  \text{and} \\
Z & = &\lan \bar e_{\g} \mid \g \in X_1 \ran \;\; \text{of } \bar u^{h+1}.
\end{array}
\end{equation}
The choice will be made in such a way that (in particular) $|S|=|X_1|$ and $\ol{[\fr u^1, Y]} = Z$. 
Let $E$ be the span of basis vectors of $V$ that are relevant for the map $\fr u^1\times Y\ra Z$ induced by $[\cdot,\cdot]$, and let 
$\kappa\colon E\times Y \ra Z$ be the corresponding map. 
In each case it will turn out that, after reordering and relabelling of the given bases of $E$, $Y$ and $Z$ and after changing signs of some basis elements, the map $\kappa$ is represented by one of the matrices in the list below. We will check that each bilinear map in the list is adequate (if $\Char \mF$ is good for $\Phi$). Now if
$\bar V$ does not contain $Z$, then an element $\d\in \Pi \cap E$ witnessing the fact that $\kappa$ is adequate with respect to $\bar V \cap Z$ also witnesses the fact that the map $[\cdot,\cdot]\colon \fr u^1 \times \fr u^{h} \ra \fr u^{h+1}$ is adequate with respect to $V$. Hence $V$ must contain $X_1$. We now replace $X_0$ with $X_0\cup X_1$ and continue in the same manner until $X_0$ becomes equal to $\Phi^{+}_{h+1}$, which is obviously a contradiction. In fact, the proof above for types $A_l$, $B_l$, $C_l$ and $G_2$ can be framed in the same way, with $X_0$ and $S$ chosen to be appropriate singleton sets at each step.

Now we give the list of maps $\kappa\colon E\times Y \ra Z$ discussed in the previous paragraph. Each map will be given by a square 
$\dim(Y)\times \dim(Z)$-matrix with coefficients in $E^*$. If $\{ e_1,\ldots,e_m \}$ is the given basis of $E$ then the coefficients will be expressed as linear combinations of the dual basis $\{ e^1,\ldots, e^m \}$. In each case we will denote the given bases of $Y$ and $Z$ by $\{ y_1,\ldots, y_n\}$ and 
$\{z_1,\ldots,z_n\}$ respectively (so $n=\dim(Y)=\dim(Z)$). 
The correspondence between matrices of this form and bilinear maps $\kappa$ is given by the standard natural isomorphism $E^*\otimes \Hom(Y,Z) \ra \Hom(E\otimes Y,Z)$, $e^* \otimes f \mapsto (e\otimes y \mapsto e^* (e)f(y) )$. Whenever we consider an element $a\in E$, we will use the notation
$a=\sum_{i=1}^m a_i e_i$.

\ncase
\textbf{Case 1:} $\dim(Y')=\dim(Z')=1$ and the matrix of $\kappa$ is $(Ne^1)$, where $N$ is one of the structure constants $\pm N'_{\a,\b}$ (cf.~\eqref{techlem1}). Since $p$ is good, $N'_{\a,\b}\ne 0$ in $\mF_q$, and it follows that $\kappa$ is adequate. We remark that, in some sense this is the most common case. It occurs whenever we can find $\b\in \Phi^+_h$ with the property that there is exactly one $\g\notin X_0$ such that there exists $\a \in \Pi$ satisfying $\a+\b=\g$.

\ncase
\textbf{Case 2:} the matrix of $\kappa$ is 
$$
A=
\begin{pmatrix}
e^2 & e^1 & 0 \\
e^3 & 0 & e^1 \\
0 & e^3 & e^2 \\
\end{pmatrix}.
$$
We claim that $\kappa$ is adequate as long as $\Char \mF \ne 2$.
Suppose for contradiction that $\kappa$ is not adequate with respect to some proper subspace $W$ of $Z$. First assume that $z_1,z_2\notin W$. We claim that $e_1$ witnesses the fact that $\kappa$ is adequate for $W$. 
 Let 
$a\in E$ be an element with $a_1\ne 0$. 
We have 
$
\det(A(a))=-2a_1 a_2 a_3.
$
So if $a_2$ and $a_3$ are non-zero then $z_1 \in \kappa(a,Y)$. If $a_2=0$ then 
$\kappa(a,y_3)=a_1 z_2 \notin W$. If $a_3=0$ then $\kappa(a,y_2)=a_1 z_1 \notin W$. Thus $\kappa$ is adequate with respect to $W$ in this situation. 

Hence either $z_1 \in W$ or $z_2 \in W$. Without loss of generality, we may assume that the former holds. 
We see that $z_2 \in W$ by considering the first column of the matrix: otherwise $\kappa(a, y_1) \equiv a_2 z_2 \pmod W$, so $e_2$ witnesses 
the fact that $\kappa$ is adequate for $W$. Similarly, we deduce that $z_3\in W$ by considering the second column of $A$. Thus $W=Z$, which is a contradiction. 

\ncase
\textbf{Case 3:} the matrix of $\kappa$ is 
$$
A=
\begin{pmatrix}
e^2 & e^1 & 0 \\   
e^3 & 0 & e^1 \\
0 & 2e^3 & e^2 \\
\end{pmatrix}.
$$
We claim that $\kappa$ is adequate provided $\Char \mF \ne 2,3$. Indeed, if $a\in E$ and $a_i\ne 0$ for $i=1,2,3$ then $\det A(a)=-3a_1 a_2 a_3 \ne 0$. We omit the rest of the proof as it is the same as in the previous case. 

\ncase
\textbf{Case 4:} the matrix of $\kappa$ is
$$
A=
\begin{pmatrix}
e^5 & e^3 & 0 & 0 & 0 \\
e^2 & 0 & e^3 & 0 & 0 \\
0 & e^4 & 0 & e^1 & 0 \\
0 & e^2 & e^5 & 0 & e^1 \\
0 & 0 & 0 & e^2 & e^4 \\
\end{pmatrix}
$$
We claim that $\kappa$ is adequate provided $\Char \mF \ne 3$. Suppose $\kappa$ is not adequate with respect to some proper subspace $W<Z$. 
First suppose that, for all $i=1,\ldots,5$, we have $z_i\notin W$. Then there exists $a\in E$ with $a_1\ne 0$ such that $\kappa(a,Y)\subseteq W$. If $a_i\ne 0$ for $i=2,\ldots,5$ then $\kappa(a,Y) =Z \nsubseteq W$ because $\det A(a)=3a_1 a_2 a_3 a_4 a_5 \ne 0$ (by our assumption on $\Char \mF$). On the other hand, we have the following sequence of deductions (where each line may use conclusions of preceding lines):
\begin{align*}
a_2 \ne 0\colon \text{ otherwise } & \kappa(a,y_4)=a_1 z_3\notin W; \\
a_4 \ne 0\colon \text{ otherwise } & \kappa(a,y_5)=a_1 z_4\notin W; \\
a_5 \ne 0\colon \text{ otherwise } & \kappa(a,y_1)=a_5 z_1 \notin W; \\
a_3 \ne 0\colon \text{ otherwise } & \kappa(a,y_3)=a_5 z_4 \notin W.
\end{align*}
This is a contradiction as we have already established that $a_i=0$ for some $i\in \{2,3,4,5\}$. 

Thus at least one of the basis elements $z_i$, $i=1,\ldots,5$, must be in $W$. 
Note that if, for some $m$, the $m$th column of $A$ has nonzero entries only in rows $i$ and $j$, and these entries are $e^s$ and $e^t$ respectively, then $z_i\in W$ if and only if $z_j\in W$. Indeed, suppose $z_i \in W$. Then $\kappa(a,y_m)\equiv a_t z_j \pmod W$ for every $a\in E$, so if $z_j\notin W$ then $e_t$ witnesses the adequacy of $\kappa$ for $W$; thus $z_j\in W$. 

We have
$$
z_1 \in W \stackrel{1}{\Llra} z_2 \in W \stackrel{3}{\Llra} z_4 \in W \stackrel{5}{\Llra} z_5 \in W \stackrel{4}{\Llra} z_3 \in W,
$$
where the number above each arrow indicates a column of $A$ one may consider to establish the corresponding equivalence. Since at least one basis element $z_i$ lies in $W$, all of them must be in $W$, which is a contradiction.

\ncase
\textbf{Case 5:} the matrix of $\kappa$ is
$$
A=
\begin{pmatrix}
e^4 & 0 & e^1 & 0 & 0 & 0 & 0 \\
e^6 & e^2 & 0 & e^1 & 0 & 0 & 0 \\
0 & e^7 & 0 & 0 & 0 & e^1 & 0 \\
0 & 0 & e^6 & e^4 & 0 & 0 & 0 \\
0 & 0 & 0 & e^7 & e^3 & e^2 & 0 \\
0 & 0 & 0 & 0 & e^5 & 0 & e^2 \\
0 & 0 & 0 & 0 & 0 & e^5 & e^3 
\end{pmatrix}.
$$
We will show that $\kappa$ is adequate as long as $\Char \mF \ne 2,5$. Suppose $\kappa$ is not adequate for a subspace $W<Z$. Assume first that $W$ contains none of the basis vectors $z_i$, $i=1,\ldots,7$. Then there is $a\in E$ with $a_2\ne 0$ such that $\kappa(a,Y)\subseteq W$. Since 
$\det A(a)= 5a_1 a_2 a_3 a_4 a_5 a_6 a_7$, there is $i\in [1,7]$ such that $a_i=0$. However, 
\begin{align*}
a_3 \ne 0\colon \text{ otherwise } & \kappa(a,y_7)=a_2 z_6 \notin W; \\
a_5\ne 0\colon \text{ otherwise } & \kappa(a,y_5)=a_3 z_5 \notin W;\\
a_7\ne 0\colon \text{ otherwise } & \kappa(a,y_2)=a_2 z_2 \notin W. 
\end{align*}
Suppose $a_1=0$. Consider the subspaces $Z'=\lan z_5,z_6,z_7 \ran$ and $Y'=\lan y_5, y_6, y_7 \ran$. Since $a_1=0$, we have $\kappa(E,Y')\subseteq Z'$,
and the matrix of the restriction of $\kappa$ to $E\times Y'$ is given by the intersection of the last 3 rows and the last 3 columns of $A$. Since $a_2$, $a_3$ and $a_5$ are all non-zero, by the proof in Case 2 we have $\kappa(a,Y')=Z'$, so $W$ contains $z_5$, $z_6$ and $z_7$, contradicting our assumption. Hence $a_1\ne 0$, and we further deduce that
\begin{align*}
a_6\ne 0\colon \text{ otherwise } & \kappa(a,y_3)=a_1 z_1 \notin W; \\
a_4 \ne 0\colon \text{ otherwise } & \kappa(a,y_1)=a_6 z_2 \notin W.
\end{align*}
So $a_i \ne 0$ for $i\in [1,7]$, which we have shown to be impossible.

It remains to consider the case when $W$ contains $z_i$ for some $i\in [1,7]$. Using the convention of the previous case, we have equivalences
$$
z_3 \in W \stackrel{2}{\Llra} z_2 \in W \stackrel{1}{\Llra} z_1 \in W \stackrel{3}{\Llra} z_4 \in W.
$$
Also,
$$
z_5 \in W \stackrel{5}{\Llra} z_6 \in W \stackrel{7}{\Llra} z_7 \in W.
$$
Moreover, if both $z_2$ and $z_4$ are in $W$ then $z_5$ is in $W$ (consider column $4$ of $A$). And if $z_5,z_7\in W$ then $z_3\in W$ (consider column $7$). Combining all these implications, we see that, as at least one of the elements $z_i$, $i\in [1,7]$, is in $W$, all of them must belong to $W$, which is a contradiction.

\medskip
Now we apply the method described above for the remaining types $D_l$, $F_4$, $E_6$, $E_7$ and $E_8$.

\ncase
\textbf{Type $D_l$, $l\ge 4$.} If $h=1$ then the result is easy: e.g.\ the argument used above for types $A_l$, $B_l$, $C_l$ and $G_2$ works if we consider the ordering
$\a_{l-2}< \a_{l-1} <\a_l <\a_{l-3} < \cdots <\a_1$ of simple roots. So we may assume that $h>1$. 

Since $\Phi$ is simply-laced, by the discussion above (see~\eqref{techlem1}), we may assume that $N'_{\a,\b}=1=\e_{\a,\b}$ for each $\a\in \Pi$ and $\b\in\Phi^+_{h}$.
We use the case-by-case argument described above, labelling the positive roots as in Table~\ref{posroots}.
Suppose we have $\lda_{1,h}, \lda_{2,h+1}, \ldots, \lda_{i,h+1-i}\in X_0$ for some $i$, so $V$ must contain all these elements (initially $i=0$). Suppose $h+1-i<l-2$. Then we consider $X_1=\{ \lda_{i+1,h+2-i} \} \subseteq \Phi^{+}_{h+1}$ and $S=\{ \lda_{i+1,h+1-i} \}$. Let $Y$ and $Z$ be the $1$-dimensional subspaces defined by~\eqref{YZ}. Then $\ol{[\fr u^1, Y]}\subseteq Z$. Indeed, the only simple roots $\a$ satisfying $\a+\lda_{i+1,h+1-i}\in \Phi^+$ are $\a_i$ (if $i>0$) and $\a_{h+2-i}$, and if $i>0$ then $\a_i+\lda_{i+1,h+1-i}=\lda_{i,h+1-i}\in X_0$. The set of relevant basis vectors is therefore $E=\{ e_{\a_{h+2-i}} \}$. The map $\kappa\colon E \times Y \ra Z$ is as in Case 1 and hence is adequate. Therefore, we may add $\lda_{i,h+1-i}$ to $X_0$. Arguing by induction, we may now assume that 
$$
X_0=\{\lda_{i,h+1-i} \mid i\ge 1 \text{ and } h+1-i\le l-2 \}.
$$

Now let $m=l-h-1$ (so $m<l-2$ because $h>1$). Set $X_1=\{ \lda_{m,l-1}, \, \mu_m, \, \nu_{m+1,l-1} \} \subseteq \Phi^{+}_{h+1}$ and 
$S=\{ \lda_{m,l-2}, \, \lda_{m+1, l-1}, \, \mu_{m+1} \}\subseteq \Phi^+_h$, and let $Y$ and $Z$ be defined by~\eqref{YZ} as before. It is easy to see that then the set of the relevant basis elements of $\fr u^1$ is $E=\{ e_{\a_m}, e_{\a_{l-1}}, e_{\a_l} \}$ and that the resulting map 
$\kappa\colon E\times Y\ra Z$ is as in Case 2 (after reordering our bases). Since $\Char \mF\ne 2$, it follows that $\kappa$ is adequate, and we may replace $X_0$ with $X_0 \cup X_1$. 

In particular, now $\nu_{m+1,l-1} \in X_0$. Suppose that for some $j<(l-m-2)/2$  we have 
\[
\nu_{m+1,l-1}, \; \nu_{m+2,l-2}, \; \ldots, \; \nu_{m+j,l-j}\in X_0 
\]
 (initially $j=1$). Setting $X_1=\{ \nu_{m+j+1,l-j-1} \}$ and $S=\{ \nu_{m+j+1, l-j } \}$ and using Case 1, we deduce that $\nu_{m+j+1,l-j-1} \in V$ by the same method as before. We can thus add $\nu_{m+j+1,l-j-1}$ to $X$. Arguing by induction, we infer that all elements of $\Phi^+_{h+1}$ of the form $\nu_{i,t}$ must belong to $V$. We have already shown that all the other elements of $\Phi^{+}_{h+1}$ are in $V$, so we have the desired contradiction.

\ncase
\textbf{Types $E_6$, $E_7$, $E_8$, $F_4$.} 
In these cases, for each possible height $h$, one can list the pairs of roots $\a\in\Pi$ and $\b\in \Phi^h$ satisfying $\a+\b\in \Phi^{h+1}$ using a simple program in GAP~\cite{GAP4}. Note that, as explained above, the signs $\e_{\a,\b}$ may all be taken to be $1$ if $\Phi$ is simply-laced, whereas the signs for type $F_4$ may be found in~\cite{BurgoyneWilliamson1971}. This information suffices to construct the matrix describing the Lie bracket map $\fr u^1 \times \fr u^h \ra \fr u^{h+1}$ in each case (in practice, it is convenient to represent these data by a bipartite graph with vertex set $\Phi^+_h \cup \Phi^+_{h+1}$ and edges corresponding to nonzero entries of the matrix).

It is then routine to check, using the method we have described, that adequacy of maps in Cases 1 and 3 implies the result for type $F_4$ and that adequacy of maps in Cases 1--5 implies the result for type $E_8$. We have shown that maps in Case 3 are adequate if $\Char \mF \ne 2,3$ and the map in Case 5 is adequate if $\Char \mF\ne 2,5$. Since $\Char \mF$ is assumed to be $0$ or a good prime for $\Phi$, the result follows for types $F_4$ and $E_8$. 

In fact, for type $F_4$ the map of Case 3 occurs only when $h=3$. For type $E_8$, the map of Case $2$ occurs for $h=2,4,8,14$, the map of Case 4 occurs for $h=3,9$ and the map of Case 5 occurs for $h=5$. The heights of positive roots of $E_8$ vary from $1$ to $29$, and for the remaining values of $h\in [1,28]$ maps of Case 1 suffice. (These conclusions are in agreement with the statement of~\cite{Springer1966}, Theorem 2.6.) Now the root systems of types $E_6$ and $E_7$ can be embedded into the system of type $E_8$, and we deduce the result for $E_6$ and $E_7$ when $h\ne 5$ (as we have assumed that $\Char \mF\ne 2,3$ if $\Phi$ is of type $E_6$ or $E_7$). Using the same method as before, it is easy to check the lemma in the case of $E_7$, $h=5$ (in fact, maps of Case 1 suffice), and the result for type $E_6$ follows.
\end{proof}

\begin{lem}\label{dirprod}
Let $B=TP$ be a semidirect product where $P=P_1 \times \cdots \times P_l$ is an abelian normal $p$-subgroup of $B$ and $T$ is a $p'$-subgroup of $B$. Assume that $\lsa{t}{P}_i=P_i$ for all $t\in T$ and each $i$. Let 
$$
\Cal S= \{ Q\le P \mid Q \le P_1 \times \cdots \times P_{j-1} \times P_{j+1} \times \cdots \times P_{l} \; \text{ for some } j \}.
$$
Let $P_{\reg} = \{u_1\cdots u_l \in P \mid u_i\in P_i-\mbf 1 \text{ for each } i \}$. 
Let $\chi\in \Cal C(B)$ and suppose that $\chi(g)=0$ for all $g\in B$ such that $g_p\in P_{\reg}$. Then $\chi\in \Cal I(B,P,\Cal S)$.
\end{lem}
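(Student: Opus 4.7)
The plan is to proceed in two stages.

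\textbf{Stage 1: the case $T = \mbf 1$.} Here $B = P$, and $\Cal I(P, P, \Cal S) = \sum_{j=1}^l \Ind_{P^{(j)}}^P \Cal C(P^{(j)})$, with $\Ind_{P^{(j)}}^P \eta = \eta \otimes \rho_{P_j}$ for $\eta \in \Cal C(P^{(j)})$. Write $\chi = \sum_{\lambda \in \hat P} c_\lambda \lambda$ with $c_\lambda \in \mZ$. The hypothesis says that $\chi$, viewed as a function on $P$, vanishes on $P_{\reg} = \prod_i (P_i \setminus \mbf 1)$. Apply inclusion--exclusion to the complement:
\[
\chi \;=\; \sum_{\emptyset \ne J \subseteq [1,l]} (-1)^{|J|+1}\, \chi \cdot \mbf 1_{\bigcap_{j \in J} P^{(j)}}.
\]
Each summand is an integer-valued function supported on $\bigcap_{j \in J} P^{(j)} = \prod_{i \notin J} P_i$, and its inverse Fourier transform on $\hat P$ is a $\mZ$-valued function constant in the coordinates indexed by $J$. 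Grouping these contributions by assigning each $J$ to a distinguished $j \in J$ produces an integral splitting $c_\lambda = \sum_{j=1}^l \tilde c_j(\lambda^{(j)})$ with $\tilde c_j \colon \widehat{P^{(j)}} \to \mZ$, and hence $\chi = \sum_j \Ind_{P^{(j)}}^P \tilde c_j \in \Cal I(P, P, \Cal S)$.

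\textbf{Stage 2: general $B = T \ltimes P$.} By a Schur--Zassenhaus-type conjugation (moving the $p'$-Hall complement of any $L$ with $L \cap P \in \Cal S$ into $T$, while keeping $L \cap P$ inside some $P^{(j)}$), one has
\[
\Cal I(B, P, \Cal S) \;=\; \sum_{j=1}^l \Ind_{B^{(j)}}^B \Cal C(B^{(j)}), \qquad B^{(j)} := T P^{(j)}.
\]
Since $T$ preserves each $P_i$, the $T$-action on $\hat P$ respects the coordinate structure, which allows the Stage 1 Fourier decomposition to be carried out $T$-equivariantly, producing $T$-invariant $\tilde c_j \in \Cal C(P^{(j)})^T$. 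Clifford theory on the normal subgroup $P \lhd B$ shows that the restriction map $\Res_{P^{(j)}}^{B^{(j)}} \colon \Cal C(B^{(j)}) \to \Cal C(P^{(j)})^T$ is surjective (its image contains the orbit sum $\sum_{\lambda' \in [\lambda]} \lambda'$ for every $T$-orbit $[\lambda]$, via induction of an extension from the inertia subgroup), so each $\tilde c_j$ lifts to some $\eta_j \in \Cal C(B^{(j)})$. Mackey's formula, combined with $B = B^{(j)} P$, then gives $\Res_P^B \Ind_{B^{(j)}}^B \eta_j = \Ind_{P^{(j)}}^P \tilde c_j$, and hence $\Res_P^B (\chi - \sum_j \Ind_{B^{(j)}}^B \eta_j) = 0$; moreover, the difference also vanishes on $\{g : g_p \in P_{\reg}\}$ because each $\Ind_{B^{(j)}}^B \eta_j$ does (any $B$-conjugate of such a $g$ has $p$-part in $P_{\reg}$, which is disjoint from $P^{(j)}$).

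The hard part is the final step of Stage 2: showing that the residual virtual character $\chi' := \chi - \sum_j \Ind_{B^{(j)}}^B \eta_j$, which vanishes on $P$ and on $\{g : g_p \in P_{\reg}\}$, actually lies in $\Cal I(B, P, \Cal S)$. I expect this to be handled either by a recursive application of the main argument to $\chi'$ (using its additional vanishing to iteratively reduce to a case already covered) or by a direct Brauer-induction analysis on $B/P \cong T$ to integrally decompose $\chi'$ into contributions of the required form.
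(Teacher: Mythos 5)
The decisive step is missing. After your Stage 2 subtraction you are left with $\chi'\in\Cal C(B)$ satisfying $\Res^B_P\chi'=0$ and $\chi'(g)=0$ whenever $g_p\in P_{\reg}$, and you still have to prove $\chi'\in\Cal I(B,P,\Cal S)$; you explicitly defer this, but it is not a residual formality — it is the whole content of the lemma when $T\ne\mbf 1$. Note first that $\Res^B_P\chi'=0$ alone is far from sufficient: for $B=T\times P_1\times P_2$ with $P_1,P_2$ nontrivial and $\tau\in\Irr(T)$ nontrivial, the virtual character $(1_T-\tau)\times 1_P$ restricts to zero on $P$, yet it does not lie in $\Cal I(B,P,\Cal S)=\Ind_{TP_2}^B\Cal C(TP_2)+\Ind_{TP_1}^B\Cal C(TP_1)$, since every element of the latter has coefficient function of the form $a(\tau',\lambda_2)+b(\tau',\lambda_1)$, whereas the alternating sum of the coefficients of $(1_T-\tau)\times 1_P$ over $\{1_T\}\times\{1_{P_1},\mu_1\}\times\{1_{P_2},\mu_2\}$ equals $1$. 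So everything hinges on the vanishing of $\chi'$ at the mixed elements $g=sv$ with $s=g_{p'}\ne 1$ and $v=g_p\in P_{\reg}$, i.e.\ on how the fixed points of $s$ on the factors $P_i$ interact with regularity, and neither of your suggestions reaches this: a ``recursive application of the main argument'' to $\chi'$ achieves nothing (Stage 2 only kills the restriction to $P$, which is already zero), and Brauer induction on $B/P\cong T$ forgets the action of $s$ on $P$ entirely. This is exactly where the paper's proof does its work: it inducts on $l$, subtracts $\Ind_{TQ}^B\Res^B_{TQ}(\pi_{1_{P_1}}\chi)\in\Cal I(B,P,\Cal S)$ (with $Q=P_2\times\cdots\times P_l$) to arrange $\pi_{1_{P_1}}(\chi)=0$, and then, for each $\phi\in\Irr(P_1)$ with inertia group $XP$, uses the Glauberman correspondence between the $\lan s\ran$-invariant characters of $P_1$ and $\Irr(C_{P_1}(s))$, together with the hypothesis at the elements $gu$, $u\in C_{P_1}(s)-\mbf 1$, to show that $\th=\pi_\phi\Res^B_{XP}\chi$ vanishes at every $g\in XQ$ with $g_p\in Q_{\reg}$; only then does the inductive hypothesis apply to $\Res^{XP}_{XQ}\th$. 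You would need an argument of comparable strength to close Stage 2.

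There are also smaller inaccuracies, though they are repairable. In Stage 1 the individual inclusion--exclusion terms $\chi\cdot\mbf 1_{\prod_{i\notin J}P_i}$ do \emph{not} have integer Fourier coefficients: the coefficient at $\lambda$ is the average of the coefficients of $\chi$ over the fibre of characters agreeing with $\lambda$ outside $J$, so your grouping only yields a rational splitting $c_\lambda=\sum_j\tilde c_j(\lambda^{(j)})$. The integral (and, for Stage 2, $T$-equivariant) splitting is in fact true, but needs a separate argument, for instance the observation that $\sum_j\Ind_{P^{(j)}}^P\Cal C(P^{(j)})$ is a $T$-stable direct summand of $\Cal C(P)$, obtained from the $T$-stable decomposition of each $\mZ^{\Irr(P_i)}$ as $\mZ\cdot\mbf 1\oplus\{f\mid f(1_{P_i})=0\}$. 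The identification $\Cal I(B,P,\Cal S)=\sum_j\Ind_{TP^{(j)}}^B\Cal C(TP^{(j)})$, the surjectivity of restriction onto $T$-invariants, and the Mackey computation are fine. But these repairs do not touch the unproved final step, which is the genuine gap and is not accessible by Fourier analysis on $P$ plus Clifford--Mackey bookkeeping alone; some coprime-action input such as the Glauberman correspondence appears unavoidable.
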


\begin{proof} We argue by induction on $l$. If $l=1$, we have $\chi(g)=0$ whenever $g\in B$ and $g_p\ne 1$, whence $\chi\in \Cal P(B)=\Cal I(B,P,\Cal S)$ (see e.g.\ Section~\ref{Eaton}).
Let $l\ge 2$ and consider $Q=P_2 \times \cdots \times P_l$. Define
\[
Q_{\reg}  =  \{u_2 \cdots u_l \in Q \mid u_i \in P_i - \mbf 1 \text{ for each } i \}.
\]
Let $\Omega$ be a set of representatives of the orbits of the action of $B$ on $\Irr(P)$. 
We may (and do) replace $\chi$ with
$$
\chi - \Ind_{TQ}^B \Res_{TQ}^B (\pi_{1_{P_1}}(\chi)),  
$$
so that 
$
\pi_{1_{P_1}} (\chi)=0.
$ 

Let $\phi\in \Irr(P_1)$ and let $X=\Stab_T (\phi)$. Let
\begin{equation}\label{thdef}
\th = \pi_{\phi} \Res^B_{XP} \chi.
\end{equation}
By Clifford theory, we have
\begin{equation}\label{chiClif}
\Ind_{XP}^B \th = \pi_{\phi} \chi.
\end{equation}
 Our aim is to apply the inductive hypothesis to $\Res^{XP}_{XQ} \th$ (note that $X$ normalises $Q$ by the hypothesis). Let $g=sv\in XQ$ where 
$v=g_p\in Q_{\reg}$ and $s=g_{p'}$. We claim that $\th(g)=0$. 

Let $\xi=\Res^B_{\lan g \ran P_1} \chi$. Using~\eqref{thdef} and the fact that $XP$ stabilises $\phi$, we obtain
$$
\pi_{\phi} \xi = \pi_{\phi}\Res^B_{\lan g \ran P_1} \chi = \Res^{XP}_{\lan g \ran P_1} \pi_{\phi} \Res^B_{XP} \chi= \Res^{XP}_{\lan g \ran P_1} \th.
$$
In particular, $\th(g)=(\pi_{\phi} \xi) (g)$. Let 
$$
\Om = \{ \psi \in \Irr(P_1) \mid \ls{s}{\psi} = \psi \} 
$$ 
and $C=C_{P_1} (s)$.
Let $\xi'=\sum_{\psi\in \Om} \pi_{\psi} (\xi)$. We may write 
$$
\Res^{\lan g \ran P_1}_{\lan g \ran C} \xi' = \sum_{\lda\in \Irr(C)} \a_{\lda} \times \lda,
$$
where the virtual characters $\a_{\lda}$ of $\lan g \ran$ are determined uniquely. By the Glauberman correspondence (\cite{IsaacsBook}, Theorem 13.1), there is a bijection between $\Om$ and $\Irr(C)$ given by restriction. Hence, for all $\psi\in\Om$, we have 
\begin{equation}\label{xilda}
\Res^{\lan g \ran P_1}_{\lan g \ran C} \pi_{\psi} \xi= \a_{\lda}\times \lda
\end{equation}
where $\lda=\Res^{P_1}_C \psi$. 
Since $\pi_{1_{P_1}}(\chi)=0$, we have $\a_{1_C} =0$. For any $u\in C$, if $\psi \in \Irr(P_1) - \Om$ and $\eta\in \Irr(\lan g \ran P_1 \di \psi)$ then $\eta(gu)=0$ by Clifford theory (because 
$gu\notin \Stab_{\lan g \ran P_1} (\psi)$), so $\xi(gu)=\xi'(gu)$. Thus, for all $u\in C- \mbf 1$,
$$
0=\chi(gu)=\xi(gu)=\xi' (gu) = \sum_{\lda\in \Irr(C)- \{1_C \}} \a_{\lda}(g) \lda(u).
$$  
So the class function $\g= \sum_{\lda \in \Irr(C)} \a_{\lda}(g) \lda$, defined on $C$, is zero on $C-\mbf 1$ and satisfies $\lan \g,1_C \ran=0$. It follows that $\g=0$, whence $\a_{\lda}(g)=0$ for all $\lda\in \Irr(C)$. By~\eqref{xilda} we have $\th(g)=(\pi_{\phi} \xi) (g)=0$, as claimed.  

Hence we may apply the inductive hypothesis to the group $XQ$ and the set 
$$
\Cal S'= \{ Y\le Q \mid Y \le P_2 \times \cdots \times P_{j-1} \times P_{j+1} \times \cdots \times P_{l} \; \text{ for some } j\in [2,l] \}
$$
in place of $\Cal S$, deducing that $\Res^{XP}_{XQ} \th\in \Cal I(XQ, Q, \Cal S')$. It follows from the definition of $X$ that the group $XP/\ker\phi$ decomposes as 
a direct product of $XQ$ and $P_1/\ker\phi$. Thus $\th$ is the inflation of $(\Res^B_{XQ} \th)\times \tilde\phi \in \Cal C(XP/\ker\phi)$, where $\tilde\phi$ is the deflation of $\phi$ to $P_1/\ker\phi$. Then it is clear that $\th\in \Cal I(B,P,\Cal S)$, and the result follows from~\eqref{chiClif}
 because $\phi\in \Irr(P_1)$ may be chosen arbitrarily. 
\end{proof}

Recall the notation introduced in the beginning of Section~\ref{lietype}. Denote by $W$ the Weyl group $N_{\mbf G} (\mbf T)/\mbf T$ of $\mbf G$.
 For the remainder of Section~\ref{lietype} we adopt the convention that $G=\mbf G^F$, $B=\mbf B^F$, $U=\mbf U^F$ and so on. Let $\Phi$ be the root system of $\mbf G$, let $\Phi^+$ and $\Phi^-$ be the sets of positive and negative roots with respect to $\mbf B$, and denote by $\Pi$ the set of simple roots in $\Phi^+$. 
Denote by $\mbf X_{\a}$ the root subgroup of $\mbf U$ corresponding to $\a \in \Phi^+$. 
As before, let $\mbf U_2= \prod_{\hght(\a) \ge 2} \mbf X_{\a}$ and $U_2=\mbf U_2^F$. We use the bar notation for the projection $\mbf U\ra \mbf U/\mbf U_2=\bar{\mbf U}$.
Due to the commutator relations~\eqref{commrel}, $\bar{\mbf U}$ is the direct product of the root spaces $\bar{\mbf X}_{\a}$ for $\a \in \Pi$.
 Recall that an element $u\in \mbf U$ is \emph{regular} in $\mbf G$ if and only if the 
$\bar{\mbf X}_{\a}$-component of $\bar u$ is not equal to $1$ for each $\a\in\Pi$ (see~\cite{DigneMichelBook}, Proposition 14.14).
Denote by $U_{\reg}$ the set of regular elements of $\mbf G$ contained in $U$. An element $u\in \mbf G$ is regular if and only if $u$ is $G$-conjugate to an element of $U_{\reg}$.

\begin{prop}\label{IforB} Suppose $\mbf G$ is split and $p$ is good for $\mbf G$. Then $\Cal I(B,U,\Cal S (G,U,B))$ consists precisely of those virtual characters $\phi\in \Cal C (B)$ that vanish on the elements $g\in G$ such that $g_p \in U_{\reg}$.
\end{prop}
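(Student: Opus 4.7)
The strategy is to prove the two inclusions separately. The easy direction (containment in the vanishing set) uses the Bruhat decomposition to show that elements of $\mathcal{S}(G,U,B)$ cannot contain regular elements. The hard direction reduces $\phi$ modulo $\mathcal{C}(B)$-summands lying in the image of inflation from $\bar{B}=B/U_2$, applying Proposition~\ref{redlin} to the complement and Proposition~\ref{dirprod} to the inflated part.

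For the containment $\mathcal{I}(B,U,\mathcal{S}(G,U,B))\subseteq \{\phi:\phi(g)=0\text{ when }g_p\in U_{\mathrm{reg}}\}$, it suffices to treat a generator $\mathrm{Ind}_L^B\psi$ with $L\cap U$ Sylow $p$ in $L$ and $L\cap U\in\mathcal{S}(G,U,B)$. If $\mathrm{Ind}_L^B\psi(g)\neq 0$ then some $B$-conjugate of $g_p$ lies in $L$, hence (as $L\cap U$ is Sylow in $L$) $g_p$ is $G$-conjugate to an element of $L\cap U\subseteq {}^tU\cap U$ for some $t\notin B$. By the Bruhat decomposition we may write $t=b_1\dot{w}b_2$ with $w\neq 1$, so ${}^tU\cap U={}^{b_1}(U\cap {}^{\dot{w}}U)$, where $U\cap {}^{\dot{w}}U=\prod_{\alpha\in\Phi^+, w^{-1}\alpha\in\Phi^+}X_\alpha$. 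Since $w\neq 1$, choose $\alpha\in\Phi^+$ with $w^{-1}\alpha<0$; writing $\alpha$ as a nonnegative combination of simple roots, some simple root $\delta$ satisfies $w^{-1}\delta<0$, so $X_\delta$ is absent from the product. Thus every element of $U\cap{}^{\dot{w}}U$ has trivial $\bar{X}_\delta$-component, so is not regular. Since regularity is $G$-invariant, $g_p$ is not regular, a contradiction.

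For the reverse containment, decompose $\phi=\phi_A+\phi_B$ according to whether $U_2$ lies in the kernel of each irreducible constituent. By Proposition~\ref{redlin}, every type-B irreducible constituent of $\phi_B$ lies in $\mathcal{I}(B,U,\mathcal{S}')$, where $\mathcal{S}'$ consists of subgroups contained in some $\prod_{\alpha\in\Phi^+\setminus\{\delta\}}X_\alpha$. But $\prod_{\alpha\in\Phi^+\setminus\{\delta\}}X_\alpha=U\cap {}^{\dot{s}_\delta}U$ and $\dot{s}_\delta\notin B$, so $\mathcal{S}'\subseteq\mathcal{S}(G,U,B)$ and hence $\phi_B\in\mathcal{I}(B,U,\mathcal{S}(G,U,B))$. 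By the first inclusion already proved, $\phi_B$ vanishes on $\{g:g_p\in U_{\mathrm{reg}}\}$, and so does $\phi_A=\phi-\phi_B$.

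Now $\phi_A=\mathrm{Inf}_{\bar{B}}^B\tilde{\phi}$ for some $\tilde{\phi}\in\mathcal{C}(\bar{B})$. Since the projection $\pi\colon B\to\bar{B}$ commutes with taking $p$-parts and since $g_p\in U$ is regular iff its image in $\bar{U}=\prod_{\alpha\in\Pi}\bar{X}_\alpha$ has every component nontrivial, $\tilde{\phi}$ vanishes on all $\bar{g}$ with $\bar{g}_p$ in the "regular" set of $\bar{U}$. The split hypothesis guarantees that $T$ normalises each $\bar{X}_\alpha$, so Proposition~\ref{dirprod} applies with $P_i=\bar{X}_{\alpha_i}$, yielding $\tilde{\phi}\in\mathcal{I}(\bar{B},\bar{U},\bar{\mathcal{S}})$, where $\bar{\mathcal{S}}$ omits one factor. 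Since inflation commutes with induction (as $U_2$ is contained in every preimage), $\phi_A$ is a combination of $\mathrm{Ind}_L^B\psi$ with $L=\pi^{-1}(\bar{L})$; one checks that $L\cap U$ is Sylow in $L$ and that $L\cap U\subseteq U_2\cdot\prod_{i\neq j}X_{\alpha_i}=\prod_{\alpha\in\Phi^+\setminus\{\alpha_j\}}X_\alpha=U\cap{}^{\dot{s}_{\alpha_j}}U\in\mathcal{S}(G,U,B)$. Hence $\phi_A\in\mathcal{I}(B,U,\mathcal{S}(G,U,B))$, completing the proof.

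The main technical points are the Bruhat-decomposition analysis showing that intersections $U\cap{}^tU$ (for $t\notin B$) contain no regular element, and the verification that preimages under $B\to\bar{B}$ of elements of $\bar{\mathcal{S}}$ lie in $\mathcal{S}(G,U,B)$; both rely crucially on the splitness of $\mathbf{G}$ via the identification of $\bar{U}$ with the direct product of its simple-root subgroups.
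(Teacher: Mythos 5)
Your proof is correct and follows essentially the same route as the paper: the Bruhat/Weyl-group description of $U\cap\ls{t}{U}$ for $t\notin B$ gives the easy inclusion, and the converse splits off $\pi_{1_{U_2}}(\phi)$ via Proposition~\ref{redlin} and applies Lemma~\ref{dirprod} to the deflation to $B/U_2$. The only difference is organizational: the paper records the identity $\Cal S(G,U,B)=\{Q\le U \mid Q\subseteq\prod_{\a\in\Phi^{+}-\{\d\}}X_{\a}\text{ for some }\d\in\Pi\}$ once up front, whereas you verify the two needed containments in place and spell out the lifting details that the paper leaves implicit.
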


\begin{proof} It is well known that $U$ may be identified with $U(\Phi^+,\mF_q)$.
Let $n\in N_{\mbf G} ({\mbf T})^F$, and let $w$ be the image of $n$ in the Weyl group $W$. Then $\ls{n}{U} \cap U$ is the product of the root subgroups $X_{\a}$ for  
$\a\in \Phi^+$ such that $w^{-1}(\a)\in \Phi^{+}$. Since only the trivial element of $W$ stabilises $\Phi^{+}$, there is $\d \in \Pi$ such that $w^{-1}(\d) \in \Phi^{-}$, so $\ls{n}{U} \cap U \subseteq \prod_{\a \in \Phi^{+} - \{\d \} } X_{\a}$. On the other hand, if $w$ is the reflection corresponding to a simple root $\d$ then $\d$ is the only positive root sent to $\Phi^-$ by $w$, so $\ls{n}{U}\cap U = \prod_{\a \in \Phi^{+} - \{\d \} } X_{\a}$ in this case. Since every double $B$-$B$-coset in $G$ contains an element of $N_{\mbf G} (\mbf T)^F$, it follows that
$$
\Cal S \coleq \Cal S(G,U,B) = \left\{ Q \le U \; \Big| \; Q\subseteq \prod_{\a\in\Phi^{+}-\{\d\}} U_{\a} \text{ for some } \d \in \Pi \right\}.
$$
Since $U_{\reg}$ is disjoint from each element of $\Cal S$ (and $\Cal S$ is closed under $B$-conjugacy), for every $\chi\in \Cal I(B,U,\Cal S)$ we have $\chi(g)=0$ whenever $g_p \in U_{\reg}$. Conversely, suppose that $\chi\in \Cal C(B)$ and $\chi(g)=0$ for all $g\in B$ with $g_p\in U_{\reg}$. By Proposition~\ref{redlin}, we have  $\chi-\pi_{1_{U_2}} (\chi)\in \Cal I(B,U,\Cal S)$, so we may assume that $\chi=\pi_{1_{U_2}} (\chi)$. However, $B/U_2$ decomposes as a semidirect product of $T$ and $U/U_2$, and $U/U_2=\prod_{\a \in \Pi} \bar X_{\a}$ is a direct product. Therefore, the result follows from
Lemma~\ref{dirprod}.
\end{proof}

\subsection{Semisimple characters}\label{lietype2}


We state certain facts concerning regular characters of $U$ and Gelfand--Graev characters of $G$, 
which may be found in~\cite{DigneMichelBook}, Chapter 14.
Recall that $\tau$ is the permutation of $\Pi$ induced by $F$, and let $\Pi/\tau$ be the set of its orbits. For each $\Om\in \Pi/\tau$ let 
$\bar{\mbf X}_{\Om}=\prod_{\d \in \Om} \bar{\mbf X}_{\d}$ and $\bar X_{\Om} = \bar{\mbf X}_{\Om}^F$. Then $\bar X_{\Om}$ is isomorphic to the additive group of the finite field $\mF_{q^{|\Om|}}$ (see e.g.\ the paragraph preceding Definition 14.27 in~\cite{DigneMichelBook}). Moreover, $U/U_2 = \prod_{\Om\in \Pi/\tau} \bar X_{\Om}$ is a direct product decomposition. 

If $Y$ is a subgroup of $B$ containing $U_2$ and $\zeta \in \Cal C(Y)$ is such that the kernel of every irreducible constituent of $\zeta$ contains $U_2$, we write $\tilde{\zeta}=\Def^Y_{Y/U_2} \zeta$. 
A linear character $\psi\in \Irr(U)$ is called \emph{regular} if $U_2 \subseteq \ker \psi$ and $\Res^{\bar U}_{\bar X_\Om}\tilde{\psi} \ne 1_{U_\Om}$ for every 
$\Om\in \Pi/\tau$.
The $T$-orbits of regular characters of $U$ are parametrised by the elements of 
$H^1 (F, Z(\mbf G) )$, the parametrisation being uniquely determined by the choice of $\psi_1$ (which can be taken to be any regular character of $U$). For each $z\in H^1 (F, Z(\mbf G))$ let $\psi_z$ be a regular character of $U$ contained in the orbit corresponding to $z$. More precisely, we may take $\psi_z=\ls{t}{\psi_1}$ where $t\in \Cal L^{-1}(z \Cal L(Z(\mbf G)))$. 

\begin{remark}
 If we assume $Z(\mbf G)$ to be connected then $H^1 (F,Z(\mbf G))$ is trivial and the proofs below become much simpler.
\end{remark}

The Gelfand--Graev character attached to 
$z\in H^1 (F, Z (\mbf G))$ is defined by 
$\Gamma_z = \Ind_U^G \psi_z$. Let $D_{\mbf G}$ be the isometry
$\Cal C(G)\ra \Cal C(G)$ given by the Alvis--Curtis--Kawanaka duality (see~\cite{DigneMichelBook}, Chapter 8). 
Let $\Xi_z = D_{\mbf G} \Gamma_z$ and $\xi_z = \Res^G_B \Xi_z$.

An explicit description of $X_{\Om}$ as a subset of $\bar{\mbf X}_{\Om}$ (see e.g.\ \cite{CarterSGLT}, Proposition 13.6.3) shows that  an element $u\in U$ is regular if and only if, for each $\Om\in\Pi/\tau$, 
the image of $u$ under the projection $U\ra \bar X_{\Om}$ is different from $1$.   
The virtual characters $\Xi_z$ vanish outside regular unipotent elements of $G$ 
(see~\cite{DigneMichelBook}, Proposition 14.33). Since each regular unipotent element of $G$ 
is contained in a unique $G$-conjugate of $B$ and $N_G (B)=B$, we have $\Ind_B^G \xi_z = \Xi_z$ for all $z\in H^1 (F, Z(\mbf G))$.

Thus, by the Frobenius reciprocity,
\begin{equation}\label{Frobrec}
\lan \Xi_z, \Xi_{z'} \ran = \lan \xi_z, \xi_z' \ran \quad \text{ for all } z,z' \in H^1 (F, Z(\mbf G)). 
\end{equation}
An irreducible character of $G$ is called \emph{semisimple} if it is a constituent of some $\Xi_z$. 
We denote by $\Irr_{s} (G)$ the set of semisimple characters of $G$ and, similarly, we write
$$
\Irr_{s} (B) =\{ \phi\in \Irr(B) \mid \lan \xi_z, \phi \ran \ne 0 \text{ for some } z\in H^1 (F, Z(\mbf G)) \}.
$$

Let $J\subseteq \Pi/\tau$. Denote by $\mbf P_J$ the standard parabolic subgroup containing $\mbf B$ associated with the union of $J$. Let $\mbf L_J$ be the (unique) Levi subgroup of $\mbf P_J$ containing $\mbf T$, and denote by $\mbf V_J$ the unipotent radical of $\mbf P_J$. 
The inclusion $Z(\mbf G) \hra Z(\mbf L_J)$ induces a surjective map $h_J \colon H^1 (F, Z(\mbf G)) \ra H^1 (F, Z(\mbf L_J))$  (\cite{DigneMichelBook}, Lemma 14.31). 
We set $\psi^J_1=\Res^{U}_{L_J \cap U} \psi_1$. This gives a parametrisation of $T$-orbits of regular unipotent characters of $L_J \cap U$ by elements of $H^1 (F, Z(\mbf L_J))$ as above. Write $\psi^J_y$ for a representative of the orbit corresponding to $y$.

By the proof of Proposition 14.33 in~\cite{DigneMichelBook}, we have
$
\Xi_z = \Ind^G_U ( \xi'_z)
$
where
\[
\xi'_z = \sum_{J \subseteq \Pi/\tau} (-1)^{|J|} \Inf_{L_J\cap U}^U \psi^J_{h_L (z)}.
\]
It is also shown in the quoted proof that $\xi'_z$ vanishes outside $U_{\reg}$, and we deduce that $\Xi_z (u)=\xi'_z(u)$ for each $u\in U_{\reg}$, whence
\[
\xi_z = \Ind_U^B (\xi'_z).
\]
Thus
 \begin{equation}\label{xi_z}
 \xi_z = \Ind_U^B \left(\sum_{J\subseteq \Pi/\tau} (-1)^{|J|} \Inf_{L_J\cap U}^U \psi^J_{h_J (z)} \right).
 \end{equation}

\begin{remark}\label{Bonrem}
The proof of Proposition 14.32 in~\cite{DigneMichelBook}, on which the quoted results rely, is incomplete. C{\'e}dric Bonnaf{\'e}
has provided a correct proof to the author (see~\cite{DigneMichelErrata}).
\end{remark}

As is well known, we have a semidirect product decomposition $U = V_J \rtimes (L_J \cap U)$, which induces a direct product decomposition 
$\bar U=\bar V_J \times\ol{L_J\cap U}$. Moreover, $\ol{L_J\cap U} = \prod_{\Om \in J} \bar X_{\Om}$ and 
 $\bar V_J=\prod_{\Om \in (\Pi/\tau)-J} \bar X_{\Om}$, where both products are direct. Let $z\in H^1(F,Z(\mbf G))$. 
Since $U_2\cap L_J \subseteq \ker\psi^{J}_{h_J (z)}$, we may view~\eqref{xi_z} as an equality of virtual characters of $B/U_2$. 
Hence we may rewrite~\eqref{xi_z} as 
 \begin{equation}\label{barxi_z}
 \tilde{\xi}_z = \Ind_{\bar U}^{\bar B} \left(  \sum_{J \subseteq \Pi/\tau} (-1)^{|J|} \th^J_{h_J (z)} \right)
 \end{equation}
 where $\th^J_y = \Inf_{\ol{L_J\cap U}}^{\bar U} \tilde{\psi}^J_y$.

The significance of semisimple characters for the McKay conjecture is clarified by the following lemma, which for the most part is a summary of known results.

\begin{lem}\label{pdeqss} 
We have $\Irr_{p'}(G)=\Irr_{s}(G)$ and $\Irr_{p'} (B)=\Irr_{s} (B)$ unless one of the following holds:
\begin{enumerate}[(i)]
 \item $q=2$ and $\Phi$ has a component of type $B_l$, $C_l$, $F_4$ or $G_2$;
\item $q=3$ and $\Phi$ has a component of type $G_2$.
\end{enumerate}
Moreover, we always have $\Irr_s (B)=\Irr(B\di 1_{U_2})$.
\end{lem}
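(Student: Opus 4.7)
My plan is to first establish the unconditional ``moreover'' statement $\Irr_s (B) = \Irr (B \di 1_{U_2})$, then use it to deduce $\Irr_{p'}(B) = \Irr_s(B)$ outside the listed exceptions, and finally handle $\Irr_{p'}(G) = \Irr_s(G)$ by combining this with the defining-characteristic McKay conjecture. The inclusion $\Irr_s (B) \subseteq \Irr (B \di 1_{U_2})$ is immediate from~\eqref{barxi_z}, which exhibits $\tilde{\xi}_z$ as a virtual character of $\bar B = B/U_2$. For the reverse inclusion, I would parametrise $\Irr(\bar B)$ via Clifford theory applied to $\bar B = T \ltimes \bar U$ (with $\bar U = \prod_{\Om\in\Pi/\tau}\bar X_\Om$ abelian and $T$ a $p'$-complement): every $\phi \in \Irr(\bar B)$ determines a $T$-orbit of linear characters $\psi \in \Irr(\bar U)$, which is characterised by its ``support'' $J(\psi) = \{\Om : \psi|_{\bar X_\Om} \ne 1\} \subseteq \Pi/\tau$ together with an element $y \in H^1(F, Z(\mbf L_{J(\psi)}))$ --- precisely the parameters appearing in the $T$-orbit classification of regular characters of $L_J\cap U$ recalled before~\eqref{xi_z}. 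Each summand $\Ind_{\bar U}^{\bar B} \th^{J'}_{h_{J'}(z)}$ appearing in~\eqref{barxi_z} contributes only constituents of support $J'$ (because $\th^{J'}_y$ itself has support $J'$), so distinct $J'$ do not interfere; hence $\lan \phi, \tilde\xi_z\ran = (-1)^{|J|}\lan\phi, \Ind_{\bar U}^{\bar B}\th^J_{h_J(z)}\ran$ with $J = J(\phi)$. The surjectivity of $h_J$ then permits us to choose $z$ with $h_J(z)$ equal to the parameter $y$ of $\phi$, making $\phi$ a constituent of $\xi_z$ (up to sign).

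Next, I would deduce $\Irr_{p'}(B) = \Irr_s(B)$ outside the exceptions. The inclusion $\Irr_s(B) \subseteq \Irr_{p'}(B)$ holds unconditionally, since by~\eqref{xi_z} each $\xi_z$ is a $\mZ$-linear combination of characters $\Ind_U^B \eta$ with $\eta$ linear, each of which has degree $[B:U]=|T|$, a $p'$-number, so all constituents have $p'$-degree. For the reverse inclusion, Clifford theory applied to the normal Sylow $p$-subgroup $U\vtl B$ with $p'$-complement $T$ shows that $\chi\in\Irr(B)$ has $p'$-degree if and only if the $U$-constituents of $\Res_U^B\chi$ are linear, equivalently $[U,U]\subseteq\ker\chi$. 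In the non-exceptional cases, the commutator relations~\eqref{commrel} together with non-vanishing of the relevant structure constants $N_{\a,\b}$ in $\mF_q$ yield $[U,U]=U_2$, so this coincides with $U_2\subseteq\ker\chi$, and the ``moreover'' statement gives $\Irr_{p'}(B) = \Irr(B\di 1_{U_2}) = \Irr_s(B)$.

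Finally, for $G$: the inclusion $\Irr_s(G)\subseteq\Irr_{p'}(G)$ is standard --- $\Gamma_z(1) = [G:U]$ is coprime to $p$, so the regular characters (constituents of $\Gamma_z$) have $p'$-degree, and it is classical (see~\cite{DigneMichelBook}, Chapter 14) that their Alvis--Curtis duals, the semisimple characters, also have $p'$-degree. For the reverse inclusion, I would invoke the defining-characteristic McKay conjecture (known in all cases here, cf.~\cite{Malle2008} and references therein) to get $|\Irr_{p'}(G)| = |\Irr_{p'}(B)| = |\Irr_s(B)|$, combined with a bijection $\Irr_s(G) \longleftrightarrow \Irr_s(B)$ extracted from~\eqref{Frobrec}: multiplicity-freeness of the Gelfand--Graev characters (so each $\Xi_z$ is multiplicity-free as a signed sum) together with the inner-product identity $\lan\Xi_z, \Xi_{z'}\ran = \lan \xi_z, \xi_{z'}\ran$ pairs irreducible constituents of the $\Xi_z$'s bijectively with those of the $\xi_z$'s. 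The main obstacle will be verifying that $[U,U]=U_2$ in $\mF_q$ holds outside precisely the listed exceptional cases, which boils down to a case-by-case check of the non-simply-laced root systems over small fields using the structure constants recorded in~\cite{CarterSGLT}, Chapter 4.
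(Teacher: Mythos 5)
Your treatment of the ``moreover'' statement and of $\Irr_{p'}(B)=\Irr_s(B)$ is essentially the paper's own argument: constituents of $\tilde\xi_z$ are sorted by their support $J\subseteq\Pi/\tau$, so distinct summands of~\eqref{barxi_z} cannot interfere, and surjectivity of $h_J$ supplies a suitable $z$; on the $p'$-side one uses Clifford theory for the normal Sylow subgroup $U$ together with $[U,U]=U_2$ outside the listed cases (the paper simply cites Howlett for this rather than redoing the case analysis, and notes the non-split case needs Carter's description of the $X_\Om$ --- your ``non-simply-laced over small fields'' check would also have to cover the twisted simply-laced types, where the commutator relations on $\mbf U^F$ are not just~\eqref{commrel}).

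The genuine gap is in your argument for $\Irr_{p'}(G)=\Irr_s(G)$, which the paper obtains by citing Brunat--Himstedt, Lemma 3.3. First, a false auxiliary claim: the constituents of $\Gamma_z$ do \emph{not} have $p'$-degree (the Steinberg character, of degree $|G|_p$, is a constituent of every Gelfand--Graev character); a $p'$-degree induced character can of course have constituents of degree divisible by $p$, so only the classical fact about semisimple character degrees gives $\Irr_s(G)\subseteq\Irr_{p'}(G)$. More seriously, the counting step $|\Irr_{p'}(G)|=|\Irr_s(G)|$ does not go through as proposed. The bijection $\Irr_s(G)\leftrightarrow\Irr_s(B)$ you want to ``extract'' from~\eqref{Frobrec} and multiplicity-freeness is not a formal consequence of those two facts: equality of the pairwise inner products $\lan\Xi_z,\Xi_{z'}\ran=\lan\xi_z,\xi_{z'}\ran$ for two multiplicity-free families does not match up their constituent sets without knowing, for each constituent, in \emph{which} $\Xi_z$ (resp.\ $\xi_z$) it occurs and with what multiplicity --- exactly the content of Lemmas~\ref{lemG} and~\ref{lemB}, which are proved after this lemma. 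Even with those in hand, the paper's counting argument (Theorem~\ref{liesemisimple}) needs $H^1(F,Z(\mbf G))$ to be cyclic, a hypothesis the present lemma does not carry; so your route fails, e.g., for simply connected $\mbf G$ of type $D_{2l}$. Finally, the appeal to ``the defining-characteristic McKay conjecture, known in all cases here'' is both much heavier than what is needed and not justified at the stated level of generality (arbitrary connected reductive $\mbf G$ over $\mF_q$, arbitrary centre); within this paper, whose purpose is to refine McKay, one should not take that count as an input when a direct Lusztig-series/degree argument (the cited Brunat--Himstedt lemma) is available.
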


\begin{proof}
The statement for $G$ is~\cite{BrunatHimstedt2010}, Lemma 3.3. (That result is stated only for the case when $\mbf G$ is split, but the proof does not require that assumption.) 

It is easy to see, using Clifford theory, that $\Irr_{p'}(B)=\Irr(B \di 1_{[U,U]})$. (This is true for any finite group $B$ with a normal Sylow $p$-subgroup $U$.) 
Moreover, if $\mbf G$ is not one of the exceptions (i) and (ii), then $[U,U]=U_2$. For split $\mbf G$, this was proved by Howlett (\cite{Howlett1974}, Lemma 7). This fact appears to be known for non-split $\mbf G$ as well (cf.\ \cite{DigneMichelBook}, discussion following Definition 14.27). In any case, one can prove it by a similar method to that used in~\cite{Howlett1974}, using the description of ``root subgroups'' of $U$ in~\cite{CarterSGLT}, Section 13.6. 

So we have $\Irr_{p'}(B)=\Irr(B\di 1_{U_2})$. It remains to show that $\Irr_s (B)=\Irr(B \di 1_{U_2})$ (without exceptions). Certainly, every constituent $\chi$ of any $\xi_z$ has a kernel containing $U_2$ (see the paragraph following~\eqref{xi_z}). 

Let $\chi\in \Irr(B\di 1_{U_2})$. Let $J$ be the set of orbits $\Om\in \Pi/\tau$ such that $\bar X_{\Om} \subseteq \ker\tilde{\chi}$.
Let $\lambda$ be an irreducible constituent of $\Res^{\bar B}_{\bar U} \chi$. Then $\bar V_J \subseteq \ker \lambda$, and 
 $\Def^{\bar U}_{\ol{U\cap L_J}} \lda$ is a deflation of some regular character of $U\cap L_J$. Thus $\lambda$ is $T$-conjugate to 
 $\th^J_y$ for some $y\in H^1 (F, Z(\mbf L_J))$, and we have $y=h_J (z)$ for some $z\in H^1 (F, Z(\mbf G))$ because $h_J$ is surjective. 
So $\tilde{\chi}$ is a constituent of $\Ind_{\bar U}^{\bar B} \th^J_y$. On the other hand, suppose that $\tilde{\chi}$ is a constituent of 
$\Ind_{\bar U}^{\bar B} \th^{J'}_{y'}$ for some $J'\subseteq \Pi/\tau$ and some $y'\in H^1 (F, Z(\mbf L_{J'}))$. Then, for each $\Om\in\Pi/\tau$, we have $\bar X_{\Om} \subseteq \ker\tilde{\chi}$ if and only if $\bar X_{\Om} \subseteq \ker \th^{J'}_{y'}$, that is, if and only if $\Om \in J'$. Therefore, $J'=J$. Thus, for the chosen $z$, the character $\tilde{\chi}$ is a constituent of precisely one summand on the right-hand side of~\eqref{barxi_z}, and the result follows. 
\end{proof}


Now we state and prove two lemmas analogous to each other, one for the virtual characters $\Xi_z$, the other for $\xi_z$. Using these, we will construct a correspondence between $\pm \Irr_s (G)$ and $\pm \Irr_s (B)$ (see Theorem~\ref{liesemisimple}) which will subsequently lead to a proof of Theorems~\ref{liesplit} and~\ref{lienonsplit}.

\begin{lem}\label{lemG} For all $z_0,z_1\in H^1 (F, Z(\mbf G))$ we have:
\begin{enumerate}[(i)]
\item $\Xi_{z_0}$ is multiplicity-free;
\item for each irreducible constituent $\chi$ of $\Xi_{z_0}$ there is a subgroup $A$ of $H^1 (F,Z(\mbf G))$ such that $\chi$ is a constituent of $\Xi_{z_0 z}$ if and only if $z\in A$;
\item if $\chi$ is an irreducible constituent of both $\Xi_{z_0}$ and $\Xi_{z_1}$ then $\lan \chi, \Xi_{z_0} \ran = \lan \chi, \Xi_{z_1}\ran$.
\end{enumerate}
\end{lem}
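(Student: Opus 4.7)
My plan is to transport the analysis from $\Xi_{z_0} = D_{\mbf G} \Gamma_{z_0}$ to the Gelfand--Graev character $\Gamma_{z_0}$ and then exploit a twisting action of $\hat Z := H^1(F, Z(\mbf G))$ by linear characters of $G$. For (i), I would invoke the classical theorem of Steinberg (see, e.g., \cite{DigneMichelBook}, Corollary 14.47) that $\Gamma_{z_0} = \Ind_U^G \psi_{z_0}$ is multiplicity-free. Since $D_{\mbf G}$ is an involutive isometry of $\Cal C(G)$ that sends $\Irr(G)$ into $\pm\Irr(G)$, applying it to the decomposition $\Gamma_{z_0} = \sum_i \chi'_i$ (with distinct $\chi'_i\in\Irr(G)$) yields $\Xi_{z_0} = \sum_i \eta_i \chi_i$ with $\eta_i \in \{\pm 1\}$ and the $\chi_i := \eta_i D_{\mbf G}\chi'_i \in \Irr(G)$ pairwise distinct, which is precisely (i).

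For (ii) and (iii), the central tool is the $\hat Z$-action on $\Cal C(G)$ via tensoring by linear characters. The duality $\mbf G\leftrightarrow\mbf G^*$ identifies $\hat Z$ with a subgroup of the group of linear characters of $G$ (see \cite{DigneMichelBook}, Proposition 13.30): each $z\in\hat Z$ gives a linear character $\hat z$ of $G$. The parametrization of $T$-orbits of regular characters of $U$ by $\hat Z$ is set up so that $\Gamma_{z_0 z} = \hat z \otimes \Gamma_{z_0}$; since $D_{\mbf G}$ commutes with tensoring by linear characters, this gives
\[
\Xi_{z_0 z} = \hat z \otimes \Xi_{z_0}, \qquad \lan \chi, \Xi_{z_0 z}\ran = \lan \hat z^{-1}\otimes \chi, \Xi_{z_0}\ran
\]
for every $\chi\in\Irr(G)$. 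To finish, I would use Lusztig's parametrization of semisimple characters by rational semisimple conjugacy classes of $\mbf G^{*F}$: if $\chi = \chi_{[s]}$ is the semisimple character in the series $\Cal E(G, [s])$, then $\hat z \otimes \chi_{[s]} = \chi_{[\tilde z s]}$ for a lift $\tilde z \in Z(\mbf G^{*F})$ of $z$. The subgroup $A$ of (ii) is then $A := \{z\in\hat Z : [\tilde z s] = [s]\}$, which is manifestly a subgroup of $\hat Z$; it coincides with the set of $z$ such that $\hat z^{-1} \otimes \chi = \chi$. For such $z$, the twisting formula immediately gives $\lan \chi, \Xi_{z_0 z}\ran = \lan \chi, \Xi_{z_0}\ran$, proving (iii).

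The main obstacle I foresee is justifying that this $A$ really exhausts all $z$ with $\chi \in \Xi_{z_0 z}$: a priori, $\hat z^{-1}\otimes\chi$ could be a different constituent of $\Xi_{z_0}$ rather than $\chi$ itself, enlarging $A$. Ruling this out requires the precise description of the constituents of $\Xi_{z_0}$ via the formula~\eqref{xi_z}: distinct constituents of $\Xi_{z_0}$ lie in distinct $\hat Z$-orbits, because the summands $\th^J_{h_J(z_0)}$ occurring in the decomposition of $\tilde\xi_{z_0}$ constrain the image under deflation to $\bar U$ of any irreducible constituent of $\Xi_{z_0}$, and these constraints are not preserved under non-trivial $\hat Z$-twists. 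Establishing the twisting formula $\Gamma_{z_0 z} = \hat z\otimes \Gamma_{z_0}$ (matching the two parametrizations of $\hat Z$ by $T$-orbits of regular characters on one side and by linear characters of $G$ on the other) and this distinctness of $\hat Z$-orbits together form the main technical step of the proof.
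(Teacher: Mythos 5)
Your reduction of (i) to the multiplicity-freeness of the Gelfand--Graev characters via the isometry $D_{\mbf G}$ is exactly the paper's argument and is fine. The gap is the twisting identity on which you base (ii) and (iii): $\Gamma_{z_0 z}=\hat z\otimes\Gamma_{z_0}$ is false. The linear characters $\hat z$ of $G$ obtained by duality from central elements of the dual group factor through $G\ra (\mbf G/\mbf G_{\mathrm{der}})^F$, hence are trivial on all unipotent elements; consequently $\hat z\otimes\Gamma_{z_0}=\Ind_U^G\bigl((\Res^G_U\hat z)\,\psi_{z_0}\bigr)=\Gamma_{z_0}$ for every $z$, whereas the Gelfand--Graev characters attached to distinct classes in $H^1(F,Z(\mbf G))$ are genuinely distinct (already for $\SL_2(q)$, $q$ odd, there are two). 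What tensoring by $\hat z$ really does is permute Lusztig series, $\Cal E(G,[s])\mapsto\Cal E(G,[\tilde z s])$, while the dependence of $\Gamma_z$ (hence $\Xi_z$) on $z$ takes place \emph{within} each series: when $Z(\mbf G)$ is disconnected a rational series contains several regular (resp.\ semisimple) characters, so your label $\chi_{[s]}$ does not determine $\chi$, and which of these characters occurs in which $\Gamma_z$ is governed by the characters of the component group $A_{\mbf G^*}(s)^F$, not by central twists. Your subgroup $A$ is therefore the wrong object, and the "distinct constituents lie in distinct $\hat Z$-orbits" step cannot repair it, since that orbit structure concerns series, not the constituents of a fixed $\Gamma_{z_0}$.

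Note also that (iii) needs no such machinery: since each $\Gamma_z$ is an actual multiplicity-free character and $D_{\mbf G}$ sends irreducibles to signed irreducibles, $\lan\chi,\Xi_z\ran=\lan D_{\mbf G}\chi,\Gamma_z\ran$ equals, whenever nonzero, the sign $\e_\chi$ defined by $D_{\mbf G}\chi=\e_\chi\chi'$, which is independent of $z$ --- this is the paper's one-line argument. For (ii) the paper invokes Proposition 3.12 and Corollary 3.14 of~\cite{DigneLehrerMichel1992}, where the subgroups $A$ are described explicitly in terms of the component-group data attached to $\chi$; some such analysis of the regular characters inside a fixed rational series is what a correct proof requires in place of your twisting formula.
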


\begin{proof} As $D_{\mbf G}$ is an isometry of $\Cal C(G)$ onto itself, parts (i), (ii) and (iii) are equivalent to the same statements with virtual characters $\Xi_z$ replaced by $\Gamma_z$. Thus (i) holds by~\cite{DigneMichelBook}, Theorem 14.30 (see also~\cite{CarterFGLT}, Theorem 8.1.3), and (iii) becomes trivial as $\Gamma_z$ is an actual character for each $z$. Part (ii) follows from results of~\cite{DigneLehrerMichel1992}, namely Proposition 3.12 (and its proof) and Corollary 3.14. (The subgroups $A$ are described explicitly in the proof of the quoted proposition.)
\end{proof}

\begin{lem}\label{lemB} For all $z_0,z_1\in H^1 (F, Z(\mbf G))$ we have:
\begin{enumerate}[(i)]
\item $\xi_{z_0}$ is multiplicity-free;
\item for each irreducible constituent $\chi$ of $\xi_{z_0}$ there is a subgroup $A$ of $H^1 (F,Z(\mbf G))$ such that $\chi$ is a constituent of $\xi_{z_0 z}$ if and only if $z\in A$;
\item if $\chi$ is an irreducible constituent of both $\xi_{z_0}$ and $\xi_{z_1}$ then $\lan \chi, \xi_{z_0} \ran = \lan \chi, \xi_{z_1}\ran$.	
\end{enumerate}
\end{lem}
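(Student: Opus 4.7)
The plan is to work directly with the explicit formula~\eqref{barxi_z}, which expresses $\tilde{\xi}_z$ as a signed sum
\[
\tilde{\xi}_z \;=\; \sum_{J\subseteq \Pi/\tau}(-1)^{|J|}\,\Ind_{\bar U}^{\bar B}\th^J_{h_J(z)},
\]
where each $\th^J_y$ is a linear character of $\bar U$ inflated from $\overline{L_J\cap U}$ via the $T$-stable decomposition $\bar U=\bar V_J\times\overline{L_J\cap U}$. Since $T$ is abelian (being the group of $F$-fixed points of an algebraic torus), Clifford theory gives a clean decomposition of each summand: writing $T^J_y=\Stab_T(\th^J_y)$, the character $\th^J_y$ extends canonically to $\bar U T^J_y$, and $\Ind_{\bar U}^{\bar B}\th^J_y$ is a multiplicity-free sum of $|T:T^J_y|$ distinct irreducible characters of $\bar B$, each of the form $\Ind_{\bar U T^J_y}^{\bar B}(\hat\th^J_y\cdot\alpha)$ for some linear character $\alpha$ of $T^J_y$.

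The main point is that the index $J$ can be recovered from any irreducible constituent. Indeed, if $\rho\in\Irr(\bar B)$ is a constituent of $\Ind_{\bar U}^{\bar B}\th^J_y$, then by Clifford theory $\Res^{\bar B}_{\bar U}\rho$ is a sum of $T$-conjugates of $\th^J_y$, and since $T$ stabilises each $\bar X_\Om$, every such conjugate has the same support pattern $J=\{\Om\in\Pi/\tau : \bar X_\Om\not\subseteq\ker\th^J_y\}$. Consequently, distinct values of $J$ contribute disjoint sets of irreducible constituents to the sum above. Within a fixed $J$, the parametrisation of $T$-orbits of regular characters of $\overline{L_J\cap U}$ by $H^1(F, Z(\mbf L_J))$ (which was used to define the $\psi^J_y$) shows that distinct $y$ yield distinct $T$-orbits of $\th^J_y$ inside $\bar U$, and hence disjoint sets of constituents of $\Ind_{\bar U}^{\bar B}\th^J_y$.

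With these disjointness statements in hand, parts (i)--(iii) follow at once. Every irreducible constituent $\chi$ of $\xi_z$ determines a unique pair $(J(\chi), h_{J(\chi)}(z))$ and appears in $\xi_z$ with multiplicity exactly $(-1)^{|J(\chi)|}$, which gives~(i). For~(iii), any common constituent $\chi$ of $\xi_{z_0}$ and $\xi_{z_1}$ shares the same $J(\chi)$ and hence appears with the same sign in both. For~(ii), $\chi$ is a constituent of $\xi_{z_0 z}$ if and only if $h_{J(\chi)}(z_0 z)=h_{J(\chi)}(z_0)$, i.e.\ if and only if $z\in A\coleq \ker h_{J(\chi)}$, which is a subgroup of $H^1(F,Z(\mbf G))$ because $h_{J(\chi)}$ is a group homomorphism.

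The step requiring the most care is the disjointness claim within a fixed $J$: verifying that two characters $\th^J_y$ and $\th^J_{y'}$ with $y\ne y'$ really do lie in distinct $T$-orbits in $\bar U$, not merely in $\overline{L_J\cap U}$. This reduces to the fact that the inflation map from linear characters of $\overline{L_J\cap U}$ to those of $\bar U$ trivial on $\bar V_J$ is $T$-equivariant (which is immediate from $T$ stabilising both factors of the direct product decomposition $\bar U = \bar V_J\times\overline{L_J\cap U}$), combined with the quoted parametrisation of $T$-orbits of regular characters of $\overline{L_J\cap U}$.
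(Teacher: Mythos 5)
Your proposal is correct and takes essentially the same route as the paper's proof: both pass to $\bar B$, start from the expression \eqref{barxi_z}, recover $J$ from the kernel/support pattern of an irreducible constituent, extract multiplicity one from Clifford theory using that the $p'$-quotient $T$ is abelian (you via the canonical extension and Gallagher, the paper via the Clifford correspondent restricting irreducibly to $\bar U$), and obtain $A=\ker h_{J}$ from the fact that the characters $\th^J_y$, $y\in H^1(F,Z(\mbf L_J))$, lie in distinct $T$-orbits. One harmless slip: $\Ind_{\bar U}^{\bar B}\th^J_y$ has $|T^J_y|$ distinct irreducible constituents, each of degree $|T:T^J_y|$, rather than $|T:T^J_y|$ constituents; this count is not used in your argument.
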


\begin{proof} Since $\xi_z=\pi_{1_{U_2}} \xi_z$ for every $z$, 
we may replace $B$ with $\bar B$ and $\xi_z$ with $\tilde{\xi_z}$ (for each $z$) in the statement of the lemma, thus obtaining an equivalent formulation.
 Suppose $\chi\in \Irr(\bar B)$ is a constituent of $\tilde \xi_{z_0}$ for some $z_0\in H^1 (F, Z(\mbf G))$. 
 Define $J_{\chi}$ to be the set of orbits $\Om\in \Pi/\tau$ such that $\bar X_{\Om} \subseteq \ker\chi$. 
 Suppose $\th^J_{h_J (z_0)}$ is a constituent of $\Res^{\bar B}_{\bar U} \chi$, where $J\subseteq \Pi/\tau$.
As we observed in the proof of Lemma~\ref{pdeqss}, it follows that $J=J_{\chi}$. 
Let $E=\Stab_{\bar B} (\th^J_{h_J (z_0)})$, and let $\zeta\in\Irr(E)$ be the Clifford correspondent of $\chi$ with respect to $\th^J_{h_J (z_0)}$, so that
$\chi=\Ind_E^{\bar B} \zeta$ and $\zeta \in \Irr(E \di \th^J_{h_J (z_0)})$. Since $E/\bar U$ is abelian and has order coprime to that of $\bar U$, it follows from standard results of character theory (\cite{IsaacsBook}, Theorem 6.16 and Corollary 6.28) that 
$\Res^E_{\bar U} \zeta = \th^J_{h_J (z_0)}$, whence  
$\lan \chi, \Ind_{\bar U}^{\bar B} \th^J_{h_J (z_0)} \ran=1$. Thus, by~\eqref{barxi_z}, $\chi$ occurs in $\tilde \xi_z$ with multiplicity $(-1)^{|J_{\chi}|}$. Since $J_{\chi}$ does not depend on $z_0$, this proves (i) and (iii). 

For (ii), consider $z_1=z_0 z\in H^1 (F, Z(\mbf G))$. 
 As we have seen above, if $\th^{J'}_{h_{J'}(z_1)}$ is a constituent of $\Res^{\bar B}_{\bar U} \chi$ for some $J'\subseteq \Pi/\tau$ then 
 $J'=J_{\chi}=J$. Moreover, by Clifford theory, $\th^{J}_{h_{J}(z_1)}$ is a constituent of $\Res^{\bar B}_{\bar U} \chi$ if and only if $\th^{J}_{h_J (z_1)}$ is $T$-conjugate to $\th^{J}_{h_J (z_0)}$. As the characters $\th^J_y$, $y\in H^1 (F,Z(\mbf L_J))$,  lie in distinct $T$-orbits, this happens precisely when $h_J (z_1)=h_J(z_0)$, i.e.\ 
 $h_J (z)=1$. So, by~\eqref{barxi_z}, the subgroup $A=\ker h_{J_{\chi}}$ satisfies the requirement of (ii).
\end{proof}

It is well-known that $Z(G)=C_T (u) = Z(\mbf G)^F$ for every $u\in U_{\reg}$ (see~\cite{DigneMichelBook}, Lemma 14.15).
For each $\nu\in \Irr(Z(G))$ write $\Xi_{z,\nu}=\pi_{\nu} \Xi_z$ and $\xi_{z,\nu}=\pi_{\nu} \xi_{z}$. 
Then we have $\Xi_{z,\nu}= \Ind_B^G \xi_{z,\nu}$ and $\xi_{z,\nu} = \Res^G_B \Xi_{z,\nu}$ due to the corresponding identities for $\Xi_z$ and $\xi_z$.

\begin{thm}\label{liesemisimple} Suppose $H^1 (F, Z(\mbf G))$ is cyclic. 
Then there exists a signed bijection $F\colon \pm \Irr_{s} (G) \ra \pm \Irr_{s} (B)$ such that 
$\lan F(\chi), \xi_{z,\nu} \ran = \lan \Res^G_B \chi, \xi_{z,\nu} \ran$ 
for all $z\in H^1 (F, Z(\mbf G))$, $\nu\in \Irr(Z(G))$ and $\chi\in \Irr_{s}(G)$.
\end{thm}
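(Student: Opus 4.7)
The plan is to reformulate the identity via Frobenius reciprocity, reduce it to a counting statement, and then use the cyclicity of $H \coleq H^1(F, Z(\mbf G))$ to isolate individual counts by Möbius inversion on its chain of subgroups. As a first step, since $Z(G) \le B$, the projection $\pi_\nu$ onto characters with central character $\nu$ commutes with $\Res^G_B$ and $\Ind_B^G$; combined with $\Ind_B^G \xi_z = \Xi_z$ this yields $\Ind_B^G \xi_{z,\nu} = \Xi_{z,\nu}$, so by Frobenius reciprocity the required identity is equivalent to
\[
\lan F(\chi), \xi_{z,\nu} \ran_B \;=\; \lan \chi, \Xi_{z,\nu} \ran_G \qquad \text{for all } z \in H,\, \nu \in \Irr(Z(G)).
\]

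By Lemmas~\ref{lemG} and~\ref{lemB}, each $\chi \in \Irr_{s}(G)$ carries well-defined invariants: its central character $\nu_\chi$, a subgroup $A_\chi \le H$, the coset $C_\chi = z_0(\chi) A_\chi$ consisting of those $z$ for which $\chi$ is a constituent of $\Xi_z$, and the common sign $\epsilon_\chi \in \{\pm 1\}$ of the multiplicity of $\chi$ in each such $\Xi_z$. Analogous invariants $(\nu_\phi, A_\phi, C_\phi, \epsilon_\phi)$ are attached to each $\phi \in \Irr_{s}(B)$. The reformulated identity then forces $F(\chi) = \epsilon_\chi \epsilon_\phi \, \phi$ for some $\phi$ sharing the triple $(\nu_\chi, A_\chi, C_\chi)$; it therefore suffices to prove
\[
|M^G_{\nu, A, C}| \;=\; |M^B_{\nu, A, C}|, \qquad M^G_{\nu, A, C} \coleq \{\chi \in \Irr_{s}(G) \mid \nu_\chi = \nu,\, A_\chi = A,\, C_\chi = C\},
\]
for every triple $(\nu, A, C)$, with $M^B_{\nu, A, C}$ defined in the same way on the Borel side.

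The key input is the Gram-matrix equality $\lan \Xi_{z,\nu}, \Xi_{z',\nu} \ran_G = \lan \xi_{z,\nu}, \xi_{z',\nu} \ran_B$, a direct consequence of $\Ind_B^G \xi_{z,\nu} = \Xi_{z,\nu}$ and Frobenius reciprocity. Expanding both sides through the invariant triples yields
\[
\sum_{A \ni z^{-1}z'} |M^G_{\nu, A, zA}| \;=\; \sum_{A \ni z^{-1}z'} |M^B_{\nu, A, zA}|
\]
for every $z, z' \in H$ and $\nu \in \Irr(Z(G))$. At this point the cyclicity of $H$ is essential: its subgroups form a totally ordered chain, so for a fixed $A_0 \le H$ with smallest strict over-subgroup $A_0^+$, subtracting the cumulative count at a generator of $A_0^+$ from that at a generator of $A_0$ (with the second term understood as zero when $A_0 = H$) isolates $|M^G_{\nu, A_0, zA_0}|$, and the same on the $B$-side, giving the desired equality. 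Once the counts match, any bijection $F_0 \colon M^G_{\nu, A, C} \to M^B_{\nu, A, C}$ for each triple, together with the prescription $F(\chi) = \epsilon_\chi \epsilon_{F_0(\chi)} F_0(\chi)$ extended by $F(-\chi) = -F(\chi)$, produces the required signed bijection. I expect the main obstacle to be precisely this Möbius-inversion step: if the subgroup lattice of $H$ is not a chain, a single difference of cumulative sums no longer extracts the individual numbers $|M^G_{\nu, A_0, zA_0}|$, and this appears to be exactly why the hypothesis of cyclicity is imposed.
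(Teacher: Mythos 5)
Your overall strategy coincides with the paper's: reduce the required identity, via $\Ind_B^G\xi_{z,\nu}=\Xi_{z,\nu}$, $\Res^G_B\Xi_{z,\nu}=\xi_{z,\nu}$ and Frobenius reciprocity, to matching between $G$ and $B$ the numbers of semisimple characters with prescribed central character $\nu$ and prescribed coset $C$ of constituency (your classes $M^G_{\nu,A,C}$ and $M^B_{\nu,A,C}$ are exactly the paper's $\bar{\Cal G}(X)$ and $\bar{\Cal B}(X)$), using the Gram identity $\lan\Xi_{z,\nu},\Xi_{z',\nu}\ran=\lan\xi_{z,\nu},\xi_{z',\nu}\ran$ together with Lemmas~\ref{lemG} and~\ref{lemB}, and then inverting the resulting system of cumulative counts. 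Everything up to and including your displayed identity $\sum_{A\ni z^{-1}z'}|M^G_{\nu,A,zA}|=\sum_{A\ni z^{-1}z'}|M^B_{\nu,A,zA}|$ is correct.

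The gap is in the inversion step. The subgroups of a cyclic group do \emph{not} form a totally ordered chain unless the group is a cyclic $p$-group: if, say, $H^1(F,Z(\mbf G))\simeq \mZ/6$ (which occurs, e.g., for $\mbf G=\SL_6$ with $q=7$), the trivial subgroup has two incomparable minimal over-subgroups, of orders $2$ and $3$, so there is no ``smallest strict over-subgroup $A_0^+$'' and the single difference of two cumulative counts does not isolate $|M^G_{\nu,A_0,zA_0}|$; writing $N(A_0)=\sum_{A\supseteq A_0}|M^G_{\nu,A,zA}|$, one needs for instance $N(\mbf 1)-N(\mZ/2)-N(\mZ/3)+N(\mZ/6)$ rather than $N(\mbf 1)-N(A_0^+)$. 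Your closing diagnosis of where cyclicity enters is off for the same reason. The repair stays entirely within your framework: cyclicity is genuinely used to guarantee that \emph{every} subgroup $A_0\le H^1(F,Z(\mbf G))$ is generated by a single element $h$, so that the cumulative sum $N(A_0)$ is realised by the pair $(z,zh)$ and hence transfers to $B$; then apply M\"obius inversion (inclusion--exclusion) over the full finite lattice of subgroups of $H^1(F,Z(\mbf G))$, not a chain. With that replacement your proof is complete and is in substance the paper's argument, which performs the analogous inclusion--exclusion over subsets $X\subseteq H^1(F,Z(\mbf G))$ after using cyclicity to reduce $\Cal G(X)$ to $\Cal G(\{z_0,z_0z\})$ and hence to a single inner product.
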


\begin{proof}
Fix $\nu \in \Irr(Z(G))$.
Lemmas~\ref{lemG} and~\ref{lemB} still hold if we replace $\Xi_{z}$ with $\Xi_{z,\nu}$ and $\xi_z$ with $\xi_{z,\nu}$ for each $z$. This follows from the observation that if $\chi$ is a constituent of both $\Xi_{z,\nu}$ and $\Xi_{z'}$ for some $z,z'\in H^1 (F,Z (\mbf G))$ then $\chi$ must be a constituent of $\Xi_{z',\nu}$ and from a similar statement for characters of $B$. Let $X$ be a non-empty subset of $H^1 (F, Z (\mbf G))$ and set
\begin{eqnarray*}
 \Cal G (X) & = & \{ \chi \in \pm \Irr(G) \mid \lan \chi, \Xi_{z,\nu} \ran = 1 \} \quad \text{and}  \\
 \Cal B (X) & = & \{ \phi \in \pm \Irr(B) \mid \lan \phi, \xi_{z,\nu} \ran = 1 \}.
\end{eqnarray*}
 Let $z_0 \in X$ and define $Y$ to be the subgroup of $H^1 (F, Z(\mbf G))$ generated by the elements $z$ such that $z_0 z\in X$. 
Since $H^1 (F,Z(\mbf G))$ is cyclic, $Y$ must be cyclic too, and we pick a generator $z$ of $Y$. It follows from Lemma~\ref{lemG}(ii),(iii) and Lemma~\ref{lemB}(ii),(iii) that
$\Cal G (X) = \Cal G (\{ z_0, z_0z \})$ and $\Cal B (X) = \Cal B (\{z_0, z_0 z\})$. Therefore
$$
|\Cal G(X)| = \lan \Xi_{z_0,\nu}, \Xi_{z_0 z,\nu} \ran = \lan \xi_{z_0,\nu}, \xi_{z_0 z,\nu} \ran = |\Cal B (X)|.
$$
Here the first equality follows from Lemma~\ref{lemG}(i),(iii), the third from Lemma~\ref{lemB}(i)(iii), and the second from Frobenius reciprocity. 

Let $\bar{\Cal G} (X)=\Cal G(X) - \bigcup_{Y\supsetneq X} \Cal G(Y)$ and $\bar{\Cal B}(X) = \Cal B(X) - \bigcup_{Y\supsetneq X} \Cal B (Y)$. Using the inclusion-exclusion formula, we see that $|\bar{\Cal G}(X)| = |\bar{\Cal B} (X)|$ for each non-empty $X\subseteq H^1 (F,Z(\mbf G))$. 
Let $F\colon \bigsqcup_{X} \bar{\Cal G}(X) \ra \bigsqcup_{X} \bar{\Cal B}(X)$ 
be a bijection which maps $\bar {\Cal G}(X)$ onto $\bar {\Cal B}(X)$ for each $X$. 
We can extend $F$ to a signed bijection $F\colon \pm \Irr_{s} (G \di \nu) \ra \pm \Irr_{s} (B \di \nu)$ by 
setting  $F(-\chi)=-F(\chi)$ for each $\chi$ on which $F$ is not already defined. Then we have 
$$
\lan \xi_{z,\nu}, F(\chi) \ran = \lan \Xi_{z,\nu}, \chi \ran = \lan \xi_{z,\nu}, \Res^G_B \chi \ran 
$$ 
for each $\chi\in \pm \Irr_{s}(G \di \nu)$ and each $z\in H^1 (F,Z(\mbf G))$. 
\end{proof}

\begin{lem}\label{zerolem} Suppose $\zeta\in \Cal C(B)$ satisfies $\lan \zeta, \xi_{z,\nu} \ran=0$ for each $z\in H^1 (F,Z(\mbf G)$ and $\nu\in\Irr(Z(G))$. Then 
$\pi_{1_{U_2}} (\zeta)$ vanishes on all $g\in B$ such that $g_p\in U_{\reg}$. Moreover, if $p$ is good for $\mbf G$ then $\zeta(g)=0$ 
whenever $g_p\in U_{\reg}$.
\end{lem}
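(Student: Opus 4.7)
The plan is to pass from $\zeta$ to its $1_{U_2}$-projection, then transport everything from $B$ up to $G$, where the support and spanning properties of the duals $\Xi_z=D_{\mbf G}\Gamma_z$ of the Gelfand--Graev characters become available. First, set $\zeta_0\coleq\pi_{1_{U_2}}\zeta$. By the last assertion of Lemma~\ref{pdeqss}, every irreducible constituent of $\xi_{z,\nu}$ lies in $\Irr_s(B)=\Irr(B\di 1_{U_2})$, so the self-adjointness of $\pi_{1_{U_2}}$ gives $\lan\zeta_0,\xi_{z,\nu}\ran=\lan\zeta,\xi_{z,\nu}\ran=0$ for all $z,\nu$. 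Thus it is enough to show that $\zeta_0$ vanishes on $\{g\in B : g_p\in U_{\reg}\}$, and under $B$-conjugacy any such $g$ may be written as $sv$ with $s\in Z(G)$ and $v\in U_{\reg}$ (since the $p'$-part of any element of $B$ commuting with a regular unipotent $v$ lies in $C_T(v)=Z(G)$).

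I would then set $\tilde\zeta\coleq\Ind_B^G\zeta_0$. Since $\xi_{z,\nu}=\Res^G_B\Xi_{z,\nu}$, Frobenius reciprocity converts the hypothesis into $\lan\tilde\zeta,\Xi_{z,\nu}\ran_G=0$ for all $z,\nu$. Using the fact that $\Xi_z$ vanishes off the regular unipotent elements of $G$ (\cite{DigneMichelBook}, Proposition~14.33), a direct computation of $\pi_\nu\Xi_z$ yields
\[
\Xi_{z,\nu}(sv)=\frac{\nu(s)}{|Z(G)|}\Xi_z(v)\qquad(s\in Z(G),\ v\in U_{\reg}),
\]
and $\Xi_{z,\nu}$ vanishes on any element of $G$ whose $p'$-part is not in $Z(G)$. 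The key ingredient is the standard nondegeneracy statement (see \cite{DigneMichelBook}, Chapter~14): the matrix $(\Xi_z(u_{z'}))_{z,z'\in H^1(F,Z(\mbf G))}$ evaluated on a set of representatives $u_{z'}$ of the regular unipotent $G$-conjugacy classes is nonsingular. Combined with the central-character factorisation displayed above, this implies that the family $\{\Xi_{z,\nu}\}$ spans the space of class functions on $G$ supported on elements of the form $su$ with $s\in Z(G)$ and $u$ regular unipotent, so the orthogonality forces $\tilde\zeta(sv)=0$ for every $s\in Z(G),\ v\in U_{\reg}$.

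Finally, to descend from $G$ back to $B$, I would invoke Steinberg's theorem that every regular unipotent of $\mbf G$ is contained in a unique Borel subgroup: for $v\in U_{\reg}$ this gives $\{g\in G : gvg^{-1}\in U\}=B$, so the Frobenius formula for $\Ind_B^G\zeta_0$ collapses to $\tilde\zeta(sv)=\zeta_0(sv)$, whence $\zeta_0(sv)=0$ and the first assertion is proved. For the ``moreover'' statement, Proposition~\ref{redlin} (which requires $p$ good) implies that every $\chi\in\Irr(B)$ with $U_2\not\subseteq\ker\chi$ lies in $\Cal I(B,U,\Cal S(G,U,B))$, and Proposition~\ref{IforB} then shows that such $\chi$ vanishes on $\{g:g_p\in U_{\reg}\}$; hence $\zeta-\zeta_0$ vanishes there as well, so $\zeta$ itself vanishes there. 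The hard part is the nondegeneracy of $(\Xi_z(u_{z'}))$: while classical, it requires carefully matching the parametrisation of regular unipotent $G$-classes with $H^1(F,Z(\mbf G))$ and tracing the values of $\Xi_z$ through the Alvis--Curtis duality.
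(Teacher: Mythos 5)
Your proof of the first assertion is correct and takes a genuinely different route from the paper's. The paper stays entirely on $B$: it cites Digne--Michel, Theorem 14.35, which expresses $\tilde\chi(u_z)$ (for $\chi\in\Irr(B\di 1_{U_2})$) as a $K$-linear combination of the pairings $\lan\chi,\xi_{z'}\ran$, and then reads off the vanishing of $\pi_\nu\pi_{1_{U_2}}\zeta$ on $U_{\reg}$ and extends to $Z(G)U_{\reg}$ by the central character. You instead push $\zeta_0=\pi_{1_{U_2}}\zeta$ up to $G$, use the support of the $\Xi_{z,\nu}$ together with nonsingularity of $(\Xi_z(u_{z'}))$ to force $\Ind_B^G\zeta_0$ to vanish on $Z(G)\cdot\{\text{regular unipotents}\}$, and descend via the uniqueness of the Borel containing a regular unipotent, which collapses the Frobenius formula for $\Ind_B^G$. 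The nonsingularity you flag as the hard step is essentially equivalent to the paper's input: since the $\Xi_z$ are supported precisely on the $|H^1(F,Z(\mbf G))|$ regular unipotent classes, nonsingularity of $(\Xi_z(u_{z'}))$ is exactly linear independence of the $\Xi_z$, and this is what Theorem 14.35 encapsulates. Your route makes the $\pi_\nu$ and $Z(G)$-twist bookkeeping transparent at the modest cost of the up-and-down step; the paper's route is shorter because the cited statement is already phrased on $B$.

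Your proof of the ``moreover'' statement, however, has a genuine gap. You appeal to Proposition~\ref{redlin} to place every $\chi\in\Irr(B)$ with $U_2\not\subseteq\ker\chi$ in $\Cal I(B,U,\Cal S(G,U,B))$, but that proposition's hypotheses require $U\cong U(\Phi^+,\mF_q)$, i.e.\ $\mbf G$ split; Proposition~\ref{IforB}, which you invoke next, is likewise stated only under the split hypothesis. Lemma~\ref{zerolem} carries no split hypothesis, and indeed it is formulated for a general connected reductive $\mbf G$ over $\mF_q$. Your argument therefore proves the ``moreover'' only in the split case. The paper handles the general case by invoking Digne--Michel, Corollary 14.38, which for good $p$ and arbitrary $\mbf G^F$ expresses $(\pi_\nu\zeta)(u)$, $u\in U_{\reg}$, directly as a $K$-linear combination of the pairings $\lan\pi_\nu\zeta,\xi_{z'}\ran$, with no appeal to the split Chevalley presentation of $U$. (As it happens, the ``moreover'' part is used in the paper only under the split hypothesis, in Theorem~\ref{liesplit}, so your argument would serve that application; but it does not prove the lemma as stated.)
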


\begin{proof}
Let $\nu\in\Irr(Z(G))$. By the hypothesis, $\lan \pi_{\nu} \zeta,\xi_z \ran=0$ for all $z\in H^1 (F, Z(\mbf G))$. By~\cite{DigneMichelBook}, Proposition 14.26 (and the discussion preceding it), the $\bar B$-conjugacy classes of $U/U_2$ are parametrised by $H^1 (F, Z(\mbf G))$. Let 
$u_z\in U/U_2$ be a representative of the class corresponding to $z\in H^1 (F, Z(\mbf G))$. (The parametrisation depends on the choice of $u_1$.)  
By~\cite{DigneMichelBook}, Theorem 14.35, for every $\chi\in \Irr(B\di 1_{U_2})$ and each $z\in H^1 (F, Z(\mbf G))$, the number $\tilde{\chi}(u_z)$ can be expressed as a $K$-linear combination of the scalar products $\lan \chi, \Xi_{z'} \ran$, $z'\in H^1 (F,Z(\mbf G))$. Therefore, for all $u\in U_{\reg}$, we have
$(\pi_{\nu} \pi_{1_{U_2}} \zeta) (u)=0$, whence $(\pi_{\nu} \pi_{1_{U_2}} \zeta)(tu)=\nu(t)(\pi_\nu \pi_{1_{U_2}} \zeta (u))=0$ for all $t\in Z(G)$. Taking the sum over all $\nu\in\Irr(Z(G))$, we see that $\pi_{1_{U_2}}\zeta (tu)=0$ for all $t$, proving the first statement.

Now suppose $p$ is good for $\mbf G$. Then, by~\cite{DigneMichelBook}, Corollary 14.38, for every $u\in U_{\reg}$, $(\pi_{\nu}\zeta)(u)$ is a $K$-linear combination of the scalar products $\lan \pi_{\nu}\zeta, \Xi_{z'} \ran$ with $z'\in H^1(F,Z(\mbf G))$,
whence $(\pi_{\nu}\zeta)(u)=0$. The proof of the second statement of the lemma is concluded as in the previous paragraph.
\end{proof}

\begin{proof}[Proof of Theorem~\ref{liesplit}]
By Lemma~\ref{pdeqss}, $\Irr_{p'} (G)=\Irr_s (G)$ and $\Irr_{p'}(B)=\Irr_s (B)$. Let 
$F\colon \pm \Irr_{p'} (G) \ra \Irr_{p'} (B)$ be a signed bijection given by Theorem~\ref{liesemisimple}. Then, by Lemma~\ref{zerolem}, for each $\chi\in\pm \Irr_{p'}(G)$, the difference 
$F(\chi)-\Res^G_B \chi$ vanishes on all elements $g\in B$ such that $g_p\in U_{\reg}$. Property (IRC-Syl) now follows by Proposition~\ref{IforB}.

Let $\Cal S=\Cal S(G,U,B)$ and
suppose $\phi\in \Irr^p (B)$. Then $U_2 \nsubseteq \ker\phi$ by Lemma~\ref{pdeqss}, whence $\phi\in \Cal I(B,U,\Cal S)$ by Proposition~\ref{redlin}. Hence $\Ind_B^G \phi \in \Cal I(G,B,\Cal S)$, and (pInd-Syl) follows.
\end{proof}

\begin{proof}[Proof of Theorem~\ref{lienonsplit}]
We begin by proving (pRes-Syl). 
Let $\eta\in\Irr^p (B)$ and write $\zeta=\Res^G_B \eta$. By Lemma~\ref{pdeqss}, $\lan \eta, \Xi_{z,\nu}\ran =0$, 
whence $\lan \zeta, \xi_{z,\nu} \ran =0$ (by Frobenius reciprocity), for all $z\in H^1 (F,Z(\mbf G))$ and $\nu \in \Irr(Z(G))$. Thus 
$\zeta'=\pi_{1_{U_2}} \zeta$ vanishes on all $g\in B$ such that $g_p\in U_{\reg}$, by Lemma~\ref{zerolem}. Applying Lemma~\ref{dirprod} to $\Def_{B/U_2}^B \zeta'$, we deduce that $\zeta'\in \Cal I(G,B,\Cal S)$ (where $\Cal S=\Cal S(G,U,B)$, as usual). On the other hand, all irreducible constituents of $\zeta-\zeta'$ have degrees divisible by $p$ by Lemma~\ref{pdeqss}. So $\zeta \in \Cal I(G,U,\Cal S) + \Cal C^p (B)$, and (pRes-Syl) holds.

To prove (WIRC-Syl), consider a signed bijection $F\colon \pm \Irr_{p'}(G) \ra \pm \Irr_{p'} (B)$ given by Theorem~\ref{liesemisimple} (considered together with Lemma~\ref{pdeqss}). Let $\chi\in\pm \Irr_{p'}(G)$, so that $\g=F(\chi)-\Res^G_B \chi$ satisfies $\lan \g, \xi_{z,\nu} \ran=0$ for all $z$ and $\nu$. By the argument of the previous paragraph it follows that $\g\in \Cal C^p (B)+\Cal I(B,U,\Cal S)$. So $F$ satisfies the requirement of (WIRC-Syl). 
\end{proof}




\section{Specific cases}\label{calc}

In this section we record the outcome of testing the properties of Section~\ref{blarb} for certain ``small'' groups 
using GAP~\cite{GAP4} and the data provided in~\cite{AtlasQMUL}. The GAP code used for these checks is available on the author's website~\cite{EvseevWebsite}.
For each triple $(G,p,P)$ listed in the tables below, properties (IRC), (pRes), (pInd) and (WIRC), with respect to the normaliser $H=N_G (P)$, have been tested. The latter three properties hold in all these
 cases (and hence (WIRC*) does too), and the tables state, in each case, whether or not (IRC) holds.  We do not list any cases where $P$ is cyclic (due to Theorem~\ref{cyclicthm}) or normal in $G$. We use the shorthand 
$\Cal Q_1=\Cal C(H,P)/\Cal I(H,P,\Cal S)$ and $\Cal Q_2 = \Cal C(H,P)/(\Cal C^p (H,P)+ \Cal I(H,P,\Cal S))$ where $H=N_G (P)$ and $\Cal S=\Cal S(G,P,N_G (P))$, and we give the structure of these quotients (as abelian groups) in the tables. In a sense, this structure gives an indication as to how much information is encoded in the properties we consider. 
The decomposition of $\Cal Q_1$ and $\Cal Q_2$ into components corresponding to different blocks is indicated by square brackets. (We omit the square brackets if $G$ has only one block with $P$ as a defect group.)
We note that in a few cases a component of $\Cal Q_1$ or $\Cal Q_2$ corresponding to a block is isomorphic to $\mZ/p$. In such a situation property (IRC-Bl) or (WIRC-Bl) (respectively) does not give any more information than Conjecture~\ref{conjINgen} of Isaacs and Navarro.

All possible non-cyclic defect groups of blocks $S_n$ and $A_n$ have been checked for $n\le 11$ (for all primes), and properties (IRC), (pRes) and (pInd) hold in all these cases. When $G$ is a symmetric or an alternating group, we only list cases where $P$ is a Sylow subgroup for the sake of brevity.

In the cases where $P$ is not Sylow in Tables~\ref{tablespor} and~\ref{tablelie}, each $P$ is uniquely determined up to $G$-conjugation by its isomorphism type (which is given) and the fact that $P$ is a defect group of a block. 
In Table~\ref{tablelie}, $D_8$ and $Q_8$ denote the dihedral and quaternion groups of order $8$, and $\lsa{2}F_4 (2)'$ is the Tits group (i.e.\ the derived subgroup of $\lsa{2}F_4 (2)$). 

Note that (IRC) holds in Table~\ref{tablelie} in all cases except those where $G$ is a twisted group and $p$ is the defining characteristic.
(Also, (IRC-Syl) holds when  $G=\PSU_4 (2)\simeq \PSp_4 (3)$ for $p=2$ and when $G=\PSU_4(3)$ and $p=3$.)

 Finally, we remark that the sporadic groups $HS$ and $McL$ contain subgroups isomorphic to $\PSU_3 (5)$, and $J_2$ has a subgroup isomorphic to $\PSU_3 (3)$ (see~\cite{AtlasQMUL}). Perhaps, these observations might ``explain'' the three failures of (IRC) in Table~\ref{tablespor}.

\renewcommand{\arraystretch}{1.1}

\begin{table}[hbtp]
 \caption{Symmetric and alternating groups  ($P$ is a Sylow $p$-subgroup)}
\label{tablesym}
\centering
\subfigure{
\centering
\begin{tabular}{|cIc|c|c|}
\hline
$G$, $p$ & $\Cal Q_1$ & (IRC) & $\Cal Q_2$\\
\whline
$S_4$, $2$ & $\mZ^2$ & Yes & $\mZ^2$ \\
\hline
$S_5$, $2$ & $\mZ$ & Yes & $\mZ$ \\
\hline
$S_6$, $2$ &  $\mZ^2$ & Yes & $\mZ^2$ \\
\hline
$S_6$, $3$ &  $\mZ^4$ & Yes & $\mZ^4$ \\
\hline
$S_7$, $2$ & $\mZ$ & Yes & $\mZ$ \\
\hline
$S_7$, $3$ & $\mZ^2$ & Yes & $\mZ^2$ \\
\hline
$S_8$, $2$ & $\mZ \oplus \mZ$ & Yes & $\mZ$ \\
\hline
$S_8$, $3$ & $[\mZ^2] \oplus [\mZ^2]$ & Yes & $[\mZ^2] \oplus [\mZ^2]$ \\
\hline
$S_9$, $2$ & $\mZ$ & Yes & $\mZ$ \\
\hline
$S_9$, $3$ &  $\mZ$ & Yes & $\mZ$ \\
\hline
$S_{10}$, $2$ &  $\mZ^2$ & Yes & $\mZ^2$ \\
\hline
$S_{10}$, $3$ & $\mZ$ & Yes & $\mZ$ \\
\hline
$S_{10}$, $5$ &  $\mZ^2$ & Yes & $\mZ^2$ \\
\hline
$S_{11}$, $2$ &  $\mZ$ & Yes & $\mZ$ \\
\hline
$S_{11}$, $3$ & $[\mZ] \oplus [\mZ]$ & Yes & $[\mZ] \oplus [\mZ]$ \\ 
\hline
$S_{11}$, $5$ & $\mZ^2$ & Yes & $\mZ^2$ \\
\hline
$S_{12}$, $2$ & $\mZ^2$ & Yes & $\mZ^2$ \\
\hline
$S_{12}$, $3$ & $\mZ^3$ & Yes & $\mZ^3$ \\
\hline
$S_{12}$, $5$ & $[\mZ^2] \oplus [\mZ^2]$ & Yes & $[\mZ^2] \oplus [\mZ^2]$ \\
\hline
\end{tabular}
}
\hspace{0.5cm}
\subfigure{
\centering
\begin{tabular}{|cIc|c|c|}
\hline
$G$, $p$ & $\Cal Q_1$ & (IRC) & $\Cal Q_2$\\
\whline
$A_5$, $2$ & $\mZ$ & Yes & $\mZ$ \\
\hline
$A_6$, $2$ & $\mZ$ & Yes & $\mZ$ \\
\hline
$A_6$, $3$ & $\mZ^2$ & Yes & $\mZ^2$ \\
\hline
$A_7$, $2$ &  $\mZ$ & Yes & $\mZ$ \\
\hline
$A_7$, $3$ & $\mZ$ & Yes & $\mZ$ \\
\hline
$A_8$, $2$ & $\mZ$ & Yes & $\mZ$ \\
\hline
$A_8$, $3$ & $\mZ^2$ & Yes & $\mZ^2$ \\
\hline
$A_9$, $2$ &  $\mZ/2$ & Yes & $\mZ/2$ \\
\hline 
$A_9$, $3$ &  $\mZ^2$ & Yes & $\mZ^2$ \\
\hline
$A_{10}$, $2$ & $\mZ$ & Yes & $\mZ$ \\
\hline
$A_{10}$, $3$ & $\mZ^2$ & Yes & $\mZ^2$ \\ 
\hline
$A_{10}$, $5$ & $\mZ$ & Yes & $\mZ$ \\
\hline
$A_{11}$, $2$ &  $\mZ$ & Yes & $\mZ$ \\
\hline
$A_{11}$, $3$ &  $\mZ$ & Yes & $\mZ$ \\
\hline
$A_{11}$, $5$ &  $\mZ$ & Yes & $\mZ$ \\
\hline
$A_{12}$, $2$ & $\mZ^2$ & Yes & $\mZ^2$ \\
\hline
$A_{12}$, $3$ & $\mZ^3$ & Yes & $\mZ^3$ \\
\hline
$A_{12}$, $5$ & $\mZ^2$ & Yes & $\mZ^2$ \\
\hline
\end{tabular}
}
\end{table}


\begin{table}[hbtp]
\caption{Some sporadic simple groups} 
\label{tablespor}
\centering
\begin{tabular}{|c|cIc|c|c|}
\hline
$G$, $p$ & $P$ & $\Cal Q_1$ & (IRC) & $\Cal Q_2$\\
\whline
$M_{11}$, $2$ & Sylow & $\mZ^2$ & Yes & $\mZ$ \\ 
\hline
$M_{11}$, $3$ & Sylow & $\mZ^2$ & Yes & $\mZ^2$ \\
\hline
\multirow{2}{12mm}{$M_{12}$, $2$}
   & Sylow & $\mZ^2$ & Yes & $\mZ^2$ \\
\cline{2-5}
  & $C_2 \times C_2$ & $\mZ/2$ & Yes & $\mZ/2$ \\
\hline
$M_{12}$, $3$ & Sylow & $\mZ/3$ & Yes & $\mZ/3$ \\ 
\hline
$M_{22}$, $2$ & Sylow & $\mZ$ & Yes & $\mZ$ \\
\hline
$M_{22}$, $3$ & Sylow & $\mZ/3$ & Yes & $\mZ/3$ \\
\hline
$M_{23}$, $2$ & Sylow & $\mZ$ & Yes & $\mZ$ \\
\hline
$M_{23}$, $3$ & Sylow & $\mZ/3$ & Yes & $\mZ/3$ \\
\hline
$M_{24}$, $2$ & Sylow & $\mZ$ & Yes & $\mZ$ \\
\hline
$M_{24}$, $3$ & Sylow & $\mZ/3$ & Yes & $\mZ/3$ \\
\hline
\multirow{2}{12mm}{$HS$, $2$} 
 & Sylow & $\mZ^2$ & Yes & $\mZ$ \\
\cline{2-5}
 & $C_2 \times C_2$ & $\mZ/2$ & Yes & $\mZ/2$ \\
\hline
$HS$, $3$ & Sylow & $[\mZ/3] \oplus [\mZ/3]$ & Yes & $[\mZ/3] \oplus [\mZ/3]$ \\
\hline
$HS$, $5$ & Sylow & $\mZ^5$ & No & $\mZ$ \\
\hline
$J_1$, $2$ & Sylow & $\mZ/4$ & Yes & $\mZ/4$ \\
\hline
\multirow{2}{12mm}{$J_2$, $2$}
& Sylow & $\mZ$ & Yes & $\mZ$ \\
\cline{2-5}
 & $C_2 \times C_2$ & $\mZ$ & Yes & $\mZ$ \\
\hline
$J_2$, $3$ & Sylow & $\mZ/9$ & No & $\mZ/3$ \\ 
\hline
$J_2$, $5$ & Sylow & $\mZ^4$ & Yes & $\mZ^4$ \\
\hline
$McL$, $2$ & Sylow & $\mZ$ & Yes & $\mZ$ \\
\hline
$McL$, $3$ & Sylow & $\mZ^2$ & Yes & $\mZ^2$ \\
\hline
$McL$, $5$ & Sylow & $\mZ^7$ & No & $\mZ$ \\
\hline
$He$, $5$ & Sylow & $\mZ/5$ & Yes & $\mZ/5$ \\
\hline
$He$, $7$ & Sylow & $\mZ^4$ & Yes & $\mZ^4$ \\
\hline
\end{tabular}
\end{table}
\medskip

\begin{table}[hbtp]
\caption{Some groups of Lie type}
\label{tablelie} 
\centering
\begin{tabular}{|c|cIc|c|c|}
\hline
$G$, $p$ & $P$ & $\Cal Q_1$ & (IRC) & $\Cal Q_2$\\
\whline
$\SL_2 (11)$, $2$ & Sylow & $\mZ/2$ & Yes & $\mZ/2$ \\
\hline
$\SL_2 (13)$, $2$ & Sylow & $\mZ/2$ & Yes & $\mZ/2$ \\
\hline
$\SL_2 (17)$, $2$ & Sylow & $\mZ^6$ & Yes & $\mZ$ \\
\hline
$\SL_2 (19)$, $2$ & Sylow & $\mZ/2$ & Yes & $\mZ/2$ \\
\hline
$\SL_3 (3)$, $2$ & Sylow & $\mZ^2$ & Yes & $\mZ$ \\
\hline
$\SL_3 (5)$, $2$ & Sylow & $\mZ/2 \oplus \mZ/2$ & Yes & $\mZ/2 \oplus \mZ/2$ \\
\hline
$\PSL_3 (8)$, $7$ & Sylow & $\mZ/7\oplus \mZ^5$ & Yes & $\mZ/7 \oplus \mZ^5$ \\
\hline
$\PSL_4 (2)$, $3$ & Sylow & $\mZ^2$ & Yes & $\mZ^2$ \\
\hline
$\PSL_4 (3)$, $2$ & Sylow & $\mZ$ & Yes & $\mZ$ \\ 
\hline
$\PSL_5 (2)$, $3$ & Sylow & $\mZ/3$ & Yes & $\mZ/3$ \\ 
\hline
$\PSp_4 (3)$, $2$ & Sylow & $\mZ$ & Yes & $\mZ$ \\
\hline
\multirow{3}{20mm}{$\PSp_4 (5)$, $2$}
   & Sylow & $\mZ/2$ & Yes & $\mZ/2$ \\ 
\cline{2-5}
   & $Q_8$ & $\mZ/2 \oplus \mZ$ & Yes & $\mZ/2 \oplus \mZ$ \\
\cline{2-5}
   & $D_8$ & $\mZ$ & Yes & $\mZ$ \\ 
\hline
$\PSp_4 (5)$, $3$ & Sylow & $[\mZ/3]\oplus [\mZ/3]$ & Yes & $[\mZ/3]\oplus [\mZ/3]$ \\ 
\hline
$\PSp_6 (2)$, $3$ & Sylow & $\mZ$ & Yes & $\mZ$ \\ 
\hline
$\PSp_6 (3)$, $2$ & Sylow & $\mZ^4$ & Yes & $\mZ^4$ \\ 
\hline
$\PSU_3 (3)$, $2$ & Sylow & $\mZ^2$ & Yes & $\mZ^2$ \\ 
\hline
$\PSU_3 (3)$, $3$ & Sylow & $\mZ^5$ & No & $\mZ$ \\ 
\hline
$\PSU_3 (4)$, $2$ & Sylow & $\mZ^6$ & No & $\mZ$ \\ 
\hline
$\PSU_3 (4)$, $5$ & Sylow & $\mZ/5\oplus \mZ^2$ & Yes & $\mZ/5\oplus \mZ^2$ \\ 
\hline
\multirow{2}{20mm}{$\PSU_3 (5)$, $2$} 
   & Sylow & $\mZ^2$ & Yes & $\mZ$ \\
\cline{2-5}
   & $C_2 \times C_2$ & $\mZ/2$ & Yes & $\mZ/2$ \\  
\hline
$\PSU_3 (5)$, $3$ & Sylow & $\mZ/3$ & Yes & $\mZ/3$ \\ 
\hline
$\PSU_3 (5)$, $5$ & Sylow & $\mZ^5$ & No & $\mZ^3$ \\ 
\hline
$\PSU_3 (7)$, $2$ & Sylow & $(\mZ/2)^4$ & Yes & $(\mZ/2)^4$ \\ 
\hline
$\PSU_3 (7)$, $7$ & Sylow & $\mZ^9$ & No & $\mZ$ \\ 
\hline
$\PSU_4 (3)$, $2$ & Sylow & $\mZ$ & Yes & $\mZ$ \\ 
\hline
$\PSU_4 (3)$, $3$ & Sylow & $\mZ^4$ & Yes & $\mZ^4$ \\ 
\hline
$\PSU_5 (2)$, $2$ & Sylow & $\mZ^4$ & No & $\mZ$ \\ 
\hline
$\PSU_5 (2)$, $3$ & Sylow & $\mZ^6$ & Yes & $\mZ^6$ \\ 
\hline
$\lsa{2}B_2 (8)$, $2$ & Sylow & $\mZ^3$ & No & $\mZ$ \\ 
\hline
$\lsa{2}F_4 (2)'$, $2$ & Sylow & $\mZ^6$ & Yes & $\mZ^3$ \\ 
\hline
$\lsa{2}F_4 (2)'$, $3$ & Sylow & $\mZ/3$ & Yes & $\mZ/3$ \\ 
\hline
$\lsa{2}F_4 (2)'$, $5$ & Sylow & $\mZ^2$ & Yes & $\mZ^2$ \\ 
\hline
\end{tabular}
\end{table}
\medskip





\providecommand{\bysame}{\leavevmode\hbox to3em{\hrulefill}\thinspace}
\providecommand{\MR}{\relax\ifhmode\unskip\space\fi MR }
\providecommand{\MRhref}[2]{%
  \href{http://www.ams.org/mathscinet-getitem?mr=#1}{#2}
}
\providecommand{\href}[2]{#2}

\end{document}